\crefname{theorem}{Theorem}{Theorems}
\crefname{thm}{Theorem}{Theorems}
\crefname{lemma}{Lemma}{Lemmas}
\crefname{lem}{Lemma}{Lemmas}
\crefname{remark}{Remark}{Remarks}
\crefname{prop}{Proposition}{Propositions}
\crefname{defn}{Definition}{Definitions}
\crefname{corollary}{Corollary}{Corollaries}
\crefname{conjecture}{Conjecture}{Conjectures}
\crefname{question}{Question}{Questions}
\crefname{chapter}{Chapter}{Chapters}
\crefname{section}{Section}{Sections}
\crefname{figure}{Figure}{Figures}
\theoremstyle{plain}
\newtheorem{thm}{Theorem}[section]
\newtheorem*{thm*}{Theorem}
\newtheorem{lemma}[thm]{Lemma}
\newtheorem{lem}[thm]{Lemma}
\newtheorem{prop}[thm]{Proposition}
\newtheorem{conjecture}[thm]{Conjecture}
\newtheorem{question}[thm]{Question}
\theoremstyle{definition}
\theoremstyle{remark}
\newtheorem*{remark}{Remark}
\numberwithin{equation}{section}
\renewcommand{\P}{\mathbb P}
\newcommand{\E}{\mathbb E}
\newcommand{\R}{\mathbb R}
\newcommand{\Z}{\mathbb Z}
\newcommand{\F}{\mathfrak F}
\newcommand{\cC}{\mathcal C}
\newcommand{\cP}{\mathcal P}
\newcommand{\cU}{\mathcal U}
\newcommand{\cX}{\mathcal X}
\newcommand{\sA}{\mathscr A}
\newcommand{\sB}{\mathscr B}
\newcommand{\sC}{\mathscr C}
\newcommand{\sD}{\mathscr D}
\newcommand{\sE}{\mathscr E}
\newcommand{\sF}{\mathscr F}
\newcommand{\sI}{\mathscr I}
\newcommand{\sR}{\mathscr R}
\newcommand{\sW}{\mathscr W}
\newcommand{\sY}{\mathscr Y}
\newcommand{\bbG}{\mathbb G}
\newcommand{\bbV}{\mathbb V}
\newcommand{\bbW}{\mathbb W}
\newcommand{\WUSF}{\mathsf{WUSF}}
\newcommand{\FUSF}{\mathsf{FUSF}}
\newcommand{\UST}{\mathsf{UST}}
\newcommand{\LE}{\mathsf{LE}}
\titlespacing*{\section}{0pt}{3.5ex plus 1ex minus .2ex}{2.3ex plus .2ex}
\titlespacing*{\subsection}{0pt}{3.25ex plus 1ex minus .2ex}{1.5ex plus .2ex}
\titlespacing*{\subsubsection}{0pt}{3.25ex plus 1ex minus .2ex}{1.5ex plus .2ex}
\newcommand{\bP}{\mathbf{P}}
\newcommand{\eps}{\varepsilon}
\newcommand{\Comp}{\mathcal{C}}
\newcommand{\diam}{\textrm{diam}}
\newcommand{\symdif}{\hspace{.1em}\triangle\hspace{.1em}}
\newcommand{\Witness}{\mathscr W}
\newcommand{\bareta}{\hat \eta}
\newcommand{\coarse}[2]{#1/\!#2}
\newcommand{\myfrac}[3][0pt]{\genfrac{}{}{}{}{\raisebox{#1}{$#2$}}{\raisebox{-#1}{$#3$}}}
\title[The component graph of the uniform spanning forest]{The component graph of the uniform spanning forest: transitions in dimensions $9,10,11,\ldots$}
\author{Tom Hutchcroft}
\address{University of Cambridge}
\email{t.hutchcroft@maths.cam.ac.uk}
\author{Yuval Peres}
\address{Microsoft Research}
\email{peres@microsoft.com}
\begin{document}

\begin{abstract}
We prove that the uniform spanning forests of $\Z^d$ and $\Z^{\ell}$ have qualitatively different connectivity properties whenever $\ell >d \geq 4$.  In particular, we consider the graph formed by contracting each tree of the uniform spanning forest down to a single vertex, which we call the \emph{component graph}.  We introduce the notion of \emph{ubiquitous subgraphs} and show that the set of ubiquitous subgraphs of the component graph changes whenever the dimension changes and is above $8$. To separate dimensions $5,6,7,$ and $8$, we prove a similar result concerning \emph{ubiquitous subhypergraphs} in the \emph{component hypergraph}.
%
%
%
%
%
Our result sharpens a theorem of Benjamini, Kesten, Peres, and Schramm, 
 who proved that the diameter of the component graph increases by one every time the dimension increases by four. 
\end{abstract}

\maketitle


\section{Introduction}

The \textbf{uniform spanning forests} of an infinite, connected, locally finite graph $G$ are defined to be distributional limits of uniform spanning trees of large finite subgraphs of $G$. These limits can be taken with either free or wired boundary conditions, yielding the \textbf{free uniform spanning forest} (FUSF) and \textbf{wired uniform spanning forest} (WUSF) respectively. 
Although they are defined as limits of trees, the USFs are not necessarily connected. 
Indeed, Pemantle \cite{Pem91} proved that the FUSF and WUSF of $\Z^d$ coincide for all $d$ (so that we can refer to both simply as the USF of $\Z^d$), and are a single tree almost surely (a.s.) if and only if $d\leq 4$. A complete characterization of the connectivity of the WUSF was given by Benjamini, Lyons, Peres, and Schramm \cite{BLPS}, who proved that the WUSF of a graph is connected if and only if two independent random walks on $G$ intersect infinitely often a.s.

Extending Pemantle's result, Benjamini, Kesten, Peres, and Schramm \cite{BeKePeSc04} (henceforth referred to as BKPS) discovered the following surprising theorem. 

\begin{thm*}[BKPS \cite{BeKePeSc04}]
	Let $\F$ be a sample of the USF of $\Z^d$. For each
	$x,y \in \Z^d$,  let $N(x,y)$ be the minimal number of edges that are not in $\F$ used by a path from $x$ to $y$ in $\Z^d$.
	Then 
	\[\max_{x,y \in \Z^d}N(x,y) = \left\lceil \frac{d-4}{4} \right\rceil\]
	almost surely. 
\end{thm*}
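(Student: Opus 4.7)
My plan is to prove the two inequalities $\max_{x,y} N(x,y) \geq \lceil (d-4)/4 \rceil$ and $\max_{x,y} N(x,y) \leq \lceil (d-4)/4 \rceil$ almost surely separately. For $d \leq 4$ both sides are $0$ by Pemantle's theorem, so I assume $d \geq 5$ and set $k := \lceil (d-4)/4 \rceil$, so that $4k < d \leq 4(k+1)$. The central input is the two-point estimate $\P(x \sim_\F y) \asymp |x-y|^{4-d}$, which follows from Wilson's algorithm rooted at infinity together with standard Green's function asymptotics: $x$ and $y$ lie in the same tree if and only if an independent random walk from $y$ ever hits the loop-erased random walk from $x$. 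Summing over the possible locations of intermediate edges, the first moment of the number of chains of $r$ non-forest edges from $x$ to $y$---i.e., sequences of components $T_0,\ldots,T_r$ with consecutive components joined by edges outside $\F$---is comparable (modulo logarithmic factors at integer thresholds) to
\[
\sum_{z_1,\ldots,z_r} |x-z_1|^{4-d} |z_1-z_2|^{4-d} \cdots |z_r-y|^{4-d} \asymp |x-y|^{4(r+1)-d},
\]
by a standard dimensional analysis.

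For the lower bound, Markov's inequality applied to this first moment with $r = k-1$ gives $\P(N(x,y) \leq k-1) \lesssim |x-y|^{4k-d}$, which vanishes as $|x-y| \to \infty$ because $d > 4k$. Hence $\P(\max_{x,y} N(x,y) \geq k) > 0$; as this event is $\Z^d$-translation invariant, ergodicity of the USF promotes the probability to one.

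For the upper bound, I must show $\P(N(x,y) \leq k) = 1$ for every fixed pair $x, y$. My approach is a second-moment argument on the count $X_R$ of $k$-edge chains from $x$ to $y$ with intermediate vertices in the ball of radius $R$ centred at the midpoint. Since $d \leq 4(k+1)$, the first moment of $X_R$ grows without bound in $R$; granted the tree-graph bound $\mathrm{Var}(X_R) = o(\E[X_R]^2)$, the Paley--Zygmund inequality gives $\P(X_R \geq 1) \to 1$, and monotone convergence in $R$ then yields $\P(N(x,y) \leq k) = 1$.

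The main obstacle is this variance estimate. The same-tree events $\{z \sim_\F w\}$ and $\{z' \sim_\F w'\}$ are genuinely correlated, and Wilson's algorithm reduces their joint distribution to the intersection geometry of four coupled random walks in $\Z^d$. Bounding $\mathrm{Var}(X_R)$ therefore requires sharp diagrammatic estimates on joint same-tree probabilities for tuples of vertices---a higher-order analogue of the two-point function above---in direct lineage with the intersection-of-walks estimates of Lawler and BKPS. The combinatorial complexity of these diagrams scales with $k$ and constitutes the bulk of the technical work.
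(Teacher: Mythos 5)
Your lower-bound half is sound and is essentially the standard first-moment argument (the product bound on the probability that several prescribed groups of points lie in prescribed distinct components is the paper's Proposition~\ref{prop:sdim2}, proved via Wilson's algorithm, and the convolution estimates converge precisely because $d>4k$). The gap is in the upper bound, and it is not the deferred ``diagrammatic estimates'' but the two claims you build on top of them. First, $\mathrm{Var}(X_R)=o(\E[X_R]^2)$ is false. The joint connection probability for a set $K$ of points decays like the spread $\langle K\rangle^{-(d-4)}$, which does \emph{not} factorize when points cluster: already for $d=5$, $k=1$, the pairs of chains $(z,z'),(w,w')$ with $|z-w|\ll R$ share the components of $x$ and $y$, contribute $\asymp \sum_{z}\sum_{|w-z|\le R/2} R^{-2}|z-w|^{-2} \asymp R^{6}\asymp \E[X_R]^2$ to the second moment, and so $\mathrm{Var}(X_R)\asymp\E[X_R]^2$. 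This is exactly what the paper finds in Lemma~\ref{lem:secondmoment}: the second moment is only \emph{comparable} to the square of the first moment, so Paley--Zygmund yields $\P(X_R\ge 1)\ge c>0$ uniformly in $R$, not $\P(X_R\ge1)\to1$, and your monotone-convergence step then delivers only $\P(N(x,y)\le k)\ge c$.

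Second, upgrading that positive constant to probability one is a separate structural ingredient that your outline omits. The event $\{N(x,y)\le k\}$ is not a tail event in any naive sense (deleting a single forest edge can change $N$), so tail triviality of the USF does not apply directly, and it is not translation invariant for fixed $x,y$, so ergodicity does not apply either. The resolution---used both by BKPS and by this paper---is to replace the event by a robust version (here: $H$ is \emph{robustly faithfully present}, i.e.\ there are infinitely many pairwise disjoint witnesses), which \emph{is} a tail multicomponent property, and then to invoke a zero--one law for such properties (Theorem~\ref{thm:indist} in this paper; a tail-triviality argument in BKPS). With that ingredient added, and with the second-moment bound weakened to $\E[X_R^2]\preceq\E[X_R]^2$, your scheme does become the proof; note also that the paper itself does not reprove BKPS from scratch but derives it from Theorem~\ref{thm:maintree} applied to a path, whose proof runs exactly through this ``comparable second moment plus indistinguishability'' route.
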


In particular, this theorem shows that every two trees in the uniform spanning forest of $\Z^d$ are adjacent almost surely if and only if $d\leq 8$. 
Similar results have since been obtained for other models \cite{procaccia2011geometry,rath2010connectivity,broman2016connectedness,li2016percolative,procaccia2016connectivity}. 
The purpose of this paper is to show that, once $d\geq 5$,
  the uniform spanning forest undergoes qualitative changes to its connectivity \emph{every} time the dimension increases, rather than just every four dimensions.



In order to formulate such a theorem, we introduce the \emph{component graph} of the uniform spanning forest.
Let $G$ be a graph and let $\omega$ be a subgraph of $G$. The \textbf{component graph} $\Comp_1(\omega)$ of $\omega$ is defined to be the simple graph that has the connected components of $\omega$ as its vertices, and has an edge between two connected components $k_1$ and $k_2$ of $\omega$ if and only if 
there exists an edge $e$ of $G$ that has one endpoint in $k_1$ and the other endpoint in $k_2$. More generally, for each $r\geq 1$, we define the \textbf{distance $r$ component graph} $\Comp_r(\omega)$ to be the graph which has the components of $\omega$ as its vertices, and has an edge between two components $k_1$ and $k_2$ of $\omega$ if and only if there is path in $G$ from $k_1$ to $k_2$ that has length at most $r$.


When formulated in terms of the component graph, the result of BKPS states that
 the diameter of $\Comp_1(\F)$ is almost surely $\lceil (d-4)/4\rceil$ for every $d\geq 1$. In particular, it implies that $\Comp_1(\F)$ is almost surely a single point for all $1\leq d \leq 4$ (as follows from Pemantle's theorem), and is almost surely a complete graph on a countably infinite number of vertices for all $5\leq d\leq 8$.


We now introduce the notion of \emph{ubiquitous subgraphs}. 
We define a \textbf{graph with boundary} $H=(\partial V, V_\circ,E)=(\partial V(H),V_\circ (H),E(H))$ to be a graph $H=(V,E)$ whose vertex set $V$ is partitioned into two disjoint sets, $V=\partial V \cup V_\circ$, which we call the \textbf{boundary} and \textbf{interior} vertices of $H$, such that $\partial V \neq \emptyset$. 
Given a graph $G$, a graph with boundary $H$, and collection of distinct vertices $(x_u)_{u \in \partial V}$ of $G$ indexed by the boundary vertices of $H$, 
we say that $H$ is \textbf{present} at $(x_u)_{u \in \partial V}$ if there exists a collection of vertices $(x_u)_{u \in V_\circ }$ of $G$ indexed by the interior vertices of $H$ 
such that $x_u \sim x_v$ or $x_u=x_v$ for every $u\sim v$ in $H$. (Note that, in this definition, we do \emph{not} require that $x_u$ and $x_v$ are not adjacent in $G$ if $u$ and $v$ are not adjacent in $H$.) We say that $H$ is \textbf{faithfully present} at $(x_u)_{u\in \partial V}$ if there exists a collection of \emph{distinct} vertices $(x_u)_{u \in V_\circ}$ of $G$, disjoint from $(x_u)_{u \in \partial V}$, indexed by the interior vertices of $H$ 
such that $x_u \sim x_v$ for every $u\sim v$ in $H$. In figures, we will use the convention that boundary vertices are white and interior vertices are black.

We say that $H$ is \textbf{ubiquitous} in $G$ if it is present at every collection of distinct vertices  $(x_u)_{u\in \partial V}$ in $G$, and that $H$ is \textbf{faithfully ubiquitous} in $G$ if it is faithfully present at every collection of distinct vertices  $(x_u)_{u\in \partial V}$ in $G$.


For example, if $H$ is a path of length $n$ with the endpoints of the path as its boundary, then $H$ is ubiquitous in a graph $G$ if and only if $G$ has diameter less than or equal to $n$. The same graph is \emph{faithfully} ubiquitous in $G$ if and only if every two vertices of $G$ can be connected by a simple path of length \emph{exactly} $n$. If $H$ is a star with $k$ leaves set to be in the boundary and the central vertex set to be in the interior, then $H$ is ubiquitous in a graph $G$ if and only if every $k$ vertices of $G$ share a common neighbour, and in this case $H$ is also faithfully ubiquitous. 

\begin{figure}[t]
	\centering
	\includegraphics[width=0.7\textwidth]{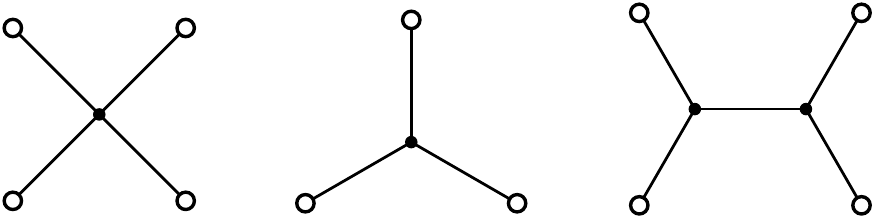}
	\caption{
	\label{fig:sepfamilybasic}
	Three trees with boundary that can be used to distinguish the component graphs of the uniform spanning forest in dimensions $9,10,11,$ and $12$. Boundary vertices are white, interior vertices are black.}
	\vspace{-0.5cm}
\end{figure}


The main result of this paper is the following theorem. We say that a transitive graph $\bbG$ is \textbf{$d$-dimensional} if there exist positive constants $c$ and $C$ such that $cn^d \leq |B(x,n)|\leq Cn^d$ for every vertex $x$ of $\bbG$ and every $n\geq 1$, where $B(x,n)$ denotes the graph-distance ball of radius $n$ around $x$ in $\bbG$. The WUSF and FUSF of any $d$-dimensional transitive graph coincide~\cite{BLPS}, and we speak simply of the USF of $\bbG$. Note that the geometry of a $d$-dimensional transitive graph may be very different from that of $\Z^d$. (Working at this level of generality does not add any substantial complications to the proof, however.)

\begin{thm}\label{thm:mainsimple}
	Let $\bbG_1$ and $\bbG_2$ be transitive graphs of dimension $d_1 $ and $d_2$ respectively, and let $\F_1$ and $\F_2$ be uniform spanning forests of $\bbG_1$ and $\bbG_2$ respectively. Then the following claims hold for every $r_1,r_2\geq 1$:
	\vspace{0.3em}
	\begin{enumerate}[leftmargin=0.9cm]\itemsep0.5em
		\item 
			\emph{(\textbf{Universality and monotonicity.})}
			If $d_1 \geq d_2 \geq 9$, then every finite graph with boundary that is ubiquitous in $\Comp_{r_1}(\F_1)$ is also ubiquitous in $\Comp_{r_2}(\F_2)$ almost surely.
		
		\item 
			\emph{(\textbf{Distinguishability of different dimensions.})} 
			If $d_1 > d_2 \geq 9$,  then there exists a finite graph with boundary $H$ such that $H$ is almost surely ubiquitous in $\Comp_{r_2}(\F_2)$  but not in $\Comp_{r_1}(\F_1)$.
	\end{enumerate}
	Moreover, the same result holds with `ubiquitous' replaced by `faithfully ubiquitous'.
\end{thm}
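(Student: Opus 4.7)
My plan is to characterize ubiquity of a finite graph with boundary $H$ in $\Comp_r(\F)$ via a dimensional--combinatorial invariant $\alpha(H,d)$ depending only on $H$ and $d$, and to deduce both parts of the theorem from this characterization. I would start by sampling $\F$ via Wilson's algorithm rooted at infinity, so that for any finite collection of vertices $y_1,\ldots,y_k$ the trees $T_{y_i}$ are described in terms of loop-erasures of independent random walks started at the $y_i$. Combining the $d$-dimensional Green's function estimate $g(x,y) \asymp |x-y|^{2-d}$, capacity and hitting estimates, and the BKPS recursive analysis of $N(x,y)$, I would derive bounds of the form
\[
\bP\bigl(T_{y_1},\ldots,T_{y_k}\text{ realise the adjacency pattern of a given graph }J\bigr) \asymp \prod_{(i,j)\in E(J)} \phi_d(|y_i - y_j|),
\]
for some dimension-dependent polynomial $\phi_d(R) \asymp R^{-\gamma_d}$, valid whenever the $y_i$ are mildly separated and $d \geq 9$. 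Passing from $\Comp_1$ to $\Comp_r$ only modifies constants and leaves exponents unchanged, which is the structural reason the conclusion does not depend on $r_1$ or $r_2$.

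Given these adjacency estimates, the moment method provides a criterion for ubiquity. For a prescribed boundary tuple $(x_u)_{u\in\partial V(H)}$, let $Z_R$ count the embeddings of the interior vertices of $H$ into the radius-$R$ box realising all required $\Comp_r$-adjacencies. The first moment factorises (up to constants) over the edges of $H$ for well-separated placements, yielding $\E[Z_R] \asymp R^{\alpha(H,d)}$ for an exponent $\alpha(H,d)$ that is linear in $d$ and is computed from $H$ via a natural combinatorial formula in $|V_\circ(H)|$ and $|E(H)|$ together with boundary-edge corrections. A Paley--Zygmund second moment estimate, combined with translation invariance and a Borel--Cantelli / $0$-$1$ argument, then shows that $H$ is almost surely ubiquitous when $\alpha(H,d) > 0$, while a matching first-moment bound shows non-ubiquity when $\alpha(H,d) < 0$. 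The linearity and monotonicity of $\alpha(H,\cdot)$ in $d$ defines a critical dimension $d^{*}(H)$ for each $H$.

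Part (1) then follows immediately from the monotonicity of $d^{*}$: if $H$ is ubiquitous in $\Comp_{r_1}(\F_1)$ then $d_1 \leq d^{*}(H)$, and since $d_2 \leq d_1$ we get $d_2 \leq d^{*}(H)$, so $H$ is ubiquitous in $\Comp_{r_2}(\F_2)$. For part (2), I would construct separating families of trees with boundary, like those in Figure~\ref{fig:sepfamilybasic}, by tuning the number of leaves, the number of internal branching vertices, and their arrangement so that $\alpha(H,d)$ changes sign between $d_2$ and $d_1$---an integer-tuning exercise yielding such a tree for every pair $d_1 > d_2 \geq 9$. The faithfully ubiquitous version follows from the same argument once one verifies that degenerate configurations (with coinciding interior vertices, or interior vertices coinciding with the boundary) contribute only a lower-order term to the first moment.

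The main obstacle is expected to be the second-moment estimate. The USF is strongly correlated: nearby vertices tend to lie in the same tree, and the Wilson random walks building the trees share loops, so the adjacency events composing $Z_R$ are far from independent. I would control cross-correlations by conditioning on partial Wilson histories and decomposing joint realisations by the intersection pattern of the constituent random walks, reducing cross terms to first-moment estimates for ``merged'' subgraphs of $H$ obtained by identifying subsets of interior vertices. Verifying that every merged contribution is dominated by the main term is the technical heart of the proof and requires careful bookkeeping of the intersection patterns, likely via an induction on $|V_\circ(H)|$.
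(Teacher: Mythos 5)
Your overall architecture (first/second moment on counts of embeddings, a dimension-dependent exponent, monotonicity in $d$ for part (1), a tuned family of trees for part (2)) is the right shape, but the central object in your plan is wrong in a way that breaks both directions of the argument. You propose a single exponent $\alpha(H,d)$ computed ``from $H$ via a natural combinatorial formula in $|V_\circ(H)|$ and $|E(H)|$,'' i.e.\ from the global vertex and edge counts, with ubiquity determined by the sign of $\alpha(H,d)$. The correct criterion is not a function of the global counts: for trees it is $\max\{|E(T')|/|V_\circ(T')| : T'\subseteq T\}\leq (d-4)/(d-8)$ (\cref{thm:maintree}), and for general graphs one must additionally minimize over coarsenings of an associated hypergraph. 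This is not bookkeeping. On the non-ubiquity side, the global first moment $\E[Z_R]$ can diverge while $H$ fails to be ubiquitous, because a proper subgraph of $H$ is already too expensive to realise; \cref{fig:unbalanced} exhibits a tree where the maximizing subgraph is proper, and the paper must run the Markov-inequality argument through an induction on subgraphs (\cref{lem:firstmoment3}) precisely because the expectation of the global count ``can grow as $N\to\infty$ for the wrong reasons.'' On the ubiquity side, your own step ``verify that every merged contribution is dominated by the main term'' in the second moment is exactly where the subgraph condition enters: the cross terms indexed by partial identifications of interior vertices are controlled by the apparent weights of \emph{subhypergraphs} of $H$, and the domination fails unless every one of them is buoyant. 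So a proof organised around a single global exponent cannot close; you need the min--max invariant from the start, and part (2) then requires exhibiting trees whose \emph{subgraph-maximal} ratio crosses the threshold between $d_2$ and $d_1$ (the five-leaf spiders of \cref{fig:sepfamily} do this, with the verification that the full tree attains the max).

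Two further gaps. First, your ``translation invariance and a Borel--Cantelli / $0$--$1$ argument'' is doing much more work than a routine tail law can: the positive-probability event produced by the second moment method lives at one fixed boundary tuple, and upgrading it to presence at \emph{every} tuple of distinct components requires an indistinguishability theorem for $k$-tuples of USF components (\cref{thm:indist}), which is a substantial external input, not a standard ergodicity step. Second, the monotonicity in $d$ used for part (1) is asserted but not justified: the coefficient of $d$ in the apparent weight $(d-8)|E|-(d-4)|V_\circ|$ is $|E|-|V_\circ|$, which need not be nonnegative for an arbitrary subhypergraph, so the critical dimension of a single term can move the wrong way; the paper's proof of item (1) first deletes edges and interior vertices of degree $<2$ (which only increases the apparent weight) to force the coefficient of $d$ to be nonnegative before concluding. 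Without that reduction, ``linearity in $d$'' does not give the monotonicity you need.
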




In order to prove item $(2)$ of \cref{thm:mainsimple}, it will suffice to consider the case that $H$ is a tree. In this case,
the following theorem allows us to calculate the dimensions for which $H$ is ubiquitous in the component graph of the uniform spanning forest.  The corresponding result for general $H$ is given in \cref{thm:main}. Examples of trees that can be used to distinguish between different dimensions using \cref{thm:maintree} are given in \cref{fig:sepfamilybasic,fig:sepfamily}.

\begin{thm}\label{thm:maintree}
	Let $\bbG$ be a $d$-dimensional transitive graph for some $d >8$, let $\F$ be a uniform spanning forest of $\bbG$,  let $r\geq 1$, and let $T$ be a finite tree with boundary. Then $T$ is almost surely ubiquitous in $\Comp_r(\F)$ if and only if $T$ is almost surely faithfully ubiquitous in $\Comp_r(\F)$, if and only if
	\[
		\max\left\{\frac{|E(T')|}{|V_\circ(T')|}\,:\, T' \text{ is a subgraph of $T$}\right\} \leq \frac{d-4}{d-8}.
	\]
\end{thm}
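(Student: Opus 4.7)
The plan is to apply the moment method to the random variable $X_T(n)$ counting faithful extensions of $T$ to $\Comp_r(\F)$ at scale $n$: fix a boundary configuration $(x_u)_{u \in \partial V(T)}$ with pairwise graph-distances of order $n$ in $\bbG$, and let $X_T(n)$ count tuples of interior trees, meeting a ball of radius $Cn$ around the boundary, for which the requisite adjacencies in $\Comp_r(\F)$ hold and all trees are distinct. The core probabilistic input I would need is the estimate
\[
\bE[X_T(n)] \asymp n^{(d-4)|V_\circ(T)| - (d-8)|E(T)|}.
\]
Heuristically, each interior tree contributes a factor of $n^{d-4}$ (the number of distinct USF components meeting a ball of radius $n$, since each component is a fractal of roughly $4$-dimensional volume), and each edge of $T$ contributes a factor of $n^{-(d-8)}$ (the decay of the pairwise adjacency probability in $\Comp_r$ for two components at distance $n$). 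The threshold $(d-4)/(d-8)$ appears already in the star case: for a star with one interior vertex and $k$ boundary leaves, $\bE[X_T(n)] \asymp n^{(d-4) - k(d-8)}$, which transitions from divergent to vanishing as $k$ crosses $(d-4)/(d-8)$.

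For the necessity direction, suppose some subgraph $T' \subseteq T$, with boundary/interior labels inherited from $T$, satisfies $|E(T')|/|V_\circ(T')| > (d-4)/(d-8)$. Then $\bE[X_{T'}(n)] \to 0$; summing over all scales for the positions of the interior trees yields the same conclusion, because the enhanced decay of edge-adjacency probabilities at larger distances suppresses the contribution of distant interior trees. Markov's inequality then shows that no extension of $T'$ exists at the chosen boundary configuration with probability tending to $1$, and since any extension of $T$ restricts to an extension of $T'$, the same holds for $T$. Ergodicity of the USF under the $\bbG$-action (or tail triviality) upgrades this to almost-sure non-ubiquity of $T$.

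The sufficiency direction is the principal difficulty. Assume $\alpha(T') := |E(T')|/|V_\circ(T')| \leq (d-4)/(d-8)$ for every subgraph $T' \subseteq T$. I would apply the second moment method to $X_T(n)$. Decomposing the second moment according to the shared subgraph $S$ of two extensions yields, up to constants,
\[
\bE[X_T(n)^2] \asymp \sum_{S \subseteq T} \bE[X_T(n)]^2 \cdot n^{-[(d-4)|V_\circ(S)| - (d-8)|E(S)|]},
\]
where $S$ ranges over subgraphs of $T$ capturing the common interior trees of two independent copies of $T$. Under the hypothesis every exponent $-[(d-4)|V_\circ(S)| - (d-8)|E(S)|]$ is nonpositive, so the finite sum is $O(\bE[X_T(n)]^2)$, and Paley--Zygmund gives $\bP[X_T(n) \geq 1] \geq c > 0$ uniformly in $n$ and the boundary configuration. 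To promote this to $\bP = 1$, I would sum the counts $X_T$ over geometrically separated annuli and exploit the weak spatial correlations of the USF at well-separated scales, so that the event ``every annulus fails'' has probability zero; a union bound over the countably many boundary configurations then yields almost-sure ubiquity. The expected number of non-faithful extensions (coinciding interior trees, or collisions with boundary trees) is controlled by the same inputs and is of lower order, so the argument also delivers faithful ubiquity. This simultaneously establishes the equivalence of ubiquity and faithful ubiquity.

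The main obstacle, I expect, is establishing the sharp multi-point estimate for $\bE[X_T(n)]$ together with the second-moment correlation structure, uniformly over the finitely many combinatorial choices of $S$. These are genuine refinements of the pairwise intersection exponents underlying BKPS, and will require careful capacity or loop-erased-random-walk arguments for several interacting USF components at once, rather than the two-component analysis that sufficed for the BKPS diameter bound.
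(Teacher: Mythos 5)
Your outline reproduces the correct exponent count (your $\bE[X_T(n)] \asymp n^{(d-4)|V_\circ|-(d-8)|E|}$ is the paper's $2^{-\eta_d(T)n}$ with $\eta_d(T)=(d-8)|E|-(d-4)|V_\circ|$), and the first/second moment strategy is indeed the skeleton of the argument. But there are two genuine gaps, one in each direction, plus a technical one.

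The most serious gap is the passage from positive probability to probability one in the sufficiency direction. Your mechanism --- sum over geometrically separated annuli, argue that ``every annulus fails'' has probability zero by weak decorrelation, then union-bound over boundary configurations --- does not work. The scale-decorrelation available for the USF (the paper's Lemma \ref{lem:secondmoment2}) only bounds cross-scale correlations well enough to run a second moment argument over many scales, which, via Fatou, still yields only $\P(\text{witnesses at infinitely many scales}) \geq \eps > 0$; it does not make the failure event null. The paper closes this gap with a genuinely external input: the multicomponent indistinguishability theorem (\cref{thm:indist}), applied to the tail multicomponent property ``$H$ is robustly faithfully present at $(x_v)$,'' which acts as a zero-one law over $k$-tuples of components. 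Without this (or some substitute zero-one law for tuples of USF components), your argument stalls at positive probability. Relatedly, your union bound over the countably many boundary configurations requires each failure event to be null, which is precisely what is not yet established.

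The second gap concerns the equivalence of ubiquity and faithful ubiquity. You dismiss non-faithful extensions as ``lower order,'' but presence (as opposed to faithful presence) allows interior witnesses to merge with each other or with boundary vertices; combinatorially this corresponds to \emph{quotients} of $T$, and quotients can strictly change the exponent --- the paper's Figure \ref{fig:example} exhibits a graph that is ubiquitous but not faithfully ubiquitous for $13 \leq d \leq 16$. For trees the equivalence does hold, but the proof requires the specific argument that any quotient of a tree containing the image of a ``bad'' connected subgraph $T''$ contains a spanning tree $S$ of that image with $\eta_d(S) \geq \eta_d(T'') > 0$. A smaller but real issue of the same flavour affects your necessity direction: the claim $\bE[X_{T'}(n)] \to 0$ requires summing over \emph{all} witness configurations, including those where witness points for distinct edges cluster at a common location; these clustered configurations are governed by the $d$-apparent weights of \emph{coarsenings} of $T'$, and one must verify (as the paper does in the proof of \cref{thm:maintree}) that coarsening a forest never decreases $\eta_d$. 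Likewise your one-line dismissal of multi-scale witness configurations hides the paper's inductive Lemmas \ref{lem:inductionestimate}--\ref{lem:firstmoment3}, which are needed because the total expectation can diverge even when the presence probability tends to zero.
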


\medskip

Note that $(d-4)/(d-8)$ is a decreasing function of $d$ for $d>8$. The theorem of BKPS follows as a special case of \cref{thm:maintree} by taking $T$ to be a path. 
\cref{fig:sepfamily} gives an example of a family of trees that can be used to deduce item $(2)$ of \cref{thm:mainsimple} from \cref{thm:maintree}. See \cref{fig:unbalanced} for another example application.


\begin{figure}[t]
	\centering
	\includegraphics[width=0.7\textwidth]{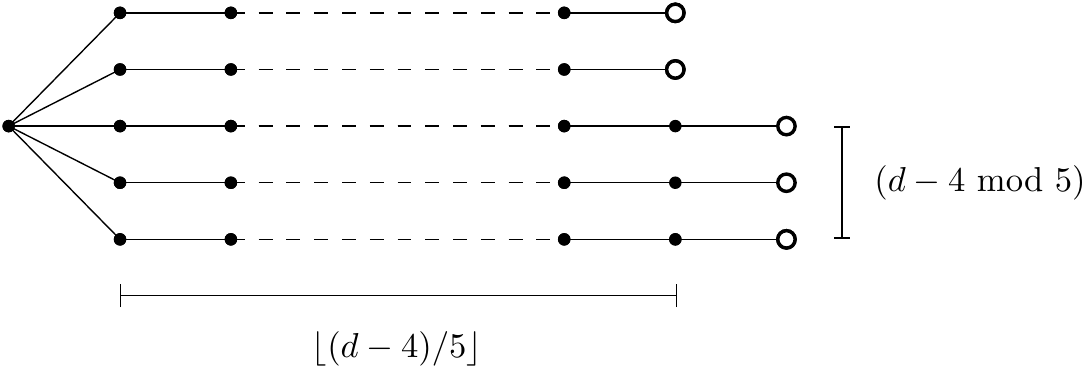}
	\caption{
	\label{fig:sepfamily}
	A family of trees with boundary that can distinguish between $d$ and $d+1$ for any $d\geq 9$. 
	}
\end{figure}

\begin{figure}[h!]
\includegraphics[width=0.7\textwidth]{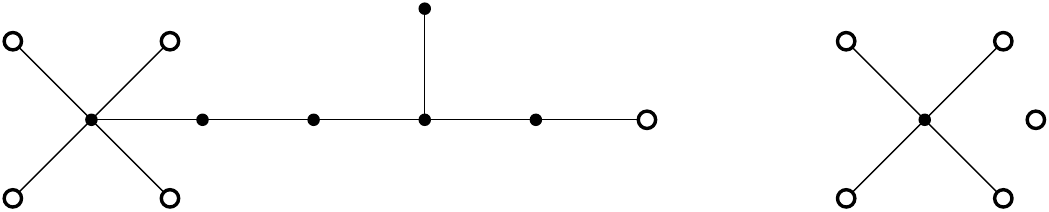}
\caption{
Left: a finite tree with boundary $T$. Right: the subgraph $T'$ of $T$ maximizing $|E(T')|/|V_\circ(T')|$. By \cref{thm:maintree}, $T$ is almost surely faithfully ubiquitous in the component graph of the uniform spanning forest of $\Z^d$ if and only if $d\leq 9$.
}
\label{fig:unbalanced}
\end{figure}


The next theorem shows that uniform spanning forests in different dimensions between $5$ and $8$ also have qualitatively different connectivity properties. 
 The result is more naturally stated in terms of \emph{ubiquitous subhypergraphs} in the \emph{component hypergraph} of the USF; see the following section for definitions and \cref{fig:hyper} for an illustration of the relevant hypergraphs.

\begin{thm}[Distinguishing dimensions $5,6,7,$ and $8$.]
	\label{thm:5678}
	 Let $\bbG$ be a $d$-dimensional transitive graph and let $\F$ be a uniform spanning forest of $\bbG$.  The following hold almost surely.
	\begin{enumerate}[leftmargin=*]
		\itemsep0.25em
		\item
		If $d=5$, then there exists a constant $r_0$ such that for every five trees of $\F$, there exists a ball of radius $r_0$ in $\bbG$ that is intersected by each of the five trees. On the other hand, if $d\geq 6$, then for every $r\geq 1$, there exists a set of four trees in $\F$ such that there does not exist a ball of radius $r$ in $\bbG$ intersecting all four trees. 
		\item 
		If $d=5$ or $6$, then there exists a constant $r_0$ such that for every three trees of $\F$, there exists a ball of radius $r_0$ in $\bbG$ that is intersected by each of the three trees. On the other hand, if $d\geq 7$, then for every $r\geq 1$, there exists a set of three trees in $\F$ such that there does not exist a ball of radius $r$ in $\bbG$ intersecting all three trees.
		\item 
		If $d=5,6,$ or $7$, then there exists a constant $r_0$ such that for every $r\geq r_0$, every set of three pairs of trees of $\F$ have the following property: There exist three trees $T_{1},T_{2},T_{3}$ in $\F$ 
		such that $T_i$ and the $i$th pair of trees all intersect some ball $B_i$ of radius $r$ in $G$ for each $i=1,2,3$, and the trees $T_1,T_2,T_3$ all intersect some ball $B_0$ of radius $r$ in $G$.
		On the other hand, if $d \geq 8$, then for every $r\geq 1$ there exists a set of three pairs of trees of $\F$ that do not have this property. 
	\end{enumerate}
\end{thm}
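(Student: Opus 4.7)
The plan is to derive \cref{thm:5678} by applying a hypergraph analog of \cref{thm:maintree} to the hypergraphs with boundary corresponding to each item. I work in the \emph{component hypergraph} $\Comp_r^{\mathrm{hyp}}(\F)$: its vertices are the trees of $\F$, and for each ball of radius $r$ in $\bbG$ one adjoins a hyperedge consisting of the trees that meet the ball. Items (1) and (2) then translate into the ubiquity of the size-$k$ hyperedge $H^{(k)}$ with all $k$ vertices on the boundary, for $k=3,4,5$. Item (3) translates into the ubiquity of the hypergraph $H_\star$ with boundary $\{A_1,\dots,A_6\}$, interior vertices $\{T_1, T_2, T_3\}$, and four size-three hyperedges, namely $\{T_i, A_{2i-1}, A_{2i}\}$ for $i=1,2,3$ together with $\{T_1,T_2,T_3\}$.

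The key scaling input is: in a $d$-dimensional transitive graph with $d \geq 5$, the number of distinct USF trees intersecting $B(x,n)$ is of order $n^{d-4}$, and the probability that a given USF tree intersects a fixed ball of bounded radius at distance $n$ is of order $n^{-(d-4)}$ (the USF tree behaves as a random set of effective dimension $4$, by BKPS-type capacity estimates). A first-moment calculation for embeddings of a hypergraph with boundary $H$ then yields the criterion that $H$ is almost surely ubiquitous in $\Comp_r^{\mathrm{hyp}}(\F)$ if and only if
\begin{equation*}
	d \cdot |E(H')| + (d-4)\cdot|V_\circ(H')| \,\geq\, (d-4) \cdot I(H')
\end{equation*}
for every non-empty subhypergraph $H' \subseteq H$, where $I(H')=\sum_{e \in E(H')}|e|$ counts incidences. (As a sanity check, applied to ordinary graphs this recovers the criterion $|E(T')|/|V_\circ(T')| \leq (d-4)/(d-8)$ of \cref{thm:maintree}.) For $H^{(k)}$ the criterion reduces to $(k-1)(d-4)\leq 4$, giving thresholds $d \leq 6, 5, 5$ for $k=3, 4, 5$ respectively; for $H_\star$ with $|E|=4$, $|V_\circ|=3$, $I=12$, it becomes $4d + 3(d-4)\geq 12(d-4)$, i.e.\ $d \leq 36/5$, so $d \leq 7$. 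These are exactly the thresholds asserted in items~(1)--(3), and a case check over subhypergraphs of $H_\star$ confirms $H_\star$ itself is the binding constraint.

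The positive direction (ubiquity below threshold) is proved by a second-moment argument via Wilson's algorithm: one exhibits a family of candidate embeddings at each large scale whose mean is large and whose variance is controlled by decomposing pairwise correlations into independent loop-erased-walk segments. Faithfulness---that the interior trees are genuinely distinct trees, not merely parameterizations of boundary trees---follows from a standard disjointness estimate for LERWs. The negative direction (non-ubiquity above threshold) is an application of Borel--Cantelli along a geometric sequence of scales, using the vanishing first moment whenever the density inequality fails. I expect the main technical difficulty to arise in the second-moment analysis for $H_\star$: because $T_1, T_2, T_3$ all share the central hyperedge $\{T_1, T_2, T_3\}$, the relevant hitting events are strongly correlated and the required Green's-function bounds involve three simultaneous LERWs, a computation more intricate than (though analogous to) the one underlying \cref{thm:maintree}.
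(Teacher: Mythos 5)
Your reduction is the same as the paper's: \cref{thm:5678} is deduced by applying the general hypergraph criterion (\cref{thm:mainhyper}) to exactly the hypergraphs you describe (a single all-boundary edge of degree $5$, $4$, $3$ for items (1)--(2), and your $H_\star$ for item (3)), and your arithmetic for the thresholds is correct. However, there are three genuine gaps. First, the criterion you state -- ubiquity iff every subhypergraph $H'$ satisfies $d|E(H')|+(d-4)|V_\circ(H')|\geq (d-4)\Delta(H')$ -- is false as a general ``only if'': the correct condition is that $H$ has a \emph{coarsening} all of whose subhypergraphs are $d$-buoyant (equivalently, by \cref{lem:maxminswap}, every subhypergraph has a $d$-buoyant coarsening), and for non-faithful ubiquity one must further quantify over \emph{quotients}. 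The graph $H_1$ of \cref{fig:example} is faithfully ubiquitous for $11\leq d\leq 12$ even though $\eta_d(H_1)>0$ there. Your negative conclusions happen to survive because single edges admit no proper coarsenings or boundary-preserving quotients, and every coarsening of every quotient of $H_\star$ still has positive apparent weight for $d\geq 8$ -- but this must actually be checked, and your criterion does not ask for it.

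Second, and more seriously, the positive directions you need are almost all \emph{critical}: $\eta_5(H^{(5)})=0$, $\eta_6(H^{(3)})=0$, and $d/(d-4)$ is an integer for $d\in\{5,6,8\}$, which are precisely the dimensions where items (1)--(3) assert ubiquity. In these cases the single-scale second moment carries a logarithmic (or worse) excess -- $\E[\tilde S_x(n)^2]\asymp n$ while $\E[\tilde S_x(n)]^2\asymp 1$ -- so Cauchy--Schwarz at one scale gives only $\P(\tilde S_x(n)>0)\succeq 1/n$. The paper must sum the count over $\sim n$ well-separated scales and show distinct scales are essentially uncorrelated (\cref{lem:secondmoment2}, \cref{prop:ubiquityspeciald}), together with the ``$d$-basic'' decomposition to reduce general hypergraphs to these building blocks. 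You locate the main difficulty in the correlations within $H_\star$, but the real obstruction is this criticality, which your ``second-moment argument at each large scale'' cannot overcome as described. Third, a second-moment bound only yields \emph{positive probability} of presence at one well-chosen tuple of trees; ubiquity is a statement about \emph{every} tuple, and the passage from positive probability to almost-sure ubiquity requires a zero--one law, supplied in the paper by the multicomponent indistinguishability theorem (\cref{thm:indist}) applied to the tail property of robust faithful presence. Your proposal omits this step entirely.
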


\begin{figure}
\includegraphics[width=0.775\textwidth]{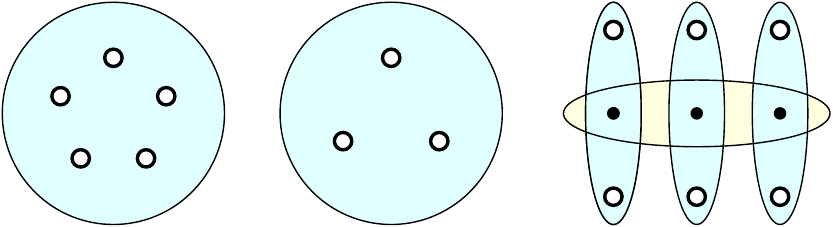}
\caption{
Three hypergraphs with boundary that can be used to distinguish the component hypergraphs of the uniform spanning forest in dimensions $5,6,7,$ and $8$.  Edges are represented by shaded regions.
}
\label{fig:hyper}
\end{figure}

\subsection{Ubiquity of general graphs and hypergraphs in the component graph.}
\label{subsec:introgeneral}

\begin{figure}
\includegraphics[width=0.5\textwidth]{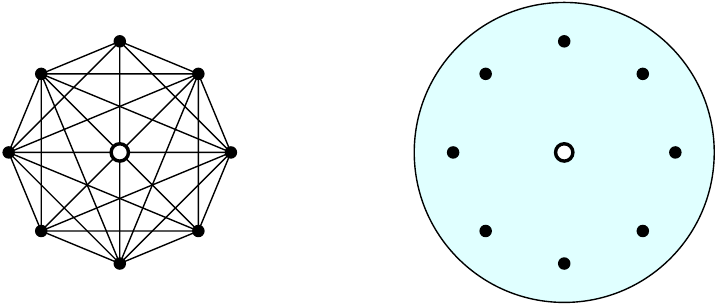}
\caption{
\small{
Considering the coarsening in which all edges of a hypergraph are merged into one shows that any connected graph with $|\partial V|\in \{0,1\}$ is faithfully ubiquitous in the component graph of the uniform spanning forest of $\Z^d$ for every $d>4$, since every subhypergraph of this coarsening has $d$-apparent weight either $-d$ or $-4$.  
}
}
\label{fig:degenerate}
\end{figure}

In this section, we extend \cref{thm:maintree} to the case that $H$ is not a tree. In order to formulate this extension, it is convenient  to consider the even more general setting in which $H$ is a \emph{hypergraph} with boundary. 
Indeed, it is a surprising feature of the resulting theory that one is forced to consider hypergraphs even if one is interested only in graphs.

We define a \textbf{hypergraph} $H=(V,E,\perp)$ to be a triple consisting of a set of vertices $V$, a set of edges $E$, and a binary relation $\perp \subseteq V \times E$ such that the set $\{v\in V : (v,e)\in \perp\}$ is nonempty for every $e\in E$. We write $v \perp e$ or $e \perp v$ and say that $v$ is \textbf{incident} to $e$ if $(v,e)\in \perp$. Note that this definition is somewhat nonstandard, as it allows multiple edges with the same set of incident vertices. We say that a hypergraph is \textbf{simple} if it does not contain two distinct edges whose sets of incident vertices are equal. 
Every graph is also a hypergraph. 
A \textbf{hypergraph with boundary} $H=(\partial V,V_\circ,E,\perp)$ is defined to be a hypergraph $H=(V,E,\perp)$ together with a partition of $V$ into disjoint subsets, $V=\partial V \cup V_\circ$, the \textbf{boundary} and \textbf{interior} vertices of $H$, such that $\partial V \neq \emptyset$. The degree of a vertex in a hypergraph is the number of edges that are incident to it, and the degree of an edge in a hypergraph is the number of vertices it is incident to.
To lighten notation, we will often write simply $H=(\partial V, V_\circ, E)$ for a hypergraph with boundary, leaving the incidence relation $\perp$ implicit. 


If $H=(\partial V, V_\circ,E,\perp)$ is a hypergraph with boundary, a \textbf{subhypergraph} (with boundary) of $H$ is defined to be a hypergraph with boundary of the form $H'=(\partial V',V_\circ',E',\perp')$, where \[
\text{$\partial V' \subseteq \partial V$,\, $V_\circ' \subseteq V_\circ$,\, $E' \subseteq E$,\, $V'=\partial V' \cup V_\circ'$,\, and\, $\perp' = \perp \cap\, (V'\times E')$.}\] 
%
We say that a hypergraph with boundary $H'=(\partial V',V_\circ',E',\perp')$  is a \textbf{quotient} of a hypergraph with boundary  $H=(\partial V,V_\circ,E,\perp)$  if there exists a surjective function $\phi_V : V\to V'$ mapping $\partial V$ bijectively onto $\partial V'$ and a bijective function $\phi_E:E\to E'$ such that
\[
\{v' : v' \in V',\, v' \perp' \phi_E(e) \} = \{ \phi_V(v) : v\in V,\, v\perp e\}
\]
for every $e\in E$.
Similarly, we say that $H'$ is a \textbf{coarsening} of $H$ (and call $H$ a \textbf{refinement} of $H'$) if there exists a bijection $\phi_V: V \to V'$ mapping $\partial V$ bijectively onto $\partial V'$ and a surjection $\phi_E : E \to E'$ such that 
\[
\{e' : e'\in E',\, e' \perp' \phi_V(v)\}= \{ \phi_E(e) : e\in E,\, e\perp v\}
\]
for every $v\in V$.
In other words, $H'$ is a \emph{quotient} of $H$ if 
it can be 
obtained from $H$ by merging together some of the \emph{vertices} of $H$, while $H'$ is a \emph{coarsening} of $H$ if it can be obtained by merging together some of the \emph{edges} of $H$.


The following theorem allows us to calculate the dimensions for which an arbitrary finite simple graph $H$ is ubiquitous in the component graph of the uniform spanning forest.  It will be used to deduce \cref{thm:mainsimple,thm:maintree}. See \cref{fig:example,fig:degenerate} for example applications. For each finite hypergraph with boundary $H=(\partial V, V_\circ,E)$ and $d \in \R$, we define the \textbf{weight} of $H$, denoted $\Delta(H)$, and the $d$-\textbf{apparent weight} of $H$, denoted by $\eta_d(H)$, by setting
\[\Delta(H) := \sum_{e\in E} \deg(e) = \sum_{v\in V} \deg(v) \quad \text{ and } \quad \eta_d(H) := (d-4)\Delta-d|E| -(d-4)|V_\circ|\]
respectively. 
We say that $H$ is $d$-\textbf{buoyant} if $\eta_d(H) \leq 0$, i.e., if its $d$-apparent weight is non-positive.  
 If $H$ is a simple graph then $\Delta=2|E|$ and so $\eta_d(H) = (d-8)|E| -(d-4)|V_\circ|$.

\begin{samepage}
\begin{thm}\label{thm:main}
Let $\bbG$ be a $d$-dimensional transitive graph for some $d>4$, let $\F$ be the uniform spanning forest of $\bbG$, let $H$ be finite simple graph with boundary, and let $r\geq 1$. Then $H$ is faithfully ubiquitous in $\Comp_r(\F)$ almost surely  if and only
\begin{equation}
\label{eq:thmmaincriterion}
 \min\left\{ \max \left\{\eta_d(H'') : H'' \text{ is a subhypergraph of $H'$}\right\} : H' \text{ is a coarsening of $H$}\right\}\leq 0,
\end{equation}
that is, if and only if $H$ has a coarsening all of whose subhypergraphs are $d$-buoyant. 
Moreover, $H$ is ubiquitous in $\Comp_r(\F)$ if and only if it has a quotient that is faithfully ubiquitous in $\Comp_r(\F)$ almost surely.
\end{thm}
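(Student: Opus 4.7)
The plan is to cast the existence of realizations of $H$ as a moment-method problem indexed by the coarsening $H'$ of $H$ that records which edges of $H$ are realized at a common witness ball in $\bbG$. The second assertion (reducing ubiquity to faithful ubiquity of a quotient) is the easier half: a non-faithful realization of $H$ at distinct boundary vertices is literally a faithful realization of the quotient obtained by identifying interior vertices that land at the same point of $\bbG$. Combined with a measurability and diagonalization argument over the (uncountably many) boundary configurations, this yields both directions of the quotient equivalence.

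For the main equivalence, the central estimate, drawn from the BKPS circle of ideas via Wilson's algorithm and loop-erased random walk intersection bounds on $d$-dimensional transitive graphs, is that for any coarsening $H'$ of $H$ the expected number of faithful $H'$-realizations with interior components represented in a scale-$n$ annulus around fixed boundary points scales like $n^{-\eta_d(H')}$. Heuristically: each interior vertex ranges over $\sim n^{d-4}$ distinct USF components in a scale-$n$ ball (each tree of the $d$-dimensional USF occupies volume $\sim n^4$), each hyperedge chooses a witness ball at $\sim n^d$ positions, and each vertex-edge incidence pays a cost $\sim n^{-(d-4)}$ for the specific tree to hit the specific ball. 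Multiplying yields $n^{(d-4)|V_\circ| + d|E| - (d-4)\Delta} = n^{-\eta_d(H')}$.

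The \emph{only if} direction follows from Markov's inequality and a bottleneck argument: if every coarsening $H'$ has a subhypergraph $H''$ with $\eta_d(H'') > 0$, then a realization of $H$ via $H'$ restricts to a realization of $H''$, whose expected count at boundary scale $n$ decays as $n^{-\eta_d(H'')}$. Summing over the finite collection of coarsenings gives a decaying upper bound on the probability that any realization of $H$ exists at boundary scale $n$, and Borel--Cantelli along a geometric sequence of scales produces, almost surely, boundary configurations admitting no realization. For the \emph{if} direction, fix a coarsening $H'$ all of whose subhypergraphs are $d$-buoyant and apply the second moment method to the number $N$ of $H'$-realizations with interior components in a scale-$n$ shell. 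The first moment $\E N$ is uniformly bounded below; the second moment $\E[N^2]$ is handled by classifying pairs of realizations by the ``glued'' hypergraph obtained from two copies of $H'$ identified along a common subhypergraph $H_*$. Buoyancy of $H_*$ bounds $\E[N^2] = O((\E N)^2)$, and Paley--Zygmund together with transitivity and ergodicity of the USF upgrades this to almost sure faithful ubiquity.

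The main obstacle I anticipate is the combinatorial bookkeeping in the second moment calculation. The overlap analysis must assign a well-defined subhypergraph of $H'$ to each overlap configuration --- including cases where interior components in one realization coincide with boundary components in the other, and where witness balls coincide only partially by merging some but not all of their incidences --- and must verify that the exponent of the pairwise expectation is exactly $-2\eta_d(H') + \eta_d(H_*)$. This is precisely where formulating the criterion in terms of coarsenings and subhypergraphs of hypergraphs (rather than just subgraphs of the original graph $H$) is indispensable: the correct notion of ``gluing'' two realizations lives at the hyperedge level, not at the level of edges of $H$. A secondary subtlety is the sharp handling of the boundary case $\eta_d = 0$, where logarithmic corrections to the moment bounds must not spoil the argument.
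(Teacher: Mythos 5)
Your overall architecture --- first and second moments of witness counts, organized by coarsenings of $H$, with pairs of realizations in the second moment classified by a glued overlap subhypergraph whose buoyancy gives $\E[N^2]=O((\E N)^2)$ --- matches the paper's. But two steps conceal genuine gaps. The most serious is the final sentence of your ``if'' direction: ``Paley--Zygmund together with transitivity and ergodicity of the USF upgrades this to almost sure faithful ubiquity.'' The second moment method only yields \emph{positive probability} that $H$ is robustly faithfully present at one particular boundary tuple $x$. Faithful ubiquity quantifies over \emph{every} tuple of distinct components, and robust faithful presence at $x$ is a tail component property, not a translation-invariant event, so ordinary ergodicity of the USF gives nothing here. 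The paper closes this gap with the multicomponent indistinguishability theorem (\cref{thm:indist}), a substantial external input asserting that all $k$-tuples of distinct trees satisfy the same tail $k$-component properties; there is no elementary substitute. Relatedly, the first-moment \emph{lower} bound you attribute to ``the BKPS circle of ideas'' is not available off the shelf: BKPS prove only the upper bound \cref{thm:sdim1}, and establishing the matching lower bound for well-separated witness configurations (\cref{prop:restrictedfirstmoment}, together with the constellation construction needed to pass from a coarsening back to a faithful realization of $H$ itself) occupies all of \cref{Sec:technical}, via Wilson's algorithm, Gaussian heat kernel estimates, and the elliptic Harnack inequality.

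The ``only if'' direction as you describe it would also fail. Your expectation bound counts realizations ``with interior components represented in a scale-$n$ annulus,'' and you then sum over scales; this does not account for witness configurations whose points sit at several different scales simultaneously, and --- more fundamentally --- the total expected number of witnesses \emph{cannot} be controlled in general: when $H$ has a buoyant coarsening but contains a subhypergraph with no buoyant coarsening (e.g.\ the tree of \cref{fig:unbalanced}), the expectation diverges for reasons unrelated to presence. The paper's fix is an induction on $|E(H)|$ (\cref{lem:inductionestimate,lem:firstmoment2,lem:firstmoment3}) whose hypothesis in the non-buoyant case is a bound on a \emph{probability} rather than an expectation, decomposing a multi-scale witness at its outermost occupied scale and using subadditivity of $\bareta_d$ across the split. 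Note also that even the single-scale estimate is not a direct computation: identifying the extremal spatial configuration with a coarsening of $H$ requires the optimization over the ultrametric polytope (\cref{lem:ultrametric1}), which is where coarsenings genuinely enter the upper bound. Your instinct about logarithmic corrections is sound but slightly misplaced: the delicate cases are $d\in\{5,6,8\}$, where $d/(d-4)$ is an integer and the second moment acquires $\log$ factors, handled in the paper via $d$-basic coarsenings and a separate cross-scale decorrelation estimate.
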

\end{samepage}

This terminology used here arises from the following analogy: We imagine that from each vertex-edge pair $(v,e)$ of $H$ with $v\perp e$ we hang a weight exerting a downward force of $(d-4)$, while from each edge and each interior vertex of $H$ we attach a balloon exerting an upward force of either $d$ or $(d-4)$ respectively. The net force is equal to the apparent weight. The hypergraph is buoyant (i.e., floats) if the apparent weight is non-positive.

\cref{thm:main} is best understood as a special case of a more general theorem concerning the \emph{component hypergraph}. 
Given a subset $\omega$ of a graph $G$ and $r\geq 1$, we define the \textbf{component hypergraph} $\Comp^{hyp}_r(\omega)$ to be the simple hypergraph that has the components of $\omega$ as vertices, and where a finite set of components $W$  is an edge of $\Comp^{hyp}_r(\omega)$ if and only if there exists a set of diameter $r$ in $G$ that intersects every component of $\omega$ in the set $W$.
Presence, faithful presence, ubiquity and faithful ubiquity of a hypergraph with boundary $H$ in a hypergraph $G$ are defined similarly to the graph case. For example, we say that a finite hypergraph with boundary $H=(\partial V, V_\circ, E)$ is \textbf{faithfully present} at $(x_u)_{u\in \partial V}$ in $G$ if there exists a collection of distinct vertices $(x_u)_{u \in V_\circ}$ of $G$, disjoint from $(x_u)_{u \in \partial V}$, indexed by the interior vertices of $H$ 
such that for each $e\in E$ there exists an edge $f$ of $G$ that is incident to all of the vertices in the set $\{x_v : v \perp e\}$.
Given a $d$-dimensional graph $\bbG$ and $M\geq 1$, we let $R_\bbG(M)$ be minimal such that there exists a set of vertices in $\bbG$ of diameter $R_\bbG(M)$ that intersects $M$ distinct components of the uniform spanning forest of $\bbG$ with positive probability. Given a hypergraph with boundary $H$, we let $R_\bbG(H) = R_\bbG(\max_{e\in E} \deg(e))$.

\begin{figure}
\centering
\includegraphics[width=1\textwidth]{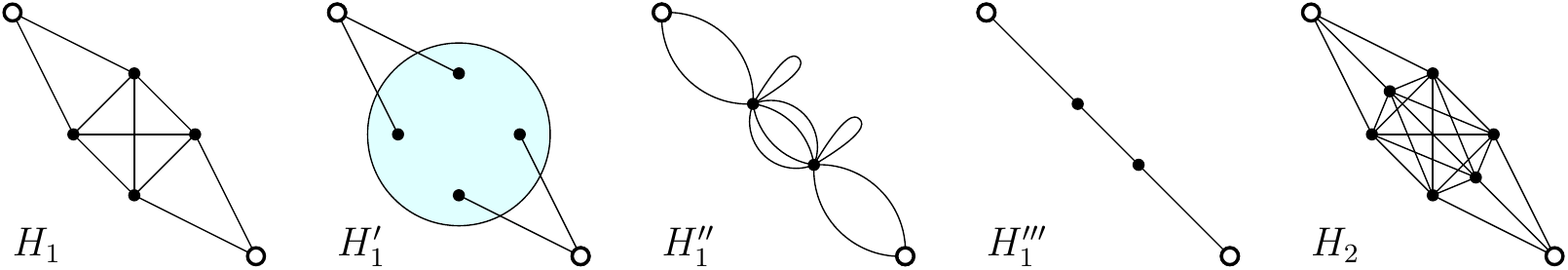}
\caption{
\small{
The graph with boundary $H_1$, far left, has $d$-apparent weight $\eta_d(H_1)=6d-64$, and is therefore $d$-buoyant if and only if $d \leq  10 + 2/3$. Meanwhile, it has a coarsening $H'_1$, centre left, that has $d$-apparent weight $\eta_d(H'_1) = 5d-64$, so that $H'_1$ is $d$-buoyant if and only if $d \leq 12 + 4/5$. In fact, using \cref{thm:main}, it can be verified  that $H_1$ is almost surely faithfully ubiquitous in the component graph of the uniform spanning forest of $\Z^d$ if and only if $d  \leq 12$. 
On the other hand, considering the quotient $H''_1$ of $H_1$, centre, and the coarsening $H_1'''$ of $H''_1$, centre right, along with other possible coarsenings of quotients, it can be verified that $H_1$ is almost surely ubiquitous in the component graph of the uniform spanning forest of $\Z^d$ if and only if $d \leq 16$. Thus, for $13 \leq d \leq 16$ the graph $H_1$ is ubiquitous but not faithfully ubiquitous in the component graph of the uniform spanning forest of $\Z^d$ a.s. A similar analysis shows that the graph $H_2$, far right, is faithfully ubiquitous in the component graph of the uniform spanning forest of $\Z^d$ almost surely if and only if $d \leq 9$, and is ubiquitous almost surely if and only if $d\leq 16$.
}
}
\label{fig:example}
\end{figure}

\begin{thm}\label{thm:mainhyper}
Let $\bbG$ be a $d$-dimensional transitive graph for some $d>4$, let $\F$ be the uniform spanning forest of $\bbG$, and let $H$ be finite hypergraph with boundary. If
\begin{equation}
\label{eq:thmmainhypercriterion} 
 \min\left\{ \max \left\{\eta_d(H'') : H'' \text{ is a subhypergraph of $H'$}\right\} : H' \text{ is a coarsening of $H$}\right\}\leq 0
\end{equation}
then $H$ is faithfully ubiquitous in $\Comp^{hyp}_r(\F)$ almost surely for every $r\geq R_\bbG(H)$. Otherwise, $H$ is not faithfully ubiquitous in $\Comp^{hyp}_r(\F)$ for any $r\geq 1$ almost surely. Moreover, $H$ is ubiquitous in $\Comp^{hyp}_r(\F)$ if and only if it has a quotient that is faithfully ubiquitous in $\Comp^{hyp}_r(\F)$ almost surely.
\end{thm}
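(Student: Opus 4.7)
The plan is to prove the sufficiency of \eqref{eq:thmmainhypercriterion} for faithful ubiquity via a second moment argument, necessity via a Borel--Cantelli calculation, and the quotient--ubiquity equivalence by a finite case analysis combined with ergodicity. Observe first that if $H'$ is a coarsening of $H$ then a faithful realization of $H'$ at $(x_u)_{u\in\partial V}$ is automatically a faithful realization of $H$ at the same configuration: any edge of $\bbG$ witnessing a merged edge of $H'$ simultaneously witnesses all of the original edges of $H$ that were merged to produce it. Hence for the positive direction we may assume without loss of generality that $H$ itself achieves the minimum in \eqref{eq:thmmainhypercriterion}, i.e., that every subhypergraph $H''\subseteq H$ satisfies $\eta_d(H'')\leq 0$.

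\textbf{First and second moment estimates.} Fix $r\geq R_\bbG(H)$, a distinct boundary configuration $(x_u)_{u\in\partial V}$, and a scale $R$ much larger than the diameter of the boundary. Let $N_R$ count the choices of tuples $((x_v)_{v\in V_\circ},(y_e)_{e\in E})$ such that (i) all components lie at distance $\asymp R$ from $(x_u)$ and at pairwise distance $\geq\varepsilon R$; (ii) the trees of $\F$ through the $x_v$ are pairwise distinct and distinct from those through the $x_u$; and (iii) for every incidence $(v,e)\in{\perp}$ (including the boundary cases $v\in\partial V$), the tree through $x_v$ meets the ball $B(y_e,r)$. Using Wilson's algorithm rooted at infinity together with the standard random walk estimates on $d$-dimensional transitive graphs, one computes
\[
\E[N_R]\;\asymp\; R^{d|E|}\cdot R^{(d-4)|V_\circ|}\cdot R^{-(d-4)\Delta}\;=\; R^{-\eta_d(H)},
\]
where the factor $R^d$ per edge is the choice of ball center, the factor $R^{d-4}$ per interior vertex reflects that the expected size of a USF component inside $B_R$ is $\asymp R^4$ (so the effective number of distinct trees meeting $B_R$ is $\asymp R^{d-4}$), and the factor $R^{-(d-4)}$ per incidence is the probability that a specified tree hits a specified ball at distance $R$. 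For the second moment, $\E[N_R^2]$ decomposes into a sum indexed by the shared subhypergraph $H''\subseteq H$ of interior vertices and edges identified between the two copies; a parallel computation gives each contribution $\asymp R^{-2\eta_d(H)+\eta_d(H'')}$. Because $\eta_d(H'')\leq 0$ for every $H''$, the sum is dominated by $H''=\emptyset$, so $\E[N_R^2]\lesssim(\E[N_R])^2$ and Paley--Zygmund yields $\P(N_R>0)\geq c>0$ uniformly in large $R$.

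\textbf{Upgrading to almost sure, and necessity.} Running the bound along a geometric sequence of annular scales $R_n=4^n$ and exploiting the near-independence of USF events on well-separated annuli (itself a consequence of the $R^{-(d-4)}$ decay of walk interactions together with the rapid coupling of LERWs far from their starting points), the Borel--Cantelli converse gives $N_{R_n}>0$ infinitely often almost surely, so $H$ is faithfully present at $(x_u)$ a.s. The event of faithful ubiquity is $\bbG$-translation-invariant, and by ergodicity of the USF this upgrades to simultaneous faithful presence at every admissible $(x_u)$ almost surely. For the necessity direction, suppose \eqref{eq:thmmainhypercriterion} fails. Any faithful realization of $H$ at scale $R$ determines a coarsening $H'$ (by merging edges whose realizing balls coincide), and hence a realization of every subhypergraph $H''\subseteq H'$. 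Fixing any troublesome $H''$ with $\eta_d(H'')>0$, the first moment calculation gives $\E[\#\text{realizations of }H''\text{ at scale }R]\lesssim R^{-\eta_d(H'')}$, which is summable along $R=2^n$; Borel--Cantelli forbids such realizations at all sufficiently large scales, and so $H$ fails to be faithfully present once the boundary vertices are separated at sufficiently large pairwise distance.

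\textbf{Quotient--ubiquity equivalence and main obstacle.} Finally, presence (as opposed to faithful presence) allows interior vertices of $H$ to coincide with each other or with boundary vertices; each such coincidence pattern corresponds to a quotient $H^*$ of $H$, and presence of $H$ at a configuration is equivalent to faithful presence of some quotient. Ubiquity of $H$ therefore reduces to the statement that for every admissible boundary configuration, some quotient of $H$ is faithfully present. Since there are only finitely many quotients, pigeonhole combined with translation invariance and ergodicity of the USF identifies a single quotient $H^*$ that is faithfully present with positive probability, hence faithfully ubiquitous; conversely, any faithfully ubiquitous quotient of $H$ immediately witnesses ubiquity of $H$. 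The main technical obstacle is the precise execution of the second moment estimate: enumerating the overlap types, identifying the resulting shared subhypergraph, and establishing the polynomial bound $R^{-2\eta_d(H)+\eta_d(H'')}$ uniformly over arbitrary $d$-dimensional transitive graphs by combining Wilson's algorithm with multi-point random walk intersection estimates at all scales.
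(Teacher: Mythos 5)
Your overall architecture (single-scale first/second moment on witnesses, Paley--Zygmund, a zero--one upgrade, first moment plus Markov for necessity, reduction of ubiquity to quotients) matches the paper's, but several steps as written would fail. The most serious is the upgrade ``faithfully present at one configuration with positive probability $\Rightarrow$ faithfully present at every configuration almost surely.'' Ergodicity and translation invariance cannot deliver this: faithful ubiquity quantifies over all $|\partial V|$-tuples of \emph{distinct components}, and invariance only tells you that the probability of presence at a fixed tuple of vertices is constant along orbits, not that it equals one. The paper instead observes that \emph{robust} faithful presence (infinitely many disjoint witnesses) is a tail multicomponent property and invokes the multicomponent indistinguishability theorem of \cite{MulticomponentIndistinguishability} (\cref{thm:indist}) as the zero--one law; your proposal is missing this input entirely, and the ``robust'' strengthening (via Fatou applied to the uniform-in-$n$ Paley--Zygmund bound) is what makes the event a tail property in the first place. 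A second step that fails outright is the clean second moment bound $\E[N_R^2]\lesssim(\E[N_R])^2$: when the two copies overlap at an edge whose shared incidence count equals exactly $d/(d-4)$, the sum over the approach scales $k_e$ produces a $\log_2 n$ factor rather than a constant, so the bound is false for $d\in\{5,6,8\}$. This is why the paper runs a separate argument there ($d$-basic coarsenings, \cref{lem:secondmomentspeciald}, the cross-scale estimate \cref{lem:secondmoment2}, and an induction assembling $H$ from basic pieces).

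Two further gaps. First, in the necessity direction you sum a single-scale first moment over scales, but witnesses need not live at a single scale: the edge-locations $y_e$ can be distributed across many dyadic shells, and the relevant exponent is $\bar\eta_d$ (the minimum of $\eta_d$ over coarsenings), obtained only after showing via the ultrametric-polytope optimization (\cref{lem:ultrametric1}) that the extremal scale configuration is two-valued and hence corresponds to a coarsening. Worse, the total expected number of witnesses can diverge even when faithful presence has small probability (when $H$ has a buoyant coarsening but a non-buoyant subhypergraph, cf.\ \cref{fig:unbalanced}), which is why the paper's induction (\cref{lem:inductionestimate,lem:firstmoment2,lem:firstmoment3}) carries a hypothesis on probabilities rather than expectations; a bare expectation-plus-Borel--Cantelli argument does not close. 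Second, your reduction ``WLOG every subhypergraph of $H$ is $d$-buoyant, by realizing the optimal coarsening $H'$'' only yields faithful ubiquity for $r\ge R_\bbG(H')$, which can exceed the claimed threshold $R_\bbG(H)$ since coarsening increases edge degrees; the paper's constellation construction (\cref{lem:constellations}) exists precisely to produce, inside each prospective edge-location, witnesses for every refinement of $H'$ at its own (smaller) radius.
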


$H$ clearly cannot be faithfully ubiquitous in $\cC^{hyp}_r(\F)$ if $r < R_\bbG(H)$, so this condition is necessary.
Note that $R_\bbG(2)=1$ for any $d$-dimensional transitive graph with $d >4$, so that \cref{thm:main} follows as a special case of \cref{thm:mainhyper} as claimed. 
 \cref{thm:5678} follows immediately by applying \cref{thm:mainhyper} to the hypergraphs pictured in \cref{fig:hyper}. The $\min\max$ problem arising in \eqref{eq:thmmaincriterion} and \eqref{eq:thmmainhypercriterion} is studied in \cref{subsec:optimalcoarsenings}.

\subsection{Organisation}

In \cref{sec:background}, we give background on uniform spanning forests, establish notation, and prove some simple preliminaries that will be used throughout the rest of the paper. 
In \cref{sec:notationsandoutline}, we outline some of the key steps in the proof of the main theorems; this section is optional if the reader prefers to go straight to the fully detailed proofs.
 \cref{sec:moments} is the computational heart of the paper, where the quantitative estimates needed for the proof of the main theorems are established.   In \cref{sec:wrappingup}, we deduce the main theorems from the estimates of \cref{sec:moments} together with the multicomponent indistinguishability theorem of \cite{MulticomponentIndistinguishability}, which is used as a zero-one law. This section is quite short, most the work having already been done in \cref{sec:moments}. 
 We conclude with some open problems and remarks in \cref{sec:closing}.

\section{Background, definitions, and preliminaries}
\label{sec:background}

\subsection{Basic notation}
Let $\bbG$  be a $d$-dimensional transitive graph with vertex set $\bbV$, and let $\F$  be the uniform spanning forest of $\bbG$. For each set $W\subseteq \bbV$, we write 
write $\sF(W)$ for the event that the vertices of $W$ are all in the same component of $\F$. Let $r \geq 1$ and let $H = (\partial V, V_\circ, E)$ be a finite hypergraph with boundary. We define
\[
	\bareta_d(H) = \min\left\{\eta_d(H'): H' \text{ is a coarsening of $H$}\right\}.
\]

We write $\preceq,\succeq$, and $\asymp$ for inequalities or equalities that hold up to a positive multiplicative constant depending only on some fixed data that will be clear from the context, usually $\bbG, H$, and $r$, and write $\lesssim,\gtrsim$ and $\approx$ for inequalities or equalities that hold up to an additive constant depending only on the same data. In particular
\[a \asymp b \text{ if and only if } \log_2 a \approx \log_2 b.\] 
We sometimes write $\exp_2 (a)$ to mean $2^a$.

  For each two vertices $x$ and $y$ of $\bbG$, we write 
$\langle xy \rangle = d_\bbG(x,y)+1$,
where $d_\bbG$ is the graph metric on $\bbG$.  
For each vertex $x$ of $\bbG$ and  $\infty \geq N > n \geq 0$, we define the dyadic shell
\[\Lambda_x(n,N) := \left\{y \in \bbV : 2^n \leq \langle x y \rangle \leq 2^N \right\}.\]
If $x=(x_u)_{u\in \partial V}$ is a collection of vertices in $\bbG$, we choose one such point $x_0$ arbitrarily and set $\Lambda_x(n,N)=\Lambda_{x_0}(n,N)$ for every $N> n \geq 0$.
Since $\bbG$ is $d$-dimensional, we have that
\begin{equation}
	\label{eq:LambdaVolume}
	\log_2 |\Lambda_x(n,N)| \approx dN 
\end{equation}
for all $n \geq 0$ and $N \geq n+1$. The upper bound is immediate, while the lower bound follows because $\Lambda_x(n,N)$ contains both some point $y$ with $\langle x_0y \rangle = 2^{N-1}+2^{N-2}$ and the ball of radius $2^{N-2}$ around this point $y$.

\subsection{Uniform spanning forests}
\label{subsec:USFbackground}

Given a finite connected graph $G$, we define $\UST_G$ to be the uniform probability measure on the set of spanning trees of $G$, that is, connected subgraphs of $G$ that contain every vertex of $G$ and do not contain any cycles. 
Now suppose that $G=(V,E)$ is an infinite, connected, locally finite graph, and let $(V_i)_{i\geq 1}$ be an \textbf{exhaustion} of $V$ by finite sets, that is, an increasing sequence of finite, connected subsets of $V$ such that $\bigcup_{i\geq1}V_i=V$. For each $i\geq 1$, let $G_i$ be the subgraph of $G$ induced\footnote{Given a graph $G=(V,E)$ and a set of vertices $W \subseteq V$, the subgraph of $G$ \emph{induced} by $W$ is defined to be the graph with vertex set $W$ and with edge set given by the set of edges of $G$ that have both endpoints in $W$.} by $V_i$, and let $G_i^*$ be the graph formed from $G$ by contracting $V \setminus V_i$ down to a single vertex and deleting all of the self-loops that are created by this contraction. The \textbf{free} and \textbf{wired uniform spanning forest} (FUSF and WUSF) measures of $G$, denoted $\FUSF_G$ and $\WUSF_G$, are defined to be the weak limits of the uniform spanning tree measures of $G_i$ and $G_i^*$ respectively. That is, for every finite set $S \subset E$,
\[\FUSF_G\bigl(\{\omega  \subseteq E : S \subseteq \omega \}\bigr) = \lim_{i\to\infty} \UST_{G_i}\bigl(\{\omega  \subseteq E : S \subseteq \omega \}\bigr)\]
and
\[\WUSF_G\bigl(\{\omega  \subseteq E : S \subseteq \omega \}\bigr) = \lim_{i\to\infty} \UST_{G^*_i}\bigl(\{\omega  \subseteq E : S \subseteq \omega \}\bigr).\]
Both limits were proven to exist by Pemantle \cite{Pem91} (although the WUSF was not considered explicitly until the work of H\"aggstr\"om \cite{Hagg95}), and do not depend on the choice of exhaustion. 

Benjamini, Lyons, Peres, and Schramm \cite{BLPS} proved that the WUSF and FUSF of $G$ coincide if and only if $G$ does not admit harmonic functions of finite Dirichlet energy, from which they deduced that the WUSF and FUSF coincide on any amenable transitive graph. In particular, it follows that the WUSF and FUSF coincide for every transitive $d$-dimensional graph, and in this context we refer to both the FUSF and WUSF measures on $G$ as simply the uniform spanning forest measure, $\mathsf{USF}_G$, on $G$. We say that a random spanning forest of $G$ is a uniform spanning forest of $G$ if it has law $\mathsf{USF}_G$. 

\subsection{Wilson's Algorithm}
\label{subsec:Wilson}

\textbf{Wilson's algorithm} \cite{Wilson96} is a way of generating the uniform spanning tree of a finite graph by joining together loop-erased random walks. It was extended to generate the wired uniform spanning forests of infinite, transient graphs by Benjamini, Lyons, Peres, and Schramm \cite{BLPS}.

 Recall that, given a path $\gamma = ( \gamma_n )_{n\geq0}$ in a graph $G$ that is either finite or visits each vertex of $G$ at most finitely often, the \textbf{loop-erasure} of $\gamma$ is defined by deleting loops from $\gamma$ chronologically as they are created. The loop-erasure of a simple random walk path is known as \textbf{loop-erased random walk} and was first studied by Lawler \cite{Lawler80}. Formally, we define the loop-erasure of $\gamma$ to be $\mathsf{LE}(\gamma)=( \gamma_{\tau_i} )_{i\geq0}$, where $\tau_i$ is defined recursively by setting $\tau_0=0$ and
\[\tau_{i+1} = 1+ \sup\{t \geq \tau_i : \gamma_t = \gamma_{\tau_i}\}.\]
(If $G$ is not simple, then we also keep track of which edges are used by $\mathsf{LE}(\gamma)$.)

Let $G$ be an infinite, connected, transient, locally finite graph. Wilson's algorithm rooted at infinity allows us to sample the wired uniform spanning forest of $G$ as follows. Let $( v_i )_{i\geq1}$ be an enumeration of the vertices of $G$. Let $\F_0=\emptyset$, and define a sequence of random subforests $( \F_i )_{i\geq 0}$ of $G$ as recursively follows. 
\begin{enumerate}

\item Given $\F_i$, let $X^{i+1}$ be a random walk started at $v_{i+1}$, independent of $\F_i$. 

\item 
Let $T_{i+1}$ be the first time $X^{i+1}$ hits the set of vertices already included in $\F_i$, where $T_{i+1}=\infty$ if $X^{i+1}$ never hits this set. 
Note that $T_{i+1}$ will be zero if $v_{i+1}$ is already included in $\F_i$. 

\item Let $\F_{i+1}$ be the union of $\F_i$ with the loop-erasure of the stopped random walk path $( X^{i+1}_n )_{n=0}^{T_{i+1}}$.

\end{enumerate}
Finally, let $\F = \bigcup_{i\geq1}\F_i$. This is Wilson's algorithm rooted at infinity: the resulting random forest $\F$ is a wired uniform spanning forest of $G$.

\subsection{The main connectivity estimate}

Let $K$ be a finite set of vertices of $\bbG$. Following \cite{BeKePeSc04}, we define the \textbf{spread} of $K$, denoted $\langle K \rangle$, to be 
\[\langle K \rangle = \min \Big\{\prod_{\{x,y\}\in E(\tau)} \langle x y \rangle : \tau=(W,E) \text{ is a tree with vertex set $K$}\Big\}. \]
Note that the tree $\tau$ being minimized over in the definition of $\langle K \rangle$ need not be a subgraph of $\bbG$. If we enumerate the vertices of $K$ as $x_1,\ldots,x_n$, then we have the simple estimate \cite[Lemma 2.6]{BeKePeSc04}
\begin{equation}
\label{eq:spread}
\langle K \rangle \asymp \prod_{i=1}^n 
\min \left\{ \langle x_i x_j \rangle : 1 \leq j<i \right\},
\end{equation}
where the implied constant depends on the cardinality of $K$. In practice we will always use \eqref{eq:spread}, rather than the definition, to estimate the spread.

The main tool in our analysis of the USF is the following estimate of BKPS. Recall that $\sF(K)$ is the event that every vertex of $K$ is in the same component of the uniform spanning forest $\F$.

\begin{thm}[BKPS \cite{BeKePeSc04}]
\label{thm:sdim1}
 Let $\bbG$ be a $d$-dimensional transitive graph with $d>4$, let $\F$ be the uniform spanning forest of $G$, and let $K$ be a finite set of vertices of $\bbG$. Then there exists a constant $C=C(\bbG,|K|)$ such that
\begin{equation} \P\big(\sF(K)\big) \leq C \big\langle  K \big\rangle^{-(d-4)}.  \end{equation}
\end{thm}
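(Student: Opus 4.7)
The plan is to sample $\F$ via Wilson's algorithm rooted at infinity, as reviewed in \cref{subsec:Wilson}, and to bound the probability that successive random walks intersect using Green's-function estimates. First I would fix a tree $\tau$ with vertex set $K$ realizing the minimum in the definition of $\langle K \rangle$, root $\tau$ arbitrarily, and enumerate $K = \{x_1, \ldots, x_n\}$ in breadth-first order so that each $i \geq 2$ has a unique parent $x_{p(i)}$ in $\tau$ with $p(i) < i$. By \eqref{eq:spread},
\[
\langle K \rangle \asymp \prod_{i=2}^n \langle x_i x_{p(i)} \rangle \asymp \prod_{i=2}^n \min_{j<i} \langle x_i x_j \rangle,
\]
so it suffices to show $\P(\sF(K)) \leq C_n \prod_{i=2}^n \langle x_i x_{p(i)} \rangle^{-(d-4)}$.

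The key geometric input is the Green's-function bound $G_\bbG(x,y) \leq C \langle xy \rangle^{-(d-2)}$ for the simple random walk on $\bbG$, a standard consequence of $d$-dimensionality via Carne--Varopoulos and elliptic Harnack estimates. For independent walks $X^x, X^y$ started at $x$ and $y$, summing hitting probabilities gives
\[
\E\bigl[\bigl|\mathrm{range}(X^x) \cap \mathrm{range}(X^y)\bigr|\bigr] \asymp \sum_{z \in \bbV} \langle xz \rangle^{-(d-2)} \langle yz \rangle^{-(d-2)} \asymp \langle xy \rangle^{-(d-4)},
\]
where the last step comes from splitting the sum over $z$ according to whether $\langle xz \rangle$ or $\langle yz \rangle$ is smaller and applying \eqref{eq:LambdaVolume}. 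Markov's inequality then yields the pairwise intersection bound $\P(\mathrm{range}(X^x) \cap \mathrm{range}(X^y) \neq \emptyset) \leq C \langle xy \rangle^{-(d-4)}$.

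To convert this into a bound on $\P(\sF(K))$, I would generate $\F$ by running Wilson's algorithm from $x_1, \ldots, x_n$ in order, with underlying infinite random walks $X^1, \ldots, X^n$. On $\sF(K)$, for every $i \geq 2$ the walk $X^i$ must hit the previously constructed forest, which forces $\mathrm{range}(X^i) \cap \bigcup_{j < i} \mathrm{range}(X^j) \neq \emptyset$. I would bound the joint probability of these $n-1$ events by summing over intersection vertices arranged along the edges of $\tau$: for each tuple $(z_e)_{e \in E(\tau)}$, the strong Markov property at successive hitting times decouples the walks so that the contribution factors into a product of Green's functions along the edges of $\tau$, giving
\[
\P(\sF(K)) \leq \sum_{(z_e)_e} \prod_{\{i,j\} \in E(\tau)} C\, G_\bbG(x_i, z_{\{i,j\}}) G_\bbG(x_j, z_{\{i,j\}}) \leq C_n \prod_{\{i,j\} \in E(\tau)} \langle x_i x_j \rangle^{-(d-4)} \asymp \langle K \rangle^{-(d-4)}.
\]

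The main obstacle is the decoupling in this last step: the hitting events for successive walks in Wilson's algorithm are not independent, so a naive product over conditional first-moment bounds does not telescope. The clean resolution, due to BKPS, is to sum over intersection vertices along the single fixed tree $\tau$ rather than iterating conditional hitting bounds; the strong Markov property then reduces the calculation to a product of Green's-function convolutions, one per edge of $\tau$, each of which is controlled by the pairwise intersection estimate derived above.
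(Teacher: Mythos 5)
The paper does not actually prove this statement: it quotes it from BKPS and merely remarks that the general transitive case follows from their $\Z^d$ argument once one has the Hebisch--Saloff-Coste heat kernel estimates (whence the Green function bound \eqref{eq:HSC}, which is the correct source for your estimate $G_\bbG(x,y)\preceq \langle xy\rangle^{-(d-2)}$ --- Carne--Varopoulos alone does not give this). So the real question is whether your reconstruction of the BKPS argument works, and it has two genuine gaps. The first is that you fix the tree $\tau$ attaining the minimum in the definition of $\langle K\rangle$ and then assert that on $\sF(K)$ the walks intersect along the edges of $\tau$. That containment of events is false: Wilson's algorithm forces each $X^i$ to hit \emph{some} earlier walk, but which one is determined by the randomness, not by $\tau$. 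The correct use of the spread is the opposite of yours: one sums over \emph{all} trees $\tau'$ on $\{1,\dots,n\}$ in which each $i\ge 2$ is attached to some $j<i$ (the possible intersection patterns), and each such tree contributes at most $\prod_{\{i,j\}\in E(\tau')}\langle x_i x_j\rangle^{-(d-4)}\le \langle K\rangle^{-(d-4)}$ precisely \emph{because} $\langle K\rangle$ is a minimum over trees; the number of such trees depends only on $|K|$ and is absorbed into $C$.

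The second gap is the claimed factorization $\P\bigl(\sR_\tau(z)\bigr)\le \prod_{\{i,j\}\in E(\tau)} G(x_i,z_{\{i,j\}})\,G(x_j,z_{\{i,j\}})$. This fails whenever a vertex $j$ has degree at least $2$ in the tree: the probability that the single walk $X^j$ visits all of the points $\{z_e : e\ni j\}$ is not $\prod_{e\ni j}G(x_j,z_e)$ (take all these $z_e$ equal and far from $x_j$: the left side is one Green function, the right side a much smaller power of it). The correct first-moment bound sums over the order in which $X^j$ visits these points and yields a chain of Green functions between consecutive visited points; after summing over the $z$'s this produces spreads of the enlarged sets $\{x_j\}\cup\{z_e : e\ni j\}$ rather than a product over edges, and controlling the resulting expression is the actual content of the BKPS proof (an induction on $|K|$ built on their Lemma 2.6, i.e.\ \eqref{eq:spread}). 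The strong Markov property alone does not ``reduce the calculation to a product of Green's-function convolutions, one per edge.'' The parts of your argument that are sound are the reduction via Wilson's algorithm to the event that each walk meets an earlier one (exactly as in the proof of \cref{prop:sdim2}) and the pairwise intersection estimate $\P(\mathrm{range}(X^x)\cap\mathrm{range}(X^y)\neq\emptyset)\preceq\langle xy\rangle^{-(d-4)}$; the multi-walk combinatorics is where the proposal breaks down.
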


BKPS proved the theorem in the case $\bbG=\Z^d$. The general case follows from the same proof by applying the heat kernel estimates of Hebisch and Saloff-Coste \cite{HebSaCo93} (see \cref{thm:HSCGreen}), as stated in \cite[Remark 6.12]{BeKePeSc04}. These heat kernel estimates imply in particular that the Greens function estimate
\begin{equation}
\label{eq:HSC}
\sum_{n\geq 0} p_n(u,v) \asymp \langle uv \rangle^{-(d-2)}
\end{equation}
holds for every $d$-dimensional transitive graph $\bbG$ with $d>2$ and every pair $u,v \in \bbV$.

\begin{prop}\label{prop:sdim2}
Let $\bbG$ be a $d$-dimensional transitive graph, let $\F$ be the uniform spanning forest of $G$, and let $K_i$ be a collection of finite sets of vertices of $\bbG$ indexed by some finite set $I$. Then there exists a constant $C=C(\bbG,|I|,\{|K_i|:i \in I\})$ such that
\begin{equation} \P\big(\sF(K_i \cup K_j) \text{ if and only if $i=j$}\big) \leq C \prod_{i \in I} \big\langle  K_i \big\rangle^{-(d-4)}.  \end{equation}
\end{prop}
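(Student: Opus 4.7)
The plan is to prove the stronger statement that $\P\bigl(\bigcap_{i\in I}\sF(K_i)\bigr) \leq C\prod_i \langle K_i\rangle^{-(d-4)}$, which suffices because the event $\{\sF(K_i\cup K_j)\text{ iff } i=j\}$ is contained in $\bigcap_i \sF(K_i)$. My strategy is to adapt the proof of \cref{thm:sdim1} to handle several sets simultaneously, exploiting that simple random walks started from different vertices are independent so that the bound factorizes over $i$.

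First, for each $i\in I$, I would invoke \eqref{eq:spread} to enumerate $K_i=\{x_{i,1},\dots,x_{i,n_i}\}$ so that
\[
	\prod_{j=2}^{n_i}\min_{j'<j}\langle x_{i,j}\,x_{i,j'}\rangle \asymp \langle K_i\rangle.
\]
I would then generate $\F$ by Wilson's algorithm rooted at infinity, processing the vertices of $\bigcup_i K_i$ first in the lexicographic order of the pairs $(i,j)$. Let $\gamma_{i,j}$ denote the underlying simple random walk launched at $x_{i,j}$. The event $\sF(K_i)$ forces, for every $j\geq 2$, the Wilson walk from $x_{i,j}$ to be absorbed into the component containing some $x_{i,j'}$ with $j'<j$. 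Weakening this (the Wilson walk is a prefix of the simple walk $\gamma_{i,j}$), $\sF(K_i)$ is contained in the event that $\gamma_{i,j}$ intersects the union of the unstopped walks $\gamma_{i,j'}$ for $j'<j$.

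The key input is the classical estimate, obtained by decomposing the convolution $\sum_y G(x,y)G(y,z)\asymp \langle xz\rangle^{-(d-4)}$ using \eqref{eq:HSC} for $d>4$, that two independent simple random walks from $x$ and $y$ meet with probability at most $C\langle xy\rangle^{-(d-4)}$. Crucially, conditional on the Wilson walks from all earlier vertices, $\gamma_{i,j}$ remains a fresh simple random walk from $x_{i,j}$, independent of the previously drawn trajectories. Taking a union bound over $j'<j$ (restricted to the same $K_i$), the conditional probability that $\gamma_{i,j}$ hits the prior trajectories in $K_i$ is bounded by $C\min_{j'<j}\langle x_{i,j}\,x_{i,j'}\rangle^{-(d-4)}$. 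Multiplying the conditional bounds over all $(i,j)$ with $j\geq 2$ and applying \eqref{eq:spread} yields
\[
	\P\Bigl(\bigcap_i \sF(K_i)\Bigr)\leq C\prod_i \prod_{j=2}^{n_i}\min_{j'<j}\langle x_{i,j}\,x_{i,j'}\rangle^{-(d-4)} \asymp C\prod_i \langle K_i\rangle^{-(d-4)}.
\]

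The step I expect to be the main technical obstacle is justifying the factorization of conditional probabilities when the same fresh walk $\gamma_{i,j}$ is asked only to hit walks from the same $K_i$ (not walks from $K_{i''}$ with $i''\neq i$, which would merge distinct sets). One careful way is to enlarge the allowed event to include intersections with \emph{any} prior walk, which preserves the bound since we are only applying the union-bound across $j'<j$ indexed within a single $K_i$; the independence we actually use is only of $\gamma_{i,j}$ from all walks indexed earlier in the Wilson ordering. The rest is bookkeeping, essentially identical to the single-set BKPS argument underlying \cref{thm:sdim1}.
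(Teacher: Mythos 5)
Your opening reduction is where the argument breaks: the ``stronger statement'' $\P\bigl(\bigcap_{i\in I}\sF(K_i)\bigr)\leq C\prod_{i}\langle K_i\rangle^{-(d-4)}$ with $C=C(\bbG,|I|,\{|K_i|\})$ is false. Take $K_1=K_2=\{x,y\}$ with $\langle xy\rangle$ large: the left-hand side is $\P(\sF(\{x,y\}))\asymp \langle xy\rangle^{-(d-4)}$, while the right-hand side is $C\langle xy\rangle^{-2(d-4)}$. The events $\sF(K_i)$ can be strongly positively correlated when the sets are close to (or equal to) one another, and the separation clause ``$\sF(K_i\cup K_j)$ if and only if $i=j$'' in the proposition is precisely what rules this out; it cannot be discarded. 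The same defect surfaces inside your Wilson's-algorithm argument: on $\sF(K_i)$ alone, the walk $\gamma_{i,j}$ need not hit $\bigcup_{j'<j}\gamma_{i,j'}$, since it can be absorbed into the component of $x_{i,1},\dots,x_{i,j-1}$ by hitting a loop-erased piece contributed by a walk started in some other $K_{i''}$ that happens to lie in that same component. Your proposed repair (enlarging to intersections with \emph{any} prior walk) restores the inclusion but destroys the bound: the union bound must then range over all prior walks, producing terms $\langle x_{i,j}\,x_{i'',j''}\rangle^{-(d-4)}$ with $i''\neq i$ that can dominate $\min_{j'<j}\langle x_{i,j}\,x_{i,j'}\rangle^{-(d-4)}$, so the product no longer factorizes as $\prod_i\langle K_i\rangle^{-(d-4)}$.

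The fix is to use the separation part of the event, which is what the paper does. Since components never merge in Wilson's algorithm, on the event $\{\sF(K_i\cup K_j)\text{ iff }i=j\}$ the component containing $K_i$ is built exclusively from the walks started in $K_i$; hence that event is contained in $\bigcap_{i}\{A(X^{i,1},\dots,X^{i,|K_i|})=1\}$, where $A$ is the indicator that Wilson's algorithm run on those walks alone produces a connected forest. These events are independent across $i$ (they involve disjoint families of independent walks) and each has probability exactly $\P(\sF(K_i))$, so $\P(\sF(K_i\cup K_j)\text{ iff }i=j)\leq\prod_i\P(\sF(K_i))$, and \cref{thm:sdim1} finishes the proof with no need to redo the intersection estimates (the paper notes this product bound also follows from negative association of the USF). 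Your sequential-conditioning computation is sound as far as it goes, but it must be applied to the full event with separation, not to $\bigcap_i\sF(K_i)$.
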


\begin{proof}
We may assume that $I=\{1,\ldots,k\}$ for some $k\geq 1$. 
Given a collection of independent random walks $X^1,\ldots,X^n$, let $A(X^1,\ldots,X^n)$ be the indicator of the event that the forest generated by running the first $n$ steps of Wilson's algorithm using the walks $X^1,\ldots,X^n$, in that order, is connected.
 Thus, given a finite set $K \subset \bbV$,  we have
\[\P(\sF(K)) = \P\left(A\left(X^1,\ldots,X^{|K|}\right) =1\right)\]
where $X^1,\ldots,X^{|K|}$ are independent random walks started at the vertices of $K$. Now suppose that $(K_i)_{i \in I}$ is a collection of finite sets, and suppose we generate a sample $\F$  of the USF, starting with independent random walks $X^{1,1},\ldots,X^{1,|K_1|},X^{2,1},\ldots,X^{k,|K_k|}$, where $X^{i,j}$ starts from the $j$th element of $K_i$. Then we observe that
\[\left\{\sF(K_i \cup K_j) \text{ if and only if $i=j$} \right\} \subseteq \bigcap_{i \in I} \left\{ A(X^{i,1},\ldots,X^{i,|K_i|}) =1\right\}, \]
and hence that 
\begin{equation}
\label{eq:negdep}\P\big(\sF(K_i \cup K_j) \text{ if and only if $i=j$}\big) \leq \prod_{i \in I}\P(\sF(K_i)).\end{equation}
The claim now follows from \cref{thm:sdim1}. It is also possible to prove \eqref{eq:negdep} using the negative association property of the USF, see e.g.\ \cite{MR2060630}.
\end{proof}

\subsection{Witnesses}



\begin{figure}
\includegraphics[width=0.8\textwidth]{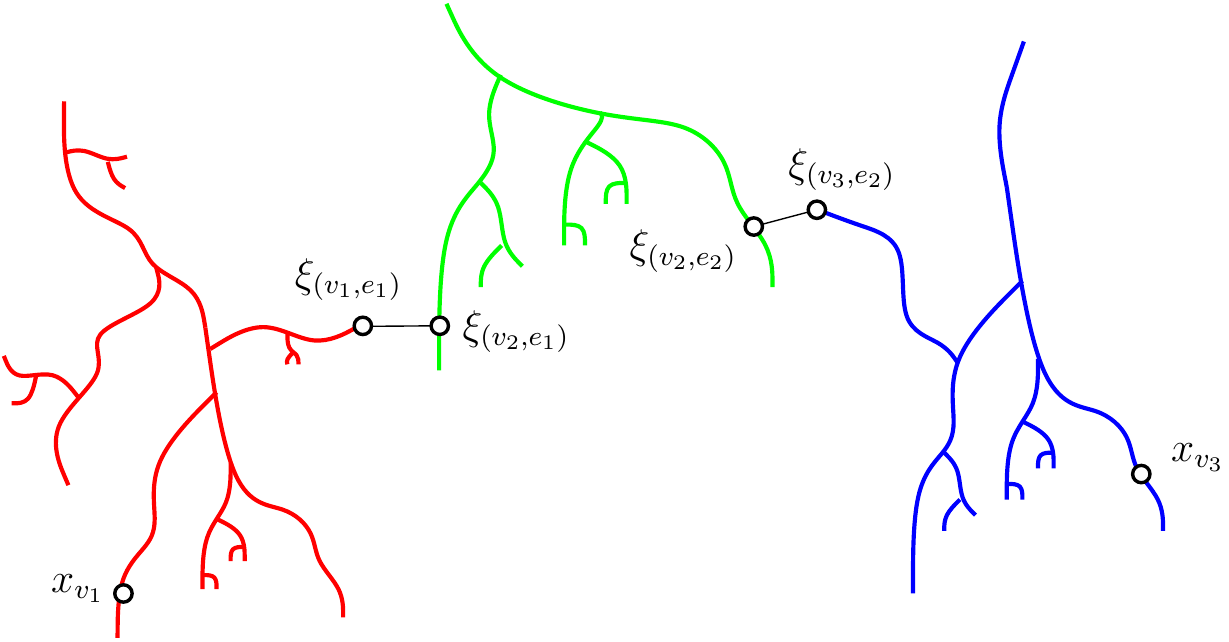}
\caption{Schematic illustration of a witness for the faithful presence of a path of length two with endpoints as boundary points. Let $H$ be the graph with boundary defined by $V(H)=\{v_1,v_2,v_3\}$, $\partial V(H)=\{v_1,v_3\}$ and with edges $e_1=\{v_1,v_2\}$ and $e_2=\{v_2,v_3\}$. The configuration $(\xi_{(v,e)})_{(v,e)\in E_\bullet}$ is a witness for the $1$-faithful presence of $H$ at $(x_{v_1},x_{v_3})$ if 
$\{\xi_{(v_1,e_1)},\xi_{(v_2,e_1)}\}$ and $\{\xi_{(v_2,e_2)},\xi_{(v_3,e_2)}\}$ are edges of $\bbG$ and 
there exist three distinct trees of $\F$ each containing one of the sets $\{x_{v_1},\xi_{(v_1,e_1)}\}$, $\{\xi_{(v_1,e_1)},\xi_{(v_2,e_1)}\}$, and $\{\xi_{(v_3,e_2)}, x_{v_3}\}$.}
\label{fig:witness}
\end{figure}

Let $H$ be a finite hypergraph with boundary, let $r\geq 1$, and let $x=(x_v)_{v\in \partial V}$ be a collection of vertices in $\bbG$. We say that $H$ is $r$-faithfully present at $x$ if it is faithfully present at the components of $x$ in $\cC_r^{hyp}(\F)$. We define $r$-presence of $H$ at $x$ similarly. Let $E_\bullet$ be the set of pairs $(e,v)$, where $e\in E$ is an edge of $H$ and $v\perp e$ is a vertex of $H$ incident to $e$.  We say that $\xi= (\xi_{(e,v)})_{(e,v)\in E_\bullet}\in V^{E_\bullet}$ is a \textbf{witness} for the $r$-faithful presence of $H$ at $x$ if the following conditions hold:
%
%
%
 \begin{enumerate}[leftmargin=*]
 \itemsep0.3em
 \item   For every $e \in E$ and every $u,v \perp e$ we have that $\langle \xi_{(e,v)} \xi_{(e,u)} \rangle \leq r-1$.
 \item For each boundary vertex $v \in \partial V$, every point in the set $\{x_v\} \cup \{\xi_{(e,v)} : e\perp v\}$ is in the same component of $\F$,
 \item for each interior vertex $v \in V_\circ$, every point in the set $\{\xi_{(e,v)} : e\perp v\}$ is in the same component of $\F$, and
 \item for any two distinct vertices $u,v \in V$, the components of $\F$ containing $\{\xi_{(e,u)} : e\perp u\}$ and $\{\xi_{(e,v)} : e\perp v\}$ are distinct. 
 \end{enumerate}
 See \cref{fig:witness} for an illustrated example. 
 We write $\Witness(x,\xi)=\Witness^H_r(x,\xi)$ for the event that $\xi$ is a witness for the $r$-faithful presence of $H$ at $x$.
 Thus, 
on the event that all the vertices of $x$ are in distinct components of $\F$, 
 $H$ is $r$-faithfully present at $x$ if and only if $\Witness_r^H(x,\xi)$ occurs for some $\xi\in V^{E_\bullet}$, and is present at $x$ if and only if $\Witness_r^{H'}(x,\xi)$ occurs for some quotient $H'$ of $H$ and some $\xi\in  V^{E_\bullet(H')}$.

We say that $H$ is $r$-\textbf{robustly faithfully present} at $x=(x_v)_{v\in V}$  if there is an infinite collection  $\{ \xi^i = (\xi^i_{(e,v)})_{(e,v)\in E_\bullet} : i \geq 1 \}$ 
such that $\xi^i$ is a witnesses for the $r$-faithful presence of $H$ at $x$ for every $i$, and $\xi^j_{(e,v)} \neq \xi^j_{(e',v')}$ for every $i > j \geq 1$ and $(e,v),(e',v') \in E_\bullet$.

Often, $x$, $r$ and $H$ will be fixed. In this case we will speak simply of  `faithful presence'  to mean `$r$-faithful presence', `robustly faithfully present' to mean `$r$-robustly faithfully present', `witnesses' to mean `witnesses for the $r$-faithful presence of $H$ at $x$', and so on.

It will be useful to define the following sets in which witnesses must live. 
For every $(x_v)_{v\in \partial V}$, $n\geq 0$ and $N > n$, let
\[\Xi_x(n,N) =  \Xi^H_x(n,N) = \left(\Lambda_x(n,N)\right)^{E}\]
and let
 $\Xi_{\bullet x}(n,N)=\Xi^{H,r}_{\bullet x}(n,N)$ be the set
\begin{multline*}\Xi_{\bullet x}(n,N) =\\ \left\{(\xi_{(e,v)})_{(e,v)\in E_\bullet} \in \left(\Lambda_x(n,N)\right)^{E_\bullet} : \, \langle \xi_{(e,v)} \xi_{(e,u)} \rangle \leq r-1 \text{ for every $e \in E$ and every $u,v \perp e$}\right\},\end{multline*}
so that $\xi \in \Lambda_x(n,N)^{E_\bullet}$ is a witness for the faithful presence of $H$ if and only if $\xi \in \Xi_{\bullet x}(n,N)$ and conditions $(2)$, $(3)$, and $(4)$ in the definition of witnesses, above, hold.



\subsection{Indistinguishability of tuples of trees}

In this section we provide background on the notion of \emph{indistinguishability theorems}, including the indistinguishability theorem of \cite{MulticomponentIndistinguishability} which will play a major role in the proofs of our main theorems.

Indistinguishability theorems tell us that, roughly speaking, `all infinite components look alike'. The first such theorem was proven in the context of Bernoulli percolation by Lyons and Schramm \cite{LS99}. Indistinguishability of components in uniform spanning forests was conjectured by Benjamini, Lyons, Peres, and Schramm \cite{BLPS} and proven by Hutchcroft and Nachmias \cite{HutNach2016a}. (Partial progress was made independently at the same time by Tim\'ar \cite{timar2015indistinguishability}.) All of the results just mentioned apply to \emph{individual} components. In this paper, we will instead apply the indistinguishability theorem of \cite{MulticomponentIndistinguishability}, which yields a form of indistinguishability for \emph{multiple} components in the uniform spanning forest. 
We will use this theorem as a zero-one law that allows us to pass from an estimate showing that certain events occur with positive probability to knowing that these events must occur with probability one.



We now give the definitions required to state this theorem. 
Let $G=(V,E)$ be a graph, and let $k\geq 1$. 
We define $\Omega_k(G) =\{0,1\}^E \times V^k$, which we equip with its product $\sigma$-algebra and think of as the set of subgraphs of $G$ rooted at an ordered $k$-tuple of vertices. A measurable set $\sA \subseteq \Omega_k(G)$ is said to be a $k$-\textbf{component property} if 
\[ (\omega,(u_i)_{i=1}^k)\in \sA \Longrightarrow (\omega,(v_i)_{i=1}^k)\in \sA \,\,\,
\begin{array}{l}
\text{for all } (v_i)_{i=1}^k \in V^k \text{ such that $u_i$ is} \\\text{connected to $v_i$ in $\omega$ for each $i=1,\ldots,k$}.
\end{array}
\]
That is, $\sA$ is a $k$-component property if it is stable under replacing the root vertices with other root vertices from within the same components. 
Given a $k$-component property $\sA$, we say that a $k$-tuple of components $(K_1,\ldots,K_k)$ of a configuration $\omega \in \{0,1\}^E$ \textbf{has property} $\sA$ if $(\omega,(u_i)_{i=1}^k) \in \sA$ whenever $u_1,\ldots,u_k$ are vertices of $G$ such that $u_i \in K_i$ for every $1 \leq i \leq k$. 


Given a vertex $v$ of $G$ and a configuration $\omega \in \{0,1\}^E$, let $K_\omega(v)$ denote the connected component of $\omega$ containing $v$. We say that a $k$-component property $\sA$ is a \textbf{tail} $k$-component property if 
\[ (\omega,(v_i)_{i=1}^k)\in \sA \Longrightarrow (\omega',(v_i)_{i=1}^k)\in \sA \,\,\,
\begin{array}{l}
\forall \omega' \in \{0,1\}^E \text{ such that } \omega \symdif \omega' \text{ is finite and }\\
 K_\omega(v_i)\symdif K_{\omega'}(v_i) \text{ is finite for every $ i =1,\ldots,k$,}
\end{array}
\]
where $\symdif$ denotes the symmetric difference. 
In other words, tail multicomponent properties are stable under finite modifications to $\omega$ that result in finite modifications to each of the components of interest $K_\omega(v_1),\ldots,K_\omega(v_k)$.

\begin{thm}[\cite{MulticomponentIndistinguishability}]
\label{thm:indist}
Let $\bbG$ be a $d$-dimensional transitive graph with $d>4$ and with vertex set $\bbV$, and let $\F$ be the uniform spanning forest of $\bbG$.
 Then for each $k\geq 1$ and each tail $k$-component property $\sA \subseteq \Omega_k(\bbG)$, either every $k$-tuple of distinct connected  components of $\F$ has property $\sA$ almost surely or no $k$-tuple of distinct connected  components of $\F$ has property $\sA$ almost surely.
\end{thm}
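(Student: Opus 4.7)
The plan is to adapt the single-component indistinguishability argument of Hutchcroft and Nachmias \cite{HutNach2016a} to the multi-component setting by combining an update-tolerance property of the USF with a pivoting scheme that changes one coordinate of the tuple at a time and a multi-component mass-transport principle.

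First I would establish a zero-one law. Since the USF of a transitive graph $\bbG$ is ergodic under the action of $\mathrm{Aut}(\bbG)$, the two $\mathrm{Aut}(\bbG)$-invariant events
\begin{align*}
E_+ &= \{\text{some $k$-tuple of distinct components has property } \sA\},\\
E_- &= \{\text{some $k$-tuple of distinct components does not have property } \sA\}
\end{align*}
each have probability in $\{0,1\}$, and the theorem amounts to showing $\P(E_+ \cap E_-)=0$. Assuming for contradiction that $\P(E_+) = \P(E_-) = 1$, one may pick a tuple with property $\sA$ and a tuple without, then exchange one coordinate at a time; at some step the $\sA$-status must flip, yielding almost surely two $k$-tuples $(K_1,\ldots,K_{k-1},K)$ and $(K_1,\ldots,K_{k-1},K')$ that share $k-1$ coordinates and such that only the first has property $\sA$.

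The second step is a surgery argument based on update-tolerance of the USF. For any edge $e \notin \F$, adding $e$ creates a unique cycle, and removing any other edge of that cycle produces a new spanning forest; by Wilson's algorithm (\cref{subsec:Wilson}), or equivalently by the transfer-current computation, the resulting measure is absolutely continuous with respect to the USF law with Radon-Nikodym derivative bounded uniformly on events depending only on a bounded region. Applied iteratively to the pivot tuples above, the goal is to find a finite sequence of such swap moves, supported near a chosen pivot, that (i) leaves $K_1,\ldots,K_{k-1}$ unchanged up to finite symmetric difference and (ii) effects an exchange of $K$ and $K'$ in the $k$-th coordinate. Because $\sA$ is a \emph{tail} $k$-component property, such finite modifications preserve its $\sA$-status, which contradicts the design of the surgery.

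The main obstacle is step (ii): producing swaps that carry out a genuine rewiring of the $k$-th component without macroscopically disturbing the other $k-1$. I would address this through a multi-component mass-transport argument, sending mass between pairs of tuples linked by a pivotal swap and using the update-tolerance Radon-Nikodym bound in both directions to force the $\sA$-status to be constant across almost every tuple. The tree-separation inputs behind \cref{prop:sdim2} are useful here to ensure that the rewired $k$-th component almost surely remains distinct from $K_1,\ldots,K_{k-1}$, so that the output of the surgery is a valid $k$-tuple of distinct components. Carrying out the mass-transport balance in the multi-component setting, while respecting the tail equivalence implicit in the notion of $k$-component property, is the technical heart of \cite{MulticomponentIndistinguishability} and the main departure from the single-component case.
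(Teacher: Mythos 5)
The first thing to note is that the paper does not prove \cref{thm:indist} at all: it is imported wholesale from \cite{MulticomponentIndistinguishability} and used as a black box (a ``zero-one law''), so there is no in-paper argument to compare yours against. Your sketch is a reasonable description of the general Lyons--Schramm / Hutchcroft--Nachmias strategy (ergodicity to reduce to a pivotal pair of tuples, update tolerance, mass transport), but as a proof it has genuine gaps, and you correctly identify that the decisive step is exactly the content of the cited reference rather than something you have supplied.

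Two concrete problems. First, your update-tolerance step is misstated in precisely the situation that matters here: when $e \notin \F$ joins two \emph{distinct} components of the wired USF on a transient graph, adding $e$ creates no cycle at all --- it merges the two trees --- so ``removing any other edge of that cycle'' is vacuous. The update move one actually needs adds $e$ and deletes the first edge of the future (the ray to infinity) of one endpoint, and establishing absolute continuity with controlled Radon--Nikodym derivative for that move is nontrivial and is not a consequence of Wilson's algorithm or the transfer current theorem in the way you suggest. Second, your intended contradiction does not follow as stated: exchanging $K$ for $K'$ in the $k$-th coordinate is not a finite modification of that component but a wholesale replacement, so the hypothesis that $\sA$ is a \emph{tail} $k$-component property (stability under finite symmetric differences of $\omega$ and of each tracked component) does not directly force the $\sA$-status to be preserved across the swap. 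The actual argument has to show that the indicator of $\sA$, explored along the components, cannot depend on tail information, and your step (ii) --- the surgery that rewires the $k$-th component while perturbing $K_1,\ldots,K_{k-1}$ only finitely, together with the mass-transport balancing --- is announced rather than carried out. Since that is the technical heart of \cite{MulticomponentIndistinguishability}, the proposal should be regarded as a plausible outline of the known proof, not a proof.
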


We say that $\sA$ is a \textbf{multicomponent property} if it is a $k$-component property for some $k\geq 1$. 
For our purposes, the key example of a tail multicomponent property is the property that some finite hypergraph with boundary $H$ is $r$-robustly faithfully present at $(x_v)_{v\in \partial V}$. Applying \cref{thm:indist}, we will deduce that if $H$ is $r$-robustly faithfully present at some $(x_v)_{v\in \partial V}$ with positive probability then it must be almost surely $r$-robustly faithfully present at \emph{every} $(x_v)_{v\in \partial V}$ for which  the vertices $\{x_v\}_{v\in \partial V}$ are all in distinct components of $\F$.

\subsection{Optimal Coarsenings}
\label{subsec:optimalcoarsenings}

In this section we study the $\min\max$ problem appearing in \cref{thm:main,thm:mainhyper}, proving the following.

\begin{lemma}
	\label{lem:maxminswap}
	Let $H$ be a finite hypergraph with boundary and let $d\geq 4$. Then
	\begin{multline} 
		\label{eq:maxminswap}
	\max\left\{ \min \left\{\eta_d (H'') : H'' \text{ is a coarsening of $H'$}\right\} : H' \text{ is a subhypergraph of } H\right\} =\\
	\min\left\{ \max \left\{\eta_d (H'') : H'' \text{ is a subhypergraph of $H'$}\right\} : H' \text{ is a coarsening of } H\right\}.
	\end{multline}
	In particular, $H$ has a coarsening all of whose subhypergraphs are $d$-buoyant if and only if every subhypergraph of $H$ has a $d$-buoyant coarsening.
\end{lemma}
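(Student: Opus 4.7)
The easier direction $\mathrm{LHS}\leq \mathrm{RHS}$ of \eqref{eq:maxminswap} is weak duality. For any subhypergraph $H'\subseteq H$ and any coarsening $\tilde H$ of $H$ with edge-surjection $\phi_E\colon E(H)\to E(\tilde H)$, the restriction of $\phi_E$ to $E(H')$, together with the induced incidence, defines a coarsening $H'_\phi$ of $H'$, while the edge set $\phi_E(E(H'))$ together with the vertex set $V(H')$ and incidence inherited from $\tilde H$ cuts out a subhypergraph $\tilde H\langle H'\rangle\subseteq \tilde H$. These two hypergraphs have the same edge count and the same interior-vertex count; a direct comparison shows every incidence present in $H'_\phi$ is also present in $\tilde H\langle H'\rangle$, while the converse can fail because in $\tilde H\langle H'\rangle$ a vertex--edgeclass incidence may be witnessed by a representative in $E(H)\setminus E(H')$. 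Therefore $\Delta(H'_\phi)\leq \Delta(\tilde H\langle H'\rangle)$, and since $d\geq 4$,
\[
\bareta_d(H')\leq \eta_d(H'_\phi)\leq \eta_d(\tilde H\langle H'\rangle)\leq \max_{H''\subseteq \tilde H}\eta_d(H'').
\]
Taking $\max$ over $H'\subseteq H$ on the left and $\min$ over $\tilde H$ on the right gives $\mathrm{LHS}\leq \mathrm{RHS}$.

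For the reverse inequality, the plan is a direct construction. First pick a subhypergraph $H^*\subseteq H$ achieving $\bareta_d(H^*)=\mathrm{LHS}$, chosen extremally (maximising $|E(H^*)|$ and, among such, $|V(H^*)|$), and then pick an optimal coarsening $\tilde{H^*}$ of $H^*$ with $\eta_d(\tilde{H^*})=\bareta_d(H^*)$, chosen to maximise $|E(\tilde{H^*})|$. Extend $\tilde{H^*}$ to a coarsening $\tilde H$ of $H$ by placing each edge of $E(H)\setminus E(H^*)$ in its own singleton class. The goal is to verify that $\max_{H''\subseteq \tilde H}\eta_d(H'')\leq \mathrm{LHS}$, which will give $\mathrm{RHS}\leq \mathrm{LHS}$.

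Suppose for contradiction that some $H''\subseteq \tilde H$ satisfies $\eta_d(H'')>\mathrm{LHS}$. Pull the edges of $H''$ back along $\phi_E$ to obtain a subhypergraph $\hat H\subseteq H$ with edge set $\phi_E^{-1}(E(H''))$ and vertex set $V(H'')$. By construction $H''$ is a coarsening of $\hat H$ (up to the same extra-incidence phenomenon as above), so $\bareta_d(\hat H)\leq \eta_d(H'')$; combined with $\bareta_d(\hat H)\leq \mathrm{LHS}<\eta_d(H'')$ this forces $\hat H$ to admit a strictly better coarsening $H^\dagger$. The main obstacle is then to combine $H^\dagger$ with $\tilde{H^*}$ to yield either a strictly better coarsening of $H^*$ (contradicting optimality of $\tilde{H^*}$) or a subhypergraph of $H$ with strictly smaller $\bareta_d$ (contradicting the extremality of $H^*$). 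The plan is to form the common refinement of the two coarsening maps on the overlap $E(\hat H)\cap E(H^*)$ and carefully track the resulting change in $\eta_d$ using the additive decomposition $\eta_d(H) = \sum_{e\in E(H)}[(d-4)\deg(e)-d] -(d-4)|V_\circ(H)|$; the extremality choices in $H^*$ and $\tilde{H^*}$ should supply the strict improvement needed to reach the contradiction. A possible fallback is an induction on $|E(H)\setminus E(H^*)|$, reducing to a smaller hypergraph on which the weak-duality direction together with the inductive hypothesis suffices. The ``in particular'' statement is then the equality specialised to the threshold $0$.
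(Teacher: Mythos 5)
Your weak-duality direction ($\mathrm{LHS}\leq\mathrm{RHS}$) is correct, and you are right to flag the incidence subtlety: a vertex--edge-class incidence in a subhypergraph of a coarsening may be witnessed only by a representative edge lying outside the chosen subhypergraph, so the two natural objects differ only in that $\Delta$ can be larger on the subhypergraph-of-the-coarsening side, and since the coefficient of $\Delta$ in $\eta_d$ is $d-4\geq 0$ the inequality goes the right way. This half agrees with the paper, which dismisses it as the trivial inequality.

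The hard direction has a genuine gap, and the specific coarsening you build is in fact the wrong one. Extending a $d$-optimal coarsening of an extremal subhypergraph $H^*$ by singleton classes on $E(H)\setminus E(H^*)$ need not achieve the min-max. For a concrete failure, take $d=10$ and let $H$ be the disjoint union of $H_1$ (a single edge incident to two boundary vertices, so $\eta_d(H_1)=\bareta_d(H_1)=2$) and $H_2$ (two edges, each incident to the same ten interior vertices and one boundary vertex, so $\eta_d(H_2)=132-20-60=52$, while merging the two edges gives $66-10-60=-4$). A short computation shows the left-hand side equals $2$ and is attained only by subhypergraphs whose edge set is $E(H_1)$; hence your $H^*$ satisfies $E(H^*)=E(H_1)$, your $\tilde H$ leaves both edges of $H_2$ unmerged, and $H_2$ itself is then a subhypergraph of $\tilde H$ with $\eta_d=52>2$. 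So the contradiction you are aiming for cannot be derived: the "bad" $H''=H_2$ really exists, its improving coarsening $H^\dagger$ lives entirely outside $E(H^*)$, and neither the optimality of $\tilde{H^*}$ nor the extremality of $H^*$ is disturbed. The step you yourself call "the main obstacle" is therefore not merely unfinished --- the construction it is meant to complete is false. The paper's route is different: take a $d$-optimal coarsening $\coarse{H}{\bowtie}$ of the \emph{whole} hypergraph, show that enlarging any subhypergraph $H'$ to the smallest full subhypergraph subordinate to $\bowtie$ containing it only increases $\eta_d$ (the change is $(d-4)\sum_v(\deg(v)-1)\geq 0$ over the added vertices), and show that $\bowtie$ restricts to a $d$-optimal coarsening of every full subordinate subhypergraph (\cref{lem:optimal}, \cref{lem:optimalisbest}); together these exhibit $\coarse{H}{\bowtie}$ as a genuine saddle point. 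To salvage your outline you would need to replace "optimal coarsening of $H^*$ extended by singletons" with "globally optimal coarsening of $H$" and prove this restriction property.
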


Given a hypergraph with boundary $H=(\partial V, V_\circ, E,\perp)$ and an equivalence relation $\bowtie$ on $E$, we can form a coarsening $\coarse{H}{\bowtie}$ of $H$ by taking $\partial V(\coarse{H}{\bowtie})=\partial V(H)$ and $V_\circ (\coarse{H}{\bowtie})=V_\circ(H)$, taking $E(H/\bowtie)$ to be the set of equivalence classes of $\bowtie$, and defining 
\[
\perp\!(H/\bowtie) = \Bigl\{(v,[e]) : v\in V,\, [e] \in E(H/\bowtie), \text{ and } \exists f\in E \text{ such that } [f]=[e] \text{ and } v \perp f\Bigr\},
\]
where $[e]$ denotes the equivalence class of $e$ under $\bowtie$. 
It is easily seen that every coarsening of $H$ can be uniquely represented in this way. We say that a coarsening $\coarse{H}{\bowtie}$ of a hypergraph with boundary $H$ is \textbf{proper} if there exist at least two non-identical edges of $H$ that are related under $\bowtie$.

Let $H=(\partial V, V_\circ,E,\perp)$ be a finite hypergraph with boundary. 
We say that a subhypergraph $H'$ of $H$ is \textbf{subordinate} to an equivalence relation $\bowtie$ on $E$ if every equivalence class of $\bowtie$ is either contained in or disjoint from the edge set of $H'$.  
Given an equivalence relation $\bowtie$ on $E$ and a subhypergraph $H'$ of $H$, we write $\coarse{H'}{\bowtie}$ for the coarsening $\coarse{H'}{\,(\,\bowtie |_{E'})}$, where $\bowtie|_{E'}$ is the restriction of $\bowtie$ to the edge set of $H'$.
The function $H' \mapsto \coarse{H'}{\bowtie}$ is a bijection from subhypergraphs of $H$ subordinate to $\bowtie$ to subhypergraphs of $\coarse{H}{\bowtie}$.
%
We say that an equivalence relation $\bowtie$ on $E$ is $d$\textbf{-optimal} if 
\[ \eta_d(\coarse{H}{\bowtie}) = \min \bigl\{\eta_d(\coarse{H}{\bowtie'}) : \text{$\coarse{H}{\bowtie'}$ a coarsening of $H$} \bigr\}. \]
 We call a coarsening $H'=\coarse{H}{\bowtie}$ of $H$ $d$-optimal if $\bowtie$ is $d$-optimal. We say that a subhypergraph $H'=(\partial V',V_\circ',E',\perp')$ of $H$ is \textbf{full} if $\{v\in V: v \perp e\}\subseteq V'$ for every $e \in E'$.

\begin{lemma}
\label{lem:optimal}
Let $H$ be a finite hypergraph with boundary, let $d\in \R$, and let $\coarse{H}{\bowtie}$ be a $d$-optimal coarsening of $H$. Then  $\coarse{H'}{\bowtie}$ is a  $d$-optimal coarsening of $H'$ for every full subhypergraph $H'$ of $H$ subordinate to $\bowtie$.
\end{lemma}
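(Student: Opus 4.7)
The plan is to prove this by contradiction via an exchange argument: from any coarsening of $H'$ that strictly beats $\coarse{H'}{\bowtie|_{E'}}$, I will build a coarsening of $H$ that strictly beats $\coarse{H}{\bowtie}$, contradicting $d$-optimality. Set $E' := E(H')$. Because $H'$ is subordinate to $\bowtie$, every $\bowtie$-equivalence class is either contained in $E'$ or disjoint from $E'$. Given any equivalence relation $\bowtie'$ on $E'$, I can therefore define an equivalence relation $\widehat{\bowtie}$ on $E$ by declaring two edges to be equivalent if either they both lie in $E'$ and are $\bowtie'$-equivalent, or they both lie in $E\setminus E'$ and are $\bowtie$-equivalent. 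By construction $\widehat{\bowtie}$ agrees with $\bowtie$ on $E\setminus E'$, and $H'$ remains subordinate to $\widehat{\bowtie}$.

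The core of the argument will be the additive identity
\[
\eta_d(\coarse{H}{\widehat{\bowtie}}) - \eta_d(\coarse{H'}{\bowtie'}) \;=\; \eta_d(\coarse{H}{\bowtie}) - \eta_d(\coarse{H'}{\bowtie|_{E'}}),
\]
which I would prove by showing that both sides equal the common constant
\[
C = (d-4)\sum_{[e]\subseteq E\setminus E'}\deg_H([e]) \;-\; d\bigl|\{\bowtie\text{-classes contained in } E\setminus E'\}\bigr| \;-\; (d-4)\bigl(|V_\circ(H)|-|V_\circ(H')|\bigr),
\]
which depends only on $\bowtie|_{E\setminus E'}$. To verify this, I would expand $\eta_d = (d-4)\Delta - d|E| - (d-4)|V_\circ|$, split the sums $\Delta$ and $|E|$ according to the partition of edge classes into those contained in $E'$ and those contained in $E \setminus E'$, and cancel. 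The only place where this decomposition is non-obvious is the $\Delta$ contribution from classes $[e]\subseteq E'$: such a class contributes $\deg_H([e])$ to $\Delta(\coarse{H}{\widehat{\bowtie}})$ but $\deg_{H'}([e])$ to $\Delta(\coarse{H'}{\bowtie'})$. Here the hypothesis that $H'$ is \textbf{full} is exactly what is needed: it guarantees that every vertex of $H$ incident to some edge of such a class $[e]$ already lies in $V(H')$, so $\deg_H([e]) = \deg_{H'}([e])$ and the two contributions coincide.

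From the identity, if there were any coarsening $\bowtie'$ of $E'$ with $\eta_d(\coarse{H'}{\bowtie'}) < \eta_d(\coarse{H'}{\bowtie|_{E'}})$, then $\eta_d(\coarse{H}{\widehat{\bowtie}}) < \eta_d(\coarse{H}{\bowtie})$, contradicting $d$-optimality of $\bowtie$ on $H$. No such $\bowtie'$ can exist, so $\coarse{H'}{\bowtie}$ is a $d$-optimal coarsening of $H'$. The only genuinely delicate point in the proof is the degree bookkeeping for the $E'$-classes; without fullness the identity $\deg_H([e])=\deg_{H'}([e])$ fails, the additive decomposition breaks, and the exchange step no longer moves $\eta_d$ in a controlled way, so locating precisely where fullness must be invoked is the main thing to get right.
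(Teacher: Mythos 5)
Your proof is correct and follows essentially the same route as the paper: splice the candidate relation $\bowtie'$ on $E'$ together with $\bowtie$ on $E\setminus E'$ to get a relation on $E$, show that $\eta_d$ of the coarsening of $H$ and of $H'$ differ by a constant independent of the choice on $E'$, and conclude by optimality of $\bowtie$. The only difference is cosmetic: you carry out the degree bookkeeping (and the role of fullness in ensuring $\deg_H([e])=\deg_{H'}([e])$ for classes inside $E'$) explicitly, where the paper dismisses it as "easily verified."
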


Recall from \cref{subsec:introgeneral} that $\eta_d(H)$ is defined to be $(d-4)\Delta(H)-d|E|-(d-4)|V_\circ|$.

\begin{proof}

Let $H'$ be a subhypergraph of $H$ subordinate to $\bowtie$. Let $\bowtie'$ be an equivalence relation on $E'$, and let $\bowtie''$ be the equivalence relation on $E$ defined by
\begin{equation*}
e  \, \bowtie'' \, e' 
\iff (\text{$e,e' \in E\setminus E'$ and $e \bowtie e'$}) \text{ or } (\text{$e,e' \in  E'$ and $e \bowtie' e'$}).
\end{equation*}
Thus, $H'$ is subordinate to $\bowtie''$ and $\coarse{H'}{\bowtie''}=\coarse{H'}{\bowtie'}$. It is easily verified that, since $\bowtie$ and $\bowtie''$ differ only on edges of $H'$ and $H'$ is subordinate to $\bowtie$,
\begin{align*}
 |V_\circ(\coarse{H'}{\bowtie'})| - |V_\circ(\coarse{H'}{\bowtie})| &= |V_\circ(\coarse{H}{\bowtie''})| - |V_\circ(\coarse{H}{\bowtie})|,\\
 |E(\coarse{H'}{\bowtie'})| - |E(\coarse{H'}{\bowtie})| &= |E(\coarse{H}{\bowtie''})| - |E(\coarse{H}{\bowtie})|,  \quad \text{ and }\\
\Delta(\coarse{H'}{\bowtie'}) - \Delta(\coarse{H'}{\bowtie}) &= \Delta(\coarse{H}{\bowtie''}) - \Delta(\coarse{H}{\bowtie}).
\end{align*}
 We deduce that, since $\coarse{H}{\bowtie}$ is $d$-optimal,
\[\eta_d(\coarse{H'}{\bowtie'}) - \eta_d(\coarse{H'}{\bowtie}) = \eta_d(\coarse{H}{\bowtie''}) - \eta_d(\coarse{H}{\bowtie}) \geq 0,
\vspace{0.2em}
\]
and the result follows  since $\bowtie'$ was arbitrary. \qedhere

\end{proof}

\begin{lem}
\label{lem:optimalisbest}
Let $H$ be a finite hypergraph with boundary, let $d\geq 4$, and let $\coarse{H}{\bowtie}$ be a $d$-optimal coarsening of $H$. Then
\begin{multline}
\label{eq:optimalisbest}
\max \left\{ \min \left\{\eta_d(\coarse{H'}{\bowtie'}) : \coarse{H'}{\bowtie'} 
\text{  a coarsening of $H'$}\right\}: H'
\text{ a subhypergraph of $H$} \right\}\\
=
\max\left\{\eta_d(\coarse{H'}{\bowtie}) : H' \text{ a subhypergraph of $H$} \right\}.
\end{multline}
\end{lem}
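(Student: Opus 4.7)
The easy direction LHS $\leq$ RHS is immediate: for every subhypergraph $H'$ of $H$, the restriction $\bowtie|_{E(H')}$ is one of the equivalence relations available to the inner minimum, so $\min_{\bowtie'}\eta_d(\coarse{H'}{\bowtie'}) \leq \eta_d(\coarse{H'}{\bowtie})$, and taking the outer maximum over $H'$ gives the claim.

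For the reverse direction, fix a subhypergraph $H^{*}$ of $H$ attaining the maximum on the right-hand side of \eqref{eq:optimalisbest}. The plan is to promote $H^{*}$ to a subhypergraph $H''$ of $H$ that is both \emph{full} and \emph{subordinate to} $\bowtie$ and that satisfies $\eta_d(\coarse{H''}{\bowtie}) \geq \eta_d(\coarse{H^{*}}{\bowtie})$. Once such an $H''$ is in hand, \cref{lem:optimal} applied to $H''$ guarantees that $\coarse{H''}{\bowtie}$ is a $d$-optimal coarsening of $H''$, so
\[
\min_{\bowtie'}\eta_d(\coarse{H''}{\bowtie'}) = \eta_d(\coarse{H''}{\bowtie}) \geq \eta_d(\coarse{H^{*}}{\bowtie}),
\]
and taking the outer maximum on the left yields LHS $\geq$ RHS.

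I would build $H''$ by iterating two monotone moves starting from $H^{*}$, each of which preserves being a subhypergraph of $H$ and does not decrease $\eta_d(\coarse{\cdot}{\bowtie})$. The \emph{fullness move}: while some $v \in V \setminus V'$ is incident in $H$ to an edge of $E'$, adjoin $v$ to the appropriate part of $V'$; writing $k \geq 1$ for the number of equivalence classes of $\bowtie|_{E'}$ that contain an edge of $E'$ incident to $v$, this leaves $|E(\coarse{H'}{\bowtie})|$ unchanged, adds $k$ to $\Delta(\coarse{H'}{\bowtie})$, and adds $0$ or $1$ to $|V_\circ(\coarse{H'}{\bowtie})|$ according as $v$ is boundary or interior, producing a net change in $\eta_d$ of either $(d-4)k$ or $(d-4)(k-1)$, both nonnegative for $d \geq 4$. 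The \emph{subordination move}: while some class $[e]$ of $\bowtie$ has $F_1 := [e]\cap E'$ and $F_2 := [e]\setminus E'$ both nonempty, adjoin $F_2$ to $E'$ together with every vertex of $V \setminus V'$ incident in $H$ to some edge of $F_2$. Since $[e]$ is still a single class meeting the new edge set, $|E(\coarse{H'}{\bowtie})|$ is unchanged; direct bookkeeping gives the change in $\Delta$ as $a + b + c$ and the change in $|V_\circ|$ as $b$, where $a$ counts the vertices of $V'$ previously not incident to $F_1$ but incident to some edge of $F_2$, $b$ the newly adjoined interior vertices, and $c$ the newly adjoined boundary vertices, giving an $\eta_d$ change of $(d-4)(a+c) \geq 0$. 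Since $|E|$ is finite, alternating the two moves terminates and produces the desired $H''$.

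The main technical point will be the bookkeeping for the subordination move: one must verify both that collapsing the newly adjoined raw edges into the single coarsened edge indexed by $[e]$ does not inflate the coarsened edge count, and that $H''$ remains full throughout the iteration (the latter holds automatically because I adjoin precisely the incident vertices of each newly added edge). With this bookkeeping confirmed, the argument closes as outlined.
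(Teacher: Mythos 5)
Your proof is correct and follows essentially the same route as the paper's: both arguments pass from an arbitrary (or maximizing) subhypergraph to its full, $\bowtie$-subordinate hull, check that $\eta_d(\coarse{\cdot}{\bowtie})$ does not decrease under this enlargement, and then invoke \cref{lem:optimal} to identify the inner minimum with $\eta_d(\coarse{H''}{\bowtie})$. The only differences are presentational: you build the hull by iterated elementary moves (with somewhat more careful degree bookkeeping) rather than in one step, and you work with a single maximizer instead of sandwiching the maxima over all subhypergraphs.
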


\begin{proof}
Let $\coarse{H}{\bowtie}$ be a $d$-optimal coarsening of $H$. 
Let $H'$ be a subhypergraph of $H$, and let $H''$ be the smallest full subhypergraph of $H$ that contains $H'$ and is subordinate to $\bowtie$. That is, $H''$ is obtained from $H'$ by adding every edge of $H$ that is contained in equivalence class of $\bowtie$ intersecting $E'$  and every vertex of $H$ that is incident to either an edge of $H'$ or one of these added edges. 
Writing $\deg_{\coarse{H''}{\;\bowtie}}(v)$ for the degree of a vertex in $\coarse{H''}{\bowtie}$, we compute that
\[\eta_d(\coarse{H''}{\bowtie}) - \eta_d(\coarse{H'}{\bowtie}) = (d-4)\sum_{v\in V(H'') \setminus V(H') } \left(\deg_{\coarse{H''}{\;\bowtie}}(v) -1\right) \geq 0.\]
It follows that
\begin{multline}
\label{eq:optimalisbest2}
\min\{ \eta_d(\coarse{H'}{\bowtie'}) : \text{$\coarse{H'}{\bowtie'}$ a coarsening of $H'$}\} \leq 
\eta_d(\coarse{H'}{\bowtie}) \leq \eta_d(\coarse{H''}{\bowtie})\\ = \min \{ \eta_d(\coarse{H''}{\bowtie'}) : \coarse{H''}{\bowtie'} \text{ is a coarsening of $H''$}\},
\end{multline}
where the equality on the second line follows from \cref{lem:optimal}. Taking the maximum over $H'$, we obtain that
\begin{align*}
\max &\left\{ \min \left\{\eta_d(\coarse{H'}{\bowtie'}) : \coarse{H'}{\bowtie'} 
\text{  a coarsening of $H'$}\right\}: H'
\text{ a subhypergraph of $H$} \right\}\\ 
&\leq 
\max \left\{ \eta_d(\coarse{H'}{\bowtie}) 
: H'
\text{ a subhypergraph of $H$} \right\}\\
&\leq
\max \left\{ \min \left\{\eta_d(\coarse{H''}{\bowtie'}) : \coarse{H''}{\bowtie'} 
\text{  a coarsening of $H''$}\right\}: 
\begin{array}{l}\text{$H''$ a subhypergraph of}\\ \text{$H$ subordinate to $\bowtie$} \end{array}\right\},
\end{align*}
where the second equality follows from \eqref{eq:optimalisbest2}. 
The the final line of this display is clearly less than or equal to the first line, so that all the lines must be equal, completing the proof.
\end{proof}
\begin{proof}[Proof of \cref{lem:maxminswap}]
It follows immediately from \cref{lem:optimalisbest} that
\begin{multline*}
\max \left\{ \min \left\{\eta_d(\coarse{H'}{\bowtie'}) : \coarse{H'}{\bowtie'} 
\text{  a coarsening of $H'$}\right\}: H'
\text{ a subhypergraph of $H$} \right\}\\
\geq \min \left\{ \max\left\{\eta_d(\coarse{H'}{\bowtie'}) : H' \text{ a subhypergraph of $H$} \right\} : \coarse{H}{\bowtie'} \text{ a coarsening of $H$}\right\},
\end{multline*}
and the reverse inequality is trivial. \qedhere

\end{proof}

\begin{remark}
\cref{lem:optimalisbest} yields a brute force algorithm for computing the value of the relevant $\max \min$ problem that is exponentially faster than the trivial brute force algorithm, although still taking superexponential time in the number of edges of $H$.  
\end{remark}

\section{Sketch of the proof}
\label{sec:notationsandoutline}

In this section we give a detail-free overview of the most important components of the proof. This section is completely optional; all the arguments and definitions mentioned here will be repeated in full detail later on.


	\subsection{Non-ubiquity in high dimensions}
	\label{subsec:nonubiqsketch}

	Let $\bbG$ be a $d$-dimensional transitive graph, let $H$ be a finite hypergraph with boundary, and let $\F$ be the uniform spanning forest of $\bbG$. 
	We wish to show that 
	if every coarsening of $H$ has a subhypergraph that is not $d$-buoyant, then 
	$H$ is not faithfully ubiquitous in $\cC^{hyp}_r(\F)$ for any $r \geq 1$ a.s. By \cref{lem:maxminswap}, this condition is equivalent to there existing a subhypergraph of $H$ none of whose coarsenings are $d$-buoyant. If $H$ is faithfully ubiquitous then so are all of its subhypergraphs, and so it suffices to consider the case that $H$ does not have any $d$-buoyant coarsenings, i.e., that $\bareta_d(H) >0$.

	 To show that $H$ is not faithfully ubiquitous, it would suffice to show that if the vertices $x=(x_v)_{v\in \partial V}$  are far apart from each other, then the expected total number of witnesses for the faithful presence of $H$ at $x$ is small. As it happens, we are not able to control the total number of witnesses without making further assumptions on $H$. 
	 Nevertheless, the most important step in our argument is to show
	that if $x$ is contained in $\Lambda_x(0,n-1)$, then the expected number of witnesses in $\Lambda_x(n,n+1)$ is exponentially small as a function of $n$.  
	 Once we have done this, we will control the expected number of witnesses that occur `at the same scale' as $x$  by a similar argument. We are not finished at this point, of course, since we have not ruled out the existence of witnesses that are spread out across multiple scales. However, given the single-scale estimates, we are able to handle multi-scale witnesses of this form via an inductive argument on the size of $H$ (\cref{lem:inductionestimate,lem:firstmoment2,lem:firstmoment3}), which allows us to reduce from the multi-scale setting to the single-scale setting.


	 Let us briefly discuss how the single-scale estimate is attained. Write $\Xi = \Xi_x(n,n+1)$. \cref{prop:sdim2} implies that the expected number of witnesses in $\Lambda_x(n,n+1)$ is at most a constant multiple of 
	 \[
	 \sum_{\xi \in \Xi} 
	 \prod_{u \in \partial V} W(x,\xi),\]
	 where 
	 \[ W(x,\xi) = \prod_{u\in \partial V}\langle x_u, \{\xi_e: e \perp u\} \rangle^{-(d-4)} \prod_{u \in V_\circ} \langle \{\xi_e: e \perp u\} \rangle^{-(d-4)}. \]
	 To control this sum, we split it as follows. Let $L$ be the set of symmetric functions $\ell:E^2 \to \{0,\ldots,n\}$ such that $\ell(e,e)=0$ for every $e\in E$. 
 	For each $\ell \in L$, let 
	\[\Xi_\ell = \left\{\xi \in \Xi :
	\begin{array}{l}
	 2^{\ell(e,e')} \leq \langle \xi_e \xi_{e'} \rangle \leq 2^{\ell(e,e')+2} \text{ for all $e,e' \in E$}
	\end{array}
	 \right\},\]
	 so that $\Xi = \bigcup_{\ell \in L} \Xi_\ell$. The advantage of this decomposition is that $W$ is approximately constant on each set $\Xi_\ell$:
	 \[\log_2 W(x,\xi) \approx  -(d-4)|\partial V| n  -(d-4)\sum_{i=1}^{|E|}\sum_{u \perp e}\min \left\{\ell(e_i,e_j) : j<i,\, e_j \perp u\right\}\]
	 for every $\xi \in \Xi_\ell$.

	On the other hand, by considering the number of choices we have for $\xi_{e_i}$ at each step given our previous choices, it follows that
	\begin{align}\log_2 |\Xi_\ell| \lesssim  
	dn+ d\sum_{i=1}^{|E|}\min\left\{\hat \ell(e_i,e_j) : j<i\right\},
	\end{align}
	where $\hat \ell$ is the largest ultrametric on $E$ that is dominated by $\ell$. ($\Xi_\ell$ could be much smaller than this of course -- it could even be empty.) We deduce that
	\begin{multline*}
	 \sum_{\xi \in \Xi} 
	  W(x,\xi) \preceq \exp_2\left( dn-(d-4)|\partial V|n\right) \\ \cdot \sum_{\ell \in L} \exp_2 \left[ d\sum_{i=1}^{|E|}\min\left\{\hat \ell(e_i,e_j) : j<i\right\}    -(d-4)\sum_{i=1}^{|E|}\sum_{u \perp e}\min \left\{\ell(e_i,e_j) : j<i,\, e_j \perp u\right\} \right]
	 \end{multline*}
	 and hence that
	 \begin{multline*}
	 \log_2 \sum_{\xi \in \Xi} 
	  W(x,\xi) \lesssim \log_2|L| + dn-(d-4)|\partial V|n
	 \\ + \max_{\ell \in L} \left[ d\sum_{i=1}^{|E|}\min\left\{\hat \ell(e_i,e_j) : j<i\right\}    -(d-4)\sum_{i=1}^{|E|}\sum_{u \perp e}\min \left\{\ell(e_i,e_j) : j<i,\, e_j \perp u\right\} \right].
	 \end{multline*}
	 We have that $\log_2 |L| = E^2 \log_2(n+1)$, which will be negligible compared with the rest of the expression in the case that $\bareta_d(H) >0$.   From here, the problem is to identify the $\ell \in L$ achieving the maximum above. We will argue, by invoking a general lemma (\cref{lem:ultrametric1}) about optimizing linear combinations of minima of distances on the ultrametric polytope, that there is an $\ell \in L$ maximizing the expression such that $\ell$ is an ultrametric and $\ell(e,e') \in \{0,n\}$ for every $e,e' \in E$. The set of such functions $\ell$ are in bijection with the set of coarsenings $H'$ of $H$, where two edges of $H$ are identified in $H'$ if and only if $\ell(e,e')=0$. Choosing such a coarsening optimally, it is not hard to deduce that
	 \[  \log_2 \sum_{\xi \in \Xi} 
	  W(x,\xi) \lesssim  - \bareta_d(H)\, n + |E|^2\log_2 n, \]
	  giving the desired exponential decay.

	  \subsection{Ubiquity in low dimensions}

	  We now sketch the proof of ubiquity in low dimensions. Here we will only discuss the case in which $d/(d-4)$ is not an integer (i.e., $d\notin \{5,6,8\}$). The case in which $d/(d-4)$ is an integer raises several additional technical complications, see \cref{sec:speciald}. 

	  Let $\bbG$ be a $d$-dimensional transitive graph with $d\in \{7\} \cup \{9,10,\ldots\}$, let $H$ be a finite hypergraph with boundary, and let $\F$ be the uniform spanning forest of $\bbG$. Recall the definition of $R_\bbG(H)$ from \cref{subsec:introgeneral}. 
	  Working in the opposite direction to the previous subsection, we wish to prove that if $H$ has a coarsening all of whose subhypergraphs are $d$-buoyant, then $H$ is faithfully ubiquitous in the component hypergraph $\cC^{hyp}_{r}(\F)$ for every $r \geq R_\bbG(H)$ a.s.
	   We say that $H$ is $r$-\textbf{robustly faithfully present} at $x=(x_v)_{v\in V}$  if there are infinitely many disjoint witnesses for the faithful presence of $H$ at $x$. The event that $H$ is $r$-robustly faithfully present at $x$ is a tail $|\partial V|$-component property. Thus, by \cref{thm:indist}, it suffices to prove that there exists an $x$ such that, with positive probability, the points of $x$ are all in different components of $\F$ and $H$ is $R_\bbG(H)$-robustly faithfully present at $x$. 

	   Let us suppose for now that every subhypergraph of $H$ is $d$-buoyant
	    (i.e., that we do not have to pass to a coarsening for this to be true). 
	   To prove that $H$ has a positive probability of being robustly faithfully present at some $x$, we perform a first and second moment analysis on the number of witnesses in dyadic shells. Suppose that $x$ is contained in $\Lambda_x(0,n-1)$. Since we are now interested in existence rather than nonexistence, we can make things easier for ourselves by considering only  $\xi$ that are both contained in a dyadic shell $\Lambda_x(n,n+1)$, and such that $\langle \xi_{(e,u)} \xi_{(e',u')} \rangle \geq 2^{n-C_1}$ whenever $e\neq e'$, for some appropriate chosen constant $C_1$. Furthermore, for each $e\in E$ the points $\{ \xi_{(e,u)} : u \perp e\}$  must be sufficiently well separated that there are not local obstructions to $\xi$ being a witness -- this is where we need that $r \geq R_\bbG(H)$. Call such a $\xi$ \textbf{good}, and denote the set of good $\xi$ by $\Omega_x(n)$. We then argue that for good $\xi$, the probability that $\xi$ is a witness is comparable to
	   \begin{multline*} 
	   		W(x,\xi) = \prod_{u \in \partial V} \langle x_u, \{\xi_e: e \perp u\} \rangle^{-(d-4)} \prod_{u \in V_\circ} \langle \{\xi_e: e \perp u\} \rangle^{-(d-4)}\\ \asymp \exp_2 \left[ -(d-4)(\Delta -|V_\circ|) \, n \right],
	   \end{multline*}
	   where $\xi_e$ is chosen arbitrarily from $\{\xi_{(e,u)} : u \perp e\}$ for each $e$, and hence that
	   the expected number of witnesses in $\Omega_x(n)$ is comparable to $2^{-\eta_d(H) n}$.
	   In other words, we have that the upper bound on the probability that $\xi$ is a witness provided by \cref{prop:sdim2}  is comparable to the true probability when $\xi$ is good. Our proof of this estimate appears in \cref{Sec:technical}; unfortunately it is quite long.

	   Taking this lower bound on trust for now, the rest of the analysis proceeds similarly to that sketched in \cref{subsec:nonubiqsketch}, and is in fact somewhat simpler thanks to our restriction to good configurations. The bound implies that the expected number of good witnesses in $\Lambda_x(n,n+1)$ is  comparable to
	   $\exp_2\left[ - \eta_d(H) \, n \right]$. Estimating the second moment is equivalent to estimating the expected number of pairs $\xi,\zeta$ such that $\xi$ and $\zeta$ are both good witnesses. Observe that if $\xi$ and $\zeta$ are both good witnesses then the following hold:
	   \begin{enumerate}
		   	\item For each $v \in V$, there is at most one $v' \in V$ such that $\xi_{(e,v)}$ and  $\zeta_{(e',v')}$ 
	   	 are in the same component of $\F$ for some (and hence every) $e \perp v$ and $e' \perp v'$. 
	   	    	\item For each $e \in E$, there is at most one $e'\in E$ such that $\langle \xi_e \zeta_{e'} \rangle \leq 2^{n-C_1-1}$. 
	   \end{enumerate}
	   To account for the degrees of freedom given by (1), we define $\Phi$  to be the set of functions $\phi: V_\circ \to V_\circ \cup \{\star\}$  such that the preimage of $\phi^{-1}(v)$ has at most one element for each $v\in V_\circ$. 
	   (Here and elsewhere, we use $\star$ as a dummy symbol so that we can encode partial bijections by functions.)
	   For each $\phi \in \Phi$, we define $\tilde \sW_\phi(\xi,\zeta)$ to be the event that $\xi$ and $\zeta$ are both witnesses, and that $\xi_{(e,v)}$ and  $\zeta_{(e',v')}$ 
	   	 are in the same component of $\F$ if and only if $e'=\phi(e)$.
	   	Thus, to control the expected number of pairs of good witnesses, it suffices to control
	   	 \[\sum_{\phi \in \Phi} \sum_{\xi,\zeta \text{ good}} \P\left(\tilde \sW_\phi(\xi,\zeta)\right) \preceq \max_{\phi \in \Phi} \sum_{\xi,\zeta \text{ good}} \P\left(\tilde \sW_\phi(\xi,\zeta)\right).\]
	   	Next, to account for the degrees of freedom given by (2), 	we define $\Psi$  to be the set of functions $\psi: E \to E \cup \{\star\}$  such that the preimage $\psi^{-1}(e)$ has at most one element for each $e\in E$.    For each $\psi \in \Psi$ and $k = (k_e)_{e \in E} \in \{0,\ldots,n\}^{E}$, let
\begin{multline*}\Omega^{\psi,k} = \\\left\{(\xi,\zeta) \in (\Omega_x(n))^2 : 
\begin{array}{l} 2^{n-k_e} \leq \langle \zeta_e \xi_{\psi(e)} \rangle \leq 2^{n-k_e+2} \text{ for all $e\in E$ such that $\psi(e) \neq \star$,} 
\vspace{0.3em} \\
 \text{and }\langle \zeta_e \xi_{e'} \rangle \geq 2^{n-C_1-2} \text{ for all $e,e'\in E$ such that $e' \neq \psi(e)$}
\end{array}
\right\}.
\end{multline*}
We can easily upper bound the volume
\begin{equation*}\log_2|\Omega^{\psi,k}(x)| \lesssim 2d|E|n - d\sum_{\psi(e)\neq\star} k_e. \end{equation*}
Using this together with \cref{prop:sdim2}, is is straightforward to calculate that
\begin{multline*}
\log_2 \sum_{(\xi,\zeta) \in \Omega^{\psi,k}}\P\left(\tilde \sW_\phi(\xi,\zeta)\right) \lesssim
-2\eta_d(H)\, n - (d-4)|\{ u \in V_\circ : \phi(u) \neq \star\}|\,n
 \\+ (d-4)\sum_{\psi(e)\neq \star}|\{u \perp e : \phi(u) \perp \psi(e)\}|k_e - d \sum_{\psi(e)\neq\star}k_e
  \end{multline*}
 for every $\phi\in \Phi$, $\psi \in \Psi$ and $k \in \{0,\ldots,n\}^E$. 

We now come to some case analysis. Observe that for every $\psi\in \Psi$ and $e\in E$, we have that
\begin{multline*}
\sum_{k_e=0}^n \exp_2\left[(d-4)|\{u\perp e:\phi(u)\perp \psi(e)\}| - d\right]k_e  \\\preceq \begin{cases}
\exp_2\left[(d-4)|\{u\perp e:\phi(u)\perp \psi(e)\}| - d\right]n & \text{ if $(d-4)|\{u\perp e:\phi(u)\perp \psi(e)\}|  >d$}\\
n & \text{ if $(d-4)|\{u\perp e:\phi(u)\perp \psi(e)\}|  =d$}\\
1 & \text{ if $(d-4)|\{u\perp e:\phi(u)\perp \psi(e)\}| <d$}.
\end{cases}
\end{multline*}
Since $d/(d-4)$ is not an integer, the middle case cannot occur and we obtain that
\begin{multline*}
\log_2 \sum_{k}\sum_{(\xi,\zeta) \in \Omega^{\psi,k}} \P(\tilde \sW_\phi(\xi,\zeta))
\lesssim
 -2\eta_d(H)\, n - (d-4)|\{ u \in V_\circ : \phi(u) \neq \star\}|\,n \\
+\sum_{e}\left[(d-4)|\{u \perp e : \phi(u) \perp \psi(e)\}|-d\right]
\mathbbm{1}\left(|\{u \perp e : \phi(u) \perp \psi(e)\}| >  d/(d-4) \right) n.
 \end{multline*}
 From here, our task is to show that the expression on the right hand side is maximized when $\phi \equiv \star$ and $\psi \equiv \star$, in which case it is equal to $-2d \eta_d(H) n$. To do this, we identify optimal choices of $\phi$ and $\psi$ with subhypergraphs of $H$, and use the assumption that every subhypergraph of $H$ is $d$-buoyant. This should be compared to how, in the proof of non-ubiquity sketched in the previous subsection, we identified optimal choices of $\ell$ with coarsenings of~$H$.

 Once we have this,  since there are only a constant number of choices for $\phi$ and $\psi$, we deduce that the second moment of the number of good witnesses is comparable to the square of the first moment. Thus, it follows from the Cauchy-Schwarz inequality that the probability of there being a good witness in each sufficiently large  dyadic shell is bounded from below by some $\eps>0$, and we deduce from Fatou's lemma that there are good witnesses in infinitely many dyadic shells with probability at least $\eps$. This completes the proof that robust faithful presence occurs with positive probability. 

It remains to remove the simplifying assumption we placed on $H$, i.e., to allow ourselves to pass to a coarsening of $H$ all of whose subhypergraphs are $d$-buoyant before proving faithful ubiquity. To do this,  
we introduce the notion of \emph{constellations of witnesses}. These are larger collections of points, defined in such a a way that every constellation of witness for $H$ contains a witness for each refinement of $H$.  In the actual, fully detailed proof we will work with constellations from the beginning. This does not add many complications.


\section{Moment Estimates}

\label{sec:moments}

\subsection{Non-ubiquity in high dimensions}\label{sec:1stupper}

The goal of this section is to prove the following. 

\begin{prop}\label{prop:nonubiquity}
	Let $\bbG$ be a $d$-dimensional transitive graph with $d>4$, let $\F$ be the uniform spanning forest of $G$, let $H$ be a finite hypergraph with boundary, and let $r\geq 1$. Then the following hold:
	\begin{enumerate}[leftmargin=*]
	\itemsep0.2em
		\item 
	If $H$ has a subhypergraph that does not have any $d$-buoyant coarsenings,
	then $H$ is  not faithfully ubiquitous in $\Comp^{hyp}_r(\F)$ almost surely. 
	\item
	If every quotient $H'$ of $H$ such that $R_\bbG(H') \leq r$ has a subhypergraph that does not have any $d$-buoyant coarsenings, 
	then $H$ is  not  ubiquitous in $\Comp^{hyp}_r(\F)$ almost surely.
	\end{enumerate}
\end{prop}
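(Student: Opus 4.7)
The plan is to reduce (2) to (1) by a finite union bound and to reduce within (1) to the case $\bareta_d(H) > 0$. For (2) $\Leftarrow$ (1), observe that $H$ is ubiquitous in $\cC^{hyp}_r(\F)$ if and only if some quotient $H'$ with $R_\bbG(H') \leq r$ is faithfully ubiquitous, and there are only finitely many such quotients. For the reduction within (1): faithful ubiquity passes to subhypergraphs, since given distinct $(y_v)_{v \in \partial V(H_0)}$ we may extend to a boundary tuple for $H$ by appending widely separated auxiliary vertices (which lie in distinct USF components with high probability) and then restrict the witness. Thus, by contrapositive, it suffices to prove the statement in the case $\bareta_d(H) > 0$.

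The heart of the argument is a single-scale first-moment estimate: for $x = (x_v)_{v \in \partial V}$ with $x \subset \Lambda_x(0, n-1)$ and $\langle x_u x_v \rangle \asymp 2^n$ for $u \neq v$, I aim to show
\[
\sum_{\xi \in \Xi_{\bullet x}(n, n+1)} \P(\Witness^H_r(x, \xi)) \preceq \exp_2\bigl(-\bareta_d(H)\, n + O(\log n)\bigr).
\]
By \cref{prop:sdim2} (which uses Wilson's algorithm and negative association), $\P(\Witness^H_r(x, \xi)) \preceq W(x, \xi)$ factors as a product of inverse spreads. I will stratify the sum over $\xi$ by the logarithmic distance profile $\ell : E^2 \to \{0, \dots, n\}$: on each stratum $\Xi_\ell$, $W$ is approximately constant while $|\Xi_\ell|$ is controlled via a Wilson-style greedy volume count in terms of the ultrametric envelope $\hat\ell$ of $\ell$. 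This reduces the desired bound to maximizing a linear functional $F$ of $\ell$ and $\hat\ell$ over $L := \{0, \dots, n\}^{E^2}$.

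The main technical obstacle is this optimization. I plan to invoke an ultrametric optimization lemma (\cref{lem:ultrametric1}) showing that the maximum of $F$ is attained at some $\ell$ which is itself an ultrametric taking values in $\{0, n\}$. Such two-valued ultrametrics are in bijection with equivalence relations $\bowtie$ on $E$, i.e., with coarsenings $\coarse{H}{\bowtie}$ of $H$, and the value of $F$ at such $\ell$ is precisely $-\eta_d(\coarse{H}{\bowtie})\, n$. The assumption $\bareta_d(H) > 0$ then forces $F(\ell) \leq -\bareta_d(H)\, n$ uniformly, which together with the polynomial factor $|L| = (n+1)^{|E|^2}$ yields the advertised single-scale bound. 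This step --- converting a minimax over the ultrametric polytope into the purely combinatorial statement about coarsenings --- is where the analysis is most delicate.

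Multi-scale witnesses will be handled by induction on $|E|$: stratifying $\xi$ by the dyadic shells containing its edge-clusters, the outermost shell contributes a single-scale factor while the inner shells contribute factors for strictly smaller subhypergraphs, yielding analogues of \cref{lem:inductionestimate,lem:firstmoment2,lem:firstmoment3}. Summing the resulting geometric series in $n$ forces the expected total number of witnesses for faithful presence at $x$ to tend to $0$ as the minimum pairwise distance $2^{n_0}$ in $x$ grows. By Markov's inequality, with positive probability no witness exists at some distinct-component tuple, so $H$ is not faithfully ubiquitous with positive probability. Invoking \cref{thm:indist} as a zero-one law (for a suitable tail $|\partial V|$-component property encoding faithful presence at the roots) upgrades this to almost sure non-faithful-ubiquity, giving (1); a union bound over the finitely many quotients of $H$ with $R_\bbG(H') \leq r$ then yields (2).
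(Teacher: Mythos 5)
Your overall architecture matches the paper's: reduce to a subhypergraph, prove a single-scale first-moment bound via \cref{prop:sdim2}, stratification by distance profiles, and the ultrametric optimization of \cref{lem:ultrametric1}, then handle multiple scales by induction on $|E|$ and finish with Markov. Two steps, however, are genuinely flawed as stated.

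First, the claim that ``the expected total number of witnesses for faithful presence at $x$ tends to $0$'' is false in general, even after your reduction to $\bareta_d(H)>0$. The expectation $\E[S^H_x(0,\infty)]$ can diverge whenever $H$ contains a very non-buoyant piece attached to a very buoyant piece: witnesses with the buoyant part placed at scale $N\to\infty$ contribute a factor growing like $2^{-\bareta_d(H'')N}$ with $\bareta_d(H'')<0$, and this is compatible with $\bareta_d(H)>0$ (the tree of \cref{fig:unbalanced} is the paper's example of this phenomenon). This is precisely why \cref{lem:firstmoment3} inducts on \emph{probabilities} rather than expectations: when $H$ has a proper subhypergraph $H'$ with $\bareta_d(H')>0$ one uses $\{S^H>0\}\subseteq\{S^{H'}>0\}$ and recurses, and the expectation argument is run only in the terminal case where $\bareta_d(H)>0$ but every proper subhypergraph is buoyant-coarsenable. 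Your reduction to subhypergraphs can absorb this fix --- pass to a \emph{minimal} subhypergraph with $\bareta_d>0$ --- but as written the geometric-series step does not go through.

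Second, your closing appeal to \cref{thm:indist} is both unjustified and unnecessary. The event that $H$ is $r$-faithfully present at the roots is not a tail $|\partial V|$-component property: a finite modification of $\omega$ away from the root components can merge the distinct components carrying the interior vertices of the (possibly unique) witness, destroying faithful presence. (This is exactly why the paper introduces \emph{robust} faithful presence for the positive direction.) No zero--one law is needed here: for every $\eps>0$ one produces $x$ with $\P(\text{all } x_v \text{ in distinct components})\geq 1-\eps$ and $\P(H \text{ faithfully present at } x)\leq\eps$, whence $\P(H \text{ faithfully ubiquitous})\leq 2\eps$ for all $\eps$, i.e.\ it is $0$. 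Your reduction of (2) to (1) by a union bound over the finitely many quotients with $R_\bbG(H')\leq r$ is correct and is what the paper does.
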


Let $\bbG$ be a $d$-dimensional graph with $d > 4$, and let $\F$ be the uniform spanning forest of $\bbG$. Let $H=(\partial V,V_\circ,E)$ be a finite hypergraph with boundary such that $E \neq \emptyset$, and let $r\geq 1$. Recall that $\sW(x,\xi)$ is defined to be the event that $\xi$ is a witness for the faithful presence of $H$ at $x$. 
For each $N> n$, we define 
\[
	 S^H_x(n,N) = \sum_{\xi \in \Xi_{\bullet x}(n,N)} \mathbbm{1}\left[\Witness(x,\xi)\right].
\]
For each $(\xi_e )_{e\in E} \in \bbV^{E}$, we also define
\[
	 W^H(x,\xi) = \prod_{u \in \partial V} \langle x_u, \{\xi_e: e \perp u\} \rangle^{-(d-4)} \prod_{u \in V_\circ} \langle \{\xi_e: e \perp u\} \rangle^{-(d-4)} 
\]
and
\[
	\mathbb{W}^H_x(n,N)  = \sum_{\xi\in \Xi_x(n,N)} W^H(x,\xi), 
\]
so that, if we choose a vertex $u(e) \perp e$ arbitrarily for each $e\in E$ and set $(\xi_e)_{e\in E} = (\xi_{(e,u(e))})_{e\in E}$, it follows from \cref{prop:sdim2} that
\[
	\E\left[ S^H_{x}(n,N)\right] 
	=
	\sum_{\xi\in \Xi_{\bullet x}(n,N)} \P(\Witness(x,\xi)) \preceq \sum_{\xi\in \Xi_{\bullet x}(n,N)^{E}} W^H(x,\xi) = \bbW^H_{x}(n,N)
\]
for every $x$, $n$, and $N$.

To avoid trivialities, in the case that $H$ does not have any edges we define $\bbW^H_x(n,N)=1$ for every $x\in \bbV^{\partial V}$ and $N>n$. 

\medskip

In order to prove \cref{prop:nonubiquity}, it will suffice to show that if $H$ has a subhypergraph with boundary that does not have any $d$-buoyant coarsenings, then for every $\eps>0$ there exists a collection of vertices $(x_u)_{u\in \partial V}$ such that all the vertices $x_u$ are in a different component of $\F$ with probability at least $1/2$ (which, by \cref{thm:sdim1}, will be the case if the vertices are all far away from each other), but $\P(H$ is faithfully present at $x)= \P(S^H_{x}(0,\infty) >0) \leq \eps$.
%
%
In order to prove this, we seek to obtain upper bounds  on the quantity $\bbW^H_{x}(n,N)$. We begin by considering the case of a single distant scale. That is, the case that $|N-n|$ is a constant and all the points of $x$ are contained in $\Lambda_x(0,n-1)$. Recall that 
	$\bareta_{d}(H)$ is defined to be $\min \{\eta_{d}(H') : \text{ $H'$ is a coarsening of $H$}\}$.


\begin{lem}[A single distant scale]
	\label{lem:firstmoment}
	Let $\bbG$ be a $d$-dimensional transitive graph and let $H$ be a finite hypergraph with boundary. Then for every $m \geq 0$, there exists a constant $c=c(\bbG,H,m)$ such that
	\begin{equation*}
		\log_2\bbW^H_x(n,n+m) \leq   -\bareta_d(H) \, n + |E|^2\log_2 n + c 
	\end{equation*}
	for all $x=(x_u)_{u\in \partial V} \in \bbV^{\partial V}$ and all $n$ such that $\langle x_u x_v \rangle \leq 2^{n-1}$ for all $u,v \in \partial V$.
\end{lem}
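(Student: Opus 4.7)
The plan is to follow the blueprint from Section~\ref{sec:notationsandoutline}: decompose the configuration space $\Xi_x(n,n+m)$ according to the approximate pairwise distances between coordinates of $\xi$, use the spread formula to bound the weight $W^H$ and the volume of each piece, then reduce the resulting combinatorial optimization to a maximum over coarsenings of $H$. Concretely, I would let $L$ be the set of symmetric functions $\ell: E \times E \to \{0,\ldots,n+m+2\}$ with $\ell(e,e)=0$, and for each $\ell \in L$ set
\[\Xi_\ell = \bigl\{\xi \in \Xi_x(n,n+m) : 2^{\ell(e,e')} \leq \langle \xi_e\xi_{e'}\rangle \leq 2^{\ell(e,e')+2} \text{ for all } e,e'\in E\bigr\}.\]
The $\Xi_\ell$ cover $\Xi_x(n,n+m)$, and $|L| \leq (n+m+3)^{|E|^2}$ supplies the $|E|^2 \log_2 n$ factor in the claim.

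On each $\Xi_\ell$ the spread estimate \eqref{eq:spread}, combined with the observation that $\langle x_u \xi_e\rangle \asymp 2^n$ for every $u \in \partial V$ and $e \in E$ (which uses the hypothesis $\langle x_u x_v\rangle \leq 2^{n-1}$), gives, after fixing an enumeration $e_1,\ldots,e_{|E|}$ of $E$,
\[\log_2 W^H(x,\xi) \approx -(d-4)|\partial V|\,n - (d-4)\sum_{u\in V}\sum_{\substack{i \geq 2 \\ e_i \perp u}} \min\{\ell(e_i,e_j): j < i,\, e_j \perp u\}.\]
A direct counting argument using \eqref{eq:LambdaVolume} to bound the volume of the smallest ball containing $\xi_{e_i}$ given the previously chosen coordinates gives
\[\log_2 |\Xi_\ell| \lesssim d\,n + d\sum_{i \geq 2}\min\{\hat\ell(e_i,e_j): j < i\},\]
where $\hat\ell$ is the largest ultrametric on $E$ dominated by $\ell$; the ultrametric closure appears because the triangle inequality forces the pairwise distances around any triangle to be mutually consistent.

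Writing $\Phi_d(\ell)$ for the sum of these two bounds, the remaining task is to maximize $\Phi_d$ over $L$. I would invoke the ultrametric-optimization lemma \cref{lem:ultrametric1} to conclude that the maximum is attained by some $\{0,n\}$-valued ultrametric $\ell$; such $\ell$ are in bijection with equivalence relations $\bowtie$ on $E$, i.e.\ with coarsenings $\coarse{H}{\bowtie}$ of $H$, via $\ell(e,e')=0 \iff e \bowtie e'$. For the $\ell$ corresponding to $\coarse{H}{\bowtie}$, a direct count using that $|E(\coarse{H}{\bowtie})|$ equals the number of equivalence classes of $\bowtie$ and that $\sum_u \deg_{\coarse{H}{\bowtie}}(u) = \Delta(\coarse{H}{\bowtie})$ shows $\Phi_d(\ell) = -\eta_d(\coarse{H}{\bowtie})\,n + O(1)$. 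Maximizing over coarsenings gives $\max_\ell \Phi_d(\ell) \leq -\bareta_d(H)\, n + O(1)$, and summing $2^{\Phi_d(\ell)}$ over $\ell \in L$ yields the claimed bound.

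The hardest step will be the reduction to $\{0,n\}$-valued ultrametrics: the functional $\Phi_d$ mixes minima of $\ell$ restricted to stars around vertices of $H$ (from the weight) with minima of the ultrametric closure $\hat\ell$ (from the volume), and one must verify that replacing $\ell$ by an extremal ultrametric is harmless on both sides. The natural route is a level-set exchange argument: along a monotone interpolation between $\ell$ and a $\{0,n\}$-valued ultrametric, the functional should be monotone because each min is piecewise linear and concave in its arguments. Packaging this as the abstract polytope lemma \cref{lem:ultrametric1} referenced in the sketch is the cleanest way to proceed and makes the optimization transparent enough to reuse in later estimates.
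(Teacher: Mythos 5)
Your proposal follows the paper's proof essentially verbatim: the same dyadic decomposition of $\Xi_x(n,n+m)$ into pieces $\Xi_\ell$ indexed by approximate pairwise distance profiles, the same weight and volume bounds in terms of $\ell$ and its ultrametric closure $\hat\ell$, the same appeal to \cref{lem:ultrametric1} to reduce to $\{0,n\}$-valued ultrametrics, and the same identification of such ultrametrics with coarsenings of $H$ yielding $-\eta_d(\coarse{H}{\bowtie})\,n$. The argument is correct and no further commentary is needed.
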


It will be useful for applications in \cref{Sec:technical} to prove a more general result. A graph $\bbG$ is said to be \textbf{$d$-Ahlfors regular}  if there exists a positive constant $c$ such that $c^{-1} r^d \leq |B(x,r)| \leq cr^d$ for every $r\geq 1$ and every $x \in V$ (in which case we say $\bbG$ is $d$-Ahlfors regular with constant $c$).
 Given $\alpha>0$ and a finite hypergraph with boundary $H$, we define
\[
	\eta_{d,\alpha}(H) = (d-2\alpha)\Delta - d|E| - (d-2\alpha)|V_\circ|,
\]
where we recall that $\Delta =\sum_{e\in E}\deg(e) = \sum_{v\in V} \deg(v)$, and define $\bareta_{d,\alpha}(H) = \min \{\eta_{d,\alpha}(H') : \text{ $H'$ is a coarsening of $H$}\}$.
Given a graph $\bbG$, a finite hypergraph with boundary $H=(\partial V, V_\circ, E)$, and points $(x_v)_{v\in \partial V}$, $(\xi_e)_{e\in E}$ we also define 
\[ 
	W_\alpha^H(x,\xi) = \prod_{u \in \partial V} \langle x_u, \{\xi_e: e \perp u\} \rangle^{-(d-2\alpha)} \prod_{u \in V_\circ} \langle \{\xi_e: e \perp u\} \rangle^{-(d-2\alpha)} 
\]
and, for each $N> n$,
\[
	\mathbb{W}^{H,\alpha}_{x}(n,N)  = \sum_{\xi\in \Xi_x(n,N)} W_\alpha^H(x,\xi). 
\]
Note that $\eta_d=\eta_{d,2}$ and $\bbW_x^H=\bbW_x^{H,2}$, so that \cref{lem:firstmoment} follows as a special case of the following lemma.

\begin{lem}[A single distant scale, generalised]
	\label{lem:firstmomentgeneral}
	Let $\bbG$ be a $d$-Ahlfors regular graph with constant $c'$, let $H$ be a finite hypergraph with boundary, and let $\alpha \in \R$ be such that $d\geq 2\alpha$. Then for every $m \geq 0$, there exists a constant $c=c(c',H,\alpha,d,m)$ such that
	\begin{equation*}
		\log_2\bbW^{H,\alpha}_{x}(n,n+m) \leq   -\bareta_{d,\alpha}(H) \, n + |E|^2\log_2 n + c 
	\end{equation*}
	for all $x=(x_u)_{u\in \partial V} \in \bbV^{\partial V}$ and all $n$ such that $\langle x_u x_v \rangle \leq 2^{n-1}$ for all $u,v \in \partial V$.
\end{lem}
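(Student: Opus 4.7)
Following the sketch in Section~3.1, the plan is to decompose $\bbW_x^{H,\alpha}(n, n+m)$ according to the dyadic scales of pairwise distances between the points $\xi_e$, bound the weight and volume uniformly on each scale-class, and then optimize the resulting expression over coarsenings of $H$.

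First I would set up the decomposition. Fix an enumeration $e_1, \ldots, e_{|E|}$ of $E$ and let $L$ be the set of symmetric functions $\ell : E \times E \to \{0, 1, \ldots, n+m+1\}$ with $\ell(e,e) = 0$; for each $\ell \in L$ set
\[
\Xi_\ell := \bigl\{\xi \in \Xi_x(n, n+m) : 2^{\ell(e, e')} \leq \langle \xi_e \xi_{e'}\rangle < 2^{\ell(e, e') + 1} \text{ for all } e \neq e'\bigr\}.
\]
These partition $\Xi_x(n, n+m)$ and satisfy $\log_2 |L| \lesssim |E|^2 \log_2 n$, accounting for the $|E|^2 \log_2 n$ error in the claimed bound. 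Writing $I_u := \{i : e_i \perp u\}$ and using \eqref{eq:spread} together with the fact that $\langle x_u \xi_e \rangle \asymp 2^n$ for every $u \in \partial V$ and $\xi \in \Xi_x(n, n+m)$ (a consequence of $\langle x_u x_v \rangle \leq 2^{n-1}$ and $\xi_e \in \Lambda_x(n, n+m)$), a direct spread computation gives
\[
\log_2 W_\alpha^H(x, \xi) \lesssim -(d - 2\alpha)|\partial V|\, n - (d - 2\alpha)\, G(\ell) \qquad \text{for every } \xi \in \Xi_\ell,
\]
where $G(\ell) := \sum_{u \in V} \sum_{i \in I_u,\, i > \min I_u} \min\{\ell(e_i, e_j) : j < i,\, j \in I_u\}$. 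On the volume side, iteratively placing the $\xi_{e_i}$ inside appropriate dyadic balls and invoking Ahlfors regularity yields
\[
\log_2 |\Xi_\ell| \lesssim dn + d \sum_{i = 2}^{|E|} \min_{j < i} \hat\ell(e_i, e_j),
\]
where $\hat\ell$ is the largest ultrametric on $E$ dominated by $\ell$; the replacement of $\ell$ by $\hat\ell$ is justified because the triangle inequality forces $\ell$ to be an ultrametric up to an additive $O(1)$ on any non-empty $\Xi_\ell$.

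Combining these estimates with $\log_2 |L| \lesssim |E|^2 \log_2 n$ gives
\[
\log_2 \bbW_x^{H, \alpha}(n, n + m) \lesssim |E|^2 \log_2 n + dn - (d - 2\alpha)|\partial V|\, n + \max_{\ell \in L} F(\ell, n),
\]
with $F(\ell, n) := d \sum_{i = 2}^{|E|} \min_{j < i} \hat\ell(e_i, e_j) - (d - 2\alpha)\, G(\ell)$. The main obstacle, and the heart of the argument, is evaluating $\max_{\ell \in L} F(\ell, n)$. For this I would invoke the general optimization lemma \cref{lem:ultrametric1} on linear combinations of minima over the ultrametric polytope, which will imply that the maximum is achieved, up to additive constants, by some $\ell$ that is itself an ultrametric taking values only in $\{0, n\}$.

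Such $\{0, n\}$-valued ultrametrics are in bijection with coarsenings $H' = \coarse{H}{\bowtie}$ of $H$ via the rule $e \bowtie e' \Longleftrightarrow \ell(e, e') = 0$. For the $\ell$ corresponding to $H'$, counting contributions (one of $n$ per non-initial equivalence class, and one of $n$ per new class encountered at each vertex) gives $\sum_{i = 2}^{|E|} \min_{j < i} \hat\ell(e_i, e_j) = (|E(H')| - 1)\, n$ and $G(\ell) = n\sum_{u \in V}(\deg_{H'}(u) - 1) = (\Delta(H') - |V|)\, n$, and hence
\[
dn + F(\ell, n) - (d - 2\alpha)|\partial V|\, n = -\bigl[(d - 2\alpha) \Delta(H') - d\, |E(H')| - (d - 2\alpha) |V_\circ|\bigr]\, n = -\eta_{d, \alpha}(H')\, n.
\]
Maximizing over coarsenings produces $-\bareta_{d, \alpha}(H)\, n$, which together with the $|E|^2 \log_2 n$ term completes the proof. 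The hardest step is the reduction to $\{0, n\}$-valued ultrametrics via \cref{lem:ultrametric1}; the sharpening from $\ell$ to $\hat\ell$ in the volume bound is a secondary but necessary technical point.
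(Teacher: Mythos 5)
Your proposal is correct and follows essentially the same route as the paper's proof: the same dyadic decomposition of $\Xi_x(n,n+m)$ into classes $\Xi_\ell$, the same weight and volume bounds in terms of $\ell$ and its ultrametric envelope $\hat\ell$, the reduction via \cref{lem:ultrametric1} to $\{0,n\}$-valued ultrametrics, and the identification of such ultrametrics with coarsenings $H'$ yielding $-\eta_{d,\alpha}(H')\,n$. The only (immaterial) differences are bookkeeping choices such as the exact range of $\ell$ and the placement of the $2^{-(d-2\alpha)|\partial V|n}$ factor.
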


Before proving this lemma, we will require a quick detour to analyze a relevant optimization problem. 

\subsubsection*{Optimization on the ultrametric polytope}
\label{subsec:ultrametric}

 Recall that a (semi)metric space $(X,d)$ is an \textbf{ultrametric} space if $d(x,y) \leq \max \{d(x,z),d(z,y)\}$ for every three points $x,y,z\in X$. For each finite set $A$, the \textbf{ultrametric polytope} on $A$ is defined to be 
\[ \mathcal{U}_A = \left\{(x_{a,b})_{a,b \in A}  \in [0,1]^{A^2} : 
\begin{array}{l}
x_{a,a}=0 \text{ for all $a \in A$},\, x_{a,b}=x_{b,a} \text{ for all $a,b\in A$},
\vspace{0.25em}
\\
 \text{and } x_{a,b} \leq \max\left\{x_{a,c},x_{c,b}\right\} \text{for all $a,b,c \in A$}
\end{array}
\right\},  \]
which is a closed convex subset of $\R^{A^2}$. 
We consider $\cU_A$ to be the set of all ultrametrics on $A$ with distances bounded by $1$.  
We write $\cP(A^2)$ for the set of subsets of $A^2$.

\begin{lemma}\label{lem:ultrametric1}
	Let $A$ be a finite non-empty set, and let $F:\R^{A^2}\to \R$ be of the form
	\[
		F(x) = \sum_{k=1}^K c_k \min\{x_{a,b} : (a,b) \in W_k\},
	\]
	where $K<\infty$, $c_1,\ldots,c_K \in \R$, and $W_1,\ldots, W_K \in \cP(A^2)$. Then the maximum of $F$ on $\cU_A$ is obtained by an ultrametric for which all distances are either zero or one. That is,
	\[
		\max\{F(x) : x \in \cU_A\} = \max\left\{F(x) : x \in \cU_A,\, x_{a,b} \in \{0,1\} \text{ for all $a,b\in A$}\right\}.
	\]
\end{lemma}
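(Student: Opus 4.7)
The plan is to exploit a layer-cake / thresholding construction: from an arbitrary $x \in \cU_A$, I will extract a one-parameter family of $0/1$ ultrametrics whose average reproduces $F(x)$, and then pick the best one.

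For $t \in [0,1]$, define $x^{(t)} \in \{0,1\}^{A^2}$ by $x^{(t)}_{a,b} = \mathbbm{1}[x_{a,b} > t]$. My first step is to verify that $x^{(t)} \in \cU_A$. Symmetry is immediate, and $x^{(t)}_{a,a} = \mathbbm{1}[0 > t] = 0$ for all $t \geq 0$. For the ultrametric inequality, if $x^{(t)}_{a,b} = 1$ then $x_{a,b} > t$, so $\max(x_{a,c},x_{c,b}) \geq x_{a,b} > t$, hence at least one of $x^{(t)}_{a,c}$ or $x^{(t)}_{c,b}$ equals $1$, giving $x^{(t)}_{a,b} \leq \max(x^{(t)}_{a,c},x^{(t)}_{c,b})$; the inequality is trivial when $x^{(t)}_{a,b}=0$.

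Next, I will apply the layer-cake identity. For each $W \subseteq A^2$ and each $x \in \cU_A$, since all values $x_{a,b}$ lie in $[0,1]$,
\[
\min\{x_{a,b} : (a,b) \in W\} \;=\; \int_0^1 \mathbbm{1}\bigl[\min\{x_{a,b} : (a,b) \in W\} > t\bigr]\, dt \;=\; \int_0^1 \min\{x^{(t)}_{a,b} : (a,b) \in W\}\, dt,
\]
where in the last equality I used that $\mathbbm{1}[\min > t] = \min \mathbbm{1}[\,\cdot\, > t]$ when each factor is $0/1$ (both sides equal $1$ iff every $x_{a,b} > t$). Summing over $k$ with coefficients $c_k$ and using Fubini yields $F(x) = \int_0^1 F(x^{(t)})\, dt$.

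Finally, as $t$ varies over $[0,1]$, the configuration $x^{(t)}$ takes only finitely many values (indexed by the finitely many thresholdings determined by the finite set of numbers $\{x_{a,b}\}$), so the integrand $F(x^{(t)})$ is a simple function and $F(x)$ is a convex combination of finitely many values of the form $F(y)$ with $y$ a $0/1$ ultrametric. In particular there exists $t^* \in [0,1]$ with $F(x^{(t^*)}) \geq F(x)$, and taking the supremum over $x \in \cU_A$ proves the lemma. The only conceptually nontrivial step is the verification that the thresholding preserves the ultrametric inequality; everything else is routine once that is in hand.
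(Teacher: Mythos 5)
Your proof is correct, and it takes a genuinely different route from the paper's. The paper argues by induction on $|A|$: it takes a maximizer $y$, uses the two affine identities $F(\lambda x)=\lambda F(x)$ and $F(x+\alpha\mathbf 1)=F(x)+\alpha F(\mathbf 1)$ to rescale $y$ and force $\min\{y_{a,b}:a\neq b\}=0$, then quotients $A$ by the relation $y_{a,b}=0$ and applies the induction hypothesis to the smaller set. Your argument instead decomposes an arbitrary $x\in\cU_A$ exactly as an average of its thresholdings, $F(x)=\int_0^1 F(x^{(t)})\,dt$; the three checks you flag (thresholding preserves the ultrametric inequality, the layer-cake identity $y=\int_0^1\mathbbm{1}[y>t]\,dt$ for $y\in[0,1]$, and $\mathbbm{1}[\min>t]=\min\mathbbm{1}[\cdot>t]$) are all valid, and the identity holds for arbitrary signs of the $c_k$ because it is an exact equality rather than a convexity bound. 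Your route avoids induction entirely and proves something slightly stronger --- every value $F(x)$ is an explicit convex combination of values at $0/1$ ultrametrics, not merely dominated by the best one. It also transfers verbatim to the constrained polytope $\cU_{\{A_i\}_{i\in I}}$ of the paper's Lemma 4.4: for $t\in[0,1)$ the thresholding keeps all cross-block distances equal to $1$, so $x^{(t)}$ stays in the constrained polytope, and discarding the single point $t=1$ does not change the integral. The one pedantic caveat is that the statement implicitly requires each $W_k$ to be nonempty (otherwise $\min\emptyset$ is undefined and the lemma itself is vacuous for that term), which affects both proofs equally.
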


\begin{proof}
We prove the claim by induction on $|A|$. The case $|A|=1$ is trivial. Suppose that the claim holds for all sets with cardinality less than that of $A$. 
We may assume that $(a,a) \notin W_k$ for every $1\leq k \leq K$ and $i \in A$, since if $(a,a)\in W_k$ for some $1\leq k \leq K$ then the term $c_k \min \{x_{a,b} : (a,b) \in W_k\}$ is identically zero on $\cU_A$. 
We write $\mathbf{1}$ for the vector
\[\mathbf{1}_{(a,b)}=\mathbbm{1}(a\neq b).\]
%
 It is easily verified that
\[F(\lambda x) = \lambda F(x)
 \quad \text{ and } \quad F(x+\alpha\mathbf{1}) = F(x) + \alpha F(\mathbf{1})  \]
for every $x\in \R^{A^2}$, every $\lambda\geq 0$, and every $\alpha \in \R$. 

Suppose $y\in\cU_A$ is such that $F(y) = \max_{x\in \cU_A} F(x)$. 
We may assume that $F(y)>F(\mathbf{1})$ and that $F(y)>F(0)=0$, since otherwise the claim is trivial. 
Let $m = \min \{ y_{a,b} : a,b \in A, a\neq b\}$, which is less than one by assumption.
We have that
\[\frac{y}{1-m}- \frac{m}{1-m}\mathbf{1} \in \cU_A\]
and
\[F\left(\frac{y}{1-m}- \frac{m}{1-m}\mathbf{1}\right) = \frac{F(y)}{1-m} - \frac{mF(\mathbf{1})}{1-m} = F(y) + \frac{m}{1-m}(F(y)-F(\mathbf{1})), \]
and so we must have $m=0$ since $y$ maximizes $F$.

Define an equivalence relation $\bowtie$ on $A$ by letting $a$ and $b$ be related if and only if $y_{a,b}=0$. We write $\hat a$ for the equivalence class of $b$ under $\bowtie$. Let $C$ be the set of equivalence classes of $\bowtie$, and let $\phi: \cU_C \to \cU_A$ be the function defined by
\[\phi(x)_{a,b} = x_{\hat a, \hat b}\]
for every $x\in \cU_n$.
For each $1\leq k \leq K$, let $\hat W_k$ be the set of pairs $\hat a, \hat b \in C$ such that $(a,b) \in W_k$ for some $a$ in the equivalence class $\hat a$ and $b$ in the equivalence class $\hat b$. Let 
 $\hat F : \cU_C \to \R$ be defined by 
\[\hat F(x) = \sum_{k=1}^K c_k \min \{x_{a,b} : (\hat a, \hat b) \in \hat W_k \}. \]
We have that $\hat F = F \circ \phi$, and, since $y$ maximized $F$, we deduce that, by the induction hypothesis,
\begin{align*}\max\{F(x): x \in \cU_A\} &= \max\{\hat F(x) : x \in \cU_C\}\\ &= \max\{\hat F(x) : x \in \cU_C,\, x_{\hat a, \hat b} \in \{0,1\} \text{ for all $\hat a, \hat b \in C$}\},\end{align*}
completing the proof.
\end{proof}

We will also require the following generalisation of \cref{lem:ultrametric1}.
For each finite collection of disjoint finite sets $\{A_i\}_{i\in I}$ with union $A = \bigcup_{i\in I} A_i$, we define
\[ \mathcal{U}_{\{A_i\}_{i\in I}} = \{ x \in \cU_{A} : x_{a,b}=1 \text{ for every distinct $i,j \in I$ and every $a \in A_i$ and $b \in A_j$.}\}.  \]

\begin{lemma}\label{lem:ultrametric2}
	Let $\{A_i\}_{i\in I}$ be a finite collection of disjoint, finite, non-empty sets with union $A = \bigcup_{i\in I}A_i$, and let $F:\R^{A^2}\to \R$ be of the form
	\[
		F(x) = \sum_{k=1}^K c_k \min\{x_{(i,j)} : (i,j) \in W_k\},
	\]
	where $K<\infty$, $c_1,\ldots,c_K \in \R$, and $W_1,\ldots, W_K \in \cP(A^2)$. Then the maximum of $F$ on $\cU_A$ is obtained by an ultrametric for which all distances are either zero or one. That is,
	\[
		\max\{F(x) : x \in \cU_{\{A_i\}_{i\in I}}\} = \max\left\{F(x) : x \in \cU_{\{A_i\}_{i\in I}},\, x_{a,b} \in \{0,1\} \text{ for all $a,b\in A$}\right\}.
	\]
\end{lemma}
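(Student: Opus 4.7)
The plan is to reduce the statement to \cref{lem:ultrametric1} by augmenting $F$ with a penalty term that forces all cross distances to equal $1$. Let $A^2_{\mathrm{cross}} = \bigcup_{i \neq j} A_i \times A_j$, and for a parameter $\lambda > 0$ consider
\[
F_\lambda(x) = F(x) + \lambda \sum_{(a,b) \in A^2_{\mathrm{cross}}} x_{a,b},
\]
viewing each summand $x_{a,b}$ as a minimum over the singleton set $\{(a,b)\}$, so that $F_\lambda$ is again of the form covered by \cref{lem:ultrametric1}. That lemma then supplies a $\{0,1\}$-valued maximizer $x^{\ast}$ of $F_\lambda$ on $\cU_A$.

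The core step is to show that once $\lambda$ exceeds $C := \sum_k |c_k|$, every $\{0,1\}$-valued maximizer of $F_\lambda$ on $\cU_A$ must already lie in $\cU_{\{A_i\}_{i\in I}}$. For this, given any $x \in \cU_A$ I would define $x'$ by $x'_{a,b}=1$ for $(a,b)\in A^2_{\mathrm{cross}}$ and $x'_{a,b}=x_{a,b}$ otherwise. A short case analysis on triples confirms that $x' \in \cU_{\{A_i\}_{i\in I}}$: the ultrametric inequality is inherited from $x$ on triples lying in a single $A_i$, while on triples involving a cross pair the right hand side contains at least one cross value and hence equals $1$. Since $x' \geq x$ pointwise, each $F_k(x') \geq F_k(x)$, so $F(x') \geq F(x) - C$; meanwhile, if $x \notin \cU_{\{A_i\}_{i\in I}}$ and $x$ is $\{0,1\}$-valued, some cross pair has $x_{a,b}=0$, making $\sum_{\mathrm{cross}}(x'-x) \geq 1$ and hence $F_\lambda(x') - F_\lambda(x) \geq \lambda - C > 0$. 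This contradicts the maximality of $x^{\ast}$ unless $x^{\ast} \in \cU_{\{A_i\}_{i\in I}}$.

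With $x^{\ast} \in \cU_{\{A_i\}_{i\in I}}$ established, I conclude by observing that on $\cU_{\{A_i\}_{i\in I}}$ the penalty $\lambda\sum_{\mathrm{cross}}x_{a,b}$ is a constant $\lambda|A^2_{\mathrm{cross}}|$, so $F_\lambda$ and $F$ have the same maximizers there; since $x^{\ast}$ maximizes $F_\lambda$ on the larger set $\cU_A$, it maximizes $F$ on $\cU_{\{A_i\}_{i\in I}}$ as well, and the $\{0,1\}$-valued maximizer of $F$ required by the lemma is obtained. The principal obstacle is noticing that a direct attempt to deduce the result from \cref{lem:ultrametric1} alone — by applying it to $F$ on $\cU_A$ and then raising cross distances to $1$ — can fail, because raising cross distances may decrease some $F_k$ and hence decrease $F$ when some coefficients $c_k$ are negative. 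The penalty term is precisely what makes the net effect of this raising operation strictly positive, restoring the reduction.
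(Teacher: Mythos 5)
Your proof is correct, and it takes a genuinely different route from the paper's. The paper proves \cref{lem:ultrametric2} by repeating the strategy of \cref{lem:ultrametric1}: induction on $|A|$ with the index set $I$ fixed, using the affine identities $F(\lambda x)=\lambda F(x)$ and $F(x+\alpha\mathbf{i})=F(x)+\alpha F(\mathbf{1})$ to push the minimal within-block distance to $0$, then quotienting and invoking the induction hypothesis (the paper in fact only sketches this, saying the rest is "similar to" the earlier proof). You instead reduce to \cref{lem:ultrametric1} as a black box by adding the penalty $\lambda\sum_{\mathrm{cross}}x_{a,b}$ with $\lambda>\sum_k|c_k|$; the key verifications all check out: the rounding map $x\mapsto x'$ lands in $\cU_{\{A_i\}_{i\in I}}$ (the triple case analysis is right), each $\min$ changes by at most $1$ so $F(x')\geq F(x)-\sum_k|c_k|$, and for a $\{0,1\}$-valued $x\notin\cU_{\{A_i\}_{i\in I}}$ the penalty gains at least $\lambda$, so the maximizer supplied by \cref{lem:ultrametric1} must already lie in $\cU_{\{A_i\}_{i\in I}}$, where $F_\lambda$ and $F$ differ by a constant. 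Your approach buys a shorter, fully self-contained argument that avoids redoing the induction the paper leaves implicit, and you correctly isolate why the naive "apply \cref{lem:ultrametric1} and round cross distances up to $1$" fails when some $c_k<0$ — which is precisely the gap the penalty term closes. The paper's approach buys structural parallelism between the two lemmas and avoids introducing an auxiliary objective, but is otherwise no simpler.
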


\begin{proof}
We prove the claim by fixing the index set $I$ and inducting on $|A|$. The case $|A|=|I|$ is trivial. Suppose that the claim holds for all collections of finite disjoint sets indexed by $I$ with total cardinality less than that of $A$. 
We may assume that $(i,i) \notin W_k$ for every $1\leq k \leq K$ and $i \in A$, since if $(i,i)\in W_k$ for some $1\leq k \leq K$ then the term $c_k \min \{x_{i,j} : (i,j) \in W_k\}$ is identically zero on $\cU_A$. Furthermore, we may assume that $W_k$ contains more than one element of at least one of the sets $A_i$ for each $1 \leq k \leq K$, since otherwise the term $c_k \min \{x_{i,j} : (i,j) \in W_k\}$ is equal to the constant $c_k$ on $\cU_{\{A_i\}_{i\in I}}$. 
We write $\mathbf{1}$ and $\mathbf{i}$ for the vectors
\[\mathbf{1}_{a,b} = \mathbbm{1}(a\neq b)\]
and
\[\mathbf{i}_{a,b} = \mathbbm{1}(\text{$a\neq b$, and $a,b\in A_i$ for some $i \in I$}).\]
%
 It is easily verified that
\[F(\lambda x) = \lambda F(x)
 \quad \text{ and } \quad F(x+\alpha\mathbf{i}) = F(x) + \alpha F(\mathbf{1})  \]
for every $x\in \cU_{\{A_i\}_{i\in I}}$, every $\lambda\geq 0$, and every $\alpha \in \R$ such that $x + \alpha \mathbf{i} \in \cU_{\{A_i\}_{i\in I}} $. 

The rest of the proof is similar to that of \cref{lem:ultrametric1}. \qedhere

\end{proof}


\subsubsection*{Back to the uniform spanning forest}


We now return to the proofs of \cref{prop:nonubiquity} and \cref{lem:firstmomentgeneral}.

\begin{proof}[Proof of \cref{lem:firstmomentgeneral}]
	In this proof, implicit constants will be functions of $c',H,\alpha,d$ and $m$. The case that $E = \emptyset$ is trivial (by the assumption that $d \geq 2 \alpha$), so we may assume that $|E|\geq 1$.

	Write $\Xi=\Xi_x(n,n+m)$. 
	First, observe that
	\[
		\langle x_u, \{\xi_e: e \perp u\} \rangle \asymp 2^{n} \langle \{\xi_e: e \perp u\} \rangle \]
	for every $\xi \in \Xi$ and $u \in \partial V$, and hence that
	\begin{align*}
	\bbW^{H,\alpha}_{x}(n,n+m) &= \sum_{\xi\in \Xi} \prod_{u \in \partial V} \langle x_u, \{\xi_e: e \perp u\} \rangle^{-(d-2\alpha)} \prod_{u \in V_\circ} \langle \{\xi_e: e \perp u\} \rangle^{-(d-2\alpha)} \\
	&\preceq 
	2^{-(d-2\alpha)|\partial V|n} \sum_{\xi \in \Xi} \prod_{u \in V} \langle \{\xi_e: e \perp u\} \rangle^{-(d-2\alpha)}.
	\end{align*}

	Let $L$ be the set of symmetric functions $\ell:E^2 \to \{0,\ldots,n\}$ such that $\ell(e,e)=0$ for every $e\in E$. 
	 For each $\ell \in L$, let 
	\[
		\Xi_\ell = \left\{\xi \in \Xi :
			\begin{array}{l}
		 		2^{\ell(e,e')} \leq \langle \xi_e \xi_{e'} \rangle \leq 2^{\ell(e,e')+m+1} \text{ for all $e,e' \in E$}
			\end{array}
		\right\},
	\]
	 so that $\Xi = \bigcup_{\ell \in L} \Xi_\ell$. 
	For each $\ell$ in $L$, let
	\[
		\hat \ell(e,e') = \min \{\ell(e,e')\}\cup\left\{\max \{\ell(e,e_1),\ldots,\ell(e_k,e')\}: k\geq 1 \text{ and } e_1,\ldots,e_k \in E \right\}.
	\]
	In other words, $\hat \ell$ is the largest ultrametric on $E$ that is dominated by $\ell$. 
	Observe that for every $\ell \in L$, every $\xi \in \Xi_\ell$, end every $e,e',e'' \in E$, we have that
	\begin{align*} 
		\log_2 \langle \xi_e \xi_{e'} \rangle &\leq \log_2\left[ \langle \xi_e \xi_{e''} \rangle + \langle \xi_{e''} \xi_{e'} \rangle\right] 
		 \leq \log_2\max\{\langle \xi_e \xi_{e''} \rangle,\, \langle \xi_{e''} \xi_{e'} \rangle\} +1\\
		& \leq \max\{\ell(e,e''),\, \ell(e'',e')\}+2m +3,
	\end{align*}
	and hence, by induction, that
	\[
		\log_2 \langle \xi_e \xi_{e'} \rangle \leq \hat \ell(e,e')+(2m+3)|E| \approx \hat \ell(e,e').
	\]

	Let $e_1,\ldots,e_{|E|}$ be an enumeration of $E$.
	For every $\ell \in L$, every $1 \leq j < i \leq |E|$ and every $\xi \in \Xi_\ell$ we have that
	\[
		\xi_{e_i} \in B\left(\xi_{e_j},\, 2^{\hat\ell(e_i,e_j)+(2m+3)|E|}\right) \text{ and } \left|B\left(\xi_{e_j},\, 2^{\hat\ell(e_i,e_j)+(2m+3)|E|}\right)\right| \preceq 2^{d\hat\ell(e_i,e_j)}.
	\]
	By considering the number of choices we have for $\xi_{e_i}$ at each step given our previous choices, it follows that
	\begin{align}
		\log_2 |\Xi_\ell| \lesssim  
		dn+ d\sum_{i=2}^{|E|}\min\left\{\hat \ell(e_i,e_j) : j<i\right\}. 
		\label{eq:Lambdaell}
	\end{align}

	Now, for every $\xi \in \Xi_\ell$, we have that
	\begin{align}
	\log_2 \prod_{u \in V} \langle \{\xi_e: e \perp u\} \rangle^{-(d-2\alpha)}
	&\approx -(d-2\alpha)\sum_{u \in V}\sum_{i=2}^{|E|}\mathbbm{1}(e_i \in u) \min \left\{\ell(e_i,e_j) : j<i,\, e_j \perp u\right\}
	\nonumber
	\\
	&=
	-(d-2\alpha)\sum_{i=2}^{|E|}\sum_{u \perp e}\min \left\{\ell(e_i,e_j) : j<i,\, e_j \perp u\right\}.
	\label{eq:Lambdaellsfriend}
	\end{align}
	Thus, from \eqref{eq:Lambdaell} and \eqref{eq:Lambdaellsfriend} we have that
	\begin{multline}
		\log_2 \sum_{\xi \in \Xi_\ell} \prod_{u \in V} \langle \{\xi_e: e \perp u \} \rangle^{-(d-2\alpha)}\\ \lesssim  
		dn+ \sum_{i=2}^{|E|}\left[ d\min\{\hat \ell(e_i,e_j) : j<i\} -(d-2\alpha) \sum_{u \perp e}\min \{\ell(e_i,e_j) : j<i,\, e_j \perp u\}\right]. \label{eq:Qfirstmoment1}
	\end{multline}

	Let  $Q: L \to \R$ be defined to be the expression on the right hand side of \eqref{eq:Qfirstmoment1}.
	We clearly have that $Q(\hat \ell) \geq Q(\ell)$ for every $\ell \in L$, and so there 
	exists $\ell\in L$ maximizing $Q$ such that $\ell$ is an ultrametric. It follows from \cref{lem:ultrametric1} (applied to the normalized ultrametric $\ell/n$) that there exists $\ell\in L$ maximizing $Q$ such that $\ell$ is an ultrametric and every value of $\ell$ is in $\{0,n\}$.
	Fix one such $\ell$, and define an equivalence relation $\bowtie$ on $E$ by letting $e \bowtie e'$  if and only if $\ell(e,e')=0$, which is an equivalence relation since $\ell$ is an ultrametric.
	Observe that, for every $2 \leq i \leq |E|$,
	\[ 
		\min \{\ell(e_i,e_j) : j < i\} = \mathbbm{1}[\text{$e_j$ is not in the equivalence class of $e_i$ for any $j<i$}]\, n, 
	\]
	and hence that
	\[ 
		dn + \sum_{i=2}^{|E|} \min \{\ell(e_i,e_j) : j < i\} = |\{\text{equivalence classes of $\bowtie$}\}| \, n. 
	\]
	Similarly, we have that, for every vertex $u$ of $H$,
	\[ 
		\sum_{i=2}^{|E|} \min \{\ell(e_i,e_j) : j < i, e_j \perp u\} = \left(|\{\text{equivalence classes of $\bowtie$ incident to $u$}\}|-1\right)\,n,
	\]
	where we say that an equivalence class of $\bowtie$ is incident to $u$ if it contains an edge that is incident to $u$. Thus, we have that
	\begin{multline}
	\label{eq:Qequiv}
		Q(\ell) = 
	 	d|\{\text{equivalence classes of $\bowtie$}\}|\, n
		\\-(d-2\alpha)\sum_{u \in V} (|\{\text{equivalence classes of $\bowtie$ incident to $u$}\}|-1)\, n.
	\end{multline}

	Let $H'=\coarse{H}{\bowtie}$ be the coarsening of $H$ associated to $\bowtie$ as in \cref{subsec:optimalcoarsenings}. 
	We can rewrite \eqref{eq:Qequiv} as
	\begin{align*}
		Q(\ell) &=  d|E(H')|\, n   -(d-2\alpha)\Delta(H')\, n+(d-2\alpha)|V(H)|\, n
		= -\eta_{d,\alpha}(H')\, n+(d-2\alpha)|\partial V| \, n.
	\end{align*}
	Since $|L| \leq (n+1)^{|E|^2}$, we deduce that
	\begin{multline*}
		\log_2\bbW^{H,\alpha}_{x}(n,n+m) \lesssim -(d-2\alpha)|\partial V|\, n + \log_2 \sum_{\ell \in L} Q(\ell) 
		\\
		\leq
		\max_{\ell\in L}Q(\ell) - (d-2\alpha)|\partial V|\, n + \log_2|L| \lesssim  - \bareta_{d,\alpha}(H)\, n + |E|^2 \log_2 n
	 \end{multline*}
	as claimed. \qedhere

\end{proof}


Next, we consider the case that the points $x_v$ are roughly equally spaced and we are summing over points $\xi$ that are on the same scale as the spacing of the $x_v$.

\begin{lem}[The close scale]
\label{lem:firstmomentclose}
	Let $\bbG$ be a $d$-dimensional transitive graph with $d>4$ and let $H$ be a finite hypergraph with boundary. Let $m_1,m_2\geq 0$. Then there exists a constant $c=c(\bbG,H,m_1,m_2)$ such that
\begin{equation*}\log_2\bbW^H_x(0,n+m_2) \leq    -\bareta_d(H)\, n + |E \cup \partial V|^2\log_2 n + c \end{equation*}
 for every $n \geq 1$ and every $x=(x_u)_{u\in \partial V} \in \bbV^{\partial V}$ are such that $2^{n-m_1} \leq \langle x_u x_v \rangle \leq 2^n$ for every $u,v\in V$.
\end{lem}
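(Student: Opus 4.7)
The proof will adapt that of \cref{lem:firstmomentgeneral} by enlarging the combinatorial index set from $E$ to $E\cup\partial V$, treating the boundary points $x_u$ and variable points $\xi_e$ symmetrically since both now live at comparable dyadic scales. Let $L$ be the set of symmetric functions $\ell:(E\cup\partial V)^2\to\{0,\ldots,n+m_2\}$ with $\ell(a,a)=0$ and with $\ell(u,v)=\lfloor\log_2\langle x_u x_v\rangle\rfloor\in\{n-m_1,\ldots,n\}$ for distinct $u,v\in\partial V$ (so the boundary-to-boundary values are fixed by $x$). For each $\ell\in L$, let
\[
	\Xi_\ell=\left\{\xi\in\Xi_x(0,n+m_2):2^{\ell(a,b)}\leq\langle y_a y_b\rangle<2^{\ell(a,b)+1}\text{ for all }a,b\in E\cup\partial V\right\},
\]
where $y_a=x_a$ for $a\in\partial V$ and $y_a=\xi_a$ for $a\in E$. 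Then $\Xi_x(0,n+m_2)=\bigcup_{\ell\in L}\Xi_\ell$ with $|L|\leq(n+m_2+2)^{|E\cup\partial V|^2}$, accounting for the polynomial correction in the target inequality.

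Following the strategy of \cref{lem:firstmomentgeneral}, I will use \eqref{eq:spread} to express $\log_2 W^H(x,\xi)$ on $\Xi_\ell$ as $-(d-4)$ times a sum of iterated minima of $\ell$-values (now including terms like $\min\{\ell(u,e):u\in\partial V,\,u\perp e\}$ arising from the factor $\langle x_u,\{\xi_e:e\perp u\}\rangle^{-(d-4)}$ for boundary $u$), and I will bound $|\Xi_\ell|$ by placing $\xi_{e_1},\ldots,\xi_{e_{|E|}}$ one at a time, each $\xi_{e_i}$ being constrained to a ball of radius $\preceq 2^{\min\{\hat\ell(a,e_i):a\in\partial V\cup\{e_1,\ldots,e_{i-1}\}\}}$ where $\hat\ell$ is the largest ultrametric on $E\cup\partial V$ dominated by $\ell$. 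This yields $\log_2\sum_{\xi\in\Xi_\ell}W^H(x,\xi)\lesssim Q(\ell)$ for an explicit piecewise-linear functional $Q$, and since $Q(\hat\ell)\geq Q(\ell)$ it suffices to bound $Q$ on ultrametrics $\hat\ell$ arising as ultrametrizations of some $\ell\in L$ with $\Xi_\ell\neq\emptyset$.

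The main new subtlety is the constraint that $\ell|_{\partial V\times\partial V}$ is fixed $\asymp n$ rather than being free: a naive application of \cref{lem:ultrametric1} to $\cU_{E\cup\partial V}$ would permit extremal ultrametrics collapsing all boundary vertices into a single $0$-distance class, producing only the bound $Q/n\leq 0$, which is insufficient precisely when $\bareta_d(H)>0$. To rule this out, I will use that for $\ell\in L$ with $\Xi_\ell\neq\emptyset$ the ordinary triangle inequality on actual distances yields $\ell(a,b)\leq\max(\ell(a,c),\ell(c,b))+O(1)$ for every triple; iterating this along paths of length at most $|E\cup\partial V|$ forces $\hat\ell(u,v)\geq n-m_1-O(|E\cup\partial V|)$ for every distinct $u,v\in\partial V$. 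Consequently, for each edge $e$ at most one boundary vertex $u$ can satisfy $\hat\ell(e,u)<(n-m_1)/2$, yielding a partial function $\phi_\ell:E\to\partial V\cup\{\star\}$.

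Enumerating over the finitely many possible $\phi$, for each $\phi$ I will apply \cref{lem:ultrametric2} (after appropriate rescaling) with the disjoint sets $A_u=\{u\}\cup\phi^{-1}(u)$ for $u\in\partial V$ together with $A_\star=\phi^{-1}(\star)$ when non-empty, to reduce to ultrametrics whose values between different $A_i$'s are forced $\asymp n$ and within each $A_i$ take only the values $0$ and $n+m_2$. Each such extremal ultrametric corresponds to an equivalence relation $\bowtie$ on $E\cup\partial V$ whose classes each contain at most one boundary vertex; restricting $\bowtie$ to $E$ yields a coarsening $H'$ of $H$, together with a pinning of certain edge classes to boundary vertices. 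A direct computation analogous to the derivation of \eqref{eq:Qequiv}, additionally tracking the $-d$ penalties arising from each pinning, yields $Q(\ell)/n\leq-\eta_d(H')\leq-\bareta_d(H)$ up to additive $O(1)$. Combining with $\log_2|L|\leq|E\cup\partial V|^2\log_2(n+m_2+2)$ completes the proof. The main obstacle is the triangle-inequality argument in the third paragraph: this is the step that propagates the fixed boundary-to-boundary distances through the ultrametric closure, and without it the extremal analysis would degenerate to the bound $Q\leq 0$.
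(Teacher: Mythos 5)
Your proposal is correct and follows essentially the same route as the paper's proof: the paper likewise augments the index set to $E\cup\partial V$ (setting $\xi_v=x_v$), decomposes according to a partial assignment $\phi$ of edges to nearby boundary vertices and a scale profile $\ell\in L_\phi$, invokes \cref{lem:ultrametric2} with exactly the block structure $A_u=\{u\}\cup\phi^{-1}(u)$ to reduce to $\{0,n\}$-valued ultrametrics, and then checks that merging an edge class with a boundary vertex only decreases $Q$ (by $4n$ per merge), so the optimum is controlled by $-\eta_d(H')n\leq-\bareta_d(H)n$ for a coarsening $H'$. The only cosmetic difference is that you recover $\phi$ from $\ell$ via the iterated triangle inequality rather than fixing $\phi$ first, and you bound the pinned configurations directly instead of arguing the maximizer is unpinned.
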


\begin{proof}
 We may assume that $E \neq \emptyset$, the case $E=\emptyset$ being trivial. 
For notational convenience, we will write $\xi_v=x_v$, and consider $v \perp v$ for every vertex $v\in \partial V$. Write $\Xi=\Xi_x(0,n+m_2)$, and observe that for each $\xi \in \Xi$ and $e \in E$ there exists at most one $v\in \partial V$ for which $\log_2 \langle \xi_e \xi_v\rangle < n-m_1-1$. To account for these degrees of freedom, we define $\Phi$ to be the set of functions $\phi:E \cup \partial V \to \partial V \cup \{\star\}$ such that $\phi(v)=v$ for every $v\in \partial V$. For each $\phi \in \Phi$, 
let $L_\phi$ be the set of symmetric functions $\ell: (E \cup \partial V)^2 \to \{0,\ldots,n\}$ such that $\ell(e,e)=0$ for every $e\in E \cup \partial V$ and $\ell(e,e')=n$ for every $e,e' \in E \cup \partial V$ such that $\phi(e)\neq \phi(e')$.
  For each $\phi \in \Phi$ and $\ell \in L_\phi$,
 let 
\begin{multline*}\hspace{-0.25cm}\Xi_{\phi,\ell} = \left\{\xi \in \Xi : 
\begin{array}{l}
\ell(e,e')- m_1 - 1 \leq \log_2 \langle \xi_e \xi_{e'} \rangle \leq \ell(e,e') + m_2+1 \text{ for every $e,e' \in E \cup \partial V$}
\end{array}
\hspace{-0.15cm} \right\},\end{multline*}
 and observe that
$\Xi = \bigcup_{\phi\in \Phi} \bigcup_{\ell \in L_\phi} \Xi_{\phi,\ell}$.

Now, for each $\phi \in \Phi$ and $\ell \in L_\phi$, let $\hat \ell$ be the largest ultrametric on $E \cup \partial V$ that is dominated by $\ell$. Observe that $\hat \ell \in L_\phi$, and that, as in the previous lemma, we have that
%
%
\[\log_2 \langle \xi_e \xi_{e'} \rangle \lesssim \hat \ell(e,e')\]
for every $e,e' \in E \cup \partial V$. 


Let $e_1,\ldots,e_{|E|}$ be an enumeration of $E$, and let $e_0,e_{-1},\ldots,e_{-|\partial V|+1}$ be an enumeration of $\partial V$. As in the proof of the previous lemma, we have the volume estimate
\begin{align}\log_2 |\Xi_{\phi,\ell}| \lesssim  
d\sum_{i=1}^{|E|}\min\{\hat \ell(e_i,e_j) : j<i\}
  \label{eq:Lambdaell}\end{align}

Now, for every $\xi \in \Xi_{\phi,\ell}$, we have that, similarly to the previous proof,
\begin{align*}
 \log_2 W(x,\xi)
&\approx
-(d-4)\sum_{i=1}^{|E|}\sum_{u \perp e}\min \{\ell(e_i,e_j) : j<i,\, e_j \perp u\}.
\end{align*}
(Recall that we are considering $u\perp u$ for each $u \in \partial V$.)
Thus, we have
\begin{multline}\log_2 \sum_{\xi \in \Xi_{\phi,\ell}} W(x,\xi)\\ \lesssim  
 \sum_{i=1}^{|E|}\left[ d\min\{\hat \ell(e_i,e_j) : j<i\} -(d-4) \sum_{u \perp e}\min \{\ell(e_i,e_j) : j<i,\, e_j \perp u\}\right]. \label{eq:Qfirstmoment}
\end{multline}

Let  $Q: L_\phi \to \R$ be defined to be the expression on the right hand side of \eqref{eq:Qfirstmoment}.
Similarly to the previous proof but applying \cref{lem:ultrametric2} instead of \cref{lem:ultrametric1}, there is an $\ell\in L_\phi$ maximizing $Q$ such that $\ell$ is an ultrametric and $\ell(e,e') \in \{0,n\}$ for all $e,e' \in E \cup \partial V$.
Fix one such $\ell$, and define an equivalence relation $\bowtie$ on $E \cup \partial V$ by letting $e \bowtie e'$  if and only if $\ell(e,e')=0$, which is an equivalence relation since $\ell$ is an ultrametric.
Similarly to the proof of the previous lemma, we can compute that
\begin{multline*}Q(\ell) = 
 dn\left|\{\text{equivalence classes of $\bowtie$ that are contained in $E$}\}\right|
\\
-(d-4)n\sum_{u \in \partial V} \left|\{\text{equivalence classes of $\bowtie$ incident to $u$ that do not contain $u$}\}\right|
\\
-(d-4)n \sum_{u \in V_\circ} \left(\left|\{\text{equivalence classes of $\bowtie$ incident to $u$}\}\right| -1\right).
\end{multline*}
Since $d>4$ and each equivalence class of $\bowtie$ can contain at most one vertex of $v$, we see that $Q$ increases if we remove a vertex $v\in \partial V$ from its equivalence class. Since $\ell$ was chosen to maximize $Q$, we deduce that the equivalence class of $v$ under $\bowtie$ is a singleton for every $v\in \partial V$. Thus, there exists an ultrametric $\ell \in L_\phi$ maximizing $Q$ such that $\ell(e,e')\in \{0,n\}$ for every $e,e' \in E$ and $\ell(e,v)=n$ for every $e\in E$ and $v\in \partial V$. Letting $\bowtie'$ be the equivalence relation on $E$ (rather than $E \cup \partial V$) corresponding to such an optimal $\ell$, we have
\begin{multline}
\label{eq:Qell2}
Q(\ell) = 
 dn\left|\{\text{equivalence classes of $\bowtie'$}\}\right|
\\
-(d-4)n\sum_{u \in \partial V} \left|\{\text{equivalence classes of $\bowtie'$ incident to $u$}\}\right|.
\\
-(d-4)n \sum_{u \in V_\circ} \left(\left|\{\text{equivalence classes of $\bowtie'$ incident to $u$}\}\right| -1\right).
\end{multline}
 The rest of the proof is similar to the proof of \cref{lem:firstmoment}.
\qedhere
\end{proof}

We can now bootstrap from the single scale estimates \cref{lem:firstmoment,lem:firstmomentclose} to a multi-scale estimate. Given a hypergraph with boundary $H=(\partial V, V_\circ, E)$ and a set of edges $E'\subseteq E$, we write 
$V_\circ(E')=\bigcup_{e\in E'}\{v\in V_\circ : v \perp e\}$ and define
$H(E') = (\partial V, V_\circ(E'), E')$.

\begin{lem}[Induction estimate]
\label{lem:inductionestimate}
Let $\bbG$ be a $d$-dimensional transitive graph and let $H$ be a finite hypergraph with boundary. Then there exists a constant $c=c(\bbG,H)$ such that
	\begin{multline*}
		\log_2\left[\bbW^H_x(0,N+|E|+2) - \bbW^H_x(0,N)\right] \leq 
		 \\
		   \max_{E' \subsetneq E}
		   \left\{
		    \log_2 \bbW^{H(E')}_x(0,N+|E|+2) 
		     - \left[\bareta_d(H) -\bareta_d(H(E')) \right]\, N + |E\setminus E'|^2\log_2 N 
			\right\} + c
	\end{multline*}
	for every $x=(x_u)_{u\in \partial V} \in \bbV^{\partial V}$ and every $N$ such that $\langle x_u x_v \rangle \leq 2^{N-1}$ for all $u,v \in \partial V$.
\end{lem}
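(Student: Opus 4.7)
The plan is to decompose the sum $\bbW^H_x(0,N+|E|+2)-\bbW^H_x(0,N)$ by which coordinates lie in the outer shell. For each $\xi\in\Xi_x(0,N+|E|+2)\setminus\Xi_x(0,N)$, let $E'(\xi)=\{e\in E:\xi_e\in\Lambda_x(0,N)\}\subsetneq E$, partition the sum over $E'\subsetneq E$, and then bound each piece separately. The outer coordinates $\xi_e$ ($e\in E\setminus E'$) range over the fixed dyadic annulus $\Lambda_x(N+1,N+|E|+2)$, which spans only a constant number of scales, so it behaves like a single-scale sum that \cref{lem:firstmoment} can handle.

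The first key step is a \emph{spread-product decomposition}. Using the identity $\langle K\rangle\asymp\prod_i \min_{j<i}\langle x_ix_j\rangle$ together with the scale separation $\langle x_ux_0\rangle,\langle\xi_e x_0\rangle\leq 2^N\ll 2^{N+1}\leq\langle\xi_{e'}x_0\rangle$ for $e\in E'$ and $e'\in E\setminus E'$, one enumerates inner points before outer points to obtain, for each vertex $u$ of $H$,
\[
\langle\{x_u\}\cup U_u\rangle\asymp\langle\{x_u\}\cup U'_u\rangle\cdot\langle\{x_0\}\cup U''_u\rangle\qquad (u\in\partial V),
\]
with an analogous formula for $u\in V_\circ$ (and $\langle U_u\rangle=\langle U''_u\rangle$ when $U'_u=\emptyset$), where $U'_u,U''_u$ are the inner and outer parts. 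Taking $-(d-4)$-th powers and multiplying over $u$ yields
\[
W^H(x,\xi)\preceq W^{H(E')}(x,\xi|_{E'})\cdot W^{H^{\mathrm{out}}}(\hat x,\xi|_{E\setminus E'}),
\]
where $H^{\mathrm{out}}$ is the hypergraph with edge set $E\setminus E'$, boundary $\partial V\cup(V_\circ(E')\cap V_\circ(E\setminus E'))$ and interior $V_\circ(E\setminus E')\setminus V_\circ(E')$, and $\hat x$ places every boundary vertex of $H^{\mathrm{out}}$ at $x_0$.

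Having established this factorisation, I would sum over inner $\xi|_{E'}\in(\Lambda_x(0,N))^{E'}$ (absorbed into $\bbW^{H(E')}_x(0,N+|E|+2)$ by monotonicity) and over outer $\xi|_{E\setminus E'}\in(\Lambda_x(N+1,N+|E|+2))^{E\setminus E'}$. Since $\hat x_u=x_0$ for all $u$ satisfies $\langle\hat x_u\hat x_v\rangle\leq 2^{N-1}$ trivially, \cref{lem:firstmoment} applied to $H^{\mathrm{out}}$ over the $|E|+2$ shells between $N$ and $N+|E|+2$ yields
\[
\log_2\sum_{\xi|_{E\setminus E'}}W^{H^{\mathrm{out}}}(\hat x,\xi|_{E\setminus E'})\leq -\bareta_d(H^{\mathrm{out}})N+|E\setminus E'|^2\log_2 N+O(1).
\]

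Finally, I would verify the arithmetic identity $\bareta_d(H^{\mathrm{out}})\geq\bareta_d(H)-\bareta_d(H(E'))$. A direct check gives $\eta_d(H)=\eta_d(H(E'))+\eta_d(H^{\mathrm{out}})$ because $\Delta$ and $|E|$ are additive under the disjoint splitting of edges while $|V_\circ(H)|=|V_\circ(E')|+|V_\circ(E\setminus E')\setminus V_\circ(E')|=|V_\circ(H(E'))|+|V_\circ(H^{\mathrm{out}})|$. Any pair of coarsenings $\bowtie_1$ of $H(E')$ and $\bowtie_2$ of $H^{\mathrm{out}}$ combines into a coarsening of $H$ (keeping inner and outer classes separate), and this additivity is preserved, so $\bareta_d(H)\leq\bareta_d(H(E'))+\bareta_d(H^{\mathrm{out}})$, which rearranges to the required bound. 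Combining the three pieces and taking the maximum over $E'\subsetneq E$ produces the stated inequality, with the unused summation over the finitely many choices of $E'$ absorbed into the additive constant $c$. The main obstacle is executing the spread decomposition cleanly at vertices incident to both inner and outer edges, where one must verify that the triangle-inequality slack between $\langle\xi_e\xi_{e'}\rangle$ and $\langle\xi_e x_0\rangle$ (for $e\in E'$, $e'\in E\setminus E'$) is absorbed by constants depending only on $|E|$; everything else is a bookkeeping exercise.
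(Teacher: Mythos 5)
Your overall architecture --- split $\xi$ into inner and outer coordinates, factorize the spread across the two groups, treat the outer group as a single-scale sum anchored at $x_0$ via \cref{lem:firstmoment}, and close with the subadditivity $\bareta_d(H)\leq\bareta_d(H(E'))+\bareta_d(H^{\mathrm{out}})$ --- is the paper's. But there is a genuine gap in the first step, and it is precisely the point you dismiss at the end as bookkeeping. Your cut between inner and outer sits at the single radius $2^N$: you take $E'(\xi)=\{e:\xi_e\in\Lambda_x(0,N)\}$. A coordinate outside $\Lambda_x(0,N)$ only satisfies $\langle x_0\xi_{e'}\rangle>2^N$, not $\langle x_0\xi_{e'}\rangle\geq 2^{N+1}$ as you assert; so an inner witness at radius $2^N$ and an outer witness at radius $2^N+1$ are both allowed, and they may be adjacent in $\bbG$. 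For a vertex $u$ of $H$ incident to both edges, the true spread $\langle\{x_u\}\cup\{\xi_e:e\perp u\}\rangle$ can then use the length-one link between those two witnesses, while your factorized right-hand side forces the outer witness to connect to $x_0$ at cost $\asymp 2^N$; after raising to the power $-(d-4)$, the left-hand side exceeds the right-hand side by a factor of order $2^{(d-4)N}$, so the inequality $W^H(x,\xi)\preceq W^{H(E')}(x,\xi|_{E'})\cdot W^{H^{\mathrm{out}}}(\hat x,\xi|_{E\setminus E'})$ fails in exactly the direction you need for an upper bound. No constant depending only on $|E|$ absorbs this.

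This is also why the lemma is stated for the increment $\bbW^H_x(0,N+|E|+2)-\bbW^H_x(0,N)$ rather than for a single additional shell. The paper's proof pigeonholes: since $\xi$ has only $|E|$ coordinates, at least one of the $|E|+2$ annuli $\Lambda_x(N+m-1,N+m)$ contains no $\xi_e$, and the inner/outer cut is made at such an empty annulus. Then every inner point has radius at most $2^{N+m-1}$ and every outer point has radius at least $2^{N+m}$, whence $\langle\xi_e\xi_{e'}\rangle\asymp\langle x_0\xi_{e'}\rangle$ for every inner/outer pair and the spread factorization is legitimate. The price is a union over the $|E|+2$ possible cut positions in addition to the union over $E'$, which is absorbed into the constant. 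With that modification the remainder of your argument (the definition of $H^{\mathrm{out}}$ with enlarged boundary anchored at $x_0$, the application of \cref{lem:firstmoment} to the outer sum, and the additivity of $\eta_d$ under combining coarsenings of the two pieces) goes through essentially as you wrote it.
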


Note that when $|E|\geq 1$ we must consider the term $E'=\emptyset$ when taking the maximum in this lemma, which gives $-\bareta_d(H) N + |E|^2 \log_2 N$.

\begin{proof}
The claim is trivial in the case $E=\emptyset$, so suppose that $|E|\geq 1$. 
	Let
	$\Xi = \Xi_x(0,N+|E|+2) \setminus \Xi_x(0,N)$ 
	so that
	\[ \bbW^H_x(0,N+|E|+2) - \bbW^H_x(0,N) \leq \sum_{\xi \in \Xi} W^H(x,\xi).\]
			For each $E'\subsetneq E$ and every $1 \leq m \leq |E|+1$, let
	\[ \Xi^{E', m} = \left(\Lambda_x(0,N+m-1)\right)^{E'} \times \left(\Lambda_x(N+m,N+|E|+2)\right)^{E \setminus E'}.
	\]
	Observe that if $\xi \in \Xi$
	then, by the Pigeonhole Principle, there must exist $1 \leq m \leq |E|+2$ such that $\xi_e$ is not in $\Lambda_x(N-m-1,N-m)$ for any $e \in E$, and we deduce that
	\[
	\Xi = \bigcup \left\{ \Xi^{E',m} :  E'\subsetneq E,\, 1\leq m \leq |E|+2 \right\}.
	\]
	Thus, to prove the lemma it suffices to show that
	\begin{equation}
	\label{eq:inductionestimate1}
	\log_2 \sum_{\xi \in \Xi^{H(E'),m}} W^H(x,\xi) \lesssim 
		     \log_2 \bbW^{H(E')}_x(0,N) - \left(\bareta_d(H) -\bareta_d(H(E')) \right)\, N + |E \setminus E'|^2\log_2 N 
	\end{equation}
	whenever $1\leq m \leq |E|+2$ and $E' \subsetneq E$. If $E'=\emptyset$ then this follows immediately from \cref{lem:firstmoment}, so we may suppose not.

	\medskip

	To this end, fix $E' \subsetneq E$ with $|E'|\geq 1$ and write $H'=H(E') = (\partial V,V_\circ(E'),E') = (\partial V, V_\circ',E')$.  Choose some $v_0 \in \partial V$ arbitrarily, and write $x_v = x_{v_0}$ for every $v \in V_\circ'$. Then for every $\xi \in \Xi^{E',m}$, using the fact that we have the empty scale $\Lambda_x(N-m-1,N-m)$ separating $\{\xi_e : e \in E'\}$ from $\{\xi_e : e \notin E'\}$, we have that
	\[
		\left\langle \{x_u\} \cup \{\xi_e: e \perp u\} \right\rangle 
		\asymp \left\langle \{x_u\}\cup \{\xi_e: e\in E',\,  e \perp u\} \right\rangle  \left\langle \{x_u\}\cup\{\xi_e: e \notin E',\,  e \perp u\} \right\rangle
	\]
	for every vertex $u\in \partial V$,
	\[
	\left\langle \{\xi_e: e \perp u\} \right\rangle \asymp
	 \left\langle \{\xi_e: e\in E',\, e \perp u\} \right\rangle  \left\langle \{x_u\}\cup \{\xi_e: e \notin E',\,  e \perp u\} \right\rangle
	\]
	for every vertex $u \in V'_\circ$, and that, trivially,
	\[
		\left\langle \{\xi_e: e \perp u\} \right\rangle =
	  \left\langle  \{\xi_e: e \notin E',\,  e \perp u\} \right\rangle
	\]
	for every vertex $u \in V_\circ \setminus V'_\circ$. Define a hypergraph with boundary $H'' =(\partial V'', V_\circ'',  E'',\perp'')$ by setting \begin{multline*}\text{$\partial V'' = \partial V \cup V_\circ'$,\qquad $V_\circ'' = V_\circ \setminus V_\circ',$\qquad $V''= \partial V'' \cup V_\circ''=V$,\qquad $E'' = E \setminus E'$,}\\ \text{and 
	$\perp''=\perp \cap\, (V'' \cap E'')$.}\end{multline*}
	For each $\xi \in \Xi^{E',m}$, let $\xi'=(\xi'_e)_{e\in E'}=(\xi_e)_{e\in E'}$ and $ \xi''=( \xi''_e)_{e\in E''} = (\xi_e)_{e\in E''}$.
	Then the above displays imply that
	\[ W^H(x,\xi) \asymp W^{H'}\left(x,\xi'\right) \cdot W^{H''}\big(x, \xi''\big) \]
	for every $\xi \in \Xi^{E',m}$. Thus, summing over $\xi' \in (\Lambda_x(0,N+m-1))^{E'}$ and $ \xi'' \in (\Lambda_x(N+m,N+|E|+2))^{E''}$, we obtain that
	\begin{align} 
	\log_2 \sum_{\xi \in \Xi^{E',m}} W^H(x,\xi) &\lesssim \log_2 \bbW^{H'}_x(0,N+m-1) + \log_2 \bbW^{H''}_x(N+m,N+|E|+2)
	\nonumber
	\\
	&\lesssim \log_2 \bbW^{H'}_x(0,N+|E|+2) - \bareta_d(H'') N + |E''|^2 \log_2 N,
	\label{eq:inductionestimate2}
	\end{align}
	where the second inequality follows from \cref{lem:firstmoment}.

	To deduce \eqref{eq:inductionestimate1} from \eqref{eq:inductionestimate2}, it suffices to show that
	\begin{equation}
	\label{eq:HHH}
	 \bareta_d(H) \leq \bareta_d(H') + \bareta_d(H'').
	\end{equation}
	To this end, let $\bowtie'$ be an equivalence relation on $E'$ and let $\bowtie''$ be an equivalence relation on $E''$. We can define an equivalence relation $\bowtie$ on $E$ by setting $e \bowtie e'$ if and only if either $e,e' \in E'$ and $e \bowtie'  e'$ or $e,e' \in E''$ and $e \, \bowtie'' \,e'$. 
 We easily verify that 
	$\Delta(\coarse{H}{\bowtie})$ $=$ $\Delta(\coarse{H'}{\bowtie'})$ $+$ $\Delta(\coarse{H''}{\bowtie''})$,
	$|V_\circ(\coarse{H}{\bowtie})| = |V_\circ(\coarse{H'}{\bowtie'})| + |V_\circ(\coarse{H''}{\bowtie''})|$, and 
	$|E(\coarse{H}{\bowtie})| = |E(\coarse{H'}{\bowtie'})|$ $+ |E(\coarse{H''}{\bowtie''})|$, so that
	\begin{align*}
	\eta_d(\coarse{H}{\bowtie}) = \eta_d(\coarse{H'}{\bowtie'}) + \eta_d(\coarse{H''}{\bowtie''}),
	\end{align*}
	and the inequality \eqref{eq:HHH} follows by taking the minimum over $\bowtie'$ and $\bowtie''$. \qedhere

\end{proof}

We now use \cref{lem:inductionestimate} and \cref{lem:firstmomentclose} to perform an inductive analysis of $\bbW$. Although we are mostly interested in the non-buoyant case, we begin by controlling the buoyant case.

\begin{lem}[Many scales, buoyant case]\label{lem:firstmoment2}
Let $H$ be a finite hypergraph with boundary. Let $m \geq 1$, and suppose that $x=(x_u)_{u\in \partial V} \in \bbV^{\partial V}$ are such that $2^{n-m} \leq \langle x_u x_v \rangle \leq 2^{n-1}$.  
If every subhypergraph of $H$ has a $d$-buoyant coarsening, 
then there exists a constant $c=c(\bbG,H,m)$ such that
\[\log_2\bbW^H_x(0,N) \leq  -{\bareta}_d(H) \, N + (|E \cup \partial V|^2+1)\log_2 N + c\]
for all $N\geq n$.
\end{lem}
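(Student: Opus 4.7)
I would prove the lemma by induction on $|E|$. The base case $|E|=0$ is immediate: by convention $\bbW_x^H(0,N)=1$, while $-\bareta_d(H) = (d-4)|V_\circ| \geq 0$ for $d\geq 4$, so the claimed inequality holds with room to spare. For the inductive step, the key tool is the induction estimate (\cref{lem:inductionestimate}) which bounds the dyadic increments $\bbW_x^H(0,N+|E|+2) - \bbW_x^H(0,N)$ in terms of $\bbW^{H(E')}$ for proper subsets $E' \subsetneq E$. I would combine this with the close-scale estimate (\cref{lem:firstmomentclose}) as the starting point of a telescoping argument. Since every subhypergraph of $H$ inherits the hypothesis that each of \emph{its} subhypergraphs has a $d$-buoyant coarsening, the inductive hypothesis is available for each $H(E')$ with $|E'|<|E|$.

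Applying the inductive hypothesis to $\bbW^{H(E')}_x(0,N+|E|+2)$ for $E' \subsetneq E$ nonempty, combined with \cref{lem:inductionestimate} and the observation that $\log_2(N+|E|+2) = \log_2 N + O(1)$, gives an increment bound of the form
\[
2^{-\bareta_d(H)\, N - \bareta_d(H(E'))(|E|+2) + \bigl((|E'\cup \partial V|^2 + 1) + |E\setminus E'|^2\bigr)\log_2 N + O(1)}.
\]
Since $\bareta_d(H(E')) \leq 0$ by buoyancy and there are finitely many $E'$, the term $-\bareta_d(H(E'))(|E|+2)$ is absorbed into $O(1)$. The crucial combinatorial step is the identity
\[
|E\cup \partial V|^2 - |E'\cup \partial V|^2 - |E\setminus E'|^2 = 2(|E|-|E'|)(|E'|+|\partial V|) \geq 2
\]
for $E' \subsetneq E$ nonempty, whence $(|E'\cup \partial V|^2 + 1) + |E\setminus E'|^2 \leq |E\cup \partial V|^2 - 1$. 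For $E' = \emptyset$ (where $\bbW^{H(\emptyset)} = 1$) the log coefficient reduces to $|E|^2$, which is bounded by $|E\cup \partial V|^2 - 1$ whenever $|E|\geq 1$ and $|\partial V| \geq 1$. Thus the total increment is at most $2^{-\bareta_d(H)\, N + (|E\cup \partial V|^2 - 1)\log_2 N + O(1)}$.

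Now I would telescope along $N_k = n + k(|E|+2)$. \cref{lem:firstmomentclose} applied with $m_1 = m, m_2 = 0$ gives the base point $\bbW^H_x(0,n) \leq 2^{-\bareta_d(H)\, n + |E\cup \partial V|^2 \log_2 n + O(1)}$, which is within the target. Summing the increment bound over $j = 0,\dots,k-1$ splits into two cases. When $\bareta_d(H) < 0$ the sum is a geometric series in $N_j$ dominated (up to a constant) by its last term, producing the bound $2^{-\bareta_d(H)\, N_k + (|E\cup \partial V|^2 - 1)\log_2 N_k + O(1)}$, comfortably below our target. When $\bareta_d(H) = 0$ the sum equals $\sum_j N_j^{|E\cup \partial V|^2 - 1} \leq k \cdot N_k^{|E\cup \partial V|^2 - 1} \leq N_k^{|E\cup \partial V|^2}$ since $k \leq N_k$, which is still within the target log coefficient $|E\cup \partial V|^2 + 1$. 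Finally, for arbitrary $N \geq n$ I would use monotonicity $\bbW^H_x(0,N)$ in $N$ (clear from $\Lambda_x(0,N)$ being increasing in $N$) to interpolate between consecutive $N_k$'s, absorbing the difference $\log_2 N_{k+1} - \log_2 N_k = O(1)$ into the constant.

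The main obstacle is the tight combinatorial bookkeeping of the log coefficients: the extra ``$+1$'' in the target $(|E\cup \partial V|^2 + 1)\log_2 N$ must be sufficient to absorb both (i) the $+1$ already in the inductive hypothesis and (ii) a further $\log_2 k \leq \log_2 N_k$ factor arising from the telescoping sum in the borderline case $\bareta_d(H) = 0$. This forces the algebraic identity above to produce a strict saving of at least $2$ in the log coefficient at each inductive step, which is exactly what the calculation delivers.
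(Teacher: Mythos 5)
Your proposal is correct and follows essentially the same route as the paper: induction on $|E|$, with \cref{lem:inductionestimate} controlling the dyadic increments via the inductive hypothesis applied to the $H(E')$ (whose subhypergraphs are again subhypergraphs of $H$, so the hypothesis is inherited), the combinatorial inequality $|E'\cup\partial V|^2+1+|E\setminus E'|^2\leq|E\cup\partial V|^2$ to keep the logarithmic coefficient under control, \cref{lem:firstmomentclose} for the base point $\bbW^H_x(0,n)$, and a final summation over scales that costs one extra power of $\log_2 N$ in the borderline case $\bareta_d(H)=0$. The only differences are cosmetic (telescoping in steps of $|E|+2$ plus monotonicity, versus the paper's step-one increments, and your slightly sharper version of the combinatorial identity).
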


\begin{proof}
We induct on the number of edges in $H$. The claim is trivial when $E = \emptyset$. 


		Suppose that $|E|\geq 1$ and that
	  the claim holds for all finite hypergraphs with boundary that have fewer edges than $H$.
%
%
%
By assumption, $\bareta_d(H') \leq 0$ for all subhypergraphs $H'$ of $H$. Thus, it follows from the induction hypothesis that
\[\log_2\bbW^{H'}_x(0,N+|E|+2) \lesssim  -{\bareta}_d(H') \, N + (|E' \cup \partial V'|^2+1)\log_2 N \]
for each proper subhypergraph $H'$ of $H$, and hence that
\begin{multline*}
		     \log_2 \bbW^{H'}_x(0,N+|E|+2) 
		     - \left[\bareta_d(H) -\bareta_d(H') \right] N + |E\setminus E'|^2\log_2 N \\
		      \lesssim - \bareta_d(H) \, N + (|E' \cup \partial V|^2+1 + |E\setminus E'|^2) \log_2 N. 
\end{multline*}
(Note that the implicit constants depending on $H'$ from the induction hypothesis are bounded by a constant depending on $H$ since $H$ has only finitely many subhypergraphs.) 
Observe that whenever $E' \subsetneq E$ we have that 
\[ |E' \cup \partial V|^2+1 + |E\setminus E'|^2 \leq |E\cup \partial V|^2,\]
and so we deduce that
\begin{multline*}
		     \log_2 \bbW^{H'}_x(0,N+|E|+2) 
		     - \left[\bareta_d(H) -\bareta_d(H') \right] N + |E\setminus E'|^2\log_2 N
		     \\ \lesssim - \bareta_d(H) \, N + |E \cup \partial V|^2 \log_2 N 
\end{multline*}
for every proper subhypergraph $H'$ of $H$.  Thus, we have that
\begin{align*}
\log_2 \left[ \bbW^H_x(0,N+1) - \bbW^H_x(0,N) \right]
&\leq \log_2 \left[ \bbW^H_x(0,N+|E|+2) - \bbW^H_x(0,N) \right]
\\
&\lesssim  - \bareta_d(H) \, N + |E \cup \partial V|^2 \log_2 N
\end{align*}
for all $N \geq n$, where we applied \cref{lem:inductionestimate} in the second inequality.  
Summing from $n$ to $N$ we deduce that
\begin{align*}
\bbW^H_x(0,N) - \bbW^H_x(0,n) &\preceq \sum_{i=n}^N \exp_2\left[- \bareta_d(H)\,i + |E\cup \partial V|^2\log_2 i\right]\\
 &\preceq \exp_2\left[- \bareta_d(H)\,N + (|E\cup \partial V|^2+1)\log_2 N \right].
\end{align*}
Using \cref{lem:firstmomentclose} to control the term $\bbW^H_x(0,n)$ completes the induction.
\end{proof}

We are now ready to perform a similar induction for the non-buoyant case. Note that in this case the induction hypothesis concerns probabilities rather than expectations. This is necessary because the expectations can grow as $N\to\infty$ for the wrong reasons if $H$ has a buoyant coarsening but has a subhypergraph that does not have a buoyant coarsening (e.g.\ the tree in \cref{fig:unbalanced}).

\begin{lem}[Every scale, non-buoyant case]
\label{lem:firstmoment3}
Let $H$ be a finite hypergraph with boundary such that $E\neq \emptyset$, let $m \geq 1$, and suppose that
 $H$ has a subhypergraph that does not have any $d$-buoyant coarsenings.
Then there exist positive constants $c_1=c_1(\bbG,H,m)$ and $c_2=c_2(\bbG,H.m)$ such that
\[
\vspace{0.25em}
\log_2\P(S^H_{x}(0,\infty) > 0) \leq  -c_1 \, n + |E\cup \partial V|^2\log_2 n + c_2\]
for all 
$x=(x_u)_{u\in \partial V} \in \bbV^{\partial V}$ such that $2^{n-m} \leq \langle x_u x_v \rangle \leq 2^{n-1}$ for all $u,v \in \partial V$.
\end{lem}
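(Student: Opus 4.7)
The plan is to induct on $|V(H)|+|E(H)|$, reducing the general case to the key case in which $H$ itself is its own (only) bad subhypergraph. The crucial observation underlying this reduction is that for any subhypergraph $H^*=(\partial V^*,V^*_\circ,E^*,\perp^*)$ of $H$, restricting a witness $\xi$ for the faithful presence of $H$ at $x$ to those pairs $(e,v)$ with $e\in E^*$ and $v\in V^*$ yields a witness for the faithful presence of $H^*$ at $x_{\partial V^*}$ (all four witness conditions transfer trivially). Hence $\{S^H_x(0,\infty)>0\}\subseteq \{S^{H^*}_{x_{\partial V^*}}(0,\infty)>0\}$, and since $|E^*\cup \partial V^*|\leq |E\cup \partial V|$ and $\partial V^*\subseteq \partial V$, any bound of the desired form for $H^*$ transfers to one for $H$ (using the same $n$ and $m$).

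If $H$ admits a proper bad subhypergraph $H^*\subsetneq H$, then $|V(H^*)|+|E(H^*)|<|V(H)|+|E(H)|$, and since $H^*$ is its own bad subhypergraph the induction hypothesis applied to $H^*$ immediately furnishes the required bound. The heart of the argument is the remaining case, in which $H$ is its own only bad subhypergraph. Then $\bareta_d(H)>0$ while every proper subhypergraph of $H$ has a $d$-buoyant coarsening; in particular every subhypergraph of $H(E')$ for $E'\subsetneq E$ is also a proper subhypergraph of $H$, so \cref{lem:firstmoment2} applies to $H(E')$ and gives
\[
\log_2 \bbW^{H(E')}_x(0,N)\leq -\bareta_d(H(E'))\,N + (|E'\cup \partial V|^2+1)\log_2 N + O(1).
\]

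I plan to feed this estimate into the induction estimate \cref{lem:inductionestimate}. The algebra produces the cancellation $-\bareta_d(H(E'))\,N - [\bareta_d(H)-\bareta_d(H(E'))]\,N = -\bareta_d(H)\,N$, so that the increment $\bbW^H_x(0,N+|E|+2) - \bbW^H_x(0,N)$ decays like $\exp_2(-\bareta_d(H)\,N+|E\cup \partial V|^2\log_2 N)$ for all $N\geq n$. Summing the resulting geometric-type series from $N=n$ onward and combining with the close-scale control from \cref{lem:firstmomentclose}, I will obtain $\bbW^H_x(0,\infty)\lesssim \exp_2(-\bareta_d(H)\,n+|E\cup \partial V|^2\log_2 n)$. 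Applying Markov's inequality to the integer-valued random variable $S^H_x(0,\infty)$ together with the estimate $\E[S^H_x(0,\infty)]\leq C\bbW^H_x(0,\infty)$ coming from \cref{prop:sdim2} then yields the claim with $c_1=\bareta_d(H)>0$.

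The main obstacle will be the bookkeeping in the above computation: tracking how the $\log_2 n$ factors from \cref{lem:firstmoment2} interact with the $|E\setminus E'|^2\log_2 N$ term in the induction estimate, and verifying that the constants $-\bareta_d(H(E'))\cdot(|E|+2)$ (which are nonpositive since each $H(E')$ is buoyant in Case 2) can be absorbed into the additive constant. A subtler conceptual point is the choice of induction parameter: inducting on $|E|$ alone would be insufficient because a bad subhypergraph $H^*$ may have $E^*=E$ but $V^*\subsetneq V$ (for instance, obtained by removing an interior vertex and restricting the incidence relation), so it is essential to induct on $|V|+|E|$ to make the Case 1 reduction work uniformly. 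Once these details are handled, the structure of the proof parallels that of \cref{lem:firstmoment2}, the new ingredient being the application of Markov's inequality, which is made possible precisely by the positivity of $\bareta_d(H)$ inherited from the bad subhypergraph hypothesis.
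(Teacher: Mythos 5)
Your proposal is correct and follows essentially the same route as the paper: reduce to a (minimal) bad subhypergraph, then in the case where $\bareta_d(H)>0$ but every proper subhypergraph has a $d$-buoyant coarsening, feed \cref{lem:firstmoment2} into \cref{lem:inductionestimate}, sum the increments using $\bareta_d(H)>0$, control the close scale with \cref{lem:firstmomentclose}, and finish with Markov's inequality. Your choice to induct on $|V|+|E|$ rather than $|E|$ alone is a sensible refinement (the paper's reduction step as written passes to a proper subhypergraph that could share the same edge set), but it does not change the substance of the argument.
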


\begin{proof}

We induct on the number of edges in $H$. For the base case, suppose that $H$ has a single edge.
In this case we must have that $\eta_d(H)>0$, and we deduce from \cref{lem:firstmoment,lem:firstmomentclose} that 
\begin{align*}\bbW^H_x(0,N) &\leq \bbW^H_x(0,n)+ \sum_{i=n+1}^N \bbW^H_x(i-1,i)\\ &\preceq \exp_2\left[-\bareta_d(H)\, n + |E\cup \partial V|^2\log_2 n \right] + \sum_{i=n+1}^N \exp_2\left[ -\bareta_d(H) \, i +  |E|^2\log_2 i \right]\\
&\preceq \exp_2\left[-\bareta_d(H) \, n + |E\cup \partial V|^2\log_2 n\right],\end{align*}
so that the claim follows from Markov's inequality. 
This establishes the base case of the induction. 

Now suppose that $|E|>1$ and that
  the claim holds for all finite hypergraphs with boundary that have fewer edges than $H$. If $H$ has a proper subhypergraph $H'$ with $\bareta_d(H')>0$, then $S^{H'}_x(0,\infty)$ is positive if $S^{H}_x(0,\infty)$ is, and so the claim follows from the induction hypothesis, letting $c_1(\bbG,H,m)=c_1(\bbG,H',m)$ and $c_2(\bbG,H,m)=c_2(\bbG,H',m)$. 


Thus, it suffices to consider the case that $\bareta_d(H) >0$ but that $\bareta_d(H')\leq 0$ for every proper subhypergraph $H'$ of $H$. 
In this case, we apply \cref{lem:inductionestimate} 
to deduce that
\begin{multline*}
\log_2 \left[ \bbW^H_x(0,N+1) - \bbW^H_x(0,N) \right] \leq \log_2 \left[ \bbW^H_x(0,N+|E|+2) - \bbW^H_x(0,N) \right] \\ 
\lesssim 
\max_{E'\subsetneq E}
\left\{
		    \log_2 \bbW^{H(E')}_x(0,N+|E|+2) 
		     - \left[\bareta_d(H) -\bareta_d(H(E')) \right]\, N + |E\setminus E'|^2\log_2 N 
			\right\}.
\end{multline*}
\cref{lem:firstmoment2} then yields that
\begin{multline*}
\log_2 \left[ \bbW^H_x(0,N+1) - \bbW^H_x(0,N) \right] 
			\lesssim - \bareta_d(H) \, N + (|E'\cup \partial V|^2+1 +|E\setminus E'|^2 ) \log_2 N\\
			\lesssim - \bareta_d(H) \, N + |E\cup \partial V|^2 \log_2 N.
\end{multline*}
Finally, combining this with \cref{lem:firstmomentclose} yields that, since $\bareta_d(H)>0$,
%
%
\begin{align*}\bbW_x^H(0,N) &\preceq \exp_2\left[-\bareta_d(H)\, n + |E\cup \partial V|^2\log_2 n \right] +  \sum_{i=n}^N \exp_2\left[ -\bareta_d(H)\,i+|E\cup \partial V|^2\log_2 i\right]\\
&\preceq \exp_2\left[-\bareta_d(H)\, n + |E\cup \partial V|^2\log_2 n\right], \end{align*}
and the claim follows from Markov's inequality.
\end{proof}

\begin{proof}[Proof of \cref{prop:nonubiquity}]
Let $H$ be a finite hypergraph with boundary that has a subhypergraph that does not have any $d$-buoyant coarsenings, so that in particular $H$ has at least one edge. 
 \cref{lem:firstmoment3} and \cref{prop:sdim2} imply that for every $\eps>0$, there exists $x=(x_v)_{v\in \partial V}$ such that each of the points $x_v$ are in different components of $\F$ with probability at least $1-\eps$, but $H$ has probability at most $\eps$ to be faithfully present at $x$ in the  component hypergraph $\cC^{hyp}_r(\F)$. It follows that $H$ is not faithfully ubiquitous in the component graph $\cC^{hyp}_r(\F)$ a.s.

Now suppose that $H$ is a hypergraph with boundary such that every quotient $H'$ of $H$ such that $R_\bbG(H') \leq r$ has a subhypergraph that does not have any $d$-buoyant coarsenings. Note that if $H'$ is a quotient of $H$ such that $R_\bbG(H') > r$ then $H'$ is not faithfully present anywhere in $\bbG$ a.s. This follows immediately from the definition of $R_\bbG(H')$. On the other hand,  \cref{lem:firstmoment3} and \cref{prop:sdim2} imply that for every $\eps>0$, there exists $x=(x_v)_{v\in \partial V}$ such that each of the points $x_v$ are in different components of $\F$ with probability at least $1-\eps$, but, for each quotient $H'$ of $H$ with $R_\bbG(H') \leq r$, the hypergraph $H'$ has probability at most $\eps / |\{$quotients of $H\}|$ to be faithfully present at $x$ in the component hypergraph $\cC^{hyp}_r(\F)$, since $H'$ must have a subhypergraph none of whose coarsenings are $d$-buoyant by assumption. It follows by a union bound that $H$ has probability at most $\eps$ to be present in $\cC^{hyp}_r(\F)$ at this $x$. 
 It follows as above that $H$ is not ubiquitous in the component hypergraph $\cC^{hyp}_r(\F)$ a.s. 
\end{proof}

\subsection{Positive probability of robust faithful presence in low dimensions}
\label{sec:2ndmoment}

Recall that if $\bbG$ is a $d$-dimensional transitive graph, $H=(\partial V, V_\circ, E)$ is a finite hypergraph with boundary, that $r\geq 1$ and that $(x_v)_{v\in \partial V}$ is a collection of points in $\bbG$, we say that $H$ is $r$-\textbf{robustly faithfully present} at $x=(x_v)_{v\in V}$  if there is an infinite collection  $\{ \xi^i = (\xi^i_{(e,v)})_{(e,v)\in E_\bullet} : i \geq 1 \}$ 
such that $\xi^i$ is a witness for the $r$-faithful presence of $H$ at $x$ for every $i$, and $\xi^j_{(e,v)} \neq \xi^j_{(e',v')}$ for every $i, j \geq 1$ and $(e,v),(e',v') \in E_\bullet$ such that $i \neq j$.  As in the introduction, for each $M\geq 1$ we let $R_\bbG(M)$ be minimal such that it is possible for a set of diameter $R_\bbG(M)$ to intersect $M$ distinct components of the uniform spanning forest of $\bbG$, and let $R_\bbG(H) = R_\bbG(\max_{e\in E} \deg(e) ).$

\medskip


We say that a set $W \subset \bbV$ is \textbf{well-separated} if the vertices of $W$ are all in different components of the uniform spanning forest $\F$ with positive probability.

\begin{lemma}
\label{lem:annoyinglemma}
Let $\bbG$ be a $d$-dimensional transitive graph with $d>4$, and let $\F$ be the uniform spanning forest of $\bbG$. Then a finite set $W \subset \bbV$ is well-separated if and only if when we start a collection of independent simple random walks $\{X^v : v \in W\}$ at the vertices of $W$, the event that $\{
X^u_i : i\geq 0
\} \cap \{X^v_i :i \geq 0 \} = \emptyset$ for every distinct $u,v\in W$ has positive probability.
\end{lemma}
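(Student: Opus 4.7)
My plan is to prove the two directions of the equivalence separately, with the ($\Leftarrow$) direction being a direct application of Wilson's algorithm and the ($\Rightarrow$) direction being the more substantial part.

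For ($\Leftarrow$), I would enumerate the vertices of $\bbG$ starting with those of $W$ and run Wilson's algorithm rooted at infinity, using the given walks $\{X^v : v \in W\}$ as the first $|W|$ walks. On the positive-probability event that these walks have pairwise disjoint ranges, their loop-erasures are also pairwise disjoint, so each walk $X^v$ escapes to infinity without ever hitting any previously-generated loop-erasure; thus the vertices of $W$ end up in distinct components of the resulting WUSF, and $W$ is well-separated.

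For ($\Rightarrow$), I would start by translating via Wilson's algorithm in the reverse direction: well-separatedness of $W$ is equivalent to the statement that the loop-erasures $\{\LE(X^v) : v\in W\}$ are pairwise disjoint with positive probability. My task is then to upgrade this to the stronger statement that the ranges $\{\text{range}(X^v) : v \in W\}$ are pairwise disjoint with positive probability. For this I plan to use the standard decomposition of a random walk into its loop-erasure plus conditionally independent ``bubbles'' at each vertex: conditionally on $\LE(X^v) = \ell^v$, the walk $X^v$ is $\ell^v$ together with an independent excursion at each vertex $\ell^v_j$, distributed as a random walk from $\ell^v_j$ conditioned to return to $\ell^v_j$ before hitting $\{\ell^v_{j+1}, \ell^v_{j+2}, \ldots\}$. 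Moreover the walks $\{X^v\}_{v \in W}$ are conditionally independent given their loop-erasures.

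Combining this decomposition with the Green's function estimate $G(x,y) \asymp \langle xy \rangle^{-(d-2)}$ from \eqref{eq:HSC}, a direct second-moment computation bounds, for distinct $v,w \in W$ and conditional on disjoint loop-erasures, the conditional expected overlap $\E[|\text{range}(X^v) \cap \text{range}(X^w)| \mid (\LE(X^u))_{u\in W}]$ by a convergent sum of the form $\sum_z G(\cdot,z) G(\cdot,z) \preceq \langle \cdot,\cdot\rangle^{-(d-4)}$, which is finite precisely because $d > 4$. The main obstacle will be converting this finite conditional expectation into positive probability of empty intersection. My plan is to combine a transience-based tail estimate (by $d>4$, the tails $X^v[T,\infty)$ have expected pairwise overlap that vanishes as $T \to \infty$) with a direct Markov-property argument for the finite initial segments (conditioning on specific initial trajectories that keep them pairwise disjoint with positive probability and that terminate at well-separated positions), and to glue the two together via the strong Markov property to obtain positive probability of fully disjoint walk ranges.
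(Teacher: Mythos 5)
Your proposal is correct in outline and follows essentially the same route as the paper: the $(\Leftarrow)$ direction is the identical Wilson's-algorithm observation, and your $(\Rightarrow)$ direction — go far out along the disjoint paths, show the expected intersections of the continuations vanish, and handle the initial segments by conditioning on specific trajectories via the Markov property — is precisely the paper's argument, which runs fresh walks $Y^{v,i}$ from the points $\Gamma^v_i$ on the (disjoint) rays to infinity in $\F$ and uses the a.s.\ convergence to zero of the expected-intersection sum \eqref{eq:supressed}. The bubble decomposition you invoke ends up playing no role in your final gluing plan and can be dropped. Two loose points to tighten: (i) Wilson's algorithm gives that well-separatedness is the event that each walk avoids the union of the \emph{previous} loop-erasures, which implies (but is not stated as an equivalence with) pairwise disjointness of the loop-erasures — only the implication you need is immediate; (ii) your tail estimate must control not only the tail–tail overlaps $\langle X^v_T X^u_T\rangle^{-(d-4)}$ but also the tail–initial-segment overlaps $\sum_{j\leq T}\langle X^v_T X^u_j\rangle^{-(d-2)}$, which is exactly the second term in \eqref{eq:supressed} and is where the choice of initial segments along the disjoint infinite rays in $\F$ is genuinely used.
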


\begin{proof}
We will be brief since the statement is intuitively obvious from Wilson's algorithm and the details are somewhat tedious. 
The `if' implication follows trivially from Wilson's algorithm. To see the reverse implication, suppose that $W$ is well-separated and consider the paths $\{(\Gamma^v_i)_{i\geq 0} : v\in W\}$ from the vertices of $W$ to infinity in $\F$. Using Wilson's algorithm and the Green function estimate \eqref{eq:HSC}, it is easily verified that 
\begin{equation}
\label{eq:supressed}
\lim_{i\to\infty} \sum_{v\in W} \sum_{u\in W \setminus \{v\}} \left[\langle \Gamma^v_i \Gamma^u_j \rangle^{-d+4} + \sum_{j=0}^{i-1} \langle \Gamma^v_i \Gamma^u_j \rangle^{-d+2}\right] =0
\end{equation}
almost surely on the event that the vertices of $W$ are all in different components of $\F$. Let $i\geq 1$ and consider the  collection of simple random walks $Y^{v,i}$ started at $\Gamma^v_i$ and conditionally independent of each other and of $\F$ given $(\Gamma^v_i)_{v\in W}$, and let $\tilde Y^{v,i}$ be the random path formed by concatenating $(\Gamma^{v}_j)_{j=1}^i$ with $Y^{v,i}$. It follows from \eqref{eq:supressed} and Markov's inequality that 
\begin{equation}
\label{eq:supressed2}
\limsup_{i\to\infty}\P\left( \bigl\{\tilde Y^{v,i}_j : j \geq 0\bigr\} \cap 
\bigl\{\tilde Y^{u,i}_j: j \geq 0\bigr\}
 = \emptyset \text{ for every $v\in W$}\right)
 = \P(\sF(W))>0,
\end{equation}
where we recall that $\sF(W)$ is the event that all the vertices of $W$ are in different components of $\F$.  In particular, it follows that the probability appearing on the left hand side of \eqref{eq:supressed2} is positive for some $i_0\geq 0$. The result now follows since the walks $\{X^v : v \in W\}$ have a positive probability of following the paths $\Gamma^v$ for their first $i_0$ steps, and on this event their conditional distribution coincides with that of $\{\tilde Y^{v,i_0} : v\in W\}$. 
\end{proof}
 
The goal of this subsection is to prove criteria for robust faithful presence to occur with positive probability. 
We begin with the case that $d/(d-4)$ is not an integer (i.e., $d\notin \{5,6,8\}$), which is technically simpler. The corresponding proposition for $d=5,6,8$ is given in \cref{prop:ubiquityspeciald}.

\begin{prop}\label{prop:ubiquity}
Let $\bbG$ be a $d$-dimensional transitive graph with $d>4$ such that $d/(d-4)$ is not an integer, and let $\F$ be the uniform spanning forest of $\bbG$. Let $H$ be a finite hypergraph with boundary with at least one edge, and suppose that $H$ has a coarsening all of whose subhypergraphs are $d$-buoyant. 
Then for every $r\geq R_\bbG(H)$ and every well-separated collection of points $(x_v)_{v\in \partial V}$ in $\bbV$,
 there is a positive probability that
the vertices $x_u$ are all in different components of $\F$ and that $H$ is robustly faithfully present at $x$ in $\Comp^{hyp}_r(\F)$.
\end{prop}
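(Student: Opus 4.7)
The plan is to perform a first- and second-moment analysis on the number of witnesses inside a single dyadic shell around $x$, then use Cauchy--Schwarz and Fatou's lemma to pass from positive probability of one witness to positive probability of infinitely many disjoint ones. Since $H$ itself may possess subhypergraphs that are not $d$-buoyant, I first fix a coarsening $H^\ast = \coarse{H}{\bowtie}$ of $H$ whose subhypergraphs are all $d$-buoyant, and work throughout with an enlarged object --- a \emph{constellation of witnesses for $H^\ast$} --- in which each equivalence class of $\bowtie$ is additionally realised by distinct nearby edges of $\bbG$ serving as an edge-disjoint refinement. A single constellation contains an honest witness for the faithful presence of $H$ itself, so it suffices to produce constellations robustly. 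Hereafter I may replace $H$ by $H^\ast$ and assume that \emph{every} subhypergraph of $H$ is $d$-buoyant.

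Fix $n \gg \max_{u,v}\log_2\langle x_u x_v \rangle$ and call $\xi \in \Xi_{\bullet x}(n,n+1)$ \emph{good} if (i) $\langle \xi_{(e,u)} \xi_{(e',u')} \rangle \geq 2^{n-C_1}$ for every pair of distinct edges $e, e' \in E$, and (ii) for each $e\in E$ the tuple $\{\xi_{(e,u)} : u \perp e\}$ has diameter at most $r$ and its vertices can simultaneously lie in distinct components of $\F$ with positive probability, the existence of such local configurations being guaranteed by $r \geq R_\bbG(H)$ together with \cref{lem:annoyinglemma}. For good $\xi$, the matching lower bound to be proved in \cref{Sec:technical} will yield $\P(\Witness(x,\xi)) \asymp W^H(x,\xi)$; combined with a straightforward volume count on the good set, this gives the first-moment estimate $\log_2 \E[M_n] \gtrsim -\eta_d(H)\,n$, where $M_n$ is the number of good witnesses in $\Lambda_x(n,n+1)$.

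For the second moment I parametrise pairs $(\xi,\zeta)$ of good witnesses by two partial matchings: $\phi \in \Phi$ recording which interior vertex of $\zeta$ shares a component of $\F$ with one of $\xi$'s, and $\psi \in \Psi$ recording which edge of $\zeta$ lies within dyadic distance $2^{n-C_1}$ of an edge of $\xi$, together with a scale vector $k = (k_e)_{e\in E} \in \{0,\ldots,n\}^E$ refining $\psi$. \cref{prop:sdim2} together with the volume bound $\log_2|\Omega^{\psi,k}| \lesssim 2d|E|n - d\sum_{\psi(e)\neq\star}k_e$ yields
\begin{multline*}
\log_2 \!\!\sum_{(\xi,\zeta)\in \Omega^{\psi,k}}\!\! \P\bigl(\Witness(x,\xi) \cap \Witness(x,\zeta)\bigr) \\ \lesssim -2\eta_d(H)\,n - (d-4)\bigl|\{u \in V_\circ : \phi(u) \neq \star\}\bigr|\,n + \!\!\sum_{\psi(e)\neq\star}\!\bigl[(d-4)\delta_{\phi,\psi}(e) - d\bigr]k_e,
\end{multline*}
where $\delta_{\phi,\psi}(e) = |\{u \perp e : \phi(u) \perp \psi(e)\}|$. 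The hypothesis that $d/(d-4)$ is not an integer is used precisely here, to ensure that each geometric sum over $k_e$ is either convergent or super-polynomially divergent but never marginal; summing in $k$ therefore preserves the exponential structure.

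The main obstacle is the combinatorial optimisation showing that the resulting bound is maximised at $\phi \equiv \star$, $\psi \equiv \star$, so that $\E[M_n^2] \preceq (\E M_n)^2$. Nontrivial $(\phi,\psi)$ select a subhypergraph $H''$ of $H$ consisting of the matched edges and their incident matched vertices, and the excess contribution over the trivial bound reduces, up to constants, to $-\eta_d(H'')n$; since by assumption every subhypergraph of $H$ is $d$-buoyant, $\eta_d(H'') \leq 0$, so no nontrivial matching improves on $\phi \equiv \psi \equiv \star$. This step is structurally analogous to the ultrametric optimisation of \cref{lem:ultrametric1,lem:ultrametric2}, but carried out over the lattice of partial matchings rather than equivalence relations. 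Granting this, Cauchy--Schwarz gives $\P(M_n > 0) \geq \eps > 0$ along a sequence $n_j \to \infty$, and Fatou's lemma applied to the events $\{M_{n_j} > 0\}$ yields a positive probability that good witnesses exist in infinitely many shells simultaneously. Witnesses in distinct dyadic shells automatically use disjoint auxiliary vertices, so this produces robust faithful presence of $H^\ast$ --- hence, via the constellation extraction, of $H$ --- at $x$; condition (4) of the witness definition then forces the vertices $x_v$ into distinct components of $\F$ on this event, completing the proof.
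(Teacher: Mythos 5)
Your proposal is correct and follows essentially the same route as the paper: pass to a coarsening all of whose subhypergraphs are $d$-buoyant via constellations of witnesses (so that each constellation contains a witness for the refinement $H$), restrict to well-separated ``good'' configurations in a dyadic shell, take the first-moment lower bound $\P(\Witness(x,\xi))\asymp W^H(x,\xi)$ from the technical section, bound the second moment by parametrising pairs of witnesses by partial matchings $\phi,\psi$ and a scale vector $k$, reduce the resulting optimisation to the $d$-buoyancy of subhypergraphs (with non-integrality of $d/(d-4)$ killing the marginal logarithmic case), and conclude by Cauchy--Schwarz and Fatou. This matches the paper's proof of the proposition step for step, including the deferral of \cref{lem:constellations,prop:restrictedfirstmoment}.
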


The proof of \cref{prop:ubiquity} will employ the notion of \emph{constellations}. 
The reason we work with constellations is that a constellation of witnesses for the presence of $H$ (defined below) necessarily contains a witness for every refinement of $H$. This allows us to pass to a coarsening and work in the setting that every subhypergraph of $H$ is $d$-buoyant. 

For each finite set $A$, we define the \textbf{rooted powerset} of $A$, denoted $\cP_\bullet(A)$, to be 
\[\cP_\bullet(A) := \{(B,b) : B \text{ is a subset of $A$ and $b \in B$}\}.\]
We call a set of vertices $y=(y_{(B,b)})$ of $\bbG$ indexed by $\cP_\bullet(A)$ an $A$\textbf{-constellation}. Given an $A$-constellation $y$, we define  $\sA_r(y)$ to be the event that $y_{(B,b)}$ and $y_{(B',b')}$ are connected in $\F$ if and only if $b=b'$, and in this case they are connected by a path in $\F$ with diameter at most $r$.
We say that an $A$-constellation $y$ in $\bbG$ is \textbf{$r$-good} if it satisfies the following conditions.
\begin{enumerate}
	\itemsep0.31em
	\item 
	$\langle y_{(B,b)} y_{(B',b')} \rangle \leq r$ for every $(B,b),(B',b') \in \cP_\bullet(A)$.
	\item
$\langle y_{(B,b)} y_{(B,b')} \rangle \leq R_{\bbG}(|B|) +1$ for every $B \subseteq A$ and $b,b' \in B$, and
\item $\P(\sA_r(y)) \geq 1/r$.
\end{enumerate}

The proof of the following lemma is deferred to \cref{Sec:technical}.

\begin{lemma}\label{lem:constellations}
Let $\bbG$ be a $d$-dimensional transitive graph with $d>4$. 
Let $A$ be a finite set. Then there exists $r=r(|A|)$ such that for every vertex $x$ of $\bbG$, there exists an $r$-good $A$-constellation contained in the ball of radius $r$ around $x$.
\end{lemma}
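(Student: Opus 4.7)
The plan is to construct the constellation $y = (y_{(B,b)})$ explicitly near $x$ and then lower-bound $\P(\sA_r(y))$ by a careful application of Wilson's algorithm rooted at infinity together with Green's function estimates. By transitivity, it suffices to carry out the construction for a single fixed basepoint $x$ and then transport the conclusion to any other vertex.

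First, use \cref{lem:annoyinglemma} to choose $|A|$ \emph{anchor} vertices $\{z_b : b \in A\}$ lying in $B(x, D_1)$ for some $D_1 = D_1(|A|)$ such that independent simple random walks from the $z_b$'s have probability at least some $p_0 = p_0(|A|) > 0$ of being pairwise disjoint, which by Wilson's algorithm rooted at infinity ensures the $z_b$'s lie in distinct components of $\F$ with probability at least $p_0$. For each $B \subseteq A$ with $|B| \geq 2$, the very definition of $R_\bbG(|B|)$ supplies a specific $|B|$-tuple of vertices at pairwise distance at most $R_\bbG(|B|)+1$ that lie in $|B|$ distinct components of $\F$ with positive probability. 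Translating these configurations via the transitive group action and assembling them near $x$ yields the remaining constellation points $y_{(B,b)}$ inside $B(x,r)$ for some $r = r(|A|)$; conditions (1) and (2) of $r$-goodness are then immediate, and we may further arrange that for each $b$ the same-$b$ points $\{y_{(B,b)} : B \ni b\}$ lie along a short ``corridor'' inside $B(x,r)$.

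Next, lower-bound $\P(\sA_r(y))$ by running Wilson's algorithm started from the constellation points themselves in a carefully chosen order. For each $b \in A$ in turn, initiate walks from $\{y_{(B,b)} : B \ni b\}$ sequentially along the corridor, requiring each new walk to hit the previously generated portion of the forest within at most $r$ steps (so that the loop-erased segment appended to $T_b$ has diameter at most $r$); across different $b$'s the walks must avoid one another. Using the Green's function estimate \eqref{eq:HSC}, the bound of \cref{prop:sdim2}, and standard random-walk hitting and separation estimates valid in any $d$-dimensional transitive graph, the probability of this coordinated behaviour can be bounded below by an explicit positive function of $|A|$. On this event the tree $T_b$ contains every $y_{(B,b)}$ with $B \ni b$, joined by in-tree paths of diameter at most $r$, and the $T_b$ are pairwise distinct, so $\sA_r(y)$ occurs. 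Taking $r = r(|A|)$ sufficiently large then yields $\P(\sA_r(y)) \geq 1/r$.

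The main obstacle is condition (3), and specifically its \emph{path-diameter} part: same-$b$ constellation points must not merely lie in the same component of $\F$ but be joined by a path in $\F$ of diameter at most $r$. Two vertices lying in the same USF component can in general be joined only by very long paths, so the multicomponent indistinguishability theorem (\cref{thm:indist}) alone --- which is well-suited to producing meeting balls for distinct components but says nothing about in-tree distances --- is insufficient here. The technical heart of the argument is therefore the explicit random-walk construction described above, controlled via the Green's function and intersection estimates for simple random walks in $d$-dimensional transitive graphs; it is precisely this quantitative work that warrants deferring the full proof to a separate technical section.
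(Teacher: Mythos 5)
Your construction of the constellation itself matches the paper's: for each $B\subseteq A$ you use the definition of $R_\bbG(|B|)$ together with \cref{lem:annoyinglemma} to produce a cluster $\{y_{(B,b)}:b\in B\}$ of diameter at most $R_\bbG(|B|)+1$ whose associated independent walks are pairwise disjoint with probability bounded below, you spread the clusters out inside a ball of constant radius so that cross-cluster walk intersections are unlikely (Green's function estimate), and conditions (1) and (2) of goodness are then immediate. Up to this point you are reproducing the paper's argument.

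The gap is in your second step, the lower bound on $\P(\sA_r(y))$, which you assert follows from ``the Green's function estimate, the bound of \cref{prop:sdim2}, and standard random-walk hitting and separation estimates.'' This cannot work as stated: \cref{prop:sdim2} is an \emph{upper} bound on connection probabilities and contributes nothing to a lower bound. More importantly, the event you need is not a routine hitting event. In Wilson's algorithm the walk launched from $y_{(B',b)}$ must hit the \emph{loop-erasure} of the earlier same-$b$ walks (at a point that survives into the tree), must do so inside a bounded region so that the in-tree connecting path --- which includes a segment of the previously built tree $T_b$, not just the newly appended arc --- has bounded diameter, and must simultaneously avoid forever the trees $T_{b'}$, $b'\neq b$, some of which pass within distance $R_\bbG(|B|)+1$ of its starting point. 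Getting a uniform positive lower bound on the probability of this joint event is precisely the content of \cref{lem:firstmomentlowerboundestimate}, whose proof occupies the bulk of \cref{Sec:technical}: the second-moment argument with the stopping times $\tau(m)\geq\tau(m,\ell)$ to show the walk hits the loop-erased path and not merely the walk trace (\cref{lem:Iev}), the elliptic Harnack inequality to decorrelate the several hitting events (\cref{lem:farintersections}), and the hypergraph bookkeeping of \cref{lem:ABIC} to show unwanted intersections are of lower order. None of this is quotable as a standard estimate, and your proposal neither carries it out nor cites a result that does; your closing paragraph correctly diagnoses that this is the hard part, but diagnosing it is not the same as supplying it.
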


Let $H=(\partial V,V_\circ,E)$ be a finite hypergraph with boundary with at least one edge, and let $r=r(\max_e\deg(e))$ be as in \cref{lem:constellations}. We write $\cP_\bullet(e) = \cP_\bullet(\{v \in V : v \perp e\})$ for each $e\in E$. For each $\xi=(\xi_e)_{e\in E}\in \bbV^E$ and each $e\in E$,  we let $(\xi_{(e,B,v)})_{(B,v) \in \cP_\bullet(e)}$ be an $r$-good $e$-constellation contained in the ball of radius $r$ about $\xi_e$, whose existence is guaranteed by \cref{lem:constellations}.

For each $x=(x_v)_{v\in \partial V}$ and $\xi=(\xi_e)_{e\in E}$, we define $\tilde \Witness(x,\xi)$ to be the event that  the following conditions hold:
 \begin{enumerate}[leftmargin=*]
 \item For each boundary vertex $v \in \partial V$, every point in the set $\{x_v\} \cup \{\xi_{(e,A,v)} : e\in E, (A,v) \in \cP_\bullet(e)\}$ is in the same component of $\F$,
 \item For each interior vertex $v \in V_\circ$, every point in the set $\{\xi_{(e,A,v)} : e\in E, (A,v) \in \cP_\bullet(e)\}$ is in the same component of $\F$, and
 \item For any two distinct vertices $u,v \in V$, the components of $\F$ containing the sets $\{\xi_{(e,A,u)} : e\in E, (A,u) \in \cP_\bullet(e)\}$ and $\{\xi_{(e,A,v)} : e\in E, (A,v) \in \cP_\bullet(e)\}$ are distinct. 
 \end{enumerate}
 Thus, on the event $\tilde \Witness(x,\xi)$ every refinement $H'$ of $H$ 
is $R_\bbG(H')$-faithfully present at $x$: Indeed, letting $\phi_V: V'\to V$ and $\phi_E: E'\to E$ be as in the definition of a coarsening and letting $A(e') = \{v \in V : \phi^{-1}_V(v) \perp' e'\}$ for each $e\in E'$, the collection $(\xi_{(e',v')})_{(e',v') \in E_\bullet '} = 
(\xi_{(\phi_E(e'),A(e')\phi_V(v'))})_{(e',v') \in E_\bullet '}$ is a witness for the $R_\bbG(H')$-faithfully presence of $H'$ at $x$.

\medskip

 For each $n\geq 0$, let $\Omega_x(n)$ be the set
\[\Omega_x(n) =  \left\{(\xi_{e})_{e\in E } \in \Lambda_x(n,n+1)^{E} : \langle \xi_e \xi_{e'} \rangle \geq 2^{n-C_1} \text{ for all distinct $e,e' \in E$}\right\}, \]
where $C_1=C_1(E)$ is chosen  
so that $\log_2|\Omega_x(n)|\approx nd|E|$ for all $n$ sufficiently large and all $x$. It is easy to see that such a constant exists using the $d$-dimensionality of $\bbG$.
For each $n\geq 0$ we define $\tilde S_x(n)$ to be the random variable
\begin{equation*} \tilde S_x(n) :=
 \sum_{\xi \in \Omega_x(n)}\mathbbm{1}(\tilde\Witness(x,\xi)),\end{equation*}
 so that every refinement $H'$ of $H$ is $R_\bbG(H')$-faithfully present at $x$ on the event that $\tilde S_x(n)$ is positive for some $n\geq 0$, and every refinement $H'$ of $H$ is $R_\bbG(H')$-robustly faithfully present at $x$ on the event that $\tilde S_x(n)$ is positive for infinitely many $n\geq 0$.  

\medskip


The following lemma lower bounds the first moment of $\tilde S_n$.

\begin{lem}\label{prop:restrictedfirstmoment}
Let $\bbG$ be a $d$-dimensional transitive graph with $d>4$. 
Let $H$ be a finite hypergraph with boundary with at least one edge, let $\eps>0$, and suppose that $x=(x_v)_{v\in \partial V}$ is such that $\langle x_u x_v \rangle \leq 2^{n-1}$ for all $u,v \in \partial V$ and satisfies
\[
\P\Bigl(\{X^u_i : i \geq 0 \} \cap \{X^v_i : i \geq 0\} =\emptyset \text{ for every distinct $u,v\in \partial V$}\Bigr)\geq  \eps 
\]
when $\{ X^v : v \in \partial V\}$ are a collection of independent simple random walks started at $(x_v)_{v\in \partial V}$. 
 Then there exist constants $c=c(\bbG,H,\eps)$ and $n_0=n_0(\bbG,H,\eps)$ such that if $n\geq n_0$ then
 \[
 \log_2 \P(\tilde\Witness(x,\xi)) \geq -(d-4)\left(\Delta -|V_\circ|\right) \, n -c\]
 for every $\xi \in \Omega_x(n)$ and hence that
\[\log_2\E[\tilde S_x(n)] \geq -\eta_d(H)\, n - c.\]
\end{lem}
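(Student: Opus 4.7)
The plan is to decompose the event $\tilde\Witness(x,\xi)$ into a local piece (involving the good constellations at each $\xi_e$) and a macroscopic piece (involving the USF connectivity of a single representative point per vertex of $H$), and lower bound each separately.  For each $e\in E$ and $v\perp e$ set $y_{e,v}:=\xi_{(e,\{v\},v)}$, and let
\[
T_v := \begin{cases}\{x_v\}\cup\{y_{e,v}: e\perp v\}, & v\in\partial V,\\ \{y_{e,v}: e\perp v\}, & v\in V_\circ.\end{cases}
\]
Let $\sA_r^e$ denote the event that the $e$-constellation realizes the good-constellation event $\sA_r$, and let $\cM(x,\xi)$ denote the macroscopic event that each $T_v$ is contained in a single tree of $\F$ and that distinct $v,v'\in V$ yield distinct trees.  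Because on $\bigcap_e \sA_r^e$ each ``vertex-$w$ cluster'' of the constellation at $\xi_e$ collapses to a single connected piece of $\F$ inside $B(\xi_e,r)$, the intersection $\cM\cap \bigcap_e \sA_r^e$ implies $\tilde\Witness(x,\xi)$, so it suffices to lower bound $\P(\cM\cap \bigcap_e \sA_r^e)$.

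For the local part, the balls $B(\xi_e,r)$ are pairwise separated by distance $\geq 2^{n-C_1}\gg r$ once $n$ is large, so the events $\sA_r^e$ are essentially determined by $\F$ restricted to disjoint regions.  Using a spatial-Markov / resampling argument based on Wilson's algorithm, together with the $r$-good bound $\P(\sA_r(Y_e))\geq 1/r$ and the fact that $|E|$ is bounded by a constant depending only on $H$, one obtains $\P\bigl(\bigcap_{e\in E}\sA_r^e \,\bigm|\, \cM\bigr) \geq c_1$ for some $c_1>0$.  For the macroscopic part, we seek the matching lower bound to the upper estimate of \cref{thm:sdim1},
\[
\P(\cM) \geq c_2 \prod_{v\in V}\langle T_v\rangle^{-(d-4)}.
\]
Since the points in each $T_v$ are mutually at distance $\asymp 2^n$, \eqref{eq:spread} gives $\langle T_v\rangle\asymp 2^{\deg(v)n}$ for $v\in\partial V$ and $\langle T_v\rangle\asymp 2^{(\deg(v)-1)n}$ for $v\in V_\circ$, so $\prod_v \langle T_v\rangle^{-(d-4)}\asymp 2^{-(d-4)(\Delta-|V_\circ|)n}$, which is the target.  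The lower bound on $\P(\cM)$ is proved by running Wilson's algorithm starting from the walks $(X^v)_{v\in\partial V}$ of the hypothesis---mutually non-intersecting with probability at least $\eps$, hence producing $|\partial V|$ disjoint initial components---and then sequentially adding walks from the points $y_{e,v}$.  Each intended merger of an additional walk into the existing correct component contributes a factor $\asymp 2^{-(d-4)n}$ via the Green's-function estimate \eqref{eq:HSC}, while each ``avoid wrong components'' event costs only a constant because expected random-walk intersection counts at distance $\asymp 2^n$ are summable in $d>4$.  Summing the resulting bound $\P(\tilde\Witness(x,\xi))\geq c\cdot 2^{-(d-4)(\Delta-|V_\circ|)n}$ over $\xi\in\Omega_x(n)$ and using $|\Omega_x(n)|\asymp 2^{dn|E|}$ yields $\log_2\E[\tilde S_x(n)]\geq -\eta_d(H)n - c$, as required.

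The main obstacle is the matching lower bound $\P(\cM)\gtrsim \prod_v \langle T_v\rangle^{-(d-4)}$.  The BKPS upper bound (\cref{thm:sdim1}) follows from a one-pass estimate using Wilson's algorithm, but reversing the inequality requires a careful multi-walk second-moment analysis: one must show that the various merger and non-intersection events decouple enough for their probabilities to multiply up to constants.  The hypothesis on non-intersection of the walks from $(x_v)_{v\in\partial V}$ handles the boundary contribution directly, while the condition $d>4$ ensures that subsequent walks, once they exit the scale-$2^n$ neighbourhood of their starting points, are unlikely to return and collide with each other or with the existing partial tree, so the various decoupling estimates hold with only constant loss.
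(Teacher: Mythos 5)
Your high-level architecture --- separating the local constellation events at each $\xi_e$ from a macroscopic connectivity event for one representative point per vertex of $H$, and multiplying the two bounds --- is the same as the paper's, and you are right that the matching lower bound $\P(\cM)\succeq\prod_v\langle T_v\rangle^{-(d-4)}$ is the crux. But that crux is precisely where your argument has genuine gaps; the paper devotes a separate long lemma (\cref{lem:firstmomentlowerboundestimate}) to it, and your sketch elides three of its essential ingredients. First, ``each intended merger contributes a factor $\asymp 2^{-(d-4)n}$ via the Green's-function estimate'' is not a proof: in Wilson's algorithm the added walk must hit the \emph{loop-erasure} of an earlier walk (the part that survives into $\F$), not merely its trace. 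The paper handles this with the time-symmetry device in \cref{lem:Iev} (the variables $J_{m,\ell}$ and the bound $\P(\tau(m)\ge\tau(m,\ell)\mid\cdots)\ge 1/2$, showing an intersection point survives loop-erasure with probability at least $1/2$); without some such idea the merger probability is not controlled from below. Second, even granting a lower bound on each individual conditional merger probability $P_x$ given the earlier walks, you need $\E[\prod_x P_x]\succeq\prod_x\E[P_x]$, and this does \emph{not} follow from the conditional independence of the added walks, since the $P_x$ are correlated random variables (functions of the same random branches). The paper's \cref{lem:farintersections} resolves this with the elliptic Harnack inequality (all $P_x$, $x\in K_i'$, are comparable to a single one) followed by Jensen's inequality; this step is absent from your proposal. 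Third, ``avoiding wrong components costs only a constant'' requires $\P(\text{good}\cap\text{bad})=o(\P(\text{good}))$, not merely $\P(\text{bad})=o(1)$; since $\P(\text{good})\asymp 2^{-(d-4)|K'|n}$ is itself exponentially small, summability of expected intersection counts does not suffice. The paper's \cref{lem:ABIC} proves the joint bound by a first-moment computation over all intersection locations, encoded by the auxiliary hypergraphs $H_\sigma(x,x')$ and the weight gap $\bareta_{d,2}(H_\sigma(x,x'))\ge (d-4)|K'|+2$.

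There is also a problem with the order of your conditioning in the local step. You propose to bound $\P\bigl(\bigcap_e\sA_r^e\mid\cM\bigr)$ from below, but conditioning the USF on a macroscopic connectivity event admits no usable Markov description, and $\cM$ could in principle be carried by configurations that locally spoil the constellations (e.g.\ by routing the tree of $y_{e,v}$ through $\xi_{(e,B,v')}$ for some $v'\neq v$). The paper conditions in the opposite order: it first conditions on a concrete local configuration --- specific small trees $T_{(e,v)}(\xi)$ lying in $\F$, an event of probability bounded below by the goodness of the constellations and \eqref{eq:tailtrivB} --- then invokes the spatial Markov property to identify the conditional law of $\F$ with the USF of the contracted graph $\bbG_\xi$, checks that $\bbG_\xi$ is roughly isometric to $\bbG$ so that the heat-kernel and Harnack constants are uniform in $\xi$, and only then runs the macroscopic argument in $\bbG_\xi$. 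You should restructure your proof along these lines; as written, the step $\P\bigl(\bigcap_e\sA_r^e\mid\cM\bigr)\ge c_1$ is not justified.
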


\medskip

The proofs of \cref{lem:constellations,prop:restrictedfirstmoment} are unfortunately rather technical, and are deferred to \cref{Sec:technical}. For the rest of this section, we will take these lemmas as given, and use them to prove \cref{prop:ubiquity}. The key remaining step is to upper bound the second moment of the random variable $\tilde S_x(n)$.

\begin{lem}[Restricted second moment upper bound]\label{lem:secondmoment}
Let $\bbG$ be a $d$-dimensional transitive graph with $d>4$ such that $d/(d-4)$ is not an integer. 
	Let $H$ be a hypergraph with boundary with at least one edge. Suppose that every subhypergraph of $H$ is $d$-buoyant. Then there exists a positive constant $c=c(\bbG,H)$ such that
	\vspace{0.3em}
	\[
		\vspace{0.3em}
		\log_2\E[\tilde S_x(n)^2] \leq -2\eta_d(H)\, n +  c
	\]
	for all $x=(x_u)_{u\in \partial V} \in (\bbV)^{\partial V}$ and all $n$ such that $\langle x_u x_v \rangle \leq 2^{n-1}$ for all $u,v \in \partial V$. 
\end{lem}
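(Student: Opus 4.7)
The plan is to follow the second-moment analysis sketched in \cref{sec:notationsandoutline}. I will expand
\[
\E[\tilde S_x(n)^2] = \sum_{\xi,\zeta \in \Omega_x(n)} \P\!\left(\tilde \Witness(x,\xi) \cap \tilde \Witness(x,\zeta)\right)
\]
and partition the pairs $(\xi,\zeta)$ according to two pieces of combinatorial data. First, a partial injection $\phi:V_\circ \to V_\circ \cup \{\star\}$ recording which interior vertices are identified between the two witnesses: $\phi(v)=v'\neq \star$ means that the USF components carrying the $\xi$-constellation at $v$ and the $\zeta$-constellation at $v'$ coincide. Second, a partial injection $\psi:E \to E\cup\{\star\}$ together with a scale vector $k = (k_e)_{e\in E}\in \{0,\dots,n\}^E$ specifying that $\langle \zeta_e\, \xi_{\psi(e)}\rangle \asymp 2^{n-k_e}$ when $\psi(e)\neq\star$, while $\langle \zeta_e\,\xi_{e'}\rangle \succeq 2^{n}$ otherwise. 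Both $\phi$ and $\psi$ must be partial injections because the constellations of any witness sit in distinct components and at mutual distance $\asymp 2^n$.

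For each fixed $(\phi,\psi,k)$, write $\tilde \sW_\phi(\xi,\zeta)$ for the event that both $\xi$ and $\zeta$ are witnesses with the component identifications prescribed by $\phi$, and $\Omega^{\psi,k}\subseteq \Omega_x(n)^2$ for the corresponding set of admissible pairs. Applying \cref{prop:sdim2} to the $|\partial V|+2|V_\circ|-|\{v:\phi(v)\neq\star\}|$ distinct USF components forced by $\phi$, and estimating each spread via \eqref{eq:spread} using the goodness conditions on $\Omega_x(n)$ (pairwise distances between distinct $\xi_e,\xi_{e'}$ and between distinct $\zeta_e,\zeta_{e'}$ are $\asymp 2^n$), a bookkeeping calculation gives
\[
\log_2 \P\!\left(\tilde\sW_\phi(\xi,\zeta)\right) \lesssim -2(d-4)(\Delta-|V_\circ|)\,n + (d-4)|\{v\in V_\circ : \phi(v)\neq\star\}|\, n - (d-4)\sum_{e:\psi(e)\neq\star}m_e\, k_e,
\]
where $m_e := |\{u\perp e : \hat\phi(u)\perp \psi(e)\}|$ with $\hat\phi$ extending $\phi$ by the identity on $\partial V$. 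A routine volume estimate using $d$-dimensionality gives $\log_2|\Omega^{\psi,k}|\lesssim 2d|E|n - d\sum_{e:\psi(e)\neq\star}k_e$.

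Combining these bounds and summing over $k$, the hypothesis $d/(d-4)\notin\Z$ ensures $(d-4)m-d\neq 0$ for every nonnegative integer $m$, so each geometric sum $\sum_{k_e=0}^n \exp_2([(d-4)m_e-d]k_e)$ is dominated by either its first or its last term and contributes no $\log n$ factor. Since there are only $O_H(1)$ choices of $(\phi,\psi)$, this yields
\[
\log_2 \E[\tilde S_x(n)^2] \lesssim \max_{\phi,\psi}\left[-2\eta_d(H)\,n - (d-4)|V_\circ(H^*)|\,n + \sum_{e\in E(H^*)}\bigl((d-4)m_e - d\bigr)\,n\right] + O(1),
\]
where $H^*$ is the subhypergraph of $H$ with $\partial V(H^*)=\partial V$, $V_\circ(H^*)=\{v\in V_\circ:\phi(v)\neq\star\}$, and $E(H^*)=\{e\in E:\psi(e)\neq\star\text{ and }(d-4)m_e>d\}$.

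The final step, and the heart of the argument, is the observation that $m_e \leq \deg_{H^*}(e)$ for every $e\in E(H^*)$: any $u\perp e$ contributing to $m_e$ satisfies either $u\in \partial V \subseteq V(H^*)$ or $\phi(u)\neq\star$ and hence $u\in V_\circ(H^*)$, so in either case $u\in V(H^*)$. Therefore the bracketed quantity in the maximum is at most
\[
n\left[(d-4)\Delta(H^*) - d|E(H^*)| - (d-4)|V_\circ(H^*)|\right] = n\,\eta_d(H^*) \leq 0,
\]
by the hypothesis that every subhypergraph of $H$ is $d$-buoyant. This yields the claim. The main obstacle I expect is verifying the probability bound precisely, in particular the spread computation for merged components: when $\phi(v)=v'\neq\star$ a single USF component contains both the $\xi$-constellation at $v$ and the $\zeta$-constellation at $v'$, and it will be important that $R_\bbG(H)$-goodness of the constellations absorbs all sub-$2^n$ geometry into $O(1)$, so that only the inter-configuration distances $\langle\zeta_e\,\xi_{\psi(e)}\rangle$ enter the spread estimate at the dominant scale.
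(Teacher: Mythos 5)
Your proposal is correct and follows essentially the same route as the paper's proof: the same decomposition of pairs $(\xi,\zeta)$ by the partial injections $\phi,\psi$ and the scale vector $k$, the same application of \cref{prop:sdim2} and volume bound for $\Omega^{\psi,k}$, the same use of the non-integrality of $d/(d-4)$ to eliminate the logarithmic middle case in the geometric sums, and the same final step of packaging the optimal $(\phi,\psi)$ into a subhypergraph whose $d$-buoyancy closes the argument (the paper first discards $\psi$ by bounding $Q(\phi,\psi)\leq\tilde Q(\phi)$ before forming its subhypergraph, whereas you keep $\psi$ and use $m_e\leq\deg_{H^*}(e)$ directly; this is only a cosmetic difference). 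The one blemish is that the signs of the terms $(d-4)|\{v\in V_\circ:\phi(v)\neq\star\}|\,n$ and $(d-4)\sum_{e:\psi(e)\neq\star}m_e k_e$ in your displayed bound for $\log_2\P(\tilde\sW_\phi(\xi,\zeta))$ are flipped -- merging interior vertices makes the witness event \emph{less} likely and proximity of $\zeta_e$ to $\xi_{\psi(e)}$ makes it \emph{more} likely -- but your subsequent formula and conclusion are computed with the correct signs, so this is a typo rather than a gap.
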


\begin{proof}
Observe that if $\xi,\zeta \in \Omega_x(n)$ are such that the events $\tilde \sW(x,\xi)$ and $\tilde \sW(x,\zeta)$ both occur, then the following hold:
	   \begin{enumerate}
		   	\item For each $v \in V$, there is at most one $v' \in V$ such that $\xi_{(e,A,v)}$ and  $\zeta_{(e',A',v')}$ 
	   	 are in the same component of $\F$ for some (and hence every) $e,e' \in E$ and $(A,v) \in \cP_\bullet(e)$, $(A',v')\in \cP_\bullet(e')$.
	   	    	\item For each $e \in E$, there is at most one $e'$ such that $\langle \xi_e \zeta_{e'} \rangle \leq 2^{n-C_1-1}$. 
	   \end{enumerate}
	   As a bookkeeping tool to account for the first of these degrees of freedom, we define $\Phi$ be the set of functions $\phi: V_\circ \to V_\circ \cup \{\star\}$ such that the preimage $\phi^{-1}(v)$ has at most one element for each $v\in V_\circ$. We write $\phi^{-1}(v)=\star$ if $v$ is not in the image of $\phi \in \Phi$, and write $\phi(v)=v$ for every $v\in \partial V$. (Here and elsewhere, we use $\star$ as a dummy symbol so that we can encode partial bijections by functions.) For each $\phi \in \Phi$, and $\xi,\zeta \in \bbV$, define the event $\tilde\Witness_\phi(\zeta,\xi)$ to be the event that both the event $\tilde\Witness(x,\xi)\cap \tilde\Witness(x,\zeta)$ occurs, and that
	for any two distinct vertices $u,v \in V_\circ$ the components of $\F$ containing $\{\xi_{(e,A,u)} : e\in E, (A,u)\in \cP_\bullet(e)\}$ and $\{\zeta_{(e,A,v)}: e\in E, (A,v)\in \cP_\bullet(e) \}$ coincide if and only if $v =\phi(u)$. Thus, we have that 
	\[\tilde \Witness(x,\xi)\cap \tilde \Witness(x,\zeta) = \bigcup_{\phi\in \Phi} \tilde \Witness_{\phi}(\xi,\zeta)\]
	and hence that
	\[\tilde S_x(n)^2 = \sum_{\xi,\zeta \in \Omega_x(n)} \mathbbm{1}[\tilde \Witness(x,\xi) \cap \tilde \Witness(x,\zeta)] \leq \sum_{\phi \in \Phi}\sum_{\xi,\zeta\in \Omega_x(n)} \mathbbm{1}[\tilde \Witness_\phi(\xi,\zeta)].\]

	\medskip

	It follows from \cref{prop:sdim2} that
	\vspace{0.2em}
	\begin{multline}
		\vspace{0.2em}
		\P(\tilde \Witness_\phi(\xi,\zeta)) \preceq \prod_{u \in \partial V}\langle \{x_u\}\cup\{\xi_{e} : e \perp u\},\{\zeta_{e} : e\perp u\} \rangle^{-(d-4)}
		 \cdot \prod_{u \in V_\circ,\, \phi(v)=\star}\langle\{\xi_{e} : e \perp u\}\rangle^{-(d-4)}\\
		 \cdot
		\prod_{u \in V_\circ,\, \phi^{-1}(v)=\star}\langle\{\xi_{e} : e \perp u\}\rangle^{-(d-4)} \prod_{u \in V_\circ,\, \phi(v) \neq \star}\langle\{\xi_{e} : e \perp u\} \cup \{\zeta_e : e \perp \phi(u)\}\rangle^{-(d-4)}
		\label{eq:Hphi}
	\end{multline}
	We define $R_\phi(\xi,\zeta)$ to be the expression on the right hand side of \eqref{eq:Hphi}, so that
	\[\E\left[\tilde S_x(n)^2\right] \preceq \sum_{\phi\in \Phi}\sum_{\xi,\zeta \in \Omega_x(n)} R_\phi(\xi,\zeta).\]

	\medskip
	We now account for the second of the two degrees of freedom above. 
	 Let $\Psi$ be the set of functions $\psi: E \to E \cup \{\star\}$ such that the preimage $\psi^{-1}(e)$ has at most one element for every $e\in E$. 
	For each $\psi \in \Psi$ and $k = (k_e)_{e \in E} \in \{0,\ldots,n\}^{E}$, let
	\begin{multline*}\Omega^{\psi,k} = \\\left\{(\xi,\zeta) \in (\Omega_x(n))^2 : 
	\begin{array}{l} 2^{n-k_e} \leq \langle \zeta_e \xi_{\psi(e)} \rangle \leq 2^{n-k_e+2} \text{ for all $e\in E$ such that $\psi(e) \neq \star$,} 
	\vspace{0.3em} \\
	 \text{and }\langle \zeta_e \xi_{e'} \rangle \geq 2^{n-C_1-2} \text{ for all $e,e'\in E$ such that $e' \neq \psi(e)$} 
	\end{array}
	\right\},
	\end{multline*}
	where $C_1$ is the constant from the definition of $\Omega_x(n)$, 
	and observe that
	\begin{equation}\label{eq:Omegapsikvolume}\log_2|\Omega^{\psi,k}| \lesssim 2d|E|n - d\sum_{\psi(e)\neq\star} k_e. \end{equation}
	For each $\xi,\zeta \in \Omega_x(n)$ and $e \in E$, there is at most one $e' \in E$ such that $\langle \zeta_e \xi_{e'} \rangle \leq 2^{n-C_1-2}$, and it follows that
	\[\left(\Omega_x(n)\right)^2 = \bigcup_{\psi,k} \Omega^{\psi,k},\]
	where the union is taken over $\psi \in \Psi$ and $k \in \{0,\ldots,n\}^E$.

	\medskip

	Now, for any $\xi,\zeta \in \Omega^{\psi,k}$ and $u \in V_\circ$ with $\phi(u)\neq \star$, we have  
	that 
	\begin{multline*}
	\log_2 \langle\{\xi_e : e \perp u\}\cup\{\zeta_e : e \perp \phi(u)\}\rangle^{-(d-4)} \approx -(d-4)\left(\deg(u)+\deg(\phi(u))-1\right)\, n\\ + (d-4) \sum_{e \perp u} \mathbbm{1}[\psi(e)\perp \phi(u)] \, k_e. \end{multline*}
	Meanwhile, we have that 
	\[\log_2 \langle \{\xi_e : e \perp u\} \rangle^{-(d-4)} \approx 
	\log_2 \langle \{\zeta_e : e \perp u\} \rangle^{-(d-4)} \approx
	-(d-4)(\deg(u)-1)\, n\]
	for every $u\in V_\circ$, and
	\begin{multline*}
	\log_2 \langle x_u,\{\xi_e : e \perp u\},\{\zeta_e : e\perp u\} \rangle^{-(d-4)} \approx -2(d-4)\deg(u)n + (d-4)\sum_{e \perp u}\mathbbm{1}[\psi(e)\perp u]\,k_e
	\end{multline*}
	for every $u\in \partial V$.
	Summing these estimates yields
	\begin{multline*}\log_2 R_\phi(\xi,\zeta) \approx -2(d-4)\Delta n + 2(d-4)|V_\circ|n - (d-4)|\{ v \in V_\circ : \phi(v) \neq \star\}|\,n
	 \\+ (d-4)\sum_{e}|\{u \perp e : \phi(u) \perp \psi(e)\}|k_e.
	\end{multline*}
	Thus, using the volume estimate \eqref{eq:Omegapsikvolume}, we have that
	\begin{multline*}\log_2 \sum_{(\xi,\zeta) \in \Omega^{\psi,k}}R_\phi(\xi,\zeta) \lesssim
	-2\eta_d(H)n - (d-4)|\{ u \in V_\circ : \phi(u) \neq \star\}|\,n
	 \\+ (d-4)\sum_{\psi(e)\neq \star}|\{u \perp e : \phi(u) \perp \psi(e)\}|k_e - d \sum_{\psi(e)\neq\star}k_e.
	\end{multline*}
	Observe that for every $\psi\in \Psi$ and $e\in E$, we have that
	\begin{multline*}
	\sum_{k_e=0}^n \exp_2\left(\left[(d-4)|\{u\perp e:\phi(u)\perp \psi(e)\}| - d\right]k_e \right) \\\preceq \begin{cases}
	\exp_2\left(\left[(d-4)|\{u\perp e:\phi(u)\perp \psi(e)\}| - d\right]n\right) & \text{ if $(d-4)|\{u\perp e:\phi(u)\perp \psi(e)\}|  >d$}\\
	n & \text{ if $(d-4)|\{u\perp e:\phi(u)\perp \psi(e)\}|  =d$}\\
	1 & \text{ if $(d-4)|\{u\perp e:\phi(u)\perp \psi(e)\}| <d$}.
	\end{cases}
	\end{multline*}
	Thus, summing over $k$, we see that for every $\psi \in \Psi$ and $\phi \in \Phi$ we have that
	\begin{multline}
	\log_2 \sum_{k\in\{0,\ldots,n\}^E}\sum_{(\xi,\zeta) \in \Omega^{\psi,k}}R_\phi(\xi,\zeta) \lesssim -2\eta_d(H)\, n - (d-4)|\{ u \in V_\circ : \phi(v) \neq \star\}| n\\
	+\sum_{e\in E}\left[(d-4)|\{u \perp e : \phi(u) \perp \psi(e)\}|-d\right]
	\mathbbm{1}\left(|\{u \perp e : \phi(u) \perp \psi(e)\}| >  d/(d-4) \right) n\\
	+\sum_{e\in E}\mathbbm{1}\left(|\{u\perp e : \phi(u) \perp \psi(e)\}| = d/(d-4)\right)\log_2 n.
	\label{eq:Rallterms}
	\end{multline}

Since $d/(d-4)$ is not an integer, the last term is zero, so that if we define $Q : \Phi \times \Psi \to \R$ by
	\begin{multline}Q(\phi,\psi) = - (d-4)|\{ u \in V_\circ : \phi(v) \neq \star\}|\\
	+\sum_{e\in E}\left[(d-4)|\{u \perp e : \phi(u) \perp \psi(e)\}|-d\right]
	\mathbbm{1}[|\{u \perp e : \phi(u) \perp \psi(e)\}| >  d/(d-4) ],
	\label{eq:Qdef2}
	\end{multline}
	then we have that
	\begin{equation*}
	\log_2 \sum_{k \in \{0,\ldots,n\}^E} \sum_{(\xi,\zeta)\in \Omega^{\psi,k}}R_\phi(\xi,\zeta) 
	 \lesssim -2\eta_d(H)n + Q(\phi,\psi)n.
	\end{equation*}
	Thus, since $|\Phi \times \Psi|$ does not depend on $n$, we have that 
	\begin{multline*}\log_2 \E[\tilde S_x(n)^2] \lesssim \log_2  \sum_{\phi \in \Phi}\sum_{\psi \in \Psi} \sum_{k \in \{0,\ldots,n\}^E} \sum_{(\xi,\zeta)\in \Omega^{\psi,k}}R_\phi(\xi,\zeta) \\
	\lesssim 
\max_{\phi,\psi} \log_2 \sum_{k \in \{0,\ldots,n\}^E} \sum_{(\xi,\zeta)\in \Omega^{\psi,k}}R_\phi(\xi,\zeta) \lesssim
	-2\eta_d(H)n + \max_{\phi,\psi}Q(\phi,\psi)n, 
	\end{multline*}
and so it suffices to prove that $Q(\phi,\psi)\leq 0$ for every $(\phi,\psi)\in \Phi\times\Psi$.

To prove this, first observe that we can bound
\begin{multline*}Q(\phi,\psi) \leq \tilde Q(\phi):=  - (d-4)|\{ u \in V_\circ : \phi(v) \neq \star\}|\\
	+\sum_{e\in E}\left[(d-4)|\{u \perp e : \phi(u) \neq \star \}|-d\right]
	\mathbbm{1}[|\{u \perp e : \phi(u) \neq \star \}| >  d/(d-4) ].
\end{multline*}
Let $H'$ be the subhypergraph of $H$ with boundary vertices given by the boundary vertices of $H$, edges given by the set of edges of $H$ that have $|\{u\perp e:\phi(u)\neq \star\}|>d/(d-4)$, and interior vertices given by the set of interior vertices $u$ of $H$ for which $\phi(u)\neq \star$ and $\phi(u)\perp e$ for some $e\in E'$. Then we can rewrite
\begin{equation}
\tilde Q(\phi) = \eta_d(H')-(d-4)\bigl|\{v\in V_\circ : \phi(v)\neq \star\}\setminus V'\bigr| \leq 0,
\label{eq:bad2}
\end{equation}
where the second inequality follows by the assumption that every subhypergraph of $H$ is $d$-buoyant. This completes the proof.
\end{proof}

\begin{proof}[Proof of \cref{prop:ubiquity}]
Suppose that the finite hypergraph with boundary $H$ has a $d$-optimal coarsening all of whose subhypergraphs are $d$-buoyant. Then the lower bound on the square of the first moment of $\tilde S^{H'}_x(n)$ provided by \cref{prop:restrictedfirstmoment} and the upper bound on the second moment of $\tilde S^{H'}_x(n)$ provided by \cref{lem:secondmoment} coincide, so that the Cauchy-Schwarz inequality implies that
	\[\P\left(\tilde S^{H'}_x(n) > 0\right) \geq \myfrac[0.5em]{\E\left[\tilde S^{H'}_x(n)\right]^2}{\E\left[\tilde S^{H'}_x(n)^2\right]} \succeq 1\]
	for every $n$ such that $\langle x_u x_v \rangle \leq 2^{n-1}$ for every $u,v \in \partial V$. It follows from Fatou's lemma that
	\[\P\left(\tilde S^{H'}_x(n)>0 \text{ for infinitely many $n$}\right) \geq \limsup_{n\to\infty} \P\left(\tilde S^{H'}_x(n) > 0\right) \succeq 1, \]
	so that $H$ is robustly faithfully present at $x$ with positive probability as claimed. 
\end{proof}

\subsubsection{The cases $d=5,6,8$.}
\label{sec:speciald}
We now treat the cases in which $d/(d-4)$ is an integer. This requires somewhat more care owing to the possible presence of the logarithmic term in \eqref{eq:Rallterms}. Indeed, we will only treat certain special `building block' hypergraphs directly via the second moment method. We will later build other hypergraphs out of these special hypergraphs in order to to prove the main theorems.

Let $H=(\partial V,V_\circ, E)$ be a finite hypergraph with boundary. We say that a subhypergraph $H'=(\partial V',V_\circ',E')$ of $H$ is \textbf{bordered} if $\partial V'=\partial V$ and every vertex $v\in V \setminus V'$ is incident to at most one edge in $E'$. For example, every full subhypergraph containing every boundary vertex is bordered. We say that a subhypergraph of $H$ is \textbf{proper} if it is not equal to $H$ and \textbf{non-trivial} if it has at least one edge. We say that $H$ is $d$\textbf{-basic} if it does not have any edges of degree less than or equal to $d/(d-4)$ and does not contain any proper, non-trivial bordered subhypergraphs $H'$ with $\eta_d(H')=0$.


\begin{prop}\label{prop:ubiquityspeciald}
Let $\bbG$ be a $d$-dimensional transitive graph with $d\in\{5,6,8\}$, and let $\F$ be the uniform spanning forest of $\bbG$. Let $H$ be a finite hypergraph with boundary with at least one edge.
Suppose additionally that one of the following assumptions holds:
\begin{enumerate}
	\item
$H$ is a refinement of a hypergraph with boundary that has exactly one edge, the unique edge contains exactly $d/(d-4)$ boundary vertices, and every interior vertex is incident to the unique edge.
\end{enumerate}
or
\begin{enumerate}
\item[(2)]  
$H$ has a $d$-basic coarsening with more than one edge, all of whose subhypergraphs are $d$-buoyant.
\end{enumerate}
Then for every $r\geq R_\bbG(H)$ and every well-separated collection of points $(x_v)_{v\in \partial V}$ in $\bbV$ 
there is a positive probability that
the vertices $x_u$ are all in different components of $\F$ and that $H$ is  
robustly faithfully present at $x$.
\end{prop}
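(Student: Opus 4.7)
The plan is to adapt the Cauchy--Schwarz strategy of \cref{prop:ubiquity} to the integer cases $d\in\{5,6,8\}$, working throughout with the random variable $\tilde S^{H_0}_x(n)$ for a suitable coarsening $H_0$ of $H$: the $d$-basic coarsening in Case (2), the single-edge parent in Case (1). The constellation construction ensures that any witness for $H_0$ automatically gives witnesses for every refinement (in particular for $H$), so it is enough to prove robust faithful presence of $H_0$. The first-moment lower bound \cref{prop:restrictedfirstmoment} applies unchanged; the new difficulty is entirely in the second moment, where the estimate \eqref{eq:Rallterms} now retains the term $\sum_{e}\mathbbm{1}\!\bigl(|\{u\perp e:\phi(u)\perp\psi(e)\}|=d/(d-4)\bigr)\log_2 n$ that vanished in \cref{lem:secondmoment}.

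For Case (2), I repeat the $\phi,\psi,k$ bookkeeping of \cref{lem:secondmoment} verbatim up to \eqref{eq:Rallterms}, reducing the problem to showing that $Q(\phi,\psi)n+L(\phi,\psi)\log_2 n\leq O(1)$ for every $(\phi,\psi)$, where $L(\phi,\psi)$ counts the edges $e$ of $H_0$ with $|\{u\perp e:\phi(u)\perp\psi(e)\}|=d/(d-4)$ exactly. To each non-trivial $(\phi,\psi)$ I associate the subhypergraph $H'$ of $H_0$ built from the matched edges (those with $|\{u\perp e:\phi(u)\perp\psi(e)\}|\geq d/(d-4)$) together with the interior vertices matched by $\phi$ onto vertices incident to such an edge, enlarged by any interior vertex incident to two or more such edges so that $H'$ is bordered. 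The computation of \cref{lem:secondmoment} then yields $Q(\phi,\psi)\leq\eta_d(H')\leq 0$ by the buoyancy assumption on $H_0$. When $L(\phi,\psi)>0$, the two $d$-basic hypotheses on $H_0$ --- that no edge has degree $\leq d/(d-4)$, and that no proper non-trivial bordered subhypergraph has $\eta_d=0$ --- force $H'$ to be non-trivial (it contains the $L$-edges), bordered (by construction), and proper (the $L$-edges have degree strictly greater than $d/(d-4)$, so they keep some incident vertex outside the matched set), whence $\eta_d(H')<0$ strictly. The resulting strict exponential decay in $n$ absorbs the $L\log_2 n$ overhead uniformly, yielding $\E[\tilde S^{H_0}_x(n)^2]\lesssim\E[\tilde S^{H_0}_x(n)]^2$, and Cauchy--Schwarz combined with Fatou concludes as in \cref{prop:ubiquity}.

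For Case (1), where $H_0$ is the single-edge parent with $|\partial V(H_0)|=d/(d-4)$, one has $\eta_d(H_0)=0$: the first moment is $\Theta(1)$ per scale, and the pair $(\phi,\psi)=(\star,\mathrm{id})$ unavoidably contributes a $\log_2 n$ term to the single-scale second moment, defeating a naive single-scale Cauchy--Schwarz. I instead aggregate across scales by setting $T_x(N):=\sum_{n=N}^{N^2}\tilde S^{H_0}_x(n)$, so that $\E[T_x(N)]\asymp N^2$. The diagonal contribution to $\E[T_x(N)^2]$ is $\sum_{m=N}^{N^2}O(\log_2 m)=O(N^2\log N)$, and the off-diagonal cross-scale contribution decouples via Wilson's algorithm together with \eqref{eq:HSC} --- two witnesses placed on well-separated dyadic scales are approximately independent --- producing $O(1)$ per pair and hence $O(N^4)$ in total. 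Therefore $\E[T_x(N)^2]\lesssim N^4\asymp\E[T_x(N)]^2$, Cauchy--Schwarz delivers $\P(T_x(N)>0)\geq c>0$ uniformly in $N$, and reverse Fatou applied to the events $\{T_x(N)>0\}$ --- each of which says that there is a witness for $H_0$ somewhere in the window $[N,N^2]$ --- produces witnesses at infinitely many scales with positive probability, i.e., robust faithful presence of $H_0$ and therefore of $H$.

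The main obstacle is the combinatorial verification in Case (2) that $H'$ is genuinely proper, non-trivial and bordered whenever $L(\phi,\psi)>0$: the $d$-basic definition is engineered precisely for this step, but correctly assembling $H'$ out of the $(\phi,\psi)$ data and checking each of the bordered, properness and non-triviality conditions against the incidence structure of $H_0$ requires careful bookkeeping. The cross-scale decoupling underlying the off-diagonal estimate in Case (1) is the main non-routine probabilistic step, but follows standard Wilson's algorithm arguments.
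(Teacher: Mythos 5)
Your overall strategy is the paper's: in Case (2) the same $(\phi,\psi,k)$ second-moment bookkeeping, with the $d$-basic hypotheses used to force strict negativity $Q(\phi,\psi)\leq -(d-4)$ whenever the $\log_2 n$ coefficient $L(\phi,\psi)$ is positive; in Case (1) the same multi-scale aggregation (the paper uses $\sum_{k=n}^{2n}\tilde S(2k)$ and a cross-scale decorrelation lemma asserting $\E[\tilde S(n)\tilde S(n+m)]\preceq 2^{-\eta_d(H)(2n+m)}$ for $m\geq 2$, which with $\eta_d=0$ gives $O(1)$ per off-diagonal pair, exactly your estimate), followed by Cauchy--Schwarz and Fatou. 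Your window $[N,N^2]$ versus the paper's $[n,2n]$ on even scales is immaterial, though note the decorrelation estimate needs a gap of at least $2$ between scales, so adjacent pairs must be handled separately (Cauchy--Schwarz on the diagonal bounds suffices, as you implicitly do).

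The one step I would not accept as written is the properness claim in Case (2). From $|\{u\perp e:\phi(u)\perp\psi(e)\}|=d/(d-4)<\deg(e)$ you get a vertex $u\perp e$ with $\phi(u)\not\perp\psi(e)$, but this does not put $u$ outside $H'$: $u$ may be a boundary vertex (hence in $H'$ automatically), or an interior vertex with $\phi(u)\neq\star$ that enters $H'$ through a different edge. In the extremal situation where $\phi(v)\neq\star$ for every $v$ and every edge is matched, your $H'$ equals $H$ and is not proper, so $d$-basicness of bordered proper subhypergraphs gives you nothing, yet $L(\phi,\psi)$ can still be positive there. The paper isolates exactly this case: when $\phi$ is everywhere non-$\star$ one bounds $Q(\phi,\psi)\leq \eta_d(H)-(d-4)\,|\{e:|\{u\perp e:\phi(u)\perp\psi(e)\}|<\deg(e)\}|$ directly, and the hypothesis $\deg(e)>d/(d-4)$ for every edge makes each $L$-edge contribute a full $-(d-4)$; the bordered-subhypergraph clause of $d$-basicness is only needed for the intermediate $\phi$'s. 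Supplying this extra case closes the gap and the rest of your argument goes through.
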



The proof of \cref{prop:ubiquityspeciald} will apply the following lemma, which is the analogue of \cref{lem:secondmoment} in this context.

\begin{lem}
\label{lem:secondmomentspeciald}
Let $\bbG$ be a $d$-dimensional transitive graph with $d\in\{5,6,8\}$.
Let $H$ be a hypergraph with boundary with at least one edge such that every subhypergraph of $H$ is $d$-buoyant.
\begin{enumerate}
	\itemsep0.5em
\item If $H$ has exactly exactly one edge, this unique edge is incident to exactly $d/(d-4)$ boundary vertices, and every interior vertex is incident to this unique edge, 
 then there exists a constant $c=c(\bbG,H)$ such that
\vspace{0.3em}
	\[
		\vspace{0.3em}
		\log_2\E[\tilde S_x(n)^2] \leq \log_2 n + c
	\]
for all $x=(x_u)_{u\in \partial V} \in (\bbV)^{\partial V}$ and all $n$ such that $\langle x_u x_v \rangle \leq 2^{n-1}$ for all $u,v \in \partial V$. 
\item If $H$  is $d$-basic, then there exists a constant $c=c(\bbG,H)$ such that
\vspace{0.3em}
	\[
		\vspace{0.3em}
		\log_2\E[\tilde S_x(n)^2] \leq  -2\eta_d(H) +c
	\]
for all $x=(x_u)_{u\in \partial V} \in (\bbV)^{\partial V}$ and all $n$ such that $\langle x_u x_v \rangle \leq 2^{n-1}$ for all $u,v \in \partial V$. 
\end{enumerate}
\end{lem}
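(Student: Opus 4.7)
The plan is to follow the proof of the preceding second-moment lemma (for the non-integer case) step by step up through the master estimate, retaining the logarithmic term that appears when $d/(d-4)$ is an integer, and then use the extra hypotheses to bound its contribution. Running the same derivation through the definitions of $\Phi$, $\Psi$, $\Omega^{\psi,k}$, and $R_\phi(\xi,\zeta)$ yields the bound
\[
\log_2 \E[\tilde S_x(n)^2] \leq -2\eta_d(H)\,n + \max_{(\phi,\psi) \in \Phi \times \Psi}\bigl[Q(\phi,\psi)\,n + L(\phi,\psi)\log_2 n\bigr] + O(1),
\]
where $Q$ is the quantity defined in the non-integer proof and $L(\phi,\psi) := |\{e\in E : |\{u\perp e : \phi(u)\perp \psi(e)\}| = d/(d-4)\}|$ is precisely the coefficient of $\log_2 n$ that appears in the master estimate in the integer case. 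Since $\Phi \times \Psi$ is finite, it suffices to bound $Q(\phi,\psi)\,n + L(\phi,\psi)\log_2 n$ uniformly in $(\phi,\psi)$ by $\log_2 n + O(1)$ in case (1) and by $O(1)$ in case (2).

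Case (1) is handled by direct enumeration, since $|E|=1$ and $\eta_d(H)=0$. Writing $e$ for the unique edge, either $\psi(e)=\star$ (in which case $Q=L=0$) or $\psi(e)=e$. In the latter case, because $\phi$ fixes $\partial V$ pointwise and every vertex of $V$ is incident to $e$, one computes $|\{u\perp e:\phi(u)\perp e\}| = d/(d-4) + |\{v\in V_\circ:\phi(v)\neq\star\}|$. If this count equals $d/(d-4)$, then $\phi\equiv\star$ on $V_\circ$, yielding $(Q,L)=(0,1)$; otherwise it strictly exceeds $d/(d-4)$, yielding $(Q,L)=(0,0)$ after a short calculation. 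In every case $Q\,n+L\log_2 n\leq \log_2 n + O(1)$, as required.

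For case (2), the inequality $Q(\phi,\psi)\leq 0$ is obtained exactly as in the non-integer proof via the intermediate quantity $\tilde Q(\phi) = \eta_d(H'_1) - (d-4)|\{\phi\neq\star\}\setminus V'_1|$, where $H'_1$ is built from $E'_1 := \{e:|\{u\perp e:\phi(u)\neq\star\}|>d/(d-4)\}$; this step only uses buoyancy of subhypergraphs, not integrality. It remains to prove that $Q(\phi,\psi)=0$ forces $L(\phi,\psi)=0$. Assuming for contradiction $Q=0$ and $L\geq 1$, we exhibit a proper, nontrivial, bordered subhypergraph with $\eta_d=0$, contradicting the $d$-basic hypothesis. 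If $E'_1=\emptyset$ then $\tilde Q=0$ forces $\phi\equiv\star$ on $V_\circ$, and for any $e_0\in E_\circ:=\{e:|\{u\perp e:\phi(u)\perp\psi(e)\}|=d/(d-4)\}$ the single-edge subhypergraph $H'=(\partial V,\emptyset,\{\psi(e_0)\})$ is bordered, nontrivial, and proper (using $|E|>1$); combining the lower bound $|\{u\in\partial V:u\perp\psi(e_0)\}|\geq d/(d-4)$ (from $e_0\in E_\circ$ together with $\phi\equiv\star$) with the buoyancy bound $(d-4)|\{u\in\partial V:u\perp\psi(e_0)\}|-d\leq 0$ yields $\eta_d(H')=0$ and the contradiction.

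The main obstacle is the remaining subcase $E'_1 \neq \emptyset$: here $Q=\tilde Q=0$ together with buoyancy force $\eta_d(H'_1)=0$, so either $H'_1$ is a proper bordered subhypergraph directly contradicting $d$-basic, or $H'_1=H$ is improper. The improper case must then be ruled out separately, which is where integrality of $d/(d-4)$ crucially enters: unpacking the equality $Q=\tilde Q=0$, each edge $e\in E'_1\setminus E_+$ (with $E_+:=\{e:|\{u\perp e:\phi(u)\perp\psi(e)\}|>d/(d-4)\}$) contributes the strictly positive integer quantity $(d-4)|\{u\perp e:\phi(u)\neq\star\}|-d\geq d-4$ to $\tilde Q$, so the equality $Q=\tilde Q$ forces $E'_1=E_+$; but then $E_\circ=\emptyset$, giving $L=0$ and contradicting $L\geq 1$. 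This terminal subcase is the most delicate step because, without the integrality of $d/(d-4)$, the strict-positivity argument collapses and the log term cannot be killed.
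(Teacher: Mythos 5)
Your argument is correct and follows essentially the same route as the paper: the same master estimate carrying the coefficient $L(\phi,\psi)$ of $\log_2 n$, the same quantities $Q$ and $\tilde Q$, and the same use of the $d$-basic hypothesis via bordered subhypergraphs of apparent weight zero. The only real difference is organizational: the paper shows $Q(\phi,\psi)\leq -(d-4)$ except when $\phi\equiv\star$ or $(\phi,\psi)$ is an ``isomorphism'' pair and checks $L=0$ in those two exceptional cases, whereas you reduce (using finiteness of $\Phi\times\Psi$) to showing that $Q=0$ forces $L=0$ and then split on $E'_1$; the two bookkeeping schemes are interchangeable.

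One step is asserted rather than proved: the dichotomy ``either $H'_1$ is a \emph{proper bordered} subhypergraph with $\eta_d(H'_1)=0$, or $H'_1=H$'' is not exhaustive as written, since $H'_1$ could a priori be proper but not bordered (some $v\in V_\circ\setminus V_\circ(H'_1)$ incident to two or more edges of $E'_1$). You must rule this out, and the buoyancy hypothesis does it: adjoining such a $v$ to $H'_1$ yields a subhypergraph $H''$ with $|E(H'')|=|E(H'_1)|$, $|V_\circ(H'')|=|V_\circ(H'_1)|+1$ and $\Delta(H'')\geq\Delta(H'_1)+2$, hence $\eta_d(H'')\geq\eta_d(H'_1)+(d-4)>0$, contradicting $d$-buoyancy of all subhypergraphs. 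With that two-line patch the proof is complete. (Two cosmetic remarks: in case (1), when $\psi(e)=\star$ and $\phi\not\equiv\star$ one gets $Q<0$ rather than $Q=0$, which is harmless; and integrality of $d/(d-4)$ is not what makes $(d-4)b_e-d$ strictly positive for $b_e>d/(d-4)$ --- its real role is that it permits $|\{u\perp e:\phi(u)\perp\psi(e)\}|=d/(d-4)$ to occur at all, which is what creates the $\log_2 n$ term in the first place.)
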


\begin{proof} Note that in both cases we have that every subhypergraph of $H$ is $d$-buoyant. 
We use the notation of the proof of \cref{prop:ubiquity}. As in equation \eqref{eq:Rallterms} of that proof, we have that
	\begin{multline}
	\log_2 \sum_{k\in\{0,\ldots,n\}^E}\sum_{(\xi,\zeta) \in \Omega^{\psi,k}}R_\phi(\xi,\zeta)\\ \lesssim -2\eta_d(H)\, n +Q(\phi,\psi)n 
	+\left|\left\{e \in E : \left|\left\{u\perp e : \phi(u) \perp \psi(e)\right\}\right| = d/(d-4)\right\}\right|\log_2 n,
	\label{eq:Rallterms2}
	\end{multline}
	where $Q(\phi,\psi)$ is defined as in \eqref{eq:Qdef2}. Moreover, the same argument used in that proof shows that $Q(\phi,\psi)\leq 0$ for every $(\phi,\psi)\in\Phi\times\Psi$.  In case $(1)$ of the lemma, in which $H$ has a single edge, we immediately obtain the desired bound since $\eta_d(H)=0$ and the coefficient of the $\log_2 n$ term is either $0$ or $1$.

Now suppose that $H$ is $d$-basic. Let $L(\phi,\psi)$ be the coefficient of $\log_2 n$ in \eqref{eq:Rallterms2}.
Note that $H$ cannot have an edge whose intersection with $\partial V$ has  $(d-4)/d$ elements or more, since otherwise the subhypergraph $H'$ of $H$ with that single edge and with no internal vertices is proper, bordered, and has $\eta_d(H')\geq 0$. Thus, we have that if $\phi_0$ is defined by $\phi_0(v)=\star$ for every $v\in V_\circ$ then 
\[L(\phi_0,\psi)\leq 
\left|\left\{e \in E : |\psi(e) \cap \partial V| \geq d/(d-4)\right\}\right|
=0\]
for every $\psi \in \Psi$. 

Let $\operatorname{Isom} \subseteq \Phi\times \Psi$ be the set of all $(\phi,\psi)$ such that $\phi(u)\perp \psi(e)$ for every $e\in E$ and $v\perp e$. Since $H$ is $d$-basic we have that if $(\phi,\psi)\in \operatorname{Isom}$ then
\[
L(\phi,\psi) =
\left|\left\{e \in E : \deg(e) = d/(d-4)\right\}\right| =0.
\]

  We claim that  $Q(\phi,\psi)\leq -(d-4)$ unless either $\phi=\phi_0$ or $(\phi,\psi)\in \operatorname{Isom}$.  
   Once proven this will conclude the proof, since we will then have that 
\begin{equation*}
	\log_2 \sum_{k\in\{0,\ldots,n\}^E}\sum_{(\xi,\zeta) \in \Omega^{\psi,k}}R_\phi(\xi,\zeta)\\ \lesssim -2\eta_d(H)\, n  + \max\{ -(d-4) n + |E|\log_2 n, 0\} \lesssim -2\eta_d(H) n
	\end{equation*}
for every $(\phi,\psi)\in\Phi\times \Psi$, from which we can conclude by summing over $\Phi\times\Psi$ as done previously.


We first prove that $Q(\phi,\psi)\leq -(d-4)$ unless either $\phi=\phi_0$ or $\phi(v) \neq \star$ for every $v\in V$. 
Note that since $d-4$ divides $d$, the $d$-apparent weight of every hypergraph with boundary is a multiple of $d-4$, and so we must have that $\eta_d(H')\leq -(d-4)$ for every subhypergraph $H'$ of $H$ with $\eta_d(H')<0$. 
As in \eqref{eq:bad2}, we have that 
$Q(\phi,\psi)\leq \eta_d(H')$, 
 where $H'=H'(\phi)$ is the subhypergraph of $H$ with boundary vertices given by the boundary vertices of $H$, edges given by the  set of edges of $H$ that have $|\{u\perp e:\phi(u)\neq \star\}|\geq d/(d-4)$, and interior vertices given by the set of interior vertices $u$ of $H$ for which $\phi(u)\neq \star$ and $\phi(u)\perp e$ for some $e\in E'$. 

We claim that 
 if $\phi$ is  such that $\eta_d(H')=0$ then $H'$ is bordered, and consequently is either equal to $H$ or does not have any edges by our assumptions on $H$. To see this, suppose for contradiction that $H'$ is not bordered, so that there exists a vertex $v\in V_\circ \setminus V_\circ'$ that is incident to more than one edge of $H'$. Let $H''$ be the subhypergraph of $H'$ obtained from $H'$ by adding the vertex $v$. Then we have that $|E(H'')|=|E(H')|$, $|V_\circ(H'')|=|V_\circ(H')|+1$ and $\Delta(H'')\geq \Delta(H'')+2$, and consequently that $\eta_d(H'')\geq \eta_d(H')+(d-4)$. Since every subhypergraph of $H$ is $d$-buoyant, we have that $\eta_d(H'')\leq 0$ and consequently that $\eta_d(H')\leq -(d-4)$, a contradiction. This establishes that 
 $Q(\phi,\psi)\leq -(d-4)$ unless either $\phi=\phi_0$ or $\phi(v) \neq \star$ for every $v\in V$, as claimed. 

 It remains to show that if $\phi(v) \neq \star$ for every $v\in V$ then $Q(\phi_1,\psi) \leq -(d-4)$ unless $(\phi,\psi)\in \operatorname{Isom}$. 
Since every edge of $H$ has degree strictly larger than $d/(d-4)$, we have that
\begin{multline*}
\left[(d-4)|\{u \perp e : \phi(u) \perp \psi(e)\}|-d\right]
	\mathbbm{1}[|\{u \perp e : \phi(u) \perp \psi(e)\}| >  d/(d-4) ]\\ \leq 
	\left[(d-4)\deg(e)-d\right]
	-(d-4) 
\end{multline*}
for every $e\in E$ and every $(\phi,\psi)\in \Phi\times\Psi$ such that $|\{u\perp e : \phi(u) \perp \psi(e)\}| < \deg(e)$. It follows easily from this and the definition of $Q(\phi,\psi)$ that if $\phi$ has $\phi(v) \neq \star$ for every $v\in V$, then
\begin{align*}
Q(\phi,\psi) \leq 
 \eta_d(H) - (d-4) |\{ e \in E : |\{u \perp e : \phi(u) \perp \psi(e)\}| < \deg(e) \}|.
\end{align*}
Since $\eta_d(H)\leq 0$ by assumption, it follows that $Q(\phi,\psi)\leq -(d-4)$ unless $(\phi,\psi)\in \operatorname{Isom}$. This concludes the proof.
\end{proof}

\cref{lem:secondmoment} (together with \cref{prop:restrictedfirstmoment}) is already sufficient to yield case (2) of \cref{prop:ubiquityspeciald}. To handle case (1), we will require the following additional estimate.

\begin{lem}[Different scales are uncorrelated]\label{lem:secondmoment2}
Let $\bbG$ be a $d$-dimensional transitive graph with $d>4$. 
Let $H$ be a hypergraph with boundary.
  Then there exists a positive constant $c=c(\bbG,H,r)$ such that
\[\log_2\E[\tilde S_x(n) \tilde S_{x}(n+m)] \leq -\eta_d(H)\, (2n+m) + c\]
for all $x=(x_u)_{u\in \partial V} \in (\bbV)^{\partial V}$, all $m\geq 2$, and all $n$ such that $\langle x_u x_v \rangle \leq 2^{n-1}$ for all $u,v \in \partial V$. 
\end{lem}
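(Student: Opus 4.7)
The plan is to adapt the second-moment argument from the proof of \cref{lem:secondmoment}, exploiting the scale separation $m\geq 2$ to factor every spread cleanly into a scale-$n$ piece and a scale-$(n+m)$ piece. As in that proof, let $\Phi$ be the set of partial injections $\phi:V_\circ\to V_\circ\cup\{\star\}$, and define $\tilde\Witness_\phi(\xi,\zeta)$ to be the event that both $\xi$ and $\zeta$ are witnesses and the $\xi$-component at $u\in V_\circ$ equals the $\zeta$-component at $v\in V_\circ$ if and only if $v=\phi(u)$. Then
\[
\E[\tilde S_x(n)\tilde S_x(n+m)] \leq \sum_{\phi\in\Phi}\sum_{\xi\in\Omega_x(n)}\sum_{\zeta\in\Omega_x(n+m)}\P\bigl(\tilde\Witness_\phi(\xi,\zeta)\bigr),
\]
and \cref{prop:sdim2} applied to the sets of vertices singled out by the distinct components of $\F$ in $\tilde\Witness_\phi$ gives $\P(\tilde\Witness_\phi(\xi,\zeta))\preceq R_\phi(\xi,\zeta)$, where $R_\phi$ is the same product of spread factors used in \cref{lem:secondmoment}.

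The key new ingredient is a scale-separated factorization of $R_\phi$. For $\xi\in\Omega_x(n)$ and $\zeta\in\Omega_x(n+m)$ with $m\geq 2$, every point of $\{\zeta_e:e\in E\}$ lies at distance $\asymp 2^{n+m}$ from every other point of $\{\zeta_e\}$ and from every point of $\{x_u\}_{u\in\partial V}\cup\{\xi_e\}_{e\in E}$, while the scale-$n$ points all lie within distance $2^{n+1}$ of each other. Using \eqref{eq:spread} with the enumeration that places scale-$n$ points first, this yields $\langle K_1\cup K_2\rangle\asymp \langle K_1\rangle\cdot 2^{(n+m)|K_2|}$ whenever $K_1\neq \emptyset$ lies at scale $\leq n+1$ and $K_2$ (disjoint from $K_1$) lies at scale $n+m$, and $\langle K_2\rangle\asymp 2^{(n+m)(|K_2|-1)}$ when $K_1=\emptyset$. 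Applying this to each of the four types of components appearing in $\tilde\Witness_\phi$ (boundary $u\in\partial V$; merged $u\in V_\circ$ with $\phi(u)\neq\star$; unmatched $\xi$-side $u$ with $\phi(u)=\star$; unmatched $\zeta$-side $v$ with $\phi^{-1}(v)=\star$), and noting that the constellation decorations contribute only constants depending on $H$ and $r$, gives
\[
R_\phi(\xi,\zeta)\asymp W^H(x,\xi)\cdot 2^{-(d-4)(n+m)A_\phi}, \qquad A_\phi:=\Delta-|V_\circ\setminus\mathrm{image}(\phi)|.
\]
The crucial feature is that the $\zeta$-dependence has been absorbed into the multiplicative constant.

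Summing over $\zeta\in\Omega_x(n+m)$ using $|\Omega_x(n+m)|\asymp 2^{d|E|(n+m)}$ yields
\[
\sum_\zeta R_\phi(\xi,\zeta)\preceq W^H(x,\xi)\cdot 2^{[d|E|-(d-4)A_\phi](n+m)},
\]
and a direct computation gives $d|E|-(d-4)A_\phi=-\eta_d(H)-(d-4)|V_\circ\cap\mathrm{image}(\phi)|\leq -\eta_d(H)$ since $d>4$. To handle the $\xi$-sum, note that for $\xi\in\Omega_x(n)$ all relevant pairwise distances are $\asymp 2^n$ (using $\langle\xi_e\xi_{e'}\rangle\geq 2^{n-C_1}$ from the definition of $\Omega_x(n)$), so $W^H(x,\xi)\asymp 2^{-(d-4)(\Delta-|V_\circ|)n}$, and combining with $|\Omega_x(n)|\asymp 2^{d|E|n}$ gives $\sum_{\xi\in\Omega_x(n)}W^H(x,\xi)\preceq 2^{-\eta_d(H)n}$. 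Multiplying the two sums and summing over the constant-size set $\Phi$ yields $\E[\tilde S_x(n)\tilde S_x(n+m)]\preceq 2^{-\eta_d(H)(2n+m)}$, as required.

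The main work to carry out is the verification of the spread factorization for $R_\phi$: it is conceptually transparent but requires a case-by-case check across the four component types, and careful attention to the constellation points clustered within radius $r$ of each $\xi_e$ and $\zeta_e$ (which only perturb spreads by constants depending on $H$ and $r$). Otherwise the argument is a routine adaptation of the second-moment calculation in \cref{lem:secondmoment}, with the notable simplification that the logarithmic obstruction present there (from choices of $k_e$ with $(d-4)|\{u\perp e:\phi(u)\perp\psi(e)\}|=d$) does not appear, since here the scale separation fixes the $\zeta$-scale and there is no $\psi$-type degree of freedom to sum over.
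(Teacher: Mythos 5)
Your proof is correct and follows essentially the same route as the paper's: decompose over the partial bijections $\phi\in\Phi$, observe that the scale separation $m\geq 2$ forces every relevant distance to be comparable to either $2^n$ or $2^{n+m}$, evaluate each spread accordingly, and multiply by the volumes $|\Omega_x(n)|\asymp 2^{d|E|n}$ and $|\Omega_x(n+m)|\asymp 2^{d|E|(n+m)}$. The only cosmetic difference is that you factor $R_\phi$ into a $\xi$-part and a $\zeta$-part and sum the two scales separately, whereas the paper bounds $\max_\phi\P(\tilde\Witness_\phi)$ uniformly and multiplies by the total volume; the underlying computation, including the observation that the bound is maximized at $\phi\equiv\star$, is identical.
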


\begin{proof}
Let $\Phi$ and $\tilde \Witness_\phi(\xi,\zeta)$ be defined as in the proof of \cref{lem:secondmoment}.
  For every $\xi \in \Omega_x(n)$ and $\zeta \in \Omega_x(n+m)$, we have that all distances relevant to our calculations are on the order of either $2^n$ or $2^{n+m}$. That is,
\begin{align*}
\log_2 \langle \xi_e \xi_{e'} \rangle,\,  \log_2 \langle \xi_e x_v \rangle  \approx n
\quad  \text{ and } \quad
\log_2 \langle \xi_e \zeta_{e'} \rangle,\, \log_2 \langle \zeta_e \zeta_{e'} \rangle,\, \log_2 \langle \zeta_e x_v \rangle \approx n+m
\end{align*}
 for all $e,e' \in E$ and $v\in \partial V$. Thus, using \eqref{eq:Hphi}, can estimate
\begin{multline*}\frac{1}{d-4}\log_2\P(\tilde \Witness_\phi(\xi,\zeta)) \lesssim -\sum_{u\in\partial V}|\{e \in E : e\perp u\}|\,(2n+m) \\-\sum_{u \in V_\circ,\, \phi(u) =\star} (|\{e\in E : e \perp u\}|-1)\, n  
-\sum_{u \in V_\circ,\, \phi^{-1}(u) =\star} (|\{e\in E : e \perp u\}|-1)\, (n+m) 
\\ -\sum_{u \in V_\circ,\, \phi(u) \neq \star} \left(\left|\{e\in E : e \perp u\}\right|n-n +\left|\{e\in E : e \perp \phi(u)\}\right|(n+m)\right)\,
\\ = -\Delta (2n+m) + |V_\circ|\,(2n+m) - |\{v \in V_\circ: \phi(v)\neq \star \}|\, (n+m),
\end{multline*}
which is maximized when $\phi(v)=\star$ for all $v\in V_\circ$. Now, since 
\[\log_2|\Omega_x(n)\times \Omega_x(n+m)| \lesssim |\Lambda(n-1,n)^{E} \times \Lambda(n+m-1,n+m)^{E}| \lesssim d(2n+m),\]
we deduce that
\begin{align*}
\log_2\E[\tilde S_x(n)\tilde S_x(n+m)] 
&\leq \log_2|\Omega_x(n)\times \Omega_x(n+m)| + \log|\Phi| \\&\hspace{2cm}+ \max\{\P(\tilde \Witness_\phi(\xi,\zeta)): \xi \in \Omega_x(n),\zeta\in \Omega_x(n+m), \phi \in \Phi \}
\\ & \lesssim d|E|(n+m) - (d-4)\Delta(2n+m) +(d-4) |V_\circ|(2n+m)\\ &= -\eta_d(H) (2n+m)
\end{align*}
as claimed.
\end{proof}

\begin{proof}[Proof of \cref{prop:ubiquityspeciald} given \cref{lem:constellations,prop:restrictedfirstmoment}]
	The second case, in which $H$ has a $d$-basic coarsening with more than one edge all of whose subhypergraphs are $d$-buoyant, follows from \cref{lem:constellations,prop:restrictedfirstmoment,lem:secondmomentspeciald} exactly as in the proof of \cref{prop:ubiquity}. Now suppose that $H$ is a refinement of a hypergraph with boundary $H'$ that has $d/(d-4)$ boundary vertices and a single edge incident to every vertex. Then $\eta_d(H')=0$ and every subhypergraph of $H'$ is $d$-buoyant. Applying \cref{prop:restrictedfirstmoment,lem:secondmomentspeciald,lem:secondmoment2}, we deduce that
	\[\E\left[\sum_{k=n}^{2n} \tilde S^{H'}_x(2k)\right] \succeq n, \quad \text{ and } \quad
	\E\left[\left(\sum_{k=n}^{2n} \tilde S^{H'}_x(2k)\right)^2\right] \preceq n^2, \]
	for every $n$ such that $\langle x_u x_v \rangle \leq 2^{n-1}$ for every $u,v \in \partial V$, from which it follows by Cauchy-Schwarz that
	\[\P\left( \sum_{k=n}^{2n}\tilde S^{H'}_x(2k) >0 \right)\succeq 1.\]
	for every $n$ such that $\langle x_u x_v \rangle \leq 2^{n-1}$ for every $u,v \in \partial V$.
	The proof can now be concluded as in the proof of \cref{prop:ubiquity}. 
\end{proof}


\subsection{Proof of Lemmas \ref{lem:constellations} and 
\ref{prop:restrictedfirstmoment}
}
\label{Sec:technical}


In this section we prove \cref{lem:constellations,prop:restrictedfirstmoment}. 
We begin with some background on random walk estimates. Given a graph $G$ and a vertex $u$ of $G$, we write $\bP_u$ for the law of the random walk on $G$ started at $u$.
Let $G$ be a graph, and let $p_n(x,y)$ be the probability that a random walk on $G$ started at $x$ is at $y$ at time $n$. Given positive constants $c$ and $c'$, we say that $G$ satisfies $(c,c')$\textbf{-Gaussian heat kernel estimates} if 
\begin{align} \frac{c}{|B(x,n^{1/2})|}e^{-c d(x,y)^2/n} \leq p_n(x,y) + p_{n+1}(x,y) \leq \frac{c'}{|B(x,n^{1/2})|}e^{-d(x,y)^2/(c' n)} 
\label{eq:GHKE}
\end{align}
for every $n\geq 0$ and every pair of vertices $x,y$ in $G$ with $d(x,y)\leq n$. We say that $G$ satisfies Gaussian heat kernel estimates if it satisfies $(c,c')$-Gaussian Heat Kernel Estimates for some positive constants $c$ and $c'$.

\begin{thm}[Hebisch and Saloff-Coste \cite{HebSaCo93}]
	\label{thm:HSCGreen}
	Let $\bbG$ be a $d$-dimensional transitive graph. Then $G$ satisfies Gaussian heat kernel estimates.
\end{thm}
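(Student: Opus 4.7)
The plan is to reduce the theorem to the abstract equivalence, due to Grigoryan--Saloff-Coste in the continuous setting and to Delmotte in the discrete graph setting, which says that for a graph of bounded degree the two-sided Gaussian heat kernel estimates \eqref{eq:GHKE} are equivalent to the conjunction of \emph{volume doubling} and a \emph{scale-invariant $L^2$-Poincar\'e inequality on balls}. Once this equivalence is in hand, the theorem reduces to verifying these two geometric properties for an arbitrary $d$-dimensional transitive graph $\bbG$.

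The volume doubling half is immediate from the hypothesis. Since $\bbG$ is $d$-dimensional, there are constants $c,C>0$ with $cn^{d}\leq |B(x,n)|\leq Cn^{d}$ uniformly in $x\in\bbV$ and $n\geq 1$, so that
\[
\frac{|B(x,2n)|}{|B(x,n)|}\;\leq\;\frac{C(2n)^{d}}{cn^{d}}\;=\;\frac{C}{c}\,2^{d},
\]
and the constant on the right is independent of $x$ and $n$. Bounded degree also follows from transitivity, so the first input to Delmotte's theorem is free.

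The real work is the Poincar\'e inequality. I would obtain it by an algebraic reduction: by a theorem of Trofimov, every transitive graph of polynomial growth is quasi-isometric to a Cayley graph of a finitely generated group of the same growth degree, and by Gromov's polynomial growth theorem such a group is virtually nilpotent. Since both volume doubling and the scale-invariant Poincar\'e inequality are preserved under quasi-isometries between bounded-geometry graphs, it is enough to establish the Poincar\'e inequality on Cayley graphs of virtually nilpotent groups. This is exactly the case treated by Hebisch and Saloff-Coste: one passes to the associated simply connected nilpotent Lie group equipped with a left-invariant sub-Riemannian structure coming from the generating set, proves the Poincar\'e inequality there by integrating along one-parameter subgroups using the graded Lie algebra structure, and then transfers the inequality back to the discrete Cayley graph by a standard comparison argument.

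The main obstacle is clearly the Poincar\'e inequality, not doubling: doubling is a formal consequence of polynomial growth, whereas the Poincar\'e inequality ultimately rests on Gromov's theorem together with the explicit Lie-theoretic computation on nilpotent groups. Everything else in the argument --- the Delmotte equivalence, the quasi-isometry invariance, and the transfer from the continuous Lie group to the Cayley graph --- is by now standard black-box machinery, which is presumably why the paper simply cites \cite{HebSaCo93} rather than reproducing any of it.
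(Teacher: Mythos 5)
Your argument is correct, but it reduces the transitive case to the Cayley-graph case by a genuinely different route from the paper. The paper takes the Hebisch--Saloff-Coste result for Cayley graphs as a black box giving the full two-sided Gaussian estimate, and then disposes of general transitive graphs either by rerunning the same methods (the citation to Woess) or, as in its footnote, by the observation of Trofimov and Godsil et al.\ that the product of a $d$-dimensional transitive graph with a sufficiently large complete graph is \emph{literally} a Cayley graph, a product which perturbs the random walk only trivially; no characterization of Gaussian bounds is needed for that step. You instead pass through Delmotte's equivalence (Gaussian bounds $\Leftrightarrow$ volume doubling plus scale-invariant Poincar\'e), check doubling directly from $d$-dimensionality, and import the Poincar\'e inequality from a quasi-isometric Cayley graph supplied by Trofimov's structure theorem, using the quasi-isometry invariance of doubling and Poincar\'e (equivalently, of the Gaussian estimates themselves, which is exactly \cref{thm:GHKEstability}, a fact the paper records anyway for other purposes). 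Both reductions are valid; yours is the standard modern machine, the paper's footnote trick is more elementary at the reduction step. One simplification you are entitled to: on a Cayley graph the scale-invariant Poincar\'e inequality is a formal consequence of volume doubling via the elementary pseudo-Poincar\'e telescoping argument that uses only translation invariance of the counting measure (Coulhon--Saloff-Coste), so the excursion through the simply connected nilpotent Lie group and its sub-Riemannian structure is not actually needed for that step; the deep input (Gromov, or rather Trofimov) is used only to produce the Cayley graph, not to prove the Poincar\'e inequality on it.
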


Hebisch and Saloff-Coste proved their result only for Cayley graphs, but the general case can be proven by similar methods\footnote{In fact, the general case can also be deduced from the case of Cayley graphs, since if $\bbG$ is a $d$-dimensional transitive graph then the product of $G$ with a sufficiently large complete graph is a Cayley graph \cite{trofimov1985graphs,godsil1989note}, but taking such a product affects the random walk in only a very trivial way.}, see e.g.\ \cite[Corollary 14.5 and Theorem 14.19]{Woess}.

Now, recall that two graphs $G=(V,E)$ and $G'=(V',E')$ are said to be $(\alpha,\beta)$\textbf{-rough isometric} if there exists a function $\phi:V \to V'$ such that the following conditions hold.
\begin{enumerate}
	\item $\phi$ roughly preserves distances: The estimate \[\alpha^{-1} d(x,y) - \beta \leq d'(\phi(x),\phi(y)) \leq \alpha d(x,y) + \beta\] holds for all $x,y \in V$.
	\item $\phi$ is roughly surjective: For every $x \in V'$, there exists $y \in V$ such that $d'(x,\phi(y)) \leq \beta$.
\end{enumerate}
The following stability theorem for Gaussian heat kernel estimates follows from the work of Delmotte \cite{delmotte1999parabolic}; see also \cite[Theorem 3.3.5]{KumFlour}.

\begin{thm}\label{thm:GHKEstability}
	Let $G$ and $G'$ be $(\alpha,\beta)$-roughly isometric graphs for some positive $\alpha,\beta$, and suppose that the degrees of $G$ and $G'$ are bounded by $M<\infty$ and that $G$ satisfies  $(c,c')$-Gaussian heat kernel estimates for some positive $c,c'$. Then there exist $\tilde c = \tilde c(\alpha,\beta,M,c,c')$ and 
	$\tilde c' = \tilde c'(\alpha,\beta,M,c,c')$ such that $G'$ satisfies $(\tilde c, \tilde c')$-Gaussian heat kernel estimates.
\end{thm}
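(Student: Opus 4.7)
The plan is to reduce the theorem to Delmotte's characterization of Gaussian heat kernel estimates in terms of two quasi-isometrically stable geometric/analytic conditions, and then transfer those conditions from $G$ to $G'$ along the rough isometry.

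First, I would invoke the main theorem of \cite{delmotte1999parabolic}, which asserts that, for a graph of bounded degree, the two-sided Gaussian heat kernel estimates \eqref{eq:GHKE} are equivalent to the conjunction of the following two properties: (i) \emph{volume doubling (VD)}, namely the existence of a constant $C_D$ such that $|B(x,2r)| \leq C_D\,|B(x,r)|$ for every vertex $x$ and every $r\geq 1$, and (ii) the \emph{(weak) Poincaré inequality (PI)}, namely the existence of constants $C_P$ and $\kappa\geq 1$ such that
\[
\sum_{y\in B(x,r)} \left|f(y)-\bar f_{x,r}\right|^2 \;\leq\; C_P\, r^2 \sum_{\substack{y,z\in B(x,\kappa r)\\ y\sim z}} |f(y)-f(z)|^2
\]
for every finitely supported $f:V\to\bbR$, where $\bar f_{x,r}$ denotes the average of $f$ over $B(x,r)$. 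Crucially, Delmotte's equivalence is quantitative: the constants on each side depend only on those on the other side and on the degree bound.

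Second, I would show that both VD and PI are preserved under $(\alpha,\beta)$-rough isometries between graphs whose degrees are bounded by $M$. For VD, one uses that any $(\alpha,\beta)$-rough isometry $\phi:V\to V'$ satisfies $B(\phi(x),\alpha^{-1}r - \beta)\subseteq N_\beta(\phi(B(x,r)))\subseteq B(\phi(x),\alpha r+\beta)$, where $N_\beta$ denotes the $\beta$-neighbourhood; combined with the fact that the preimage of any vertex in $V'$ under $\phi$ has diameter at most $\alpha(2\beta+\beta)$ in $G$ and that $G$ has degrees at most $M$, one controls the multiplicity of $\phi$ by a constant depending only on $\alpha,\beta,M$. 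This lets one sandwich ball volumes in $G'$ between ball volumes in $G$ up to multiplicative constants, from which VD transfers. For PI, the argument is analogous but slightly more delicate: given a function $f'$ on $G'$, one pulls it back to a function $f=f'\circ\phi$ on $G$ (possibly after small smoothing to extend $f'$ over the image), applies PI on $G$, and observes that the Dirichlet energies and local $L^2$ norms on the two sides agree up to constants depending only on $\alpha,\beta,M$. These are standard arguments laid out in detail in \cite[Chapter 3]{KumFlour}.

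Combining the two ingredients: since $G$ satisfies $(c,c')$-Gaussian heat kernel estimates, Delmotte's theorem yields VD and PI on $G$ with constants controlled by $c,c',M$; rough-isometry stability transfers VD and PI to $G'$ with constants controlled in addition by $\alpha,\beta$; and the reverse direction of Delmotte's theorem then produces $(\tilde c,\tilde c')$-Gaussian heat kernel estimates on $G'$ with constants depending only on $\alpha,\beta,M,c,c'$, as required. The only real obstacle is the bookkeeping in the rough-isometry transfer of PI---one must be careful about how the scale parameter $\kappa$ interacts with the distortion constants $\alpha,\beta$---but this is routine and I would simply cite \cite{delmotte1999parabolic} and \cite[Theorem 3.3.5]{KumFlour} rather than reproduce the estimates.
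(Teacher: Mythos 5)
Your proposal is correct and follows exactly the route the paper intends: the paper gives no proof of this theorem, simply citing Delmotte \cite{delmotte1999parabolic} and \cite[Theorem 3.3.5]{KumFlour}, and the content of those references is precisely the reduction to volume doubling plus the Poincar\'e inequality together with their quantitative stability under rough isometries of bounded-degree graphs that you describe. The only caveat worth recording is that Delmotte's equivalence requires the uniform lower bound $p_1(x,y)\geq 1/M$ for adjacent vertices (which the degree bound supplies) and that the parity issue is absorbed by stating the bounds for $p_n+p_{n+1}$, both of which your sketch implicitly handles.
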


Recall that a function $h:V\to\R$ defined on the vertex set of a graph is said to be \textbf{harmonic} on a set $A \subseteq V$ if 
\[h(v)=\frac{1}{\deg(v)} \sum_{u \sim v} h(u)\]
for every vertex $v\in A$, where the sum is taken with appropriate multiplicities if there are multiple edges between $u$ and $v$. The graph $G$ is said to satisfy an \textbf{elliptic Harnack inequality} if for every $\alpha>1$, there exist a constant $c(\alpha) \geq 1$ such that \[c(\alpha)^{-1} \leq h(v)/h(u) \leq c(\alpha)\] for every two vertices $u$ and $v$ of $G$ and every positive function $h$ that is harmonic on the set \[\left\{w \in V : \min \{d(u,w),d(w,v)\} \leq \alpha d(u,v)\right\},\]
in which case we say that $G$ satisfies an elliptic Harnack inequality with constants $c(\alpha)$. 

The following theorem also follows from the work of Delmotte \cite{delmotte1999parabolic}, and was implicit in the earlier work of e.g.\ Fabes and Stroock \cite{FabStro86}; see also \cite[Theorem 3.3.5]{KumFlour}. Note that these references all concern the \emph{parabolic} Harnack inequality, which is stronger than the elliptic Harnack inequality. 

\begin{thm}\label{thm:GHKEimpliesEHI}
	Let $G$ be a graph. If $G$ satisfies $(c_1,c_1')$-Gaussian heat kernel estimates, then there exists $c_2(\alpha)=c_2(\alpha,c_1)$ such that $G$ satisfies an elliptic Harnack inequality with constants $c_2(\alpha)$.
\end{thm}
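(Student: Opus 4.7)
The plan is to follow the classical route via the equivalence, due essentially to Grigor'yan and to Saloff-Coste and made sharp in the discrete setting by Delmotte, between Gaussian heat kernel estimates and the conjunction of volume doubling and the $L^2$-Poincaré inequality, which in turn is equivalent to the parabolic Harnack inequality. Since the elliptic Harnack inequality is a strict weakening of the parabolic one obtained by specializing to time-independent (harmonic) functions, once the parabolic statement is in hand the desired conclusion follows directly. So I would prove: GHKE $\Rightarrow$ volume doubling (VD) and Poincaré inequality (PI) $\Rightarrow$ parabolic Harnack inequality (PHI) $\Rightarrow$ elliptic Harnack inequality (EHI).

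First I would extract VD and PI from GHKE. For VD, the two-sided bounds at the diagonal give $c |B(x,n^{1/2})|^{-1} \leq p_n(x,x) \leq c' |B(x,n^{1/2})|^{-1}$, and comparing this at scales $n$ and $4n$ (using $p_n(x,x) \geq \sum_{y \in B(x, n^{1/2})} p_n(x,y)^2 / |B(x,n^{1/2})|$ together with the lower Gaussian bound) yields $|B(x, 2r)| \leq C |B(x,r)|$. For PI, I would use the standard Kusuoka–Stroock / Saloff-Coste argument: for $f$ supported in a ball $B$ of radius $r$, write $f - f_B$ using the discrete heat semigroup $P_t$ at time $t \asymp r^2$, bound $\|f - P_t f\|_2^2 \leq 2t \mathcal{E}(f,f)$ via the spectral representation of the Dirichlet form, and use the lower Gaussian bound to show $\|P_t f - f_B\|_2^2 \lesssim \|f - f_B\|_2^2 / 2$ so that $\|f - f_B\|_2^2 \lesssim r^2 \mathcal{E}(f,f)$.

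Next I would run Moser iteration on the discrete parabolic cylinders to deduce PHI from VD $+$ PI. This is the technically involved part: one proves a mean-value inequality of the form $\sup_{Q_-} u \leq C |Q_-|^{-1/p} \|u\|_{L^p(Q)}$ for nonnegative subsolutions of the discrete heat equation on a parabolic cylinder $Q$ by iterating Caccioppoli-type estimates (which use PI to move from $L^p$ to $L^{p\kappa}$ via the Sobolev embedding implied by VD $+$ PI), and a dual lower bound $\inf_{Q_+} u \geq c |Q_+|^{-1/p} \|u\|_{L^p(Q)}$ for positive supersolutions using the John–Nirenberg-style lemma of Bombieri–Giusti applied to $-\log u$, again powered by PI. Combining these on overlapping cylinders gives PHI; I would follow Delmotte's paper essentially verbatim here.

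Finally, given PHI, apply it to a positive function $h$ harmonic on $B(x_0, (2\alpha + 2) r)$ where $r = d(u,v)$: view $h$ as a time-independent caloric function on the cylinder $[0, (2 r)^2] \times B(x_0, 2\alpha r)$, and chain a bounded number of applications of PHI along a short geodesic to cover all $w$ with $\min\{d(u,w), d(w,v)\} \leq \alpha r$, yielding the claimed two-sided comparison $c(\alpha)^{-1} \leq h(v)/h(u) \leq c(\alpha)$ with $c(\alpha)$ depending only on $\alpha$ and the GHKE constants $c_1, c_1'$. The main obstacle is the Moser iteration step, which is substantially more delicate in discrete time than in the continuous setting because of the absence of a chain rule and the need to carefully handle the parabolic geometry of graph cylinders; this is precisely what Delmotte's paper \cite{delmotte1999parabolic} accomplishes, so in practice I would cite him for Step 2 rather than redo the iteration.
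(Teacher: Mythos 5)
Your proposal is correct and follows exactly the route the paper takes: the paper gives no independent proof but simply cites Delmotte \cite{delmotte1999parabolic} for the implication from Gaussian heat kernel estimates to the parabolic Harnack inequality (via volume doubling and Poincar\'e), and then observes that the elliptic Harnack inequality is the special case of the parabolic one for time-independent solutions. Your additional detail on extracting VD and PI from the heat kernel bounds and on the final chaining step is sound but not something the paper spells out.
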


We remark that the elliptic Harnack inequality has recently been shown to be stable under rough isometries in the breakthrough work of Barlow and Murugan \cite{barlow2016stability}. 

\medskip

Recall that a graph is said to be \textbf{$d$-Ahlfors regular}  if there exists a positive constant $c$ such that $c^{-1} r^d \leq |B(x,r)| \leq cr^d$ for every $r\geq 1$ and every $x \in V$ (in which case we say $G$ is $d$-Ahlfors regular with constant $c$). Ahlfors regularity is clearly preserved by rough isometry, in the sense that if $G$ and $G'$ are $(\alpha,\beta)$-rough isometric graphs for some positive $\alpha,\beta$, and $G$ is $d$-Ahlfors regular with constant $c$, then there exists a constant $c'=c'(\alpha,\beta,c)$ such that $G'$ is $d$-Ahlfors regular with constant $c'$.

\medskip

Observe that if the graph $G$ is $d$-Ahlfors regular for some $d>2$ and satisfies a Gaussian heat kernel estimate, then summing the estimate \eqref{eq:GHKE} yields that
\[1 \leq  \sum_{n\geq0} p_n(v,v) \preceq 1 \]
for every vertex $v$, and that
\begin{equation}
\label{eq:***}
\bP_u(\text{hit v}) = \frac{\sum_{n\geq0}{p_n(u,v)}}{\sum_{n\geq0} p_n(v,v)} \asymp \langle uv\rangle^{-(d-2)}
\end{equation}
for all vertices $u$ and $v$ of $G$.


 \medskip

We now turn to the proofs of \cref{lem:constellations,prop:restrictedfirstmoment}. The key to both proofs is the following lemma.

\begin{lemma}
\label{lem:firstmomentlowerboundestimate}
Let $\bbG$ be a $d$-Ahlfors regular graph with constant $c_0$ for some $d>4$, let $\F$ be the uniform spanning forest of $\bbG$, and suppose that $\bbG$ satisfies $(c_0^{-1},c_0)$-Gaussian heat kernel estimates. 
Let $K_1,\ldots,K_N$ be a collection of finite, disjoint sets of vertices, and let $K = \bigcup_{i=1}^k K_N$. Let $\{X^v : v \in K\}$ be a collection of independent simple random walks started from the vertices of $K$. If
\begin{equation}\label{eq:lowerboundhypothesis}\P\Bigl(
\{X^u_i : i \geq 0\} \cap \{ X^v_i : i\geq0\} = \emptyset \text{ for all $u\neq v \in K$}
\Bigr) \geq \eps > 0,\end{equation}
 then there exist constants $c=c(\bbG,H,\eps,|K|,c_0)$ and $C=C(\bbG,H,\eps,|K|,c_0)$ such that
\begin{multline} 
\log_2\P\left(
\begin{array}{l}
\sF(K_i \cup K_j) \text{ if and only if $i=j$, and each two points in $K_i$ are connected}\\
\text{by a path in $\F$ of diameter at most $C \operatorname{diam}(K)$ for each $1 \leq i \leq N$}
\end{array}
\right)\\ \geq -(d-4)(|K| -N)\log_2 \operatorname{diam}(K) + c.\end{multline}
\end{lemma}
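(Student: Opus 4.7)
The plan is to identify a favorable event on independent random walks started at the vertices of $K$ such that (i) the event has probability at least $c \operatorname{diam}(K)^{-(d-4)(|K|-N)}$ and (ii) on this event, running Wilson's algorithm rooted at infinity with the $K$-vertices processed first produces the desired component structure in $\F$. Fix a representative $v_i \in K_i$ for each $i$, set $R = \operatorname{diam}(K)$, let $v_0 \in K$ be arbitrary, and let $B = B(v_0, CR)$ for a constant $C$ to be chosen. Consider independent walks $\{X^v : v \in K\}$, and define $\mathcal G$ to be the event that (a) for each $i$ and each $u \in K_i \setminus \{v_i\}$, the ranges of $X^u$ and $X^{v_i}$ share a vertex inside $B$, and (b) for all $u \in K_i$ and $u' \in K_j$ with $i\neq j$, the ranges of $X^u$ and $X^{u'}$ are disjoint.

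On $\mathcal G$, Wilson's algorithm (with ordering $v_1,\dots,v_N$ first, then the remaining vertices of $K$, then the rest of $\bbG$) produces $\F$ with the desired properties: the loop-erasures $\gamma_i = \mathsf{LE}(X^{v_i})$ are pairwise disjoint by (b), so $v_1,\dots,v_N$ occupy $N$ distinct trees; for each $u \in K_i\setminus\{v_i\}$, the walk $X^u$ hits $\gamma_i\subseteq \text{range}(X^{v_i})$ by (a), and by (b) it does not first hit any other $\gamma_j$, so $u$ joins the tree of $v_i$; moreover the entire connection within $K_i$ occurs inside $B$, hence is by a path of diameter $O(R)$ in $\F$. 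Thus it suffices to prove $\P(\mathcal G) \gtrsim R^{-(d-4)(|K|-N)}$.

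To bound $\P(\mathcal G)$ from below, use the standard second-moment lower bound for intersection probabilities on $d$-dimensional transitive graphs, which is obtained from the Green's function estimate \eqref{eq:HSC} (itself a consequence of the heat kernel estimates of Theorem~\ref{thm:HSCGreen}): for each pair $(u,v_i)$ with $u,v_i$ at distance at most $R$, a capacity/Green's function calculation restricted to $B$ yields
\[
	\sum_{p\in B} G(u,p) G(v_i,p) \asymp R^{-(d-4)},
\]
and the second moment $\E[(\#\{(n,m): X^u_n=X^{v_i}_m\in B\})^2]$ can be controlled by an application of the elliptic Harnack inequality (Theorem~\ref{thm:GHKEimpliesEHI}), giving $\P(X^u\cap X^{v_i}\neq \emptyset\text{ in } B) \gtrsim R^{-(d-4)}$. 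Meanwhile, event (b) in isolation is implied by the full non-intersection hypothesis \eqref{eq:lowerboundhypothesis} and thus has probability at least $\eps$.

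The main obstacle is combining the within-$K_i$ intersection events (a) with the across-$K_i$ non-intersection event (b), which are in evident tension. The plan is a two-scale decomposition via the strong Markov property: stop each $X^v$ at its first exit from $B$. Inside $B$, use the second-moment lower bound to obtain, for each $u \in K_i\setminus\{v_i\}$ independently across $i$, the intersection of $X^u$ with $X^{v_i}$ at the cost of one factor of $R^{-(d-4)}$; crucially, the elliptic Harnack inequality will be used to show that conditioning on these intersections changes the exit distribution from $B$ only by bounded factors, so the post-$B$ portions of the walks have exit distributions comparable to those of unconditioned walks. For the post-$B$ portions, applying the hypothesis \eqref{eq:lowerboundhypothesis} together with a comparison of exit distributions (again via Harnack) yields a uniform lower bound on the conditional probability of the non-intersection event (b) outside $B$. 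Multiplying these contributions gives $\P(\mathcal G)\gtrsim \eps\cdot R^{-(d-4)(|K|-N)}$, completing the proof after absorbing constants into $c$ and taking $C$ large enough so that all connecting paths are contained in $B$.
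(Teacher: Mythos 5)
Your outline has the right ingredients (Wilson's algorithm, a second-moment bound forcing intersections at cost $\operatorname{diam}(K)^{-(d-4)}$ per pair, Harnack-type comparisons), but two steps fail as stated. The first is the deduction that $\mathcal G$ implies the desired component structure: your event (a) only forces $X^u$ to hit the \emph{range} of $X^{v_i}$, and you then assert that $X^u$ ``hits $\gamma_i\subseteq\operatorname{range}(X^{v_i})$''. Hitting the range does not imply hitting the loop-erasure: the intersection point may be erased, after which $X^u$ can continue, miss $\gamma_i$ entirely, and attach to a different tree (possibly dragging other vertices of $K_i$ with it), so neither the connectivity nor the diameter bound follows from $\mathcal G$. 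The paper's proof spends a specific argument on exactly this point (\cref{lem:Iev}): it works with the indicators $J_{m,\ell}=I_{m,\ell}\mathbbm{1}\bigl(\tau(m,\ell)\le\tau(m)\bigr)$, which certify that the intersection point survives loop-erasure, and shows $\E[J_{m,\ell}]\ge\tfrac12\E[I_{m,\ell}]$ by a symmetry argument exchanging the futures of the two walks after the intersection time. Some such step is unavoidable.

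The second gap is the decoupling of (a) from (b). Your event requires the across-group walks to avoid each other \emph{everywhere, including inside $B$}, while the within-group walks are conditioned to intersect inside $B$. Conditioning on a rare intersection event distorts the whole law of the path inside $B$, not merely its exit distribution, so the elliptic Harnack inequality (which compares harmonic functions, i.e.\ hitting probabilities and exit laws) cannot by itself give a lower bound on $\P\bigl((b)\mid (a)\bigr)$ restricted to $B$; likewise, the hypothesis \eqref{eq:lowerboundhypothesis} concerns walks started at the points of $K$ and does not transport to the post-$B$ segments via exit-law comparison alone (the exit points need not be well separated). The paper resolves this tension with a different architecture: the non-intersection is arranged \emph{first}, on the initial time segments of length $\asymp\operatorname{diam}(K)^2$ where the hypothesis applies directly at cost $\eps$; the walks are then conditioned on their positions in a distant annulus (Markov property), the $|K|-N$ intersections are engineered yet further out, with the product over vertices justified by conditional independence given the walks $X^{v_i}$, Harnack, and Jensen (\cref{lem:farintersections}); and the unwanted intersections are removed not by conditioning but by showing, via the first-moment machinery of \cref{lem:firstmomentgeneral} applied to auxiliary hypergraphs, that their contribution is of strictly smaller order (\cref{lem:ABIC}), after which one simply subtracts. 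To repair your argument you would need, at minimum, the loop-erasure survival step and a quantitative moment bound of this last kind in place of the appeal to Harnack.
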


Here we are referring to the diameter of the path considered as a subset of $\bbG$.

Before proving \cref{lem:firstmomentlowerboundestimate}, let us see how it implies \cref{lem:constellations,prop:restrictedfirstmoment}. 

\begin{proof}[Proof of \cref{lem:constellations} given \cref{lem:firstmomentlowerboundestimate}]
Let $r'$ be a large constant to be chosen later. 
By definition of $R_\bbG$ and \cref{lem:annoyinglemma}, there exists $\eps=\eps(|A|)>0$ such that for each set $B \subseteq A$, there exists a set $\{\xi_{(B,b)} : b \in B\} \subset \bbV$ of diameter at most $R_\bbG(|B|)$ such that if $\{X^{(B,b)}:b\in B\}$ are independent simple random walks then
\[\P\left(\{X^{(B,b)}_i:i\geq 0\} \cap \{X^{(B,b')}_i:i\geq 0\} = \emptyset \text{ for every $b\neq b' \in B$}\right) \geq (2\eps)^{2^{-|A|}}.\]
Take such a set for each $B$ in such a way that the set $\{\xi_{(B,b)} : (B,b) \in \cP_\bullet(A)\}$ is contained in the ball of radius $r'$ around $x$, and for each distinct $B,B' \subseteq A$, the sets $\{\xi_{(B,b)} : b\in B\}$ and  
 $\{\xi_{(B',b)} : b\in B'\}$ have distance at least $r'/2$ from each other. Clearly this is possible for sufficiently large $r'$. We have by independence that
 \[\P\left( \bigcap_{B \subseteq A} \left\{ \{X^{(B,b)}_i:i\geq 0\} \cap \{X^{(B,b')}_i:i\geq 0\} = \emptyset \text{ for every $b\neq b' \in B$}\right\}\right) \geq 2\eps.\]
On the other hand, it follows easily from the Greens function estimate \eqref{eq:HSC} that if $r'$ is sufficiently large (depending on $|A|$ and $\eps$) then 
\[
\P\biggl( \begin{array}{l} \{X^{(B,b)}_i:i\geq 0\} \cap \{X^{(B',b')}_i:i\geq 0\} = \emptyset \text{ for}\\ \text{some $B,B' \subseteq A$, $b \in B$ and $b'\in B$ with $B\neq B'$} \end{array}\biggr) \leq \eps,
\]
and we deduce that 
\[
\P\left(  \{X^{(B,b)}_i:i\geq 0\} \cap \{X^{(B',b')}_i:i\geq 0\} = \emptyset \text{ for every distinct $(B,b),(B',b')\in \cP_\bullet(A)$}\right) \geq \eps
\]
for such $r'$. Applying \cref{lem:firstmomentlowerboundestimate}, we deduce that
 $\P( \sA_{Cr'}(\xi) ) \geq c$
for some $C=C(\bbG,|A|,\eps,r')$ and $c=c(\bbG,|A|,\eps)$. It follows that
 $(\xi_{(B,b)})_{(B,b) \in \cP_\bullet(A)}$ is an $r$-good $A$ constellation for some $r=r(|A|)$ sufficiently large. \qedhere

\end{proof}

\begin{proof}[Proof of \cref{prop:restrictedfirstmoment} given \cref{lem:firstmomentlowerboundestimate}]
Let $\bbG$ be a $d$-dimensional transitive graph for some $d>4$. Let $x=(x_v)_{v\in \partial V}$ be such that $\langle x_u x_v \rangle \leq 2^{n-1}$ for every $u,v \in \partial V$,  let $\xi=(\xi_e)_{e\in E} \in \Omega_x(n)$, and let $r=r(H)$ and $(\xi_{(e,A,v)})_{e \in E, (A,v) \in \cP_\bullet(e)}$ be as in \cref{sec:2ndmoment}. 

%

For each edge $e$ of $H$, write $\sA_e(\xi)$ for the event $\sA_r((\xi_{(e,A,v)})_{(A,v) \in \cP_\bullet(e)})$, which has probability at least $1/r$ by definition of the $r$-good constellation $(\xi_{(e,A,v)})_{(A,v) \in \cP_\bullet(e)}$. 
Since the number of subtrees of a ball of radius $r$ in $\bbG$ is bounded by a constant, it follows that there exists a constant $\eps=\eps(\bbG,H)$ and
 a collection of disjoint subtrees $(T_{(e,v)}(\xi))_{(e,v) \in E_\bullet}$ of $\bbG$ such that the tree $T_{(e,v)}(\xi)$ has diameter at most $r$ and contains each of the vertices $\xi_{(e,A,v)}$ with $(A,v)\in \cP_\bullet(e)$ for every $(e,v)\in E_\bullet$,  and the estimate
 \[\P\left(\sA_r(\hat\xi_e) \cap \bigcap_{v\perp e} \{T_{(e,v)}(\xi) \subset \F\}\right) \geq (2\eps)^{1/|E|} \]
 holds for every $e\in E$.
Fix one such collection $(T_{(e,v)}(\xi))_{(e,v) \in E_\bullet}$ for every $\xi \in \Omega_x(n)$, and for each $e\in E$ let $\sB_e(\xi)$ be the event that $T_{(e,v)}(\xi)$ is contained in $\F$ for every $v\in E$. Let $\sB(\xi) = \bigcap_{e\in E} \sB_e(\xi)$. Considering generating $\F$ using Wilson's algorithm, starting with random walks $\{X^{(e,A,v)} : e \in E,$ $(A,v) \in \cP_\bullet(e)\}$ such that $X^{(e,A,v)}_0=\xi_{(e,A,v)}$ for every $e \in E$ and $(A,v) \in \cP_\bullet(e)$, we observe that
\begin{equation}
\label{eq:tailtrivB2}
\Big|\P\left( \sB(\xi) \right) - \prod_{e\in E} \P\left(  \sB_e(\xi) \right)\Big| \leq \P\left( \begin{array}{l} X^{(e,A,v)} \text{ and } X^{(e',A',v')} \text{ intersect for some distinct} \\ \text{$e,e' \in E$ and some $(A,v) \in \cP_\bullet(e)$, $(A',v') \in \cP_\bullet(e')$} \end{array}\right)
\end{equation}
and hence that
\begin{equation}
\label{eq:tailtrivB}
\P\left( \sB(\xi) \right) \geq \frac{1}{2} \prod_{e\in E} \P\left(  \sB_e(\xi) \right) \geq \eps
\end{equation}
for all $n$ sufficiently large and $\xi \in \Omega_x(n)$.

\medskip


Let $\bbG_\xi$ be the graph obtained by contracting the tree $T_{(e,v)}(\xi)$ down to a single vertex for each $(e,v) \in E_\bullet$. The spatial Markov property of the USF (see e.g.\ \cite[Section 2.2.1]{HutNach2016b}) implies that the law of $\F$ given the event $\sB(\xi)$ is equal to the law of the union of $\bigcup_{(e,v) \in E_\bullet} T_{(e,v)}(\xi)$ with the uniform spanning forest of $\bbG_\xi$. Observe that $\bbG_\xi$ and $\bbG$ are rough isometric, with constants depending only on $\bbG$ and $H$, and that $\bbG_\xi$ has degrees bounded by a constant depending only on $\bbG$ and $H$. Thus, it follows from \cref{thm:HSCGreen,thm:GHKEstability,thm:GHKEimpliesEHI} that $\bbG_\xi$ is $d$-Ahlfors regular, satisfies Gaussian heat kernel estimates, and satisfies an elliptic Harnack inequality, each with constants depending only on $H$ and $\bbG$.

\medskip

Let $E_\star = E \cup \{\star\}$, and let $K = E_\bullet \cup \{(\star,v) : v \in \partial V\}$.
For each $(e,v)\in E_\bullet$, let $x_{(e,v)}$ be the vertex of $\bbG_\xi$ that was formed by contracting $T_{(e,v)}(\xi)$, and let $x_{(\star,v)} = x_v$ for each $v\in \partial V$.
For each vertex $v$ of $H$, choose an edge $e_0(v)\perp v$ arbitrarily from $E_\star$, and let $K' = K \setminus \{x_i: v \in V\}$. Let $\F_\xi$ be the uniform spanning forest of $\bbG_\xi$, and let $\tilde \sW'(x,\xi)$ be the event that for each $(e,v),(e',v') \in K$ the vertices $x_{(e,v)}$ and $x_{(e',v')}$ are in the same component of $\F_\xi$ if and only if $v=v'$. The spatial Markov property and \eqref{eq:tailtrivB} imply that
\[\P\left(\tilde \sW(x,\xi)\right) \geq \eps \P\left(\bar \Witness'(x,\xi)\right) \succeq \P\left(\tilde \Witness'(x,\xi)\right)\]
whenever $n$ is sufficiently large and $\xi \in \Omega_x(n)$. Thus, applying \cref{lem:firstmomentlowerboundestimate} to $\bbG_\xi$ by setting $N=|V|$, enumerating $V=\{v_1,\ldots,v_N\}$ and setting $K_i = \{ x_{(e,v_i)} : (e,v_i) \in K \}$ for each $v\in V$ yields that
\[ \log_2 \P\left(\tilde \Witness(x,\xi)\right) \gtrsim \log_2 \P\left(\tilde \Witness'(x,\xi)\right) \gtrsim  -(d-4)\left(\Delta -|V_\circ|\right) \, n, \]
completing the proof. \qedhere

\end{proof}

\medskip

We now start working towards the proof of  \cref{lem:firstmomentlowerboundestimate}. 
We begin with the following simple estimate.

\begin{lemma}\label{lem:hitwnotB}
Let $G$ be $d$-Ahlfors regular with constant $c_1$, and suppose that $G$ satisfies $(c_2,c_2')$-Gaussian heat kernel estimates. Then 
there exist a positive constant $C=C(d,c_1,c_2,c_2')$ such that
\vspace{0.2em}
\[
\vspace{0.2em}
C^{-1} \langle u w \rangle^{-(d-2)} \leq \bP_u\left(\text{hit } w \text{ before }\Lambda_x(n+3c,\infty) \text{, do not hit  } \Lambda_x(0,n)\right) \leq C \langle u w \rangle^{-(d-2)}\]
for every $c \geq C$, every vertex $x$, every $n\geq 1$, and every $u,w \in \Lambda_x(n+c,n+2c)$.
\end{lemma}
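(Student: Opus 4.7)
\medskip\noindent\textbf{Proof plan.} Let $A := \Lambda_x(0,n)\cup \Lambda_x(n+3c,\infty)$, so the event in question is $\{\tau_w < \tau_A\}$. The upper bound is immediate: $\bP_u(\tau_w<\tau_A)\leq \bP_u(\tau_w<\infty)\asymp \langle uw\rangle^{-(d-2)}$ by the Green's function bound \eqref{eq:***}. For the lower bound, I will express the probability in terms of the killed Green's function $G^{A^c}(y,z):=\E_y\bigl[\sum_{k=0}^{\tau_A-1}\mathbbm{1}(X_k=z)\bigr]$ via the identity $\bP_u(\tau_w<\tau_A) = G^{A^c}(u,w)/G^{A^c}(w,w)$, and bound each factor separately.

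Since every $w\in\Lambda_x(n+c,n+2c)$ has distance at least $2^{n+c}/2$ from $A$ once $c\geq 1$, the ball $B(w,2^{n+c}/4)$ sits inside $A^c$, so $G^{A^c}(w,w)\geq G^{B(w,2^{n+c}/4)}(w,w)\asymp 1$ by summing the killed Gaussian heat kernel on that ball, while $G^{A^c}(w,w)\leq G(w,w)\asymp 1$ follows from \eqref{eq:***}. The key remaining task is to show $G^{A^c}(u,w)\gtrsim \langle uw\rangle^{-(d-2)}$. For this I will compare $G^{A^c}(u,w)$ to the Green's function on a large ball containing $A^c$. Let $R:=2^{n+3c+1}$, set $B:=B(x,R)\supset A^c$, and write $A':=A\cap B=\Lambda_x(0,n)\cup\Lambda_x(n+3c,n+3c+1)$. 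Because $\langle xu\rangle,\langle xw\rangle\leq 2^{n+2c}\leq R/4$, the standard ball Green's function estimate yields $G^B(u,w)\asymp\langle uw\rangle^{-(d-2)}$. Strong Markov at $\tau_{A'}$ gives
\[
G^B(u,w) = G^{A^c}(u,w) + \E_u\bigl[\mathbbm{1}(\tau_{A'}<\tau_{B^c})\,G^B(X_{\tau_{A'}},w)\bigr],
\]
so it suffices to show the error term is at most $\tfrac12 G^B(u,w)$. Splitting the error according to whether the walk first hits the inner ball $K_{\mathrm{in}}:=\Lambda_x(0,n)$ or the outer shell $K_{\mathrm{out}}:=\Lambda_x(n+3c,n+3c+1)$, the transient hitting estimate $\bP_u(\tau_{K_{\mathrm{in}}}<\infty)\lesssim(2^n/\langle xu\rangle)^{d-2}$ together with the Harnack-type uniform bound $G^B(y,w)\lesssim\langle xw\rangle^{-(d-2)}$ for $y\in\partial K_{\mathrm{in}}$ contributes at most a constant multiple of $2^{n(d-2)}(\langle xu\rangle\langle xw\rangle)^{-(d-2)}$, while $G^B(y,w)\lesssim 2^{-(n+3c)(d-2)}$ for $y\in K_{\mathrm{out}}$ contributes at most a constant multiple of $2^{-(n+3c)(d-2)}$. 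Using the triangle-inequality bounds $\langle uw\rangle/(\langle xu\rangle\langle xw\rangle)\leq 2/\min(\langle xu\rangle,\langle xw\rangle)\leq 2^{1-(n+c)}$ and $\langle uw\rangle\leq 2^{n+2c+1}$, both contributions are at most a constant multiple of $2^{-c(d-2)}\langle uw\rangle^{-(d-2)}$; taking $C$ large enough (depending only on the constants in the hypotheses) drives this below $\tfrac12 G^B(u,w)$, finishing the proof.

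The expected main obstacle is justifying the ``standard ball Green's function estimate'' $G^{B(x_0,R)}(y,z)\asymp\langle yz\rangle^{-(d-2)}$ for $y,z$ in the interior of a large ball, together with the transient hitting estimate $\bP_u(\tau_{B(x,r_0)}<\infty)\asymp(r_0/\langle xu\rangle)^{d-2}$ and the Harnack-type uniform bound used above, in the generality of $d$-Ahlfors regular graphs with Gaussian heat kernel estimates. All three are classical consequences of Gaussian heat kernel bounds and the elliptic Harnack inequality supplied by \cref{thm:GHKEimpliesEHI}, and are treated in standard references on random walks such as Barlow's book; I would appeal to them as black boxes. A secondary delicacy worth noting is that the saving factor $2^{-c(d-2)}$ in the error bound just barely beats $\langle uw\rangle^{-(d-2)}$ in the worst case $\langle uw\rangle\asymp 2^{n+2c}$, which is precisely why the hypothesis $c\geq C$ cannot be dispensed with.
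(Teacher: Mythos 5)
Your argument is correct, but it takes a genuinely different and somewhat heavier route than the paper's. The paper's proof never introduces killed Green's functions: it starts from $\bP_u(\text{hit } w)\asymp\langle uw\rangle^{-(d-2)}$ (the estimate \eqref{eq:***}) and subtracts the two bad contributions --- hitting $w$ and $\Lambda_x(0,n)$ in either order, and hitting $w$ only after $\Lambda_x(n+3c,\infty)$ --- each of which is shown, by one application of the strong Markov property together with \eqref{eq:***}, to be $\preceq 2^{-(d-2)c}\langle uw\rangle^{-(d-2)}$; taking $c\geq C$ large then finishes the proof. Your route through the identity $\bP_u(\tau_w<\tau_A)=G^{A^c}(u,w)/G^{A^c}(w,w)$ and the comparison of $G^{A^c}(u,w)$ with the Green's function of a large ball is sound, and your error terms (the inner- and outer-shell contributions, each carrying the saving factor $2^{-c(d-2)}$) are exactly the same quantitative mechanism the paper exploits; the cost is that you import the two-sided ball Green's function estimate and the transient hitting estimate as black boxes, neither of which the paper needs. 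In fact the black box is avoidable in your regime: for $u,w$ at distance $\leq R/4$ from the centre of $B=B(x,R)$ one gets $G^{B}(u,w)\asymp\langle uw\rangle^{-(d-2)}$ in two lines from $G^{B}\leq G$ and $G(u,w)-G^{B}(u,w)=\E_u\bigl[G(X_{\tau_{B^c}},w)\bigr]\preceq R^{-(d-2)}\preceq 2^{-c(d-2)}\langle uw\rangle^{-(d-2)}$. One small caveat: you read ``do not hit $\Lambda_x(0,n)$'' as ``do not hit it before $\tau_w$,'' whereas the paper's own proof treats it as ``never hit it''; under the stricter reading you must additionally subtract $\bP_u(\text{hit }\Lambda_x(0,n)\text{ after }\tau_w)\preceq 2^{-(d-2)c}\langle uw\rangle^{-(d-2)}$, which your estimates absorb with no change.
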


\begin{proof}
The upper bound follows immediately from \eqref{eq:***}. We now prove the lower bound. 
	For every $c \geq 1$ and every $u,w \in \Lambda_x(n+c,\infty)$, we have that
	\[
		\bP_u(\text{hit }\Lambda_x(0,n)) = \frac{\bP_u(\text{hit } x)}{\bP_u(\text{hit } x \mid \text{ hit }\Lambda_x(0,n))} \asymp \frac{\langle u x \rangle^{-(d-2)}}{2^{-(d-2)n}} \preceq 2^{-(d-2)c}. 
	\]
	Thus, we have that
	\begin{align*}
		\bP_u(\text{hit } w \text{ and } \Lambda_x(0,n)) &\leq \bP_u(\text{hit $\Lambda_x(0,n)$ after hitting $w$}) +  
		\bP_u(\text{hit $w$ after hitting $\Lambda_x(0,n)$})\\
		&\preceq \langle u w \rangle^{-(d-2)}2^{(d-2)n}\langle wx\rangle^{-(d-2)} + 2^{(d-2)n}\langle u x \rangle^{-(d-2)} \langle w x \rangle^{-(d-2)},
	\end{align*}
	where the second term is bounded by conditioning on the location at which the walk hits $\Lambda_x(0,n)$ and then using the strong Markov property. 
	By the triangle inequality, we must have that at least one of $\langle u x \rangle$ or $\langle w x \rangle$ is greater than $\frac{1}{2}\langle u w \rangle $. This yields the bound
	\begin{align*}
		\bP_u(\text{hit } w \text{ and } \Lambda_x(0,n))
		&\preceq \left(2^{(d-2)n}\langle wx\rangle^{-(d-2)} + 2^{(d-2)n}\left(\min \left\{\langle u x \rangle,\, \langle w x \rangle\right\}\right)^{-(d-2)}\right)  \langle uw \rangle^{-(d-2)}\\
		&\preceq 2^{-(d-2)c}\langle u w \rangle^{-(d-2)}.
	 \end{align*}
	 On the other hand, if $u,w \in \Lambda_x(n+c,n+2c)$ then conditioning on the location at which the walk hits $\Lambda_x(n+3c,\infty)$ yields that
	 \[ \bP_u(\text{hit } w \text{ after } \Lambda_x(n+3c,\infty)) \preceq 2^{-(d-2)(n+3c)} \preceq \langle uw \rangle^{-(d-2)}.\]
	 The claim now follows easily.
 \end{proof}

\begin{proof}[Proof of \cref{prop:restrictedfirstmoment}]

For each $1 \leq i \leq N$, let $x_i$ be chosen arbitrarily from the set $K_i$.
Let $(X^{x})_{x \in K}$ be a collection of independent random walks on $\bbG$, where $X^{x}$ is started at $x$ for each $x\in K$, and write $X^i=X^{x_i}$. 
Let $K'_i=K_i \setminus \{x_i\}$ for each $1 \leq i \leq N$ and let $K'=\bigcup_{i=1}^N K'_i$. In this proof, implicit constants will be functions of $|K|, N, c_0,$ and $d$. We take $n$ such that $2^{n-1} \leq \diam(K) \leq 2^{n}$.

\medskip

Let $c_1,c_2,c_3$ be constants to be determined. 
For each $y=(y_{x})_{x\in K} \in (\Lambda(n+c_1,n+c_3))^{K}$, let $\sY_y$ be the event
\[\sY_y = \{ X^{x}_{2^{2(n+c_2)}} = y_{x} \text{ for each $x\in K$}\}.\]
Let $\sC(c_2)$ be the event that none of the walks $X^{x}$ intersect each other before time $2^{2(n+c_2)}$, so that $\P(\sC(c_2)) \geq \eps$ for every $c_2 \geq 0$ by assumption.
%
%
For each $x\in K$, let $\sD_{x}(c_1,c_3)$ be the event that $X^{x}_{2^{2(n+c_2)}}$ is in $\Lambda(n+c_1,n+c_3)$ and that $X^{x}_m \in \Lambda(n,\infty)$ for all $m \geq 2^{2(n+c_2)}$, and let $\sD(c_1,c_3) = \bigcap \sD_{x}(c_1,c_3)$.
It follows by an easy application of the Gaussian heat kernel estimates that we can choose $c_2=c_2(\bbG,N,\eps)$ and $c_3=c_3(\bbG,N,\eps)$ sufficiently large that 
\begin{equation}
\label{eq:DgivenY}
\P(\sD(c_1,c_3) \mid \sY_y) \geq 1- \eps/2
\end{equation}
for every $y=(y_{x})_{x\in K} \in (\Lambda(n+c_1,n+c_3))^{K}$, and in particular 
 so that $\P(\sC(c_2) \cap \sD(c_1,c_3)) \geq \eps$.  We fix some such sufficiently large $c_1,c_2,$ and $c_3$, and also assume that $c_1$ is larger than the constant from \cref{lem:hitwnotB}. We write $\sC=\sC(c_2)$, $\sD_{x}=\sD_{x}(c_1,c_3)$, and $\sD=\sD(c_1,c_3)$.

\medskip

For each $1 \leq i \leq N$ and $x\in K'_i$, we define $\sI_{x}$ to be the event that the walk $X^{x}$ hits the set
\begin{multline*}
L^i_{\text{good}}=\\
\left\{ \LE(X^i)_m : \LE(X^i)_m \in \Lambda(n+2c_3, n+ 4c_3),\, \LE(X^i)_{m'} \in \Lambda(0, n+ 6c_3) \text{ for all $ 0 \leq m' \leq m$} \right\}
\end{multline*}
before hitting $\Lambda(n + 6c_3, \infty)$, and let $\sI = \bigcap_{x\in K'} \sI_{x}$.

\medskip

For each $x$ and $x'$ in $K$, we define $\sE_{x,x'}$ to be the event that the walks $X^{x}$ and $X^{x'}$ intersect, and let
 \[\sE = \bigcup\left\{ \sE_{x,x'} : 1 \leq i < j \leq N,\, x \in K_i,\, x'\in K_j \right\} \cup \bigcup \left\{ \sE_{x,x'} : x,x' \in K' \right\}.\]
%
These events have been defined so that, if we sample $\F$ using Wilson's algorithm, beginning with the walks $\{ X^v : v \in V\}$ (in any order) and then the walks $\{ X^{x} : x\in K\}$ (in any order), we have that
\begin{multline*}
\left\{
\begin{array}{l}
 \sF(K_i \cup K_j) \text{ if and only if $i=j$, and each two points in $K_i$ are connected}\\
\text{by a path in $\F$ of diameter at most $2^{6c_3} \operatorname{diam}(K)$ for each $1 \leq i \leq N$}
\end{array}\right\}\\ \supseteq (\sC \cap \sD \cap \sI) \setminus \sE.
\end{multline*}
 Thus, it suffices to prove that
 \[\log_2 \P\left(\left(\sC \cap \sD \cap \sI\right) \setminus \sE\right) \gtrsim -(d-4)\left(K -N\right) \, n = -(d-4)|K'|\,n .\]
 We break this estimate up into the following two lemmas: one lower bounding the probability of the good event $\sC \cap \sD \cap \sI$, and the other upper bounding the probability of the bad event $\sC \cap \sD \cap \sI \cap \sE$.

\begin{lemma}
\label{lem:Iev}
The estimate
\[\log_2 \P(\sI_{x} \mid \sC \cap \sD \cap \sY_y) \gtrsim -(d-4)n\] holds for every $x \in K'$ and $y=(y_{x})_{x\in K} \in (\Lambda(n+c_1,n+c_3))^{K}$.
\end{lemma}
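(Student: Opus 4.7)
The plan is to apply a second moment argument to the random variable
\[
N_x := \#\Bigl\{(k,m) : X^x_k = \LE(X^i)_m \in L^i_{\mathrm{good}},\ X^x_{k'}\notin \Lambda(n+6c_3,\infty) \text{ for all } k'\leq k\Bigr\},
\]
so that $\sI_x = \{N_x \geq 1\}$. By Cauchy--Schwarz, $\P(N_x\geq 1 \mid \sC\cap\sD\cap\sY_y) \geq \E[N_x \mid \sC\cap\sD\cap\sY_y]^2/\E[N_x^2 \mid \sC\cap\sD\cap\sY_y]$, so it suffices to show that under the conditioning we have $\E[N_x]\gtrsim 2^{-(d-4)n}$ and $\E[N_x^2]\lesssim 2^{-(d-4)n}$. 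The strong Markov property at time $2^{2(n+c_2)}$ together with $\sY_y$ lets me treat the continuations of $X^i$ and $X^x$ after this time as independent simple random walks started at $y_{x_i}$ and $y_x$ respectively; the additional conditioning on $\sC\cap\sD$ changes probabilities of events determined by these continuations by at most constant factors, since $\sC$ is measurable with respect to the walks up to time $2^{2(n+c_2)}$ and the choice of $c_2,c_3$ ensures $\P(\sD \mid \sY_y) \geq 1-\eps/2$.

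For the first moment I sum over $p\in \Lambda(n+2c_3,n+4c_3)$. By \cref{lem:hitwnotB}, the probability that $X^x$ hits $p$ before reaching $\Lambda(n+6c_3,\infty)$ while remaining in $\Lambda(n,\infty)$ is $\asymp \langle y_x p\rangle^{-(d-2)} \asymp 2^{-(d-2)n}$. Similarly, the probability that $X^i$ hits $p$ before exiting $\Lambda(0,n+6c_3)$ is $\asymp 2^{-(d-2)n}$, and on this event a last-exit / capacity argument analogous to those in \cite{BeKePeSc04} shows that with positive constant probability the initial LERW segment up to $p$ stays inside $\Lambda(0,n+6c_3)$, giving $\P(p\in L^i_{\mathrm{good}}) \gtrsim 2^{-(d-2)n}$. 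By the independence of $X^x$ and $X^i$ conditional on $\sY_y$, together with $|\Lambda(n+2c_3,n+4c_3)|\asymp 2^{dn}$, summing yields $\E[N_x] \gtrsim 2^{dn}\cdot 2^{-(d-2)n} \cdot 2^{-(d-2)n} = 2^{-(d-4)n}$.

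For the second moment I decompose $\E[N_x^2] = \E[N_x] + \E[N_x(N_x-1)]$ and bound the second term by applying the strong Markov property at the first intersection point $p_1$ (for both walks) and then invoking \cref{lem:hitwnotB} again, obtaining
\[
\E[N_x(N_x-1)] \lesssim \sum_{p_1,p_2 \in \Lambda(n+2c_3,n+4c_3)}\langle y_{x_i}\, p_1\rangle^{-(d-2)}\langle y_x\, p_1\rangle^{-(d-2)}\langle p_1\, p_2\rangle^{-2(d-2)}.
\]
Crucially, since $d>4$ we have $2(d-2)>d$, so the inner sum $\sum_{p_2}\langle p_1\, p_2\rangle^{-2(d-2)}$ is uniformly bounded by an $O(1)$ constant, while the outer sum satisfies $\sum_{p_1}\langle y_{x_i}\, p_1\rangle^{-(d-2)}\langle y_x\, p_1\rangle^{-(d-2)} \asymp \langle y_{x_i}\, y_x\rangle^{-(d-4)} \asymp 2^{-(d-4)n}$ by the standard Greens function convolution bound. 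This gives $\E[N_x(N_x-1)]\lesssim 2^{-(d-4)n}$ and hence $\E[N_x^2] \lesssim 2^{-(d-4)n} \asymp \E[N_x]$; Cauchy--Schwarz then delivers the desired bound $\P(\sI_x \mid \sC\cap\sD\cap\sY_y) \gtrsim 2^{-(d-4)n}$. The main technical obstacle is the lower bound $\P(p \in L^i_{\mathrm{good}})\gtrsim 2^{-(d-2)n}$, which requires a careful decomposition around the first and last visits of $X^i$ to $p$ to ensure that loops erased later from segments outside the big ball do not force the LERW prefix to exit $\Lambda(0,n+6c_3)$.
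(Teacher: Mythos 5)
Your overall skeleton -- a second moment argument on a count of intersections between $X^x$ and the good part of the loop-erased walk, with \cref{lem:hitwnotB} supplying the hitting estimates and the sum $\sum_{p_2}\langle p_1 p_2\rangle^{-2(d-2)}=O(1)$ controlling the off-diagonal second moment -- matches the shape of the paper's argument, and your second-moment computation is essentially correct (you drop the cross term where the two walks visit $p_1,p_2$ in opposite orders, but it contributes the same order). The problem is the first moment. The estimate you defer, $\P\bigl(p\in L^i_{\mathrm{good}}\bigr)\gtrsim 2^{-(d-2)n}$, is not a routine "last-exit / capacity" computation: it asserts that a point hit by $X^i$ survives chronological loop erasure (with the surviving prefix confined to $\Lambda(0,n+6c_3)$) with probability bounded below by a constant. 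That survival probability is governed by the non-intersection of a random walk with an independent loop-erased walk in dimension $d>4$, which is a statement of essentially the same depth as the lemma you are proving; deferring it is deferring the crux. There is also a secondary issue: $\{p\in\LE(X^i)\}$ depends on the entire trajectory of $X^i$, including the portion before time $2^{2(n+c_2)}$, so it is not conditionally independent of $\sC$ given $\sY_y$, and your claim that the conditioning only costs constant factors does not apply to this event as stated.

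The paper's proof (following \cite{lyons2003markov} and \cite[Theorem 4.2]{BeKePeSc04}) is designed precisely to avoid ever lower-bounding $\P(p\in\LE(X^i))$. It counts pairs $(m,\ell)$ with $Y_m=Z_\ell$ at a good location (an event about the walks only, for which the hitting estimates suffice), and then, rather than demanding that $Y_m$ survive the full loop erasure, demands only that the first point of the partial loop erasure $L_m$ hit by $(Z_k)_{k\geq\ell}$ occur no later along $L_m$ than the first point hit by $(Y_k)_{k\geq m}$ (the event $\tau(m,\ell)\leq\tau(m)$); that point is then guaranteed to lie on the final loop erasure. Since the two continuations $(Y_k)_{k\geq m}$ and $(Z_k)_{k\geq\ell}$ are exchangeable given the intersection and the conditioning, this extra event costs only a factor of $1/2$, and the loop-erasure difficulty disappears. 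To repair your proof you would either need to import this symmetrization, or supply a genuine proof that a fixed point on the walk survives loop erasure with probability bounded below uniformly in $d>4$ (together with the confinement of the prefix), which is substantially more work than the sentence you allot to it.
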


The proof uses techniques from \cite{lyons2003markov} and the proof of \cite[Theorem 4.2]{BeKePeSc04}.

\begin{proof}[Proof of \cref{lem:Iev}]

Fix $x \in K'$, and let $1\leq i \leq N$ be such that $x\in K'_i$. Write $Y=X^i$ and $Z=X^{x}$. 
Let $L = ( L(k) )_{k\geq 0}$ be the loop-erasure of $(Y_k)_{k\geq 0}$ and, for each $m\geq 0$, let $L_m= (L_m(k))_{k= 0}^{q_m}$ be the loop-erasure of $( Y_k )_{k=0}^m$. Define 
\[\tau(m) = \inf\{ 0 \leq r \leq q_m : L_m(r) = Y_k \text{ for some $k \geq m$}\} \]
and 
\[
\vspace{0.4em}
\tau(m,\ell) = \inf\{ 0 \leq r \leq q_m : L_m(r) = Z_k \text{ for some $k \geq \ell$}\}.\]
The definition of $\tau(m)$ ensures that $L_m(k)=L(k)$ for all $k\leq \tau(m)$.  We define the indicator random variables
\begin{multline*}
I_{m,\ell} =\\ \mathbbm{1}\left(Y_m = Z_\ell \in \Lambda(n+2c_3,n+4c_3), \text{ and } Y_{m'}, Z_{\ell'} \in \Lambda(0,n+6c_3) \text{ for all $m' \leq m$, $\ell'\leq \ell$}\right)
\end{multline*}
and
\begin{align*}
J_{m,\ell} &= I_{m,\ell} \, \mathbbm{1} \!\big(\tau(m,\ell) \leq \tau(m)\big).
\end{align*}
Observe that 
\[\sI_x \subseteq \left\{ J_{m,\ell} =1 \text{ for some } m, \ell \geq 2^{2(n+c_2)} \right\}. \]
Moreover, for every $m,\ell \geq 2^{2(n+c_2)}$ and every $y \in (\Lambda(n+c_1,n+c_3))^{K}$,  the walks $\langle Y_k \rangle_{k\geq m}$ and $\langle Z_k \rangle_{k\geq \ell}$ have the same distribution conditional on the event \[\sC \cap \sD \cap \sY_y \cap \{I_{m,\ell}=1\}.\] 
Thus, we deduce that
\[\P\left(\tau(m) \geq \tau(m,\ell) \mid \sC \cap \sD \cap \sY_y \cap \{I_{m,\ell}=1\}\right) \geq 1/2 \]
whenever the event being conditioned on has positive probability,
and therefore that 
\begin{equation*}\E[ I_{m,\ell} \mid \sC \cap \sD \cap \sY_y] \; \geq \; \E[J_{m,\ell}\mid \sC \cap \sD \cap \sY_y] \; \geq \; \frac{1}{2}\E[ I_{m,\ell} \mid \sC \cap \sD\cap \sY_y].\end{equation*}

Let 
\[I = \sum_{\ell \geq 2^{2(n+c_2)}}\sum_{m \geq 2^{2(n+c_2)}}  I_{m,\ell} 
\quad \text{ and } \quad
J = \sum_{\ell \geq 2^{2(n+c_2)}}\sum_{m \geq 2^{2(n+c_2)}}  J_{m,\ell}, \]
and note that the conditional distribution of  $I$ given the event $\sC\cap\sD\cap\sY_y$ is the same as the conditional distribution of $I $ given the event $\sD \cap \sY_y $.
For every $y \in (\Lambda(n+c_1,n+c_3))^{K}$, we have that, decomposing $\E[I \mid \sD \cap \sY_y]$ according to the location of the intersections and applying the estimate \cref{lem:hitwnotB},
\begin{multline*}
\E[J \mid \sC \cap \sD \cap \sY_y] \asymp \E[I \mid \sD \cap \sY_y] \succeq\\
 \sum_{w \in \tilde \Lambda(n+2c_3,n+4c_3)} \bP_{y_{x_i}}(\,\text{hit } w \text{ before $\Lambda(n+6c_3,\infty)$} \mid \text{do not hit } \Lambda(0,n))\\\hspace{5cm} \cdot \bP_{y_{x}}(\,\text{hit } w \text{ before $\Lambda(n+6c_3,\infty)$} \mid \text{do not hit } \Lambda(0,n))\\
\succeq 2^{-2(d-2)n} | \Lambda(n+2c_3,n+4c_3)| \asymp 2^{-(d-4)n}.
 \end{multline*}
On the other hand, we have that 
\begin{align*}
\E[J^2 \mid \sC \cap \sD \cap \sY_y] 
\, \leq \, \E[I^2 \mid \sC \cap \sD \cap \sY_y] \, = \,
\E[I^2 \mid  \sD \cap \sY_y] \preceq \E[I^2 \mid \sY_y].
\end{align*}
Meanwhile, decomposing $\E[I^2 \mid \sY_y]$ according to the location of the intersections and applying the Gaussian heat kernel estimates yields that 
\begin{multline*}
 \E[I^2 \mid  \sY_y] 
 \preceq \sum_{w,z \in \Lambda(n+2c_3,n+4c_3)} \langle y_{x_i} w \rangle^{-(d-2)}\langle w z \rangle^{-(d-2)}\langle y_{x} w \rangle^{-(d-2)} \langle wz \rangle^{-(d-2)}\\
+
\sum_{w,z \in \Lambda(n+2c_3,n+4c_3)} \langle y_{x_i} w \rangle^{-(d-2)}\langle w z \rangle^{-(d-2)}\langle y_{x} z \rangle^{-(d-2)} \langle z w \rangle^{-(d-2)},
\end{multline*}
where the two different terms come from whether $Y$ and $Z$ hit the points of intersection in the same order or not. With the possible exception of $\langle wz \rangle$, all the distances involved in this expression  are comparable to $2^n$. Thus, we obtain that
\[\E[I^2 \mid \sY_y] \preceq 2^{-2(d-4)n} \sum_{w,z \in \Lambda(n+2c_3,n+4c_3)} \langle w z \rangle^{-2(d-2)}.\]
For each $w \in \bbV$, considering the contributions of dyadic shells centred at $w$ yields that, since $d>4$,
\begin{align*}
\sum_{z\in \bbV} \langle w z\rangle^{-2(d-2)} \preceq \sum_{n\geq 0}2^{dn}2^{-2(d-2)n} \leq \sum_{n\geq 0} 2^{-(d-4)n} \preceq 1,
\end{align*}
and we deduce that
\[\E[I^2 \mid \sY_y] \preceq 2^{-2(d-4)n} |\Lambda(n+2c_3,n+4c_3)| \preceq 2^{-(d-4)n}.\]
Thus, the Cauchy-Schwarz inequality implies that
\begin{align*}\P(\sI_{x}\mid \sC \cap \sD \cap \sY_y) \geq \P(J > 0 \mid \sC \cap \sD \cap \sY_y) \succeq \myfrac[0.2em]{\E\left[J \mid \sC \cap \sD \cap \sY_y\right]^2}{\E\left[J^2 \mid \sC \cap \sD \cap \sY_y\right]} \succeq 2^{-(d-4)n}.
\end{align*}
as claimed.
\end{proof}

\medskip

We next use the elliptic Harnack inequality to pass from an estimate on $\sI_x$ to an estimate on $\sI$.

\begin{lemma}\label{lem:farintersections}
$\log_2 \P(\sC\cap \sD \cap \sI) \gtrsim -(d-4)|K'|\, n$
\end{lemma}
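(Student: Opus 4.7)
The plan is to combine three ingredients: the elliptic Harnack inequality (EHI), the conditional independence of the walks $(X^x)_{x\in K'}$ given the walks $(X^i)_{i=1}^N$, and Jensen's inequality applied to the power $|K_i'|$. The previous lemma gives a single-walk estimate $\P(\sI_x\mid \sC\cap\sD\cap\sY_y)\gtrsim 2^{-(d-4)n}$ and we must multiply this over $x\in K'$; the main obstacle will be to show that the relevant hitting probabilities factor across the different walks $X^i$ despite the coupling introduced by $\sC$.

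First, for each $i$ define the hitting function $h_i(u)=\bP_u(\text{hit }L^i_{\text{good}}\text{ before }\Lambda(0,n)\cup\Lambda(n+6c_3,\infty))$. Since $L^i_{\text{good}}\subseteq \Lambda(n+2c_3,n+4c_3)$, the function $h_i$ is positive and harmonic on the annulus $\Lambda(n,n+6c_3)\setminus L^i_{\text{good}}$, which contains the shell $\Lambda(n+c_1,n+c_3)$ together with a neighbourhood of width at least of order $2^{n+c_3}$. Chaining the EHI provided by \cref{thm:HSCGreen,thm:GHKEimpliesEHI} over a bounded number of balls yields $h_i(u)\asymp h_i(v)$ for all $u,v\in \Lambda(n+c_1,n+c_3)$; let $H_i$ denote this common value, which is a function of $X^i$ alone.

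Next, let $\cG=\sigma((X^i)_{i=1}^N)\vee \sigma((X^x_k)_{x\in K',\,k\le 2^{2(n+c_2)}})$. By the strong Markov property the tails $(X^x_{2^{2(n+c_2)}+\cdot})_{x\in K'}$ are conditionally independent given $\cG$, each an unconditioned random walk started at $y_x=X^x_{2^{2(n+c_2)}}$. On $\sC$ the walk $X^x$ avoids $L^{i(x)}_{\text{good}}$ up to time $2^{2(n+c_2)}$, so $\sI_x$ reduces to hitting $L^{i(x)}_{\text{good}}$ after this time; the extra requirement of $\sD_x^*$ costs only a factor $\asymp 1$ via a standard post-hitting argument (the walker is then in $\Lambda(n+2c_3,n+4c_3)$, which by the Green function estimate has a positive probability of never returning to $\Lambda(0,n)$). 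Hence on the good $\cG$-measurable event,
\[
\P(\sI_x\cap \sD_x^*\mid \cG,\sC)\asymp h_{i(x)}(y_x)\asymp H_{i(x)},
\]
and by conditional independence
\[
\P\Bigl(\sI\cap\bigcap_{x\in K'}\sD_x^*\;\Big|\;\cG,\sC\cap\sD\cap\sY_y\Bigr)\asymp \prod_{i=1}^N H_i^{|K_i'|}.
\]

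It remains to lower bound $\E\bigl[\prod_i H_i^{|K_i'|}\,\big|\,\sC\cap\sD\cap\sY_y\bigr]$. The key point is that under $\P(\cdot\mid \sC\cap\sD\cap\sY_y)$, by the Markov property the walks $(X^i_{2^{2(n+c_2)}+\cdot})_{i=1}^N$ are independent across $i$ (they are independent Doob transforms started at the fixed points $y_{x_i}$). Since $L^i_{\text{good}}\subseteq \Lambda(n+2c_3,n+4c_3)$ is far from $y_{x_i}\in \Lambda(n+c_1,n+c_3)$, choosing $c_2$ so that $2^{2(n+c_2)}\ll 2^{2(n+2c_3)}$ and using the heat kernel estimate \cref{thm:HSCGreen}, with probability $1-o(1)$ the walk $X^i$ has not reached $\Lambda(n+2c_3,n+4c_3)$ before time $2^{2(n+c_2)}$, so $L^i_{\text{good}}$ (and therefore $H_i$) is determined by the post-$2^{2(n+c_2)}$ tail of $X^i$. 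On this event the variables $H_i$ are independent under the conditional measure, so the expectation factorizes; combining this with Jensen's inequality applied to $x\mapsto x^{|K_i'|}$ and the estimate of \cref{lem:Iev},
\[
\E\bigl[H_i^{|K_i'|}\,\big|\,\sC\cap\sD\cap\sY_y\bigr]\ge \bigl(\E[H_i\mid \sC\cap\sD\cap\sY_y]\bigr)^{|K_i'|}\gtrsim 2^{-(d-4)|K_i'|n},
\]
since $\E[H_i\mid \sC\cap\sD\cap\sY_y]\asymp \P(\sI_x\mid \sC\cap\sD\cap\sY_y)\gtrsim 2^{-(d-4)n}$ for any $x\in K_i'$. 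Taking the product over $i$, multiplying by $\P(\sC\cap\sD)\ge \eps$, and summing over $y$ yields $\log_2\P(\sC\cap \sD\cap \sI)\gtrsim -(d-4)|K'|n$. The main obstacle is the delicate independence/EHI step: verifying that the pre-time-$2^{2(n+c_2)}$ coupling induced by $\sC$ only influences the $X^i$'s in a negligible region, so that the $H_i$'s can legitimately be treated as independent.
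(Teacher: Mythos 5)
Your proposal is correct and follows essentially the same route as the paper's proof: condition on the $\sigma$-algebra generated by the walks $X^i$, use the elliptic Harnack inequality to show that the conditional hitting probabilities $\P(\sI_x \mid \cX, \sC\cap\sD\cap\sY_y)$ are comparable across starting points in the shell (and hence across $x\in K_i'$), factor the conditional probability of $\sI$ using the conditional independence of the walks $X^x$, and then factor over $i$ and apply Jensen's inequality to $t\mapsto t^{|K_i'|}$ before invoking \cref{lem:Iev}. The only difference is presentational (you package the common value of the harmonic function as $H_i$, where the paper compares $\P(\sI_{x}\mid\cX,\cdot)$ for different $x\in K_i'$ by a point-swapping symmetry), and you are if anything more explicit than the paper about why the coupling of the walks' initial segments induced by $\sC$ does not spoil the independence across $i$.
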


\begin{proof}

For each $1\leq i \leq N$, let $x'_i$ be chosen arbitrarily from $K'_i$.
To deduce \cref{lem:farintersections} from \cref{lem:Iev}, it suffices to prove that
\[\P\Bigg(\bigcap_{x \in K'} \sI_{x} \mid \sC \cap \sD \cap \sY_y \Bigg) \succeq \prod_{i=1}^N \P\left( \sI_{x_i'} \mid \sC \cap \sD \cap \sY_y\right)^{|K_i'|}\]
for every $y=(y_{x})_{x\in K} \in (\Lambda(n+c_1,n+c_3))^{K}$.

\medskip

Let $\cX$ be the $\sigma$-algebra generated by the random walks $(X^{i})_{i=1}^N$. 
Observe that for each $x\in K'$ we have
\begin{align*}\P(\sI_{x} \mid \cX,\, \sC\cap\sD\cap\sY_y) &= \frac{\bP_{y_{x}}
\left( \text{hit $L^i_{\text{good}}$ before $\Lambda(0,n+6c_3)$, never leave $\Lambda(n,\infty)$} \right)}{\bP_{y_{x}}
\left( \text{never leave $\Lambda(n,\infty)$} \right)}\\
& \asymp \bP_{y_{x}}
\left( \text{hit $L^i_{\text{good}}$ before $\Lambda(0,n+6c_3)$, never leave $\Lambda(n,\infty)$} \right).
\end{align*}
The right hand side of the second line is a positive harmonic function of $y_{x}$ on $\Lambda(n+c_1,n+c_3+1)$, and so the elliptic Harnack inequality implies that for every $y,y' \in (\Lambda(n+c_1,n+c_3))^{K}$ and every $x\in K'$, we have that
\begin{equation*}\P\left(\sI_{x} \mid \cX,\, \sC \cap \sD \cap \sY_y\right) \asymp \P(\sI_{x} \mid \cX,\, \sC \cap \sD \cap \sY_{y'}).  \end{equation*}
Furthermore, if $y'$ is obtained from $y$ by swapping $y_{x}$ and $y_{x'}$ for some $1\leq i \leq N$ and $x,x' \in K'_i$, then clearly
\begin{equation*}\P(\sI_{x} \mid \cX,\, \sC\cap\sD\cap\sY_y) = \P(\sI_{x'} \mid \cX,\, \sC\cap\sD\cap\sY_{y'}).  \end{equation*}
Therefore, it follows that
\begin{equation*}\P(\sI_{x} \mid \cX,\, \sC\cap\sD\cap\sY_y) \asymp \P(\sI_{x'} \mid \cX,\, \sC\cap\sD\cap\sY_{y})  \end{equation*}
for all $1\leq i \leq N$ and $x,x' \in K'_i$.

Since the events $\sI_{x}$ are conditionally independent given the $\sigma$-algebra $\cX$ and the event $\sC \cap \sD \cap \sY_y$, we deduce that
\begin{align*}
\P(\sI \mid  \sC \cap \sD \cap \sY_y) &= \E\left[ \P(\sI \mid \cX,\, \sC \cap \sD \cap \sY_y) \mid \sC \cap \sD \cap \sY_y \right]\\
& = \E\left[ \prod_{x\in K'}\P(\sI_{x} \mid \cX,\, \sC \cap \sD \cap \sY_y) \mid \sC \cap \sD \cap \sY_y\right]\\
& \asymp \E\left[ \prod_{i=1}^N\P(\sI_{x'_i} \mid \cX,\, \sC \cap \sD \cap \sY_y)^{|K'_i|} \mid \sC \cap \sD \cap \sY_y\right].
\end{align*}

Now, the random variables $\P(\sI_{x'_i} \mid \cX,\, \sC \cap \sD \cap \sY_y)^{|K_i'|}$ are independent conditional on the event $\sC \cap \sD \cap \sY_y$, and so we have that
\begin{align*}
\P(\sI \mid  \sC \cap \sD \cap \sY_y) 
& \asymp \prod_{i=1}^N \E\left[ \P(\sI_{x'_i} \mid \cX,\, \sC \cap \sD \cap \sY_y)^{|K'_i|} \mid \sC \cap \sD \cap \sY_y\right]\\
&\geq \prod_{i=1}^N  \P(\sI_{x'_i} \mid  \sC \cap \sD \cap \sY_y)^{|K'_i|},
\end{align*}
as claimed, where the second line follows from Jensen's inequality. 
\end{proof}

\medskip

Finally, it remains to show that the probability of getting  unwanted intersections in addition to those that we do want is of lower order than the probability of just getting the intersections that we want.

\begin{lemma} \label{lem:ABIC}
We have that
\[\log_2 \P(\sC \cap \sD \cap \sI \cap \sE) \lesssim   - \bigl[(d-4)|K'|+2\bigr] \, n + {|K'|^2}\log_2 n.\] 
\end{lemma}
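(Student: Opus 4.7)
The plan is to decompose $\sE$ via a union bound over its constituent bad pairs and then bound each $\P(\sC \cap \sD \cap \sI \cap \sE_{x,x'})$ separately. There are at most $O(|K|^2)$ such pairs, and the subsequent dyadic decomposition over the location of the bad intersection contributes an additional $O(\log n)$ factor; both are comfortably absorbed into the $|K'|^2 \log_2 n$ correction in the claim. It therefore suffices to prove, for each fixed bad pair $(x,x')$, that $\log_2 \P(\sC \cap \sD \cap \sI \cap \sE_{x,x'}) \lesssim -[(d-4)|K'|+2]n + O(\log n)$.

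For a fixed bad pair I would further decompose according to the dyadic shell $\Lambda_{x_0}(n+k, n+k+1)$ containing the first intersection point $w$ of $X^x$ and $X^{x'}$. Since $\sC$ forbids intersections in the first $2^{2(n+c_2)}$ steps and $\sD$ confines both walks to $\Lambda_{x_0}(n,\infty)$ afterwards, one can restrict to $k \geq -O(1)$. For each such $w$ I would bound the joint probability that all required good intersections $\sI_v$ for $v \in K'$ occur \emph{and} that both $X^x$ and $X^{x'}$ visit $w$. The contribution of the $|K'|$ good intersections is bounded by $\preceq 2^{-(d-4)|K'| n}$ via an upper-bound analogue of the second-moment / Cauchy--Schwarz argument in \cref{lem:Iev}, applied conditionally on the primary-walk paths $X^1,\dots,X^N$ and using the elliptic Harnack inequality from \cref{thm:GHKEimpliesEHI} to pass between starting points. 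The cost of the bad intersection at $w$ is bounded using the Green's function estimate \eqref{eq:***}, yielding $\bP(X^x \text{ and } X^{x'} \text{ both hit } w) \preceq \langle xw\rangle^{-(d-2)} \langle x'w\rangle^{-(d-2)}$; summing over $w\in \Lambda_{x_0}(n+k,n+k+1)$ and then geometrically over $k$ contributes $\asymp 2^{-(d-4)n}$.

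Combining gives a total exponent of $(d-4)(|K'|+1)n$, which already yields the target $(d-4)|K'|n + 2n$ whenever $d\geq 6$, and for a type-(a) pair the same bound can alternatively be derived by applying \cref{prop:sdim2} to the merger $\sF(K_i \cup K_j)$ that such a pair forces by Wilson's algorithm, using \eqref{eq:spread} to compute $\langle K_i \cup K_j\rangle$. The main technical obstacle is dimension $d=5$, where the direct combination yields only $n$ extra savings beyond the baseline whereas $2n$ is required. The missing factor of $2^{-n}$ comes from exploiting that on $\sD$ the walks are restricted to remain in $\Lambda_{x_0}(n,\infty)$ forever after time $2^{2(n+c_2)}$: using the restricted estimate of \cref{lem:hitwnotB} in place of the unrestricted Green's function gives an improvement precisely at the innermost shells (where the dyadic sum concentrates), providing exactly the extra $2^{-n}$ needed. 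The case of a type-(b) bad pair ($x,x' \in K'$), in which no group merger is forced, is handled analogously: the bad intersection simply replaces one of the baseline good-intersection factors in the moment computation, and the same dyadic-shell summation again yields the required bound.
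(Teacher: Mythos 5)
Your overall architecture — union bound over bad pairs, dyadic localization of the unwanted intersection point $w$, and comparison against the cost of the required good intersections — matches the paper's strategy. But two of your key steps fail.

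The central problem is the factorization $\P(\text{good intersections} \cap \sE_{x,x'}(w)) \preceq \P(\text{good})\cdot \langle xw\rangle^{-(d-2)}\langle x'w\rangle^{-(d-2)}$. The bad intersection involves the \emph{same} walks $X^{x},X^{x'}$ that must also achieve the good intersections, and a walk can reach $w$ cheaply \emph{via} its good-intersection point $\zeta_x$: conditioning on the good structure, the extra cost of also hitting $w$ is of order $\langle w\zeta_{x}\rangle^{-(d-2)}\langle w\zeta_{x'}\rangle^{-(d-2)}$ (this is the content of the paper's \cref{lem:REcases}, with its case analysis over the order of visits), not $\langle xw\rangle^{-(d-2)}\langle x'w\rangle^{-(d-2)}$. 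Since $\zeta_x$ and $\zeta_{x'}$ may themselves be close, summing over $w$ gives $\langle\zeta_x\zeta_{x'}\rangle^{-(d-4)}$, and one must then optimize the tradeoff between this gain and the entropic/energetic cost of placing $\zeta_x,\zeta_{x'}$ close together. Running that optimization (the paper does it by attaching a degree-two vertex $\star$ to the intersection hypergraph $H_\sigma$ and minimizing $\eta_{d}$ over coarsenings) yields an extra saving of exactly $2n$ in the exponent, uniformly in $d$ — not the $(d-4)n$ your factorization suggests. So your intermediate bound is too strong for $d\geq 7$ and too weak for $d=5$, and both defects trace to the same invalid step.

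Your proposed rescue for $d=5$ also cannot work: \cref{lem:hitwnotB} gives matching upper \emph{and lower} bounds of order $\langle uw\rangle^{-(d-2)}$ for the hitting probability confined to $\Lambda(n,\infty)$, so the confinement imposed by $\sD$ changes hitting probabilities only by constants, never by a factor $2^{-n}$. The missing $2^{-n}$ for $d=5$ genuinely comes from the combinatorial bookkeeping above (the $-d$ per edge versus $-(d-2)$ per incidence in the apparent weight), not from any killing of the walks. A secondary gap: upper-bounding the joint probability of all $|K'|$ good intersections by $2^{-(d-4)|K'|n}$ is not a consequence of Cauchy--Schwarz or of Harnack alone, since the conditional probabilities $\P(\sI_x\mid\cX)$ are random and need not be bounded above by their means; one needs the full multi-point first-moment computation (the paper's $\E[M_{\sigma,0}]\preceq \bbW^{H_\sigma}$ via \cref{lem:firstmomentgeneral}), which simultaneously handles the dependence and is exactly the framework into which the bad intersection is then inserted.
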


\begin{proof}
For each $w \in \bbV$ and $x,x' \in K$, let $\sE_{x,x'}(w)$ be the event that $X^{x}$ and $X^{x'}$ both hit $w$. 
Let $\zeta=(\zeta_{x})_{x\in K'}$ and let $\sigma = (\sigma_i)_{i=1}^N$ be such that $\sigma_v$ is a bijection from  $\{1,\ldots,|K'_i|\}$ to $K'_i$  for each $1 \leq i \leq N$. 
We define $\sR_\sigma(\zeta)$ to be the event that  for each $1 \leq i \leq N$ the walk $X^{i}$ passes through the points $\{ \zeta_{x} : x \in K'_i\}$ in the order given by $\sigma$ and that for each $x\in K'$ the walk $X^{x}$ hits the point $\zeta_{x}$. We also define
\begin{equation*}
	R_\sigma(\zeta) =
		  \prod_{i=1}^N\left\langle x_i \zeta_{\sigma_i(1)} \right\rangle^{-(d-2)}\prod_{j=1}^{|K'_i|} \left\langle \zeta_{\sigma_i(j-1)} \zeta_{\sigma_i(j)} \right\rangle ^{-(d-2)} \left\langle \sigma_i(j) \zeta_{\sigma_i(j)} \right\rangle^{-(d-2)}, 
\end{equation*}
so that 
$\P(\sR_\sigma(\zeta)) \asymp R_\sigma(\zeta)$ for every $\zeta \in \bbV^{K'}$.

Let $\Lambda_\zeta = \Lambda(n+c_1,n+c_1+c_2)^{K'}$, $\Lambda_{w,1}= \Lambda(n,n+c_2+1),$ $\Lambda_{w,2} = \Lambda(n+c_2+1,\infty)$, and $\Lambda_w=\Lambda_{w,1}\cup\Lambda_{w,2}$. (Note that these sets are not functions of $\zeta$ or $w$, but rather are the sets from which $\zeta$ and $w$ will be drawn.)
We also define 
\[O=K^2 \setminus \left[ \{(x,x) : x \in K\} \cup \bigcup_{i=1}^N\left[\{(x_i,x) : x \in K_i \} \cup \{(x,x_i) : x \in K_i \} \right] \right]. \]
To be the set of pairs of points at least one of which must have their associated pair of random walks intersect in order for the event $\sE$ to occur. 
 Define the random variables $M_{\sigma,0}$, $M_{\sigma,1}$, and $M_{\sigma,2}$ to be
\begin{align*}
\vspace{0em}
M_{\sigma,0} &=   \sum_{\zeta \in \Lambda_\zeta} \mathbbm{1}\big[\sR_\sigma(\zeta)\big]\\
\vspace{0em}\\
M_{\sigma,1} &=  \sum_{(x,x') \in O}\; \sum_{w \in \Lambda_{w,1}} \sum_{\zeta \in \Lambda_\zeta} \mathbbm{1}\big[\sR_\sigma(\zeta) \cap \sE_{x,x'}(w)\big], \qquad \text{ and}\\
\vspace{0em}\\
M_{\sigma,2} &=  \sum_{(x,x')\in O} \;\sum_{w \in \Lambda_{w,2}} \sum_{\zeta \in \Lambda_\zeta} \mathbbm{1}\big[\sR_\sigma(\zeta) \cap \sE_{x,x'}(w)\big].
\end{align*}
Observe that  $\sum_{\sigma}(M_{\sigma,1} + M_{\sigma,2}) \geq 1$ on the event 
 $\sC \cap \sB\cap\sI\cap\sE$, and so
 to prove \cref{lem:ABIC} it suffices to prove that
\begin{equation}
	\label{eq:M1M2estimate}
	\log_2\E\left[M_{\sigma,1}+M_{\sigma,2}\right]
	\lesssim
	-\bigl[(d-4)|K'|+2\bigr]\, n + 2\log_2 n
\end{equation}
for every $\sigma$. We will require the following estimate.

\begin{lem}
\label{lem:REcases}
The estimate
\begin{equation}\P\left(\sR_\sigma(\zeta) \cap \sE_{x,x'}(w) \right) \preceq R_\sigma(\zeta) \langle w \zeta_{x} \rangle^{-(d-2)}\langle w \zeta_{x'} \rangle^{-(d-2)}. \end{equation}
holds for every $(x,x') \in O$, every $\zeta \in \Lambda_\zeta$, every $w \in \Lambda_w$, and every collection $\sigma=(\sigma_i)_{i=1}^N$ where $\sigma_i : \{1,\ldots,|K'_i|\} \to K'_i$ is a bijection for each $1\leq i \leq N$.
\end{lem}

\begin{proof}
Unfortunately, this proof requires a straightforward but tedious case analysis. We will give details for the simplest case, in which both $x,x'\in K'$. 
A similar proof applies in the cases that one or both of $x$ or $x'$ is not in $K'$, but there are a larger amount of subcases to consider according to when the intersection takes place. In the case that $x,x' \in K'$, let $\sE^{-,-}(\zeta,w)$, $\sE^{-,+}(\zeta,w)$, $\sE^{+,-}(\zeta,w)$ and $\sE^{+,+}(\zeta,w)$ be the events defined as follows:
\begin{itemize}[leftmargin=2.5cm]
\itemsep1em
\item[$\sE^{-,-}(\zeta,w)$:] The event $\sR_\sigma(\zeta)$ occurs, and $X^{x}$ and $X^{x'}$ both hit $w$ before they hit $\zeta_{x}$ and $\zeta_{x'}$ respectively. 
\item[$\sE^{-,+}(\zeta,w)$:] The event $\sR_\sigma(\zeta)$ occurs, $X^{x}$ hits $w$ before hitting $\zeta_{x}$, and $X^{x'}$ hits $w$ after  hitting  $\zeta_{x'}$.
\item[$\sE^{+,-}(\zeta,w)$:] The event $\sR_\sigma(\zeta)$ occurs, $X^{x}$ hits $w$ after hitting $\zeta_{x}$, and $X^{x'}$ hits $w$ before  hitting  $\zeta_{x'}$.
\item[$\sE^{+,+}(\zeta,w)$:] The event $\sR_\sigma(\zeta)$ occurs, and $X^{x}$ and $X^{x'}$ both hit $w$ after they hit $\zeta_{x}$ and $\zeta_{x'}$ respectively.
\end{itemize}
We have the estimates
\begin{align*}
\P(\sE^{-,-}(\zeta,w)) &\asymp  R(x,\zeta) \frac{\langle x  w \rangle^{-(d-2)}\langle w  \zeta_{x} \rangle^{-(d-2)} \langle x'  w \rangle^{-(d-2)}\langle w  \zeta_{x'} \rangle^{-(d-2)}}
 {\langle x \zeta_{x} \rangle^{-(d-2)}\langle x' \zeta_{x'} \rangle^{-(d-2)}},
 \\\\
 \P(\sE^{-,+}(\zeta,w)) &\asymp  R(x,\zeta) \frac{\langle x  w \rangle^{-(d-2)}\langle w  \zeta_{x} \rangle^{-(d-2)}}
{\langle x \zeta_{x} \rangle^{-(d-2)}}
\langle \zeta_{x'} w \rangle^{-(d-2)}, \\\\
\P(\sE^{+,-}(\zeta,w)) &\asymp R(x,\zeta)\frac{\langle x'  w \rangle^{-(d-2)}\langle w  \zeta_{x'} \rangle^{-(d-2)}}{\langle x' \zeta_{x'} \rangle^{-(d-2)}}\langle \zeta_{x} w \rangle^{-(d-2)},\\
\text{and}\hspace{3.5cm}&\\
\P(\sE^{+,+}(\zeta,w)) &\asymp  R(x,\zeta)\langle \zeta_{x} w \rangle ^{-(d-2)}\langle \zeta_{x'} w \rangle^{-(d-2)}.\end{align*}

\noindent
 In all cases, a bound of the desired form follows since $\langle w x \rangle \succeq \langle \zeta_{x} x \rangle$ and $\langle w x' \rangle \succeq \langle \zeta_{x'} x' \rangle$ for every $x,x'\in K'$, $\zeta\in \Lambda_\zeta$, and $w\in \Lambda_w$, and we conclude by summing these four bounds. \qedhere

\end{proof}

Our aim now is to prove \cref{eq:M1M2estimate} by an appeal to \cref{lem:firstmomentgeneral}. To do this, we will encode the combinatorics of the potential ways that the walks can intersect via hypergraphs. 
To this end, let $H_\sigma$ be the finite hypergraph with boundary that
has vertex set
\[V(H_\sigma) = \left(\{1\} \times K\right) \cup \left(\{2\} \times K'\right),\]
boundary set
\[\partial V(H_\sigma) = 
\left(\{1\} \times \{x_i : 1 \leq i \leq N \}\right) \cup \left(\{2\}\times K'\right),\]
and edge set
\begin{multline*}
E(H_\sigma) =
 \left\{ \left\{(2,\sigma_i(j)), (1,\sigma_i(j)), (1,i,\sigma_i(j+1))\right\} : 1 \leq i \leq N,\, 1 \leq j \leq |K'_i|-1 \right\}\\ \cup \left\{\left\{(2,\sigma_i(|K'_i|)), (1,\sigma_i(|K'_i|)\right\} : 1 \leq i \leq N\right\}.
\end{multline*}
See \cref{fig:inthyp} for an illustration. 
Note that the isomorphism class of $H_\sigma$ does not depend on $\sigma$.
The edge set $E(H_\sigma)$ can be identified with $K'$ by taking the intersection of each edge with the set $\{2\}\times K'$. 
Under this identification, the definition of $H_\sigma$ ensures that 
\[R_\sigma(\zeta) = W^{H_\sigma,2}(x,\zeta)\]
and consequently that
\[\E[M_{\sigma,0}] \preceq \bbW^{H_\sigma,2}_x(n,n+c_1+c_2).\]

\begin{figure}
\includegraphics[width=0.28\textwidth]{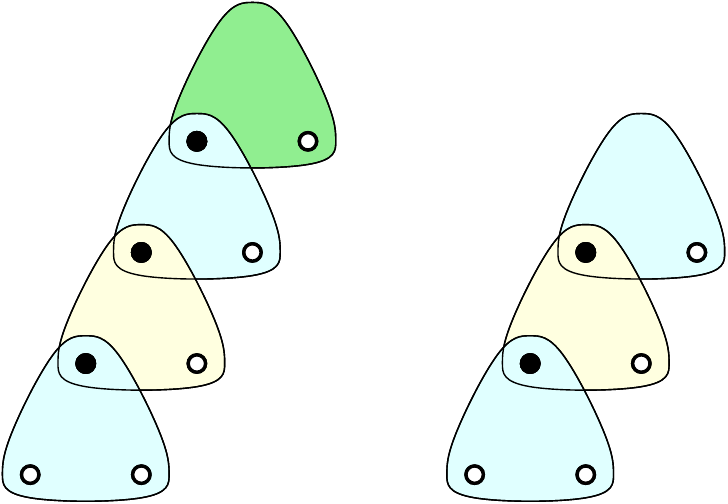}
\hspace{1cm}
\includegraphics[width=0.28\textwidth]{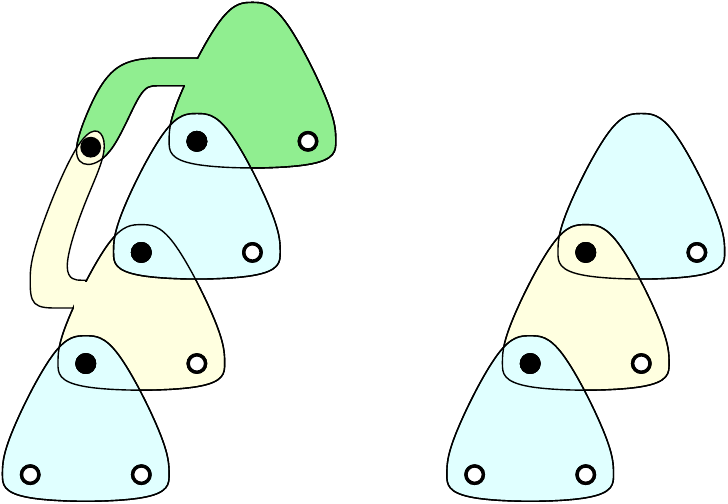}
\hspace{1cm}
\includegraphics[width=0.28\textwidth]{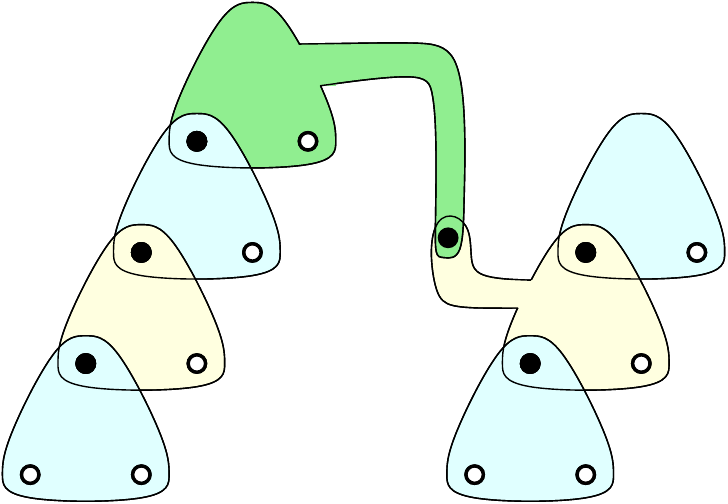}
\caption{Left: The hypergraph $H_\sigma$ in the case that $N=2$, 
$|K_1|=5$, and $|K_2|=4$.
%
%
Note that the isomorphism class of $H_\sigma$ does not depend on $\sigma$.
 Centre: Letting $K_1=\{x_{1,1},\ldots,x_{1,5}\}$, and $K_2=\{x_{2,1},\ldots,x_{2,4}\}$, this is the hypergraph $H_\sigma(x_{1,2},x_{1,4})$. 
Right: The hypergraph $H_\sigma(x_{1,4},x_{2,2})$.
}
\label{fig:inthyp}
\end{figure}

\medskip

We claim that 
\begin{equation}
\label{eq:Lprime}
 \eta_{d,2}(H_\sigma') \geq \eta_{d,2}(H_\sigma)+2
\end{equation}
for any coarsening $H_\sigma'$ of $H_\sigma$, so that
\begin{align*}\bareta_{d,2}(H_\sigma) = \eta_{d,2}(H_\sigma) &= (d-2)(3|K'|-|V|) -d|K'| -(d-2)(|K'|-|V|)\\
&= (d-4)|K'|.\end{align*}
and hence that
\begin{equation}
\label{eq:M0estimate}
\log_2 \E[M_{\sigma,0}] \lesssim -(d-4) |K'| \, n + |K'|^2 \log_2(n)
\end{equation}
by \cref{lem:firstmomentgeneral}.
Indeed, suppose that $\coarse{H_\sigma}{\bowtie}$ is a proper coarsening of $H_\sigma$ corresponding to some equivalence relation $\bowtie$ on $E(H_\sigma)$, and that the edge corresponding to $x=\sigma_i(j) \in K'$ is maximal in its equivalence class in the sense that there does not exist $\sigma_i(j')$ in the equivalence class of $\sigma_i(j)$ with $j' > j$. Clearly such a maximal $x$ must exist in every equivalence class. Moreover, for such a maximal $x = \sigma_i(j)$ there can be at most one edge of $H_\sigma$ that it shares a vertex with and is also in its class, namely the edge corresponding to $\sigma_i(j-1)$. Thus, if $x$ is maximal and its equivalence class is not a singleton, let $\coarse{H_\sigma}{\bowtie'}$ be the coarsening corresponding to the equivalence relation $\bowtie'$ obtained from $\bowtie$ by removing $x$ from its equivalence class. Then we have that $\Delta(\coarse{H_\sigma}{\bowtie'}) \leq \Delta(\coarse{H_\sigma}{\bowtie})+1$ and that $|E(\coarse{H_\sigma}{\bowtie'})| = |E(\coarse{H_\sigma}{\bowtie})|+1$, so that
\begin{equation}
\label{eq:plusfour}
\eta_d(\coarse{H}{\bowtie}) \geq \eta_d(\coarse{H_\sigma}{\bowtie'}) + d - (d-2)= \eta_d(\coarse{H_\sigma}{\bowtie'}) + 2, 
\end{equation}
and the claim follows by inducting on the number of edges in non-singleton equivalence classes.

\medskip

To obtain a bound on the expectation of $M_{\sigma,2}$, considering the contribution of each shell $\Lambda(m,m+1)$ yields the estimate
\begin{align*}
\sum_{w \in \Lambda_{w,2}}  \langle \zeta_{x} w \rangle^{-(d-2)}\langle \zeta_{x'} w \rangle^{-(d-2)}
& \preceq \sum_{m \geq n+ c_2 + 1} 2^{dn} 2^{-2(d-2)n} \preceq 2^{-(d-4)n}
\end{align*}
for every $\zeta \in \Lambda_\zeta$,
and it follows from \cref{lem:REcases} and \eqref{eq:M0estimate} that
\begin{align}
\log_2 \E[M_{\sigma,2}] &\lesssim \log_2 \E[M_{\sigma,0}] - (d-4)\, n
\nonumber
\\
 &\lesssim -(d-4)(|K'|+1)\, n + |K'|^2 \log_2 n.
 \label{eq:M2estimate}
\end{align}

\medskip

It remains to bound the expectation of $M_{\sigma,1}$. 
For each two distinct $x,x' \in K'$, let $H_\sigma(x,x')$ be the hypergraph with boundary obtained from $H_\sigma$ by adding a single vertex, $\star$, and adding this vertex to the two edges corresponding to $x$ and $x'$ respectively. These hypergraphs are defined in such a way that, by \cref{lem:REcases}, 
\[\E[M_{\sigma,1}] \preceq \sum_{(x,x')\in O} \bbW^{H_\sigma(x,x'),\, 2}_x(n+c_1,n+c_1+c_2)\]
We claim that 
\vspace{0.2cm}
\begin{equation}
\label{eq:Lprime2}
\vspace{0.2cm}
\bareta_{d,2}(H_\sigma(x,x')) \geq \bareta_{d,2}(H) +2 = (d-4)|K'|+2
\end{equation}
for every two distinct $x,x' \in K$.
First observe that 
 coarsenings of $H_\sigma$ and of $H_\sigma(x,x')$ both correspond to equivalence relations on $K$. Let $\bowtie$ be an equivalence relation on $K$, and let $H'_\sigma(x,x')$ and $H_\sigma'$ be the corresponding coarsenings. 
Clearly $|E(H'_\sigma(x,x'))|=|E(H_\sigma')|$ and $|V_\circ(H'_\sigma(x,x'))|=|V_\circ(H_\sigma')|+1$. 
If  $x$ and $x'$ are related under $\bowtie$, then we have that $\Delta(H'_\sigma(x,x')) = \Delta(H_\sigma')+1$, while if 
$x$ and $x'$ are not related under $\bowtie$, then we have that $\Delta(H'_\sigma(x,x')) = \Delta(H_\sigma')+2$.
We deduce that
\[\eta_{d,2}(H'_\sigma(x,x')) \geq \begin{cases} \eta_{d,2}(H_\sigma') &\text{ if $x \bowtie x'$}\\
\eta_{d,2}(H_\sigma') + 2 &\text{ otherwise.}  
\end{cases}
\]
If $x \bowtie x'$ then $H_\sigma'$ must be a proper coarsening of $H_\sigma$, and we deduce from \eqref{eq:Lprime} that the inequality $\eta_{d,2}(H'_\sigma(x,x')) \geq \eta_{d,2}(H_\sigma) +2$ holds for every coarsening $H'_\sigma(x,x')$ of $H_\sigma(x,x')$, yielding the claimed inequality \eqref{eq:Lprime2}.
 Using \eqref{eq:Lprime2}, we deduce from \cref{lem:firstmomentgeneral} that
\begin{equation}
\label{eq:M1estimate}
\log_2\E[M_{\sigma,1}] \lesssim -\bigl[(d-4)|K'|+2\bigr]\, n + |K'|^2\log_2 n. 
 \end{equation}
 Combining \eqref{eq:M2estimate} and \eqref{eq:M1estimate} yields the claimed estimate \eqref{eq:M1M2estimate}, completing the proof.
\qedhere

\end{proof}
\noindent
\emph{Completion of the proof of \cref{lem:firstmomentlowerboundestimate}.}
Since the upper bound given by \cref{lem:ABIC} is of lower order than the lower bound given by \cref{lem:farintersections}, it follows that 
there exists $n_0=n_0(|K|,N,d,c_1,c_2)$ such that
 \[\P(\sC \cap \sD \cap \sI \cap \sE) \leq \frac{1}{2}\P(\sC\cap\sD\cap \sI)\]
if $n \geq n_0$, and hence that
\[\log_2 \P(\sC \cap \sD \cap \sI \setminus \sE) \gtrsim \log_2 \P(\sC \cap \sD \cap \sI) \gtrsim -(d-4)|K'|\, n \]
for sufficiently large $n$ as claimed. \qedhere

\end{proof}

\section{Proof of the main theorems}
\label{sec:wrappingup}

We now complete the proof of \cref{thm:mainhyper}. We begin with the simpler case in which $d/(d-4)$ is not an integer.

\begin{proof}[Proof of \cref{thm:mainhyper} for $d\notin \{5,6,8\}$]

We begin by analyzing faithful ubiquity. 
Let $\bbG$ be a $d$-dimensional transitive graph, and let $H$ be a finite hypergraph with boundary. If $H$ has a subhypergraph none of whose coarsenings are $d$-buoyant, then \cref{prop:nonubiquity} implies that $H$ is not faithfully ubiquitous in $\cC^{hyp}_r(\F)$ almost surely for any $r\geq 1$. 


Otherwise, by \cref{lem:maxminswap}, $H$ has a coarsening all of whose subhypergraphs are $d$-buoyant. If $d/(d-4)$ is not an integer, then it follows from \cref{prop:ubiquity} that 
there exist vertices $(x_v)_{v\in \partial V}$ in $G$ such that with positive probability, the vertices $x_v$ are in different components of $\F$ and $H$ is $R_\bbG(H)$-robustly faithfully present at $(x_v)_{v\in V}$. The set
\[\left\{\left(\omega,(x_v)_{ v\in \partial V}\right) \in \{0,1\}^{E(\bbG)} \times \bbV^{\partial V}: H \text{ is $R_\bbG(H)$-robustly faithfully present at $x$}\right\}\]
is a tail multicomponent property, and it follows from \cref{thm:indist} that 
$H$ is faithfully ubiquitous in $\Comp^{hyp}_{r}(\F)$ for every $r \geq R_\bbG(H)$ a.s.


We now turn to ubiquity. Let $r \geq 1$. It follows immediately from the definitions that if $H$ has a quotient that is faithfully ubiquitous in $\cC_r^{hyp}(\F)$ almost surely then $H$ is ubiquitous in $\cC_r^{hyp}(\F)$ almost surely, and so it suffices to prove the converse. 
If every quotient $H'$ of $H$ with $R_\bbG(H') \leq r$ has a subhypergraph none of whose coarsenings are $d$-buoyant, 
then $H$ is not ubiquitous in $\cC^{hyp}_r(\F)$  almost surely by \cref{prop:nonubiquity}. Otherwise, by \cref{lem:maxminswap}, $H$ has a quotient $H'$ with $R_\bbG(H') \leq r$ that has a coarsening all of whose subgraphs are $d$-buoyant, so that $H'$ is faithfully ubiquitous in  $\cC^{hyp}_r(\F)$ almost surely and therefore $H$ is ubiquitous in  $\cC^{hyp}_r(\F)$ almost surely by the above. This concludes the proof.
\qedhere

\end{proof}

\begin{proof}[Proof of \cref{thm:mainhyper} for $d\in \{5,6,8\}$]
The only part of the proof that requires modification in this case is the proof that if $H$ has a coarsening all of whose subhypergraphs are $d$-buoyant then $H$ is faithfully ubiquitous in $\cC^{hyp}_{R_\bbG(H)}(\F)$  almost surely. To show this, we will prove by induction on $|E(H)|$ that if every subhypergraph of $H$ is $d$-buoyant then every refinement $H'$ of $H$ is faithfully ubiquitous in $\cC^{hyp}_{R_\bbG(H')}(\F)$  almost surely. 

Let us first consider the base case $|E(H)|=1$. Since every subhypergraph of $H$ is $d$-buoyant, the unique edge of $H$ must contain at most $d/(d-4)$ boundary vertices. Let $H'$ be obtained from $H$ by deleting all internal vertices that are not in the unique edge of $H$, and, if necessary, adding additional new boundary vertices to the unique edge so that it contains exactly $d/(d-4)$ boundary vertices. Then it follows from \cref{prop:ubiquityspeciald} and \cref{thm:indist} that every refinement $H''$ of $H'$ is faithfully ubiquitous in $\cC^{hyp}_{R_\bbG(H'')}(\F)$ almost surely. It is easily verified from the definitions that this implies that every refinement $H'''$ of $H$ is is faithfully ubiquitous in $\cC^{hyp}_{R_\bbG(H'')}(\F)$ almost surely also. In particular, it follows that for every $n \leq d/(d-4)$, every set of $n$ trees of $\F$ are contained in an edge of $\cC^{hyp}_{r}(\F)$ for every $r\geq R_\bbG(n)$ almost surely.

Let $H$ be a finite hypergraph with boundary all of whose subhypergraphs are $d$-buoyant. Suppose that $|E(H)|\geq 2$ and that the claim has been established for all hypergraphs with fewer edges than $H$.  
If $H$ is $d$-basic then we are already done, so assume not. Then at least one of the following must occur:
\begin{enumerate}
\item $H$ has an edge of degree less than or equal to $d/(d-4)$.
\item $H$ has a proper, non-trivial bordered subhypergraph $H'$ with $\eta_d(H')=0$.
\end{enumerate}

Let us first consider the case that $H$ has an edge of degree less than or equal to $d/(d-4)$. Let $e_0$ be an edge of $H$ with $\deg(e_0)\leq d/(d-4)$ and let $H_1$ be the subhypergraph of $H$ with $\partial V(H_1)=\partial V(H)$, $V_\circ(H_1)=V_\circ(H)$, and $E(H_1)=E(H)\setminus\{e_0\}$. By the induction hypothesis, every refinement $H_1'$ of $H_1$ is faithfully ubiquitous in $\cC^{hyp}_{R_\bbG(H_1')}(\F)$ almost surely. 
Let $H_2$ be a refinement of $H$, and let $H_3$ be obtained from $H_2$ by deleting every edge of $H_2$ which corresponds to $e_0$ under the refinement. Then $H_3$ is a refinement of $H_1$, and so is faithfully ubiquitous in $\cC^{hyp}_{R_\bbG(H_3)}(\F)$ almost surely. On the other hand, every edge of $H_2$ that was deleted to form $H_3$ has degree at most $d/(d-4)$, and since $\cC^{hyp}_{R_\bbG(H_2)}(\F)$ contains every possible edge of these sizes almost surely, we deduce that $H_2$ is faithfully ubiquitous on $\cC^{hyp}_{R_\bbG(H_2)}(\F)$ almost surely.

Now suppose that $H$ has a proper, non-trivial bordered subhypergraph $H_1$ with $\eta_d(H_1)=0$. 
Let $H_2$ be the hypergraph with boundary that has $\partial V(H_2)=V(H_1)$, $V_\circ(H_2)=V_\circ(H)\setminus V_\circ (H_1)$, and $E(H_2)=E(H_1)\setminus E(H_1)$. We claim that every subhypergraph of $H_2$ is $d$-buoyant. Indeed, suppose that $H_3$ is a subhypergraph of $H_2$, and let $H_4$ be the subhypergraph of $H_1$ that includes all the edges and vertices of $H_1$ that are included in either $H_1$ or $H_3$ (noting that some of the boundary vertices of $H_3$ will become interior vertices of $H_4$). Let $N$ be the number of boundary vertices of $H_3$ that are interior vertices of $H_1$. Then we can compute that
$|E(H_4)|=|E(H_1)|+|E(H_3)|$, $|V_\circ(H_4)|=|V_\circ(H_1)|+|V_\circ(H_4)|+N$, and $\Delta(H_4)=\Delta(H_1)+\Delta(H_4)+N$, so that
\[
\eta_d(H_3)=\eta_d(H_3)+\eta_d(H_1)=\eta_d(H_4) \leq 0 
\]
since $\eta_d(H_1)=0$ and every subhypergraph of $H_1$ is $d$-buoyant.
Thus, we deduce from the induction hypotheses that every refinement $H'$ of either  $H_1$ or $H_2$ is faithfully ubiquitous in $\cC^{hyp}_r(\F)$ almost surely for every $r\geq R_\bbG(H) \geq \max\{R_\bbG(H_1),R_\bbG(H_2)\}$. It is easily verified that this implies that every refinement $H'$ of $H$ is faithfully ubiquitous in $\cC^{hyp}_r(\F)$ for every $r\geq R_\bbG(H')$ almost surely. \qedhere


\end{proof}

\begin{proof}[Proof of \cref{thm:maintree}]
We begin by proving the claim about faithful ubiquity. Applying \cref{thm:main,lem:maxminswap}, and since every subgraph of a tree is a forest, it suffices to prove that if $T$ is a finite forest with boundary then $\eta_d(T') \geq \eta_d(T)$ whenever $d\geq4$ and $T'$ is a coarsening of $T$, so that, in particular,
\[\bareta_d(T) = \eta_d(T) = (d-8)|E| -(d-4)|V_\circ|\]
for every $d\geq 4$.

Indeed, suppose that $T'=\coarse{T}{\bowtie}$ is a proper coarsening of  a finite forest with boundary $T$. 
Since $T$ is a finite forest, the subgraph of $T$ spanned by each equivalence class of $\bowtie$ is also a finite forest, and therefore must contain a leaf. Choose a non-singleton equivalence class of $\bowtie$ and an edge $e$ of this equivalence relation that is incident to a leaf of the spanned forest.
Thus, $e$ has the property that one of the endpoints of $e$ is not incident to any other edge in $e$'s equivalence class. Let $\bowtie'$ be the equivalence relation obtained from $\bowtie$ by removing $e$ from its equivalence class and placing it in a singleton class by itself.  Then we have
that $|E(\coarse{T}{\bowtie'})| = |E(\coarse{T}{\bowtie})|+1$
and $\Delta(\coarse{T}{\bowtie'}) \leq \Delta(\coarse{T}{\bowtie})+1$ so that
\[\eta_d(\coarse{T}{\bowtie'})  \leq  \eta_d(\coarse{T}{\bowtie}) -4.\]
Thus, it follows by induction on the number of edges of $T$ in non-singleton equivalence classes that $\eta_d(\coarse{T}{\bowtie}) \geq \eta_d(T)$ for every coarsening $\coarse{T}{\bowtie}$ of $T$ as claimed. This establishes the claim about faithful ubiquity.

We now turn to ubiquity. Let $\bbG$ be a $d$-dimensional transitive graph for some $d>8$, let $r\geq 1$, and let $\F$ be the uniform spanning forest of $\bbG$.
 Let $T$ be a finite tree with boundary that is not faithfully ubiquitous in $\cC_r(\F)$, and let $T'$ be a subgraph of $T$ such that $(d-8)|E(T')| -(d-4)|V_\circ(T')| >0$, which exists by the previous paragraph. Since 
 \[(d-8)|E(T')| -(d-4)|V_\circ(T')| = \sum_{\substack{T'' \text{ a connected}\\\text{component of $T'$}}} (d-8)|E(T'')| -(d-4)|V_\circ(T'')|,\]
 we deduce that $T'$ has a connected subgraph $T''$ with $(d-8)|E(T'')| -(d-4)|V_\circ(T'')| >0$. Let $H$ be a quotient of $T$, let $H'$ be the image of $T''$ under the quotient map, and let $S$ be a spanning tree of $H'$, so that $|V_\circ(S)|\leq |V_\circ(T'')|$ and $|\partial V(S)| = |\partial V(T'')|$. Since $S$ and $T''$ are both trees, we have that $|E(S)| = |\partial V(S)|+|V_\circ(S)|-1$ and
 $|E(T'')| = |\partial V(T'')|+|V_\circ(T'')|-1$. We easily deduce that $\eta_d(S) \geq \eta_d(T'')>0$, and consequently that $S$ is not faithfully ubiquitous in $\cC_r(\F)$ almost surely. 
%
%
%
On the other hand, since $S$ is a subgraph of $H$, we have that if $H$ is faithfully ubiquitous in $\cC_r(\F)$ almost surely then $S$ is also. Since the quotient $H$ was arbitrary, it follows from \cref{thm:main} that $T$ is ubiquitous in $\cC_r(\F)$ if and only if it is faithfully ubiquitous in $\cC_r(\F)$ almost surely, completing the proof. \qedhere

\end{proof}

\begin{proof}[Proof of \cref{thm:mainsimple}]
To deduce item (1) from \cref{thm:main} and \cref{lem:maxminswap}, we need only prove that 
\[ f(d):= \min\left\{ \max \left\{\eta_d (H'') : H'' \text{ is a subhypergraph of $H'$}\right\} : H' \text{ is a coarsening of } H\right\}\]
is a non-decreasing function of $d \geq 4$ for every finite hypergraph with boundary $H$. 
Suppose that $H'$ is a subhypergraph of a coarsening of $H$. Let $H''$ be the largest subhypergraph of $H'$ that contains no edges or interior vertices of degree strictly less than $2$. In other words, $H''$ is obtained from $H'$ by recursively deleting edges and interior vertices of $H'$ that have degree strictly less than $2$ until no such edges or vertices remain.
It is easily verified that 
deleting edges or interior vertices of degree less than $2$ does not decrease the $d$-apparent weight when $d\geq 4$, and hence that
$\eta_d(H'') \geq \eta_d(H')$. 
 Thus, we have that
\begin{multline} 
\label{eq:etahatdegree2}
f(d) 
=\\ \min\left\{ \max \left\{ \eta_d(H''): \begin{array}{l}\text{ a subhypergraph of $H'$  with no edges}\\ \text{or interior vertices of degree $<2$}\end{array} \right\} : H' \text{ a coarsening of $H$}  \right\}
\end{multline}
for every $d\geq 4$.
If $H''$ is a finite hypergraph with boundary such that every edge and interior vertex of $H''$ has degree at least $2$, then $\Delta(H'') \geq 2|E(H'')|$ and $\Delta(H'') \geq 2|V_\circ(H'')|$, so that $\Delta(H'') \geq |E(H'')|+|V_\circ(H'')|$, and hence the coefficient of $d$ in $\eta_d(H'')$ is positive. Thus, the claimed monotonicity  follows from \eqref{eq:etahatdegree2}.

For item (2) it suffices by \cref{thm:maintree} to construct a family of finite trees with boundary $(T_d)_{d\geq 9}$ such that
\[
		\min\left\{\frac{|V_\circ(T'_d)|}{|E(T'_d)|}\,:\, T'_d \text{ is a subgraph of $T_d$}\right\} = \frac{d-8}{d-4}
\]
for each $d\geq 9$. We will use the family of trees pictured in \cref{fig:sepfamily}. Write $d= 4 + 5k + \ell$ where $0 \leq \ell < 5$ and let $T_d$ be the tree that has one vertex of degree five connected to $\ell$ paths of length $k+1$ and $5-\ell$ paths of length $k$. $T_d$ has five leaves, which we declare to be in its boundary, and declare all the other vertices to be in its interior. 
Clearly any subgraph $T'_d$ of $T_d$ maximizing $|V_\circ(T'_d)|/|E(T'_d)|$ must be induced by a union of geodesics joining the boundary vertices, and it is easily verified that, amongst these subgraphs, it is the full graph $T_d$ that maximizes $|V_\circ(T'_d)|/|E(T'_d)|$. To conclude, we compute that 
\[|V_\circ(T_d)| = 1 + 5(k-1) + \ell = d-8 
\quad
\text{ and }
\quad
|E(T_d)| = 5k + \ell = d-4,\]
so that $|V_\circ(T_d)|/|E(T_d)|=(d-8)/(d-4)$ as required.
\end{proof}

\section{Closing remarks and open problems}
\label{sec:closing}

\subsection{The number of witnesses}
\label{subsec:witnessesdiscussion}

The proof of \cref{thm:mainhyper} also yields the following result. 
If $\bbG$ is a $d$-dimensional transitive graph, $\F$ is the uniform spanning forest of $\bbG$, $H=(\partial V, V_\circ, E)$ is a finite hypergraph with boundary, and $r\geq 1$, then the following hold almost surely:
\begin{enumerate}[leftmargin=*]
\itemsep0.5em
\item If $H$ is faithfully ubiquitous in $\cC_r^{hyp}(\F)$, then for every collection $(x_u)_{u\in \partial V}$ of distinct vertices of $\cC_r^{hyp}(\F)$, there exists  a collection $(x^i_u)_{u \in V_\circ}$ of distinct vertices of $\cC_r^{hyp}(\F)$ for each $i \geq 1$ such that   $\{ x^i_u : u\in V_\circ, u \perp e \} \cup \{x_u : u\in\partial V, u \perp e \}$ is an edge of $\cC_r^{hyp}(\F)$ for every $i \geq 1$ and every $e \in E$, 
 $\{x^i_u : u \in V_\circ\}$ is disjoint from $\{x_u : u \in \partial V\}$ for every $i \geq 1$, and $\{x^i_u : u \in V_\circ\}$ and $\{x^j_u : u \in V_\circ\}$ are disjoint whenever $i> j \geq 1$.

\item
If $H$ is not faithfully ubiquitous in $\cC_r^{hyp}(\F)$, then for every collection $(x_u)_{u\in \partial V}$ of distinct vertices of $\cC_r^{hyp}(\F)$ there exists a finite set of vertices $A$ of $\cC_r^{hyp}(\F)$ such that $\{x_u : u \in V_\circ\}$ intersects $A$ whenever  $(x_u)_{u \in V_\circ}$ is a collection of distinct vertices of $\cC_r^{hyp}(\F)$ disjoint from $(x_u)_{u\in \partial V}$ with the property that
$\{ x^i_u : u\in V_\circ, u \perp e \} \cup \{x_u : u\in \partial V, u \perp e \}$ is an edge of $\cC_r^{hyp}(\F)$ for every $e \in E$.
\end{enumerate}
Indeed, item (2) is an immediate consequence of \cref{thm:indist}.

This has the following interesting consequence. 
For each $d >8$, it follows from \cref{thm:maintree} that the star with $\lceil (d-4)/(d-8) \rceil$ boundary leaves and one internal vertex is not faithfully ubiquitous in the component graph of the uniform spanning forest of $\Z^d$. Thus, we deduce from item (2), above, that if $d > 8$ then for every collection of $\lceil (d-4)/(d-8) \rceil$ distinct vertices of the component graph, there is almost surely some finite $M$ depending on the collection such that any clique containing the collection has size at most $M$. In particular, we conclude that the component graph of the uniform spanning forest of $\Z^d$  does not contain an infinite clique whenever $d>8$ a.s. In contrast, we note that the component graph of the uniform spanning forest of $\Z^d$ \emph{does} contain arbitrarily large cliques almost surely whenever $d \geq 5$. (This follows as a special case of \cref{thm:main} as in \cref{fig:degenerate}, but is also very easy to prove directly.)

\subsection{Further questions about the component graph of the USF.}

It is natural to wonder whether \cref{thm:main} determines the component graph up to isomorphism. It turns out that this is not the case. 
Indeed, observe that faithful ubiquity of a finite graph with boundary $H$ can be expressed as a first order sentence in the language of graphs:
\[\text{for all } (x_v)_{v \in \partial V} 
	\text{ there exists } 
	 (x_v)_{v \in V_\circ} \text{ such that } x_u\sim x_v \text{ for every } u,v \in V \text{ such that } u \sim v.
\]
Ubiquity of $H$ can be expressed similarly. However, even if we knew the almost-sure truth value of \emph{every} first order sentence in the language of graphs, this still would not suffice to determine the graph up to isomorphism. Indeed, recall that a graph $G=(V,E)$ is \textbf{quasi-$k$-transitive} if the action of its automorphism group on $V^k$ has only finitely many orbits. The model-theoretic Ryll-Nardzewski Theorem \cite[Theorem 7.3.1]{HodgesBook} implies that a countably infinite graph is determined up to isomorphism by its first order theory if and only if it is \textbf{oligomorphic}, i.e., quasi-$k$-transitive for every $k\geq 1$. 
 By considering sizes of cliques as in \cref{subsec:witnessesdiscussion}, it follows from the discussion in that section that the component graph of the uniform spanning forest of $\Z^d$ is a.s.\ not quasi-$\lceil (d-4)/(d-8) \rceil$-transitive when $d>8$, and hence is a.s.\ not oligomorphic   when $d> 8$.
We conjecture that in fact the component graph has very little symmetry indeed.

\begin{conjecture}
Let $\bbG$ be a $d$-dimensional transitive graph for some $d > 8$, and let $r\geq 1$. Then $\cC_r(\F)$ has no non-trivial automorphisms almost surely. Moreover, there does not exist a deterministic graph $G$ such that $\cC_r(\F)$ is isomorphic to $G$ with positive probability.
\end{conjecture}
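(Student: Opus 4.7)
The plan is to establish both statements simultaneously by constructing, at each vertex of $\cC_r(\F)$, a sequence of canonical local invariants that distinguishes distinct vertices almost surely. Once these invariants are in place, rigidity of $\cC_r(\F)$ follows immediately (the first statement), and the second statement follows by combining rigidity with an equivariance argument against the indistinguishability theorem.

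For each vertex $v$ of $\cC_r(\F)$ and each $k\geq 1$, let $\Phi_k(v)$ denote the isomorphism class of the rooted ball $(B_k(v),v)$ in $\cC_r(\F)$. Any automorphism preserves each $\Phi_k$, so it suffices to prove that the sequence $(\Phi_k(v))_{k\geq 1}$ takes distinct values on distinct vertices of $\cC_r(\F)$ almost surely. Since $\cC_r(\F)$ has only countably many vertices, a union bound reduces this to showing that for any fixed pair of distinct components $K_1,K_2$ of $\F$, the sequences at $K_1$ and $K_2$ differ almost surely. The event $\mathcal{E}_{K_1,K_2}=\{\Phi_k(K_1)=\Phi_k(K_2) \text{ for all }k\}$ is a tail $2$-component property in the sense of \cref{thm:indist}, so by that theorem it either holds for every pair of distinct components almost surely, or for no pair almost surely. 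Hence matters reduce to exhibiting a single pair of components on which some invariant differs on a positive-probability event.

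To exhibit such a pair, I would use a quantitative counting argument based on the moment estimates of \cref{sec:moments}. Choose a finite hypergraph with boundary $H$ with $\bareta_d(H)=0$, so that the expected number of witnesses for faithful presence of $H$ at a tuple including a given component $K$ and lying in a ball of radius $k$ around $K$ grows polynomially in $k$. For $K_1,K_2$ chosen far apart, and appropriate refinements $H$, the random witness counts should be approximately independent across well-separated regions and should exhibit genuine fluctuations by a second moment analysis. The plan is to extract from these counts a sequence of integer-valued invariants of $(\cC_r(\F),v)$ that, almost surely, differ between $v=K_1$ and $v=K_2$ for some $k$. The principal obstacle is establishing sufficient spatial decorrelation of witness counts between the two regions; this will require strengthening \cref{lem:secondmoment} to a joint-moment estimate over two well-separated base-points, probably along the same lines as the separation-of-scales arguments in \cref{sec:moments}.

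For the second statement, suppose for contradiction that $\cC_r(\F)\cong G$ for some deterministic $G$ with positive probability; by ergodicity of $\F$ under the action of $\mathrm{Aut}(\bbG)$ this holds almost surely. By the first statement $G$ has trivial automorphism group, so there is a unique isomorphism $\phi_\F:\cC_r(\F)\to G$; fixing any $v_0\in V(G)$, the component $K_0(\F):=\phi_\F^{-1}(v_0)$ is an $\mathrm{Aut}(\bbG)$-equivariant measurable selection of a single component of $\F$. I would derive a contradiction by showing that no such equivariant selection exists: the obvious candidate is to consider the $1$-component property ``my component equals $K_0$" and appeal to \cref{thm:indist}, but one must check that this property descends to a genuine tail property under finite modifications of $\F$, which is nontrivial because the canonical labeling $\phi_\F$ can shift under finite changes (as happens for any rigid infinite graph, e.g.\ a ray). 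I expect this step to be the main obstacle for the second statement, and it may require replacing the direct appeal to \cref{thm:indist} with a more delicate ergodicity or mass-transport argument that exploits the transitivity of the $\mathrm{Aut}(\bbG)$-action on $\bbV$.
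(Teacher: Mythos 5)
The statement you are addressing is stated in the paper as a \emph{conjecture}; the paper offers no proof of it, so there is nothing to compare your argument against, and your proposal must stand on its own. It does not: it is a programme with the hardest steps left open, several of which you acknowledge yourself, and at least two of which look to me like genuine obstructions rather than routine verifications. First, your reduction via the invariants $\Phi_k(v)$ (rooted-ball isomorphism types in $\cC_r(\F)$) rests on the claim that $\mathcal{E}_{K_1,K_2}$ is a tail $2$-component property in the sense required by \cref{thm:indist}. The definition of a tail $k$-component property only constrains the modification $\omega\mapsto\omega'$ through the components containing the roots; it permits arbitrary finite changes to \emph{other} components. Such a change can merge or split components within distance $r$ of $K_1$ in $\bbG$ and thereby alter the rooted ball $(B_k(K_1),K_1)$ in $\cC_r(\F)$ without touching $K_1$ or $K_2$ at all, so the event $\{\Phi_k(K_1)=\Phi_k(K_2)\ \forall k\}$ is not obviously (and plausibly is not) stable under the allowed modifications. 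Without that, \cref{thm:indist} does not apply and your union-bound reduction collapses.

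Second, and more fundamentally, the quantities you propose to use to separate $K_1$ from $K_2$ — counts of witnesses for faithful presence of $H$ lying in balls of radius $k$ in $\bbG$ — are metric quantities of the ambient graph $\bbG$, not isomorphism invariants of the abstract rooted graph $(\cC_r(\F),v)$. An automorphism of $\cC_r(\F)$ has no reason to respect balls of $\bbG$, so even if you prove decorrelation and fluctuation of these counts (itself a nontrivial strengthening of \cref{lem:secondmoment} to joint moments at two base points), you have not produced invariants that an automorphism must preserve; the sentence ``extract from these counts a sequence of integer-valued invariants of $(\cC_r(\F),v)$'' is precisely the missing idea, not a step that follows from the moment estimates. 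For the second assertion of the conjecture you correctly identify that the property ``my component equals $K_0(\F)$'' need not be a tail property because the canonical labelling can shift under finite modifications, and you leave the resolution open. In sum, the proposal identifies a reasonable line of attack but does not constitute a proof, and the conjecture should be regarded as remaining open.
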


Although we do not believe the component graphs of the USF on different transitive graphs of the same dimension to be isomorphic, it seems nevertheless that most properties of the component graph should be determined by the dimension. One way of formalizing such a statement would be to axiomatize entire the almost-sure first order theory of the component graph of the uniform spanning forest and show that this first order theory is the same for different transitive graphs of the same dimension. We expect that \cref{thm:main}, or a slightly stronger variation of it, should play an important role in this axiomatization. 
See \cite{spencer2001strange} for the development of such a theory in the mean-field setting of Erd\H{o}s-R\'enyi graphs.
In particular, we believe the following.

\begin{conjecture}
Let $\bbG_1$ and $\bbG_2$ be $d$-dimensional transitive graphs, let $r_1,r_2\geq 1$, and let $\F_1$ and $\F_2$ be the uniform spanning forests of $\bbG_1$ and $\bbG_2$ respectively. Then the component graphs $\cC_{r_1}(\F_1)$ and $\cC_{r_2}(\F_2)$ are elementarily equivalent almost surely. That is, they satisfy the same set of first order sentences in the language of graphs almost surely.
\end{conjecture}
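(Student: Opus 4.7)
The plan is to prove elementary equivalence by exhibiting a first-order axiomatization $T_d$ of the almost-sure theory of $\cC_r(\F)$ that depends only on $d$, and then showing that $T_d$ is complete. First I would establish a 0-1 law: for every first-order sentence $\sigma$, the event $\{\cC_r(\F) \models \sigma\}$ has probability zero or one. The natural approach is to express this event as a tail property and invoke a suitable variant of \cref{thm:indist}. Given the 0-1 law, the conjecture reduces to showing equality of the two almost-sure theories $T^{(1)}_d := \{\sigma : \P(\cC_{r_1}(\F_1) \models \sigma)=1\}$ and $T^{(2)}_d := \{\sigma : \P(\cC_{r_2}(\F_2) \models \sigma)=1\}$.

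Second, I would set up the candidate axiomatization. For each finite graph with boundary $H$, \cref{thm:main} determines whether the first-order sentence $\sigma_H$ asserting that $H$ is faithfully ubiquitous holds a.s., with the answer depending only on $d$ and $H$ via the criterion \eqref{eq:thmmaincriterion}. Let $T^{\mathrm{ub}}_d$ be the theory obtained by including $\sigma_H$ when $H$ satisfies the criterion and $\neg \sigma_H$ otherwise. By \cref{thm:main}, both $\cC_{r_1}(\F_1)$ and $\cC_{r_2}(\F_2)$ satisfy $T^{\mathrm{ub}}_d$ almost surely, so $T^{\mathrm{ub}}_d \subseteq T^{(1)}_d \cap T^{(2)}_d$; it would suffice to show that $T^{\mathrm{ub}}_d$ is complete, possibly after enlarging it with further axioms whose truth is still governed by $d$.

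Completeness would be attacked via an Ehrenfeucht--Fra\"{\i}ss\'{e} game. Given any two models $M, N \models T^{\mathrm{ub}}_d$ and a target quantifier rank $n$, Duplicator would maintain a partial bijection between tuples $(a_1,\ldots,a_k)$ in $M$ and $(b_1,\ldots,b_k)$ in $N$ that share the same $(n-k)$-type. When Spoiler chooses $a_{k+1}$, Duplicator invokes the relevant positive ubiquity axiom to produce a matching $b_{k+1}$: the adjacency pattern of $a_{k+1}$ to the existing tuple corresponds to a finite graph with boundary, whose faithful ubiquity (or not) is settled identically in $M$ and $N$ by $T^{\mathrm{ub}}_d$. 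If Duplicator can always win, then $M \equiv N$ as required, and applying this on the full-measure event that $\cC_{r_1}(\F_1), \cC_{r_2}(\F_2) \models T^{\mathrm{ub}}_d$ gives the conjecture.

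The main obstacle is the \emph{negative} side of the theory. First-order logic cannot directly express ``only finitely many witnesses,'' yet the failure of faithful ubiquity carries precisely such finiteness information, as used in \cref{subsec:witnessesdiscussion} to rule out infinite cliques for $d>8$. To make the EF argument succeed, Duplicator must know not only that positive extensions exist in both structures but that \emph{forbidden} extensions are absent in a matching fashion in both $M$ and $N$. This seems to demand a quantitative strengthening of \cref{thm:main}: for each non-ubiquitous $H$, a uniform bound $M_d(H) < \infty$ on the number of disjoint witnessing constellations at an arbitrary boundary tuple, with $M_d(H)$ depending only on $d$ and $H$. Proving such a uniform bound --- precisely the ``slightly stronger variation'' of \cref{thm:main} hinted at in the closing remarks --- would require pushing the second-moment method of \cref{sec:2ndmoment} to control the full near-witness count distribution uniformly across the graph, and is I expect the crux of the problem. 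A secondary issue is confirming that the parameter $r$ drops out of the theory, which should follow by showing $\cC_r(\F)$ and $\cC_{r'}(\F)$ are bi-interpretable in a $d$-uniform way.
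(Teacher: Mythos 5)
The statement you are addressing is stated in the paper as a \emph{conjecture} in the closing remarks; the paper offers no proof of it, and it remains open. Your proposal should therefore be judged as a research program rather than checked against an existing argument, and as a proof it has genuine gaps at exactly the points you yourself flag. The decisive one is completeness of the candidate theory $T^{\mathrm{ub}}_d$. The sentences $\sigma_H$ supplied by \cref{thm:main} are all of $\forall\exists$ form, whereas winning Ehrenfeucht--Fra\"iss\'e games of every depth requires controlling arbitrary quantifier alternations; your description of Duplicator's move reduces Spoiler's choice to ``an adjacency pattern corresponding to a finite graph with boundary,'' but Spoiler may select points whose relevant data includes non-adjacencies, distances in the component graph, and counting information that $T^{\mathrm{ub}}_d$ does not determine. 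Since the paper shows the component graph is not oligomorphic for $d>8$, Ryll--Nardzewski gives no shortcut here, and there is no a priori reason the ubiquity axioms (even augmented by their negations) generate a complete theory. The second gap is the uniform bound $M_d(H)$ on the number of disjoint witnesses at a non-ubiquitous boundary tuple: item (2) of \cref{subsec:witnessesdiscussion} only produces a random finite set $A$ depending on the tuple and on $\F$, not a deterministic bound depending only on $d$ and $H$, and upgrading this is an open quantitative strengthening of \cref{thm:main}, not something that follows from the second-moment estimates of \cref{sec:2ndmoment} as they stand.

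A smaller but real issue is the zero--one law itself. \cref{thm:indist} applies to tail $k$-component properties for a \emph{fixed} finite tuple of components; the event $\{\cC_r(\F)\models\sigma\}$ quantifies over all tuples and is not literally of that form. For $\Z^d$ one can instead argue from ergodicity of the USF under translations, since the isomorphism class of $\cC_r(\F)$ is translation-invariant, but for a general $d$-dimensional transitive graph this step needs to be justified separately. In short: your plan is aligned with the direction the authors themselves suggest (axiomatize the almost-sure theory, in the spirit of the Spencer--Shelah program they cite), but none of the three load-bearing steps --- the zero--one law in full generality, completeness of the axiomatization, and the uniform witness-count bound --- is established, so this is not yet a proof.
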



\subsection{Component graphs of other models and other graphs.}

It would be interesting to study ubiquitous subgraphs in component graphs derived from other models on $\Z^d$. The most tractable of these is likely to be the interlacement process \cite{Sznitman10,rath2010connectivity,procaccia2011geometry}, for which some related results have been proven by Lacoin and Tykesson \cite{lacoin2013easiest}. Here the component graph is defined by considering two trajectories to be adjacent if and only if they intersect.

\begin{question}
Let $d \geq 3$. Which finite graphs with boundary are ubiquitous in the component graph of the random interlacement on $\Z^d$?
\end{question}

The picture should be quite different to ours since the connection probabilities for more than two points are no longer given by a power of the spread.

\medskip

A much more straightforward extension of our results would be to consider uniform spanning forests generated by long-range random walks on $\Z^d$. Similarly, one could consider uniform spanning forests on non-transitive, possibly fractal, graphs that are Ahlfors-regular and satisfy sub-Gaussian heat kernel estimates of some order $\beta \geq 2$ (see e.g.\ \cite[Chapter 3]{KumFlour}). 
The beginnings of this analysis are already present implicitly in \cref{lem:firstmomentgeneral}.

\subsection*{Acknowledgments}
This work was carried out while TH was an intern at Microsoft Research, Redmond. TH thanks Mathav Murugan for many useful discussions on heat kernel estimates. We thank Omer Angel for his comments on an earlier draft of this manuscript, and thank the anonymous referee for many helpful comments and corrections.

\bibliographystyle{abbrv}
\bibliography{unimodular}
\end{document}